\def\@secnumfont{\bfseries\scshape}
\def\section{\@startsection{section}{1}%==
  \z@{.9\linespacing\@plus\linespacing}{.5\linespacing}%
  {\normalfont\large\bfseries\scshape\centering}}
\def\subsection{\@startsection{subsection}{2}%
  \z@{.5\linespacing\@plus.7\linespacing}{-.5em}%
  {\normalfont\bfseries\scshape}}
\def\subsubsection{\@startsection{subsubsection}{3}%
  \z@{.5\linespacing\@plus.7\linespacing}{-.5em}%
  {\normalfont\scshape}}
\def\specialsection{\@startsection{section}{1}%
  \z@{\linespacing\@plus\linespacing}{.5\linespacing}%
  {\normalfont\centering\large\bfseries\scshape}}
\renewenvironment{proof}[1][\proofname]{\par
\pushQED{\qed}%
\normalfont \topsep4\p@\@plus4\p@\relax
\trivlist
\item[\hskip\labelsep
\bfseries
#1\@addpunct{.}]\ignorespaces
}{%
\popQED\endtrivlist\@endpefalse
}
\newcommand \Dotfill {\leavevmode \leaders \hb@xt@ 6pt{\hss .\hss }\hfill \kern \z@}
\def\@tocline#1#2#3#4#5#6#7{\relax
  \ifnum #1>\c@tocdepth % then omit
  \else
    \par \addpenalty\@secpenalty\addvspace{#2}%
    \begingroup \hyphenpenalty\@M
    \@ifempty{#4}{%
      \@tempdima\csname r@tocindent\number#1\endcsname\relax
    }{%
      \@tempdima#4\relax
    }%
    \parindent\z@ \leftskip#3\relax \advance\leftskip\@tempdima\relax
    \rightskip\@pnumwidth plus4em \parfillskip-\@pnumwidth
    #5\leavevmode\hskip-\@tempdima
      \ifcase #1
       \or\or \hskip 1.65em \or \hskip 3.3em \else \hskip 4.95em \fi%
      #6\nobreak\relax
    \Dotfill
    \hbox to\@pnumwidth{\@tocpagenum{#7}}\par
    \nobreak
    \endgroup
  \fi}
\def\l@section{\@tocline{1}{0pt}{1pc}{}{\scshape}}
\renewcommand{\tocsection}[3]{%
\indentlabel{\@ifnotempty{#2}{\ignorespaces#1 #2.\hskip 0.7em}}#3}
\def\l@subsection{\@tocline{2}{0pt}{1pc}{5pc}{}}
\def\l@subsubsection{\@tocline{3}{0pt}{1pc}{7pc}{}}
\numberwithin{equation}{section}
\newtheoremstyle{mytheorem}{.7\linespacing\@plus.3\linespacing}{.7\linespacing\@plus.3\linespacing}%
     {\itshape}%         Body font
     {}%         Indent amount (empty = no indent, \parindent = para indent)
     {\bfseries}% Thm head font (e.g. \bfseries, \scshape, \sffamily)
     {. }%        Punctuation after thm head
     {0.3ex}%     Space after thm head (\newline = linebreak)
     {\thmname{{\bfseries #1}}\thmnumber{ {\bfseries #2}}\thmnote{ (#3)}}  % Thm head spec
\theoremstyle{mytheorem}
\newtheorem{theorem}{Theorem}[section]
\newtheorem{lemma}[theorem]{Lemma}
\newtheorem{proposition}[theorem]{Proposition}
\newtheorem{remark}[theorem]{Remark}
\newtheorem{definition}[theorem]{Definition}
\newtheorem{assumption}[theorem]{Assumption}
\newcommand\overlap{
\begin{tikzpicture}[scale=0.2]
\foreach \i in {0,1,...,4}{
	\draw [fill] (2*\i,0)--(2*\i,0) circle [radius=0.2];
	}
\draw [thick] (0,0)  to [out=45,in=135]  (2,0) to [out=45,in=135]  (4,0) to [out=45,in=135]  (6,0) to [out=45,in=135]  (8,0) ;
\draw [thick, red] (0,0)  to [out=-45,in=-135]  (2,0) to [out=-45,in=-135]  (4,0)  to [out=-45,in=-135]  (6,0) to [out=-45,in=-135]  (8,0);
\end{tikzpicture}
}
\newcommand\overlapbr{
\begin{tikzpicture}[scale=0.2]
\foreach \i in {0,1,...,4}{
	\draw [fill] (2*\i,0)--(2*\i,0) circle [radius=0.2];
	}
\draw [thick, blue] (0,0)  to [out=45,in=135]  (2,0) to [out=45,in=135]  (4,0) to [out=45,in=135]  (6,0) to [out=45,in=135]  (8,0) ;
\draw [thick, red] (0,0)  to [out=-45,in=-135]  (2,0) to [out=-45,in=-135]  (4,0)  to [out=-45,in=-135]  (6,0) to [out=-45,in=-135]  (8,0);
\end{tikzpicture}
}
\newcommand{\bbE}{{\ensuremath{\mathbb E}} }
\newcommand{\bbP}{{\ensuremath{\mathbb P}} }
\newcommand{\bbT}{{\ensuremath{\mathbb T}} }
\newcommand\bsf{\boldsymbol{f}}
\newcommand\bsY{\boldsymbol{Y}}
\newcommand{\cA}{{\ensuremath{\mathcal A}} }
\newcommand{\cB}{{\ensuremath{\mathcal B}} }
\newcommand{\cD}{{\ensuremath{\mathcal D}} }
\newcommand{\cG}{{\ensuremath{\mathcal G}} }
\newcommand{\cH}{{\ensuremath{\mathcal H}} }
\newcommand{\cI}{{\ensuremath{\mathcal I}} }
\newcommand{\cJ}{{\ensuremath{\mathcal J}} }
\newcommand{\cM}{{\ensuremath{\mathcal M}} }
\newcommand{\cQ}{{\ensuremath{\mathcal Q}} }
\newcommand{\cS}{{\ensuremath{\mathcal S}} }
\newcommand{\cT}{{\ensuremath{\mathcal T}} }
\newcommand{\cU}{{\ensuremath{\mathcal U}} }
\newcommand{\cV}{{\ensuremath{\mathcal V}} }
\newcommand{\cW}{{\ensuremath{\mathcal W}} }
\newcommand{\cZ}{{\ensuremath{\mathcal Z}} }
\DeclareMathSymbol{\leqslant}{\mathalpha}{AMSa}{"36} % nicer `smaller or equal'
\DeclareMathSymbol{\geqslant}{\mathalpha}{AMSa}{"3E} % nicer `larger or equal'
\DeclareMathSymbol{\eset}{\mathalpha}{AMSb}{"3F}     % nicer `emptyset'
\newcommand{\sumtwo}[2]{\sum_{\substack{#1 \\ #2}}} % sum with 2 lines
\newcommand{\be}{\begin{equation}}
\newcommand{\ee}{\end{equation}}
\newcommand{\asto}[1]{\underset{{#1}\to\infty}{\longrightarrow}}
\newcommand{\Asto}[1]{\underset{{#1}\to\infty}{\Longrightarrow}}
\newcommand{\R}{\mathbb{R}}
\newcommand{\Z}{\mathbb{Z}}
\newcommand{\N}{\mathbb{N}}
\newcommand{\T}{\mathbb{T}}
\newcommand{\PEfont}{\mathrm}
\newcommand{\p}{\ensuremath{\PEfont P}}
\DeclareMathOperator{\e}{\ensuremath{\PEfont E}}
\newcommand{\I}{\ensuremath{\PEfont I}}
\newcommand{\E}{\ensuremath{\PEfont  E}}
\renewcommand{\P}{\p}
\DeclareMathOperator{\bbvar}{\ensuremath{\mathbb{V}ar}}
\DeclareMathOperator{\bbcov}{\ensuremath{\mathbb{C}ov}}
\newcommand{\ind}{\mathds{1}}
\newcommand{\eps}{\varepsilon}
\renewcommand{\epsilon}{\varepsilon}
\renewcommand{\theta}{\vartheta}
\renewcommand{\rho}{\varrho}
\newenvironment{myenumerate}{%
\renewcommand{\theenumi}{\arabic{enumi}}%
\renewcommand{\labelenumi}{{\rm(\theenumi)}}%
\begin{list}{\labelenumi}
	{%
	\setlength{\itemsep}{0.4em}%
	\setlength{\topsep}{0.5em}%
	\setlength\leftmargin{2.45em}%
	\setlength\labelwidth{2.05em}%
	\setlength{\labelsep}{0.4em}%
	\usecounter{enumi}%
	}%
	}%
{\end{list}
}
\newenvironment{myitemize}{%
\begin{list}{$\bullet$}%
 	{%
	\setlength{\itemsep}{0.4em}%
	\setlength{\topsep}{0.5em}%
	\setlength\leftmargin{2.65em}%
	\setlength\labelwidth{2.65em}%
	\setlength{\labelsep}{0.4em}%
%	\usecounter{enumi}%
	}%
	}%
{\end{list}}
\renewenvironment{itemize}{
\begin{myitemize}}%
{\end{myitemize}}
\date{\today}
\newcommand\dd{\mathrm{d}}
\newcommand\sfC{\mathsf C}
\newcommand\sfM{\mathsf M}
\newcommand\sfP{\mathsf P}
\newcommand\sfU{\mathsf U}
\newcommand\sfV{\mathsf V}
\newcommand\sfW{\mathsf W}
\newcommand\sfa{\mathsf a}
\newcommand\sfb{\mathsf b}
\newcommand\sfi{\mathsf i}
\newcommand\sfI{\mathsf I}
\newcommand\sfj{\mathsf j}
\newcommand\sfm{\mathsf m}
\newcommand\sfn{\mathsf n}
\newcommand\sfx{\mathsf x}
\newcommand\sfz{\mathsf z}
\newcommand\bx{\boldsymbol{x}}
\newcommand\by{\boldsymbol{y}}
\newcommand\bz{\boldsymbol{z}}
\newcommand\dist{\mathrm{dist}}
\newcommand\cg{\mathrm{cg}}
\newcommand\even{\mathrm{even}}
\newcommand\odd{\mathrm{odd}}
\newcommand\notri{\mathrm{no\, triple}}
\newcommand\diff{\mathrm{diff}}
\newcommand\nodiff{\mathrm{superdiff}}
\newcommand\sfc{\mathsf{c}}
\newcommand\bcA{\vec{\boldsymbol{\mathcal{A}}}}
\newcommand{\ev}[1]{[\![ #1 ]\!]}
\title{The critical 2d Stochastic Heat Flow}
\begin{document}

\author[F. Caravenna]{Francesco Caravenna}
\address{Dipartimento di Matematica e Applicazioni\\
 Universit\`a degli Studi di Milano-Bicocca\\
 via Cozzi 55, 20125 Milano, Italy}
\email{francesco.caravenna@unimib.it}

\author[R. Sun]{Rongfeng Sun}
\address{Department of Mathematics\\
National University of Singapore\\
10 Lower Kent Ridge Road, 119076 Singapore
}
\email{matsr@nus.edu.sg}

\author[N. Zygouras]{Nikos Zygouras}
\address{Department of Mathematics\\
University of Warwick\\
Coventry CV4 7AL, UK}
\email{N.Zygouras@warwick.ac.uk}

\begin{abstract}
 We consider directed polymers in random environment in the critical
  dimension $d = 2$, focusing on the intermediate disorder regime when the
  model undergoes a phase transition. We prove that, at criticality, the diffusively rescaled
  random field of partition functions has a \textit{unique scaling limit}:
 a universal process of random measures on
  $\mathbb{R}^2$ with logarithmic correlations, which we call the \textit{Critical 2d Stochastic Heat Flow}.
  It is the natural candidate for the long sought solution of the critical
2d Stochastic Heat Equation with multiplicative space-time white noise.
\end{abstract}

\keywords{Directed Polymer in Random Environment, Stochastic Heat Equation, KPZ Equation, Coarse-Graining, Renormalization, Lindeberg Principle}
\subjclass[2010]{Primary: 82B44;  Secondary: 35R60, 60H15, 82D60}
\maketitle

\tableofcontents

\section{Introduction and main results}

\subsection{Overview}

The model of \emph{directed polymer in random environment} (DPRE)
is by now a fundamental model in statistical physics
and probability theory. It is one of the simplest and yet most challenging models for disordered systems,
where the effect of \emph{disorder} --- which is synonymous with random environment ---
can be investigated.
Originally introduced by Huse and Henley \cite{HH85}
in the physics literature to study interfaces of the Ising model with random impurities, over the years,
DPRE has become
an object of mathematical interest and lies at the heart of two areas of intense research in recent years.
On the one
hand, it is one of the canonical examples in the Kardar-Parisi-Zhang (KPZ) universality class of interface
growth models,
which has witnessed tremendous progress over the last two decades in spatial dimension $d=1$
(see e.g.~the surveys
\cite{QS15, Cor12, Cor16}); on the other hand, it provides a discretisation of the Stochastic Heat Equation
(SHE) and (via the
Cole-Hopf transformation) of the Kardar-Parisi-Zhang (KPZ) equation,
for which a robust solution theory in $d=1$ has been
developed only recently in the larger context of singular stochastic partial differential equations (SPDE)
\cite{H13, H14, GIP15, Kup14, GJ14}.

Our goal in this paper is to consider DPRE in the
\emph{critical spatial dimension $d=2$},
for which much
remains unknown.
Our main result shows that, in a \emph{critical window} for the disorder strength,
the family of partition functions of DPRE
converges to a universal limit, which can be interpreted as the solution of the
(classically ill-defined) 2-dimensional SHE.
This is the first example of a singular SPDE for which a solution has been constructed
in the critical dimension and for critical disorder strength.

In the remainder of the introduction,
we first recall the definition of DPRE, its basic properties,
and the works leading up to our current result. We then
present our main results and discuss their connections with singular SPDEs and related research.

\subsection{The model} \label{sec:intro}

The first ingredient in the definition of DPRE is a simple symmetric random walk $(S = (S_n)_{n\ge 0}, \P)$ on $\Z^d$, started at
$S_0 = 0$. To specify a different starting time $m$ and position $z$, we will write $\P(\,\cdot\,|S_m = z)$.
The second ingredient is the disorder or random environment,
encoded by a family of i.i.d.\ random variables
$(\omega = (\omega(n,z))_{n\in\N, z\in\Z^d}, \bbP)$ with
zero mean, unit variance and some finite exponential moments:
\begin{equation}\label{eq:lambda}
\begin{gathered}
	\bbE[\omega]=0 \,, \qquad \bbE[\omega^2]=1 \,, \\
	\exists\, \beta_0>0 \quad \text{such that} \qquad 	\lambda(\beta)
	:= \log \bbE[e^{\beta\omega}] < \infty \qquad \forall \beta \in [0, \beta_0] \,.
\end{gathered}
\end{equation}
Given $N\in\N$, $\beta>0$, and  a realization of $\omega$,
the polymer measure of length $N\in\N$ and disorder strength
(inverse temperature) $\beta$ in the random environment $\omega$ is given by
\begin{align}\label{eq:pathmeasure}
	\dd\P^{\beta,\, \omega}_{N}(S \,|\,S_0=z)
	:=\frac{1}{Z_N^{\beta,\,\omega}(z)}
	e^{\sum_{n=1}^{N-1} \{\beta \omega(n,S_n) - \lambda(\beta)\}} \,
	\dd\P(S \,|\, S_0 = z) \,,
\end{align}
where
\begin{align} \label{eq:paf}
	Z_N^{\beta,\,\omega}(z) &=
	\E\bigg[ e^{\sum_{n=1}^{N-1} \{\beta \omega(n,S_n) - \lambda(\beta)\}}
	\,\bigg|\, S_0 = z \bigg]
\end{align}
is the {\it partition function}. Note that $\lambda(\beta)$ in the exponent ensures that $\bbE[Z_N^{\beta,\,\omega}(z)]=1$.

In the mathematical literature, DPRE was first studied by Imbrie and Spencer \cite{IS88}. There have been
many results since then,
although many fundamental questions remain open. We
briefly recall what is known and refer to the recent monograph
by Comets~\cite{C17} for more details and references.

DPRE exhibits a phase transition between a {\em weak disorder} phase and a {\em strong disorder} phase.
Using the martingale structure of the partition functions $(Z_N^{\beta,\,\omega}(0))_{N\in\N}$,
first identified by Bolthausen in \cite{B89}, DPRE is said to be in the weak disorder (or strong disorder)
phase if the martingale
converges almost surely to a positive limit (or to 0).
It was later shown in \cite{CY06} that there is a critical value $\beta_c\geq 0$
such that strong disorder holds for $\beta>\beta_c$ and weak disorder holds for $0 \le \beta < \beta_c$,
where $\beta_c \in (0,\infty)$ for $d\geq 3$ \cite{IS88, B89}, and $\beta_c=0$ for $d=1,2$
\cite{CH02,CSY03} (see also \cite{L10, BL17, N19}).

In the weak disorder phase, a series of works culminating in \cite{CY06}
showed that the random walk under the polymer
measure converges to a Brownian motion under
diffusive scaling of space and time, as if the disorder is not present.
In the strong disorder phase, it is believed that under the polymer measure, the path should be
super-diffusive, but this has only been proved for special integrable
models in dimension $d=1$, see~\cite{J00,CH}.
Even less is known in $d\geq 2$ due to the lack of integrable models within the same universality class.
We mention that the strong disorder phase can alternatively be characterised by the
fact that two polymer paths sampled independently in the same random environment
have positive overlap, see \cite{CH02, CSY03, V07} and the more recent
results~\cite{Cha19,BC20a,BC20b, Ba21}.

\subsection{The case $d=2$}
\label{sec:d=2}

Henceforth, we will focus on dimension $d=2$.
Surprisingly, even though $\beta_c=0$, there is still a weak to strong disorder transition,
which was identified in \cite{CSZ17b}. More precisely,
if we choose $\beta=\beta_N=\hat\beta/\sqrt{\log N}$, which is called an
\emph{intermediate
disorder regime}, then it was shown in \cite{CSZ17b} that
below the critical point $\hat\beta_c = \sqrt{\pi}$, the partition function $Z_N^{\beta_N, \,\omega}(0)$
converges in distribution to a log-normal random variable, which
is strictly positive, while at and above $\hat\beta_c$, it converges to $0$
(such a transition does not occur in $d=1$).
This raises many interesting questions about the 2-dimensional DPRE.

There are two main perspectives in the study of the partition functions of DPRE.
One is to investigate the fluctuation of a single log-partition function $\log Z_N^{\beta, \,\omega}(0)$
as $N\to\infty$. In $d=1$, this is conjectured
to converge, under suitable rescaling,
to the universal Tracy-Widom distribution whenever
$\beta>0$. Similar universal fluctuations are expected to arise in $d\geq 2$ when $\beta >\beta_c$,
although only numerical results are available so far~\cite{HH12, HH13}. In $d=2$ and in the intermediate
disorder regime $\beta_N= \hat\beta/\sqrt{\log N}$ with
a sub-critical interaction strength $\hat\beta<\hat\beta_c$, \cite{CSZ17b} showed
that $\log Z_N^{\beta, \,\omega}(0)$ converges to a universal normal limit independent of the law of
$\omega$. The super-critical case $\hat\beta\geq \hat\beta_c$ remains a difficult challenge.

Another perspective, which we take in this paper,
is to study the \emph{diffusively rescaled field of partition functions}
indexed by all starting points in space-time:
\begin{equation}\label{eq:field}
	\big(\, \cU_N(t,x) :=
	Z_{Nt}^{\beta_N,\, \omega}(\sqrt{N}x) \big)_{t>0,\, x\in\R^2} \,,
\end{equation}
as well as the diffusively rescaled field of log-partition functions:
\begin{equation}\label{eq:field2}
	\big(\, \cH_N(t,x) :=
	\log Z_{Nt}^{\beta_N,\, \omega}(\sqrt{N}x) \big)_{t>0,\, x\in\R^2} \,.
\end{equation}
The fields $\cU_N$ and $\cH_N$ provide natural discretizations of the solutions
of the two-dimensional \emph{Stochastic Heat Equation (SHE)}
and {\em Kardar-Parisi-Zhang equation (KPZ)} respectively:
\begin{align}
	\partial_t u &= \frac{1}{2}\Delta u + \beta \, \dot{W} \, u,  \label{eq:2dSHE} \\
	\partial_t h &= \frac{1}{2}\Delta h
	+ \frac{1}{2} |\nabla h|^2 + \beta \, \dot{W} \,,  \label{eq:2dKPZ}
\end{align}
where $\dot{W} = \dot{W}(t,x)$ denotes space-time white noise.
These stochastic PDEs are singular and ill-posed:
even the recent breakthrough solution theories of regularity structures \cite{H13, H14}
and paracontrolled distributions~\cite{GIP15, GP17}
only apply in $d=1$ but not in the critical dimension $d=2$.
Therefore, if $\cU_N$ and $\cH_N$ admit non-trivial limits,
then these limits are natural candidates for the long-sought solutions of SHE and KPZ in $d=2$.

The study of the random field $\cU_N$ was initiated in \cite{CSZ17b}, which showed that
in the subcritical regime $\hat\beta<\hat\beta_c$, the centered and rescaled random field
$\sqrt{\log N}\big(\,\cU_N(t,x) -1\big)$ converges to the solution of the so-called Edwards-Wilkinson
equation, which is a Gaussian free field at each time $t$.
The study of the random field $\cH_N$ was first carried out in \cite{CD20},\footnote{More
precisely,  \cite{CD20} and \cite{Gu20} both study the analogue of $\cH_N$ defined
by mollifying the noise $\dot{W}$ in \eqref{eq:2dKPZ} instead of discretizing space and time,
while \cite{CSZ17b} considered both types of regularizations.}
which showed that $\sqrt{\log N}\big(\,\cH_N(t,x) - \bbE[\cH_N(t,x)]\big)$
is tight in $N$ as a family of distribution-valued random variables for $\hat\beta$ sufficiently small;
shortly after, \cite{CSZ20} proved
convergence to the solution of the same Edwards-Wilkinson equation as for $\cU_N$
for all $\hat\beta<\hat\beta_c$ (simultaneously, the same result was proved in \cite{Gu20} for
$\hat\beta$ sufficiently small).

In the much more interesting and delicate critical regime $\hat\beta= \hat\beta_c$
--- there is in fact
a critical window of width $O(1/\log N)$ around $\hat\beta_c$, see \eqref{intro:sigma} below ---
the random field $\cU_N(t,x)$ no longer needs any centering and rescaling.
Its limiting correlation structure was first identified in \cite{BC98}
through a different
regularisation of the $2d$ SHE \eqref{eq:2dSHE} (mollifying the noise $\dot{W}$ instead
of discretizing space and time).
In \cite{CSZ19b}, the third moment of the averaged random field
$\cU_N(t,\varphi) := \int \cU_N(t,x) \, \varphi(x) \, \dd x$, for test functions $\varphi$,
was computed and shown to converge to a finite limit as $N\to\infty$,
which implies that all subsequential limits of $\cU_N$
have the same correlation  structure identified in \cite{BC98}
(tightness is trivial since $\bbE[\cU_N]\equiv1$).
Subsequently, \cite{GQT21} identified
the limit of all moments of $\cU_N(t, \varphi)$ (see also the more recent work \cite{C21}).
However, the \emph{uniqueness of the limit} of $\cU_N$ remained elusive and challenging,
because the limiting moments identified in \cite{GQT21} and \cite{C21}
grow too fast to uniquely determine the
law of the random field.

Our main result settles this question and shows that, in the critical window
around $\hat\beta = \hat\beta_c$, the random field
$\cU_N$ indeed converges to a
\emph{unique universal limit}, which naturally provides a notion of
solution of the $2d$ SHE \eqref{eq:2dSHE}
for disorder strength~$\beta$ in the critical window. Therefore, we
name it the \emph{Critical $2d$ Stochastic Heat Flow}.

\subsection{Main results}\label{sec:main}
To formulate our main results, we generalize the partition functions in \eqref{eq:paf}
by introducing a point-to-point version, where both the starting and ending positions
of the random walk are fixed: for $M \le N \in \N_0 = \{0,1,2,\ldots\}$ and $w,z\in\Z^2$ we set
\begin{equation} \label{eq:Zab}
	Z_{M,N}^{\beta,\, \omega}(w,z) := \E \bigg[
	e^{\sum_{n=M+1}^{N-1} \{\beta \omega(n,S_n) - \lambda(\beta)\}}
	\, \ind_{S_N = z} \,\bigg|\, S_M = w \bigg]  \,,
\end{equation}
with the convention $\sum_{n=M+1}^{N-1} \{\ldots\} := 0$ for $N \le M+1$.

To deal with parity issues, for $x \in \R^2$
we denote by $\ev{x}$ the closest point $z \in \Z^2_{\rm even}
:=\{(z_1, z_2)\in \Z^2: z_1+z_2 \mbox{ even}\}$;
for $s \in \R$ we define the even approximation
$\ev{s} := 2 \, \lfloor s/2 \rfloor \in \Z_{\rm even} := 2\Z$.
We then introduce
the process of \emph{diffusively rescaled partition functions}:\footnote{Note that
$\bbE[Z_{M,N}^{\beta_N,\, \omega}(w,z)]
= \P(S_{N} = z \,|\, S_{M} = w) = O(\frac{1}{N-M}) = O(\frac{1}{N})$
for $M/N \le c < 1$, by the local limit theorem,
which explains the prefactor $N$ in \eqref{eq:rescZmeas}.
The extra factor $\frac{1}{4}$ is due to periodicity.}
\begin{equation} \label{eq:rescZmeas}
	\cZ^{\beta_N}_N = \bigg(\cZ^{\beta_N}_{N;\, s,t}(\dd x, \dd y) :=
	\frac{N}{4} \, Z_{\ev{Ns}, \ev{Nt}}^{\beta_N,\,\omega}(\ev{\sqrt{N} x},
	\ev{\sqrt{N} y}) \, \dd x \, \dd y
	\bigg)_{0 \le s \le t < \infty}
\end{equation}
where $\dd x \, \dd y$ denotes the Lebesgue measure on $\R^2 \times \R^2$,
and $\beta_N$ will be defined shortly.

We regard $\cZ^{\beta_N}_{N;\, s,t}(\dd x, \dd y)$ as a random measure on
$\R^2 \times \R^2$, where we equip the space of
locally finite measures on $\R^2 \times \R^2$ with the topology of vague convergence:
\begin{equation*}
	\mu_N \rightarrow \mu \quad \ \iff \quad \
	\int \phi(x,y) \, \mu_N(\dd x, \dd y) \to \int \phi(x,y) \, \mu(\dd x, \dd y)
	\quad \forall \phi\in C_c(\R^2\times \R^2) \,.
\end{equation*}
Our main result
proves weak convergence of the law of $\cZ^{\beta}_N$ as $N\to\infty$, when
$\beta = \beta_N$ is rescaled in a suitable \emph{critical window}, that we define next.
Let us introduce the sequence
\begin{equation}\label{eq:RN0}
	R_N := \sum_{n=1}^N \sum_{z\in\Z^2} \P(S_n = z)^2
	= \sum_{n=1}^N \P(S_{2n}=0)
	= \sum_{n=1}^N \bigg\{ \frac{1}{2^{2n}} \binom{2n}{n}\bigg\}^2
	\sim \frac{\log N}{\pi} \,,
\end{equation}
which is the expected overlap (number of collisions)
between two independent simple symmetric random walks starting from
the origin in $\Z^2$ up to time $N$.
Recalling that $\lambda(\cdot)$ is the disorder log-moment generating function,
see \eqref{eq:lambda}, the critical window for $\beta = \beta_N$ is
 \begin{equation} \label{intro:sigma}
	e^{\lambda(2\beta_N)-2\lambda(\beta_N)}-1
	= \frac{1}{R_N} \bigg(1 + \frac{\theta + o(1)}{\log N}\bigg) \,,\quad\quad
	\text{for some fixed } \theta\in \R \,.
\end{equation}
Since $\lambda(\beta) \sim \frac{1}{2} \beta^2$ as $\beta \downarrow 0$,
see \eqref{eq:lambda}, we have
$\beta_N \sim \hat\beta_c / \sqrt{\log N}$ with $\hat\beta_c = \sqrt{\pi}$ irrespective of
the parameter $\theta$,
which contributes to the second order asymptotics, see \eqref{eq:betaN}.

\smallskip

We can now state our main result, which will
be proved in Section~\ref{s:Thm1.3}.

 \begin{theorem}[Critical 2d Stochastic Heat Flow]\label{th:main0}
Fix $\beta_N$ in the critical window \eqref{intro:sigma}, for some $\theta\in \R$.
As $N\to\infty$, the family of random measures
$\cZ^{\beta_N}_N =
(\cZ^{\beta_N}_{N;\, s,t}(\dd x, \dd y))_{0\le s \le t < \infty}$ defined in \eqref{eq:rescZmeas}
converges in finite dimensional distributions
to a {\it unique} limit
\begin{equation*}
	\mathscr{Z}^\theta = (\mathscr{Z}_{s,t}^\theta(\dd x , \dd y))_{0 \le s \le t <\infty} \,,
\end{equation*}
which we call the \emph{Critical $2d$ Stochastic Heat Flow}. This limit $\mathscr{Z}^\theta$ is
universal, in that it does not depend on the law of the disorder $\omega$ except
for the assumptions in \eqref{eq:lambda}.
\end{theorem}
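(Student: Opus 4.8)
The plan is to extend the polynomial-chaos/Lindeberg scheme developed for the subcritical field in \cite{CSZ17b} by layering it with a multi-scale \emph{coarse-graining}, and to extract \emph{uniqueness} from a consistency property of the coarse-grained approximations as the mesoscopic scale is refined. Since the theorem asserts convergence only in finite-dimensional distributions, it suffices to fix finitely many $0\le s_i\le t_i$ and $\phi_i\in C_c(\R^2\times\R^2)$, $i\le k$, and to show that the random vector $\big(\cZ^{\beta_N}_{N;\,s_i,t_i}(\phi_i)\big)_{i\le k}$ converges in law to a universal limit. \textbf{Step 1 (a priori bounds and chaos expansion).} Expand each $Z^{\beta_N,\omega}_{M,N}(w,z)$ as a polynomial chaos in the i.i.d.\ family $\xi_N(n,x):=e^{\beta_N\omega(n,x)-\lambda(\beta_N)}-1$, i.e.\ a time-ordered sum over sequences of marked space-time points joined by random-walk kernels. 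Using $R_N\sim\tfrac{\log N}{\pi}$ together with the critical-window scaling \eqref{intro:sigma}, show that $\bbE[\cZ^{\beta_N}_{N;\,s,t}(\phi\otimes\psi)^2]$ stays bounded as $N\to\infty$: this is where the renormalisation bites, since the otherwise divergent two-point series $\sum_m(\beta_N^2 R_N)^m$ is tuned to be $O(1)$. This yields tightness of the vectors above and $L^2$-continuity in $(s,t)$, so everything reduces to identifying the subsequential limits.

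\textbf{Step 2 (coarse-graining).} Fix a mesoscopic scale $\delta\in(0,1)$; partition time into blocks of length $\sim N\delta$ and space into boxes of side $\sim\sqrt{N\delta}$. Split the chaos expansion according to which blocks the marked points visit: contributions with consecutive marks in the same block are collected into \emph{coarse-grained disorder variables} $\Theta^{(\delta)}_N(\bi;a,b)$ --- themselves recentred, normalised partition functions over a single block, \emph{independent across disjoint blocks} --- while the contributions that travel between distinct blocks by free heat-kernel propagation assemble into a polynomial chaos $\Psi^{(\delta)}_N$ of bounded degree ($\lesssim 1/\delta$) with deterministic coefficients, evaluated on the $\Theta^{(\delta)}_N$. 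A second-moment computation then yields
\begin{gather*}
\cZ^{\beta_N}_N=\Psi^{(\delta)}_N\big(\Theta^{(\delta)}_N\big)+\cE^{(\delta)}_N,\\
\limsup_{N\to\infty}\bbE\big[\|\cE^{(\delta)}_N\|^2\big]\le\eta(\delta),\qquad \eta(\delta)\to0 \text{ as } \delta\to0.
\end{gather*}

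\textbf{Step 3 (Lindeberg replacement, then refinement).} For fixed $\delta$ the $\Theta^{(\delta)}_N$ are near-critical single-block objects; after truncating them (in chaos degree or in $L^2$ norm) they become finite-degree polynomial chaoses in the $\xi_N$, which converge as $N\to\infty$ to explicit multiple Wiener--It\^o integrals $\Theta^{(\delta)}$ with matching first and second moments and uniform control of higher moments. Since $\Psi^{(\delta)}_N$ has bounded degree and deterministic coefficients, a Lindeberg replacement --- swapping $\Theta^{(\delta)}_N$ for $\Theta^{(\delta)}$ one block at a time, each swap costed by the third/fourth-moment mismatch --- gives $\Psi^{(\delta)}_N(\Theta^{(\delta)}_N)\Rightarrow\Psi^{(\delta)}(\Theta^{(\delta)})=:\mathscr{Z}^{\theta,(\delta)}$; only the first two moments of $\Theta^{(\delta)}_N$ enter, which is the source of universality, and $\theta$ persists through the renormalised coupling in $\Theta^{(\delta)}$ (consistently with the moment formulae of \cite{GQT21,C21}). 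Finally, a block at scale $\delta$ splits consistently into sub-blocks at any finer scale $\delta'<\delta$; comparing the two coarse-grainings and invoking Step 2 at both scales shows $\{\mathscr{Z}^{\theta,(\delta)}\}_\delta$ is Cauchy as $\delta\to0$, with limit $\mathscr{Z}^\theta$. Combining: given $\epsilon>0$, pick $\delta$ with $\eta(\delta)<\epsilon$ and $\dist(\mathscr{Z}^{\theta,(\delta)},\mathscr{Z}^\theta)<\epsilon$; then for all large $N$ the law of $(\cZ^{\beta_N}_{N;\,s_i,t_i}(\phi_i))_i$ is within $2\epsilon$ of $\mathscr{Z}^\theta$. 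Hence $\cZ^{\beta_N}_N$ converges in finite-dimensional distributions to $\mathscr{Z}^\theta$, and since \emph{any} subsequential limit must equal $\lim_\delta\mathscr{Z}^{\theta,(\delta)}$, the limit is unique and independent of the disorder law.

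\textbf{Main obstacle.} The crux is the coarse-graining estimate of Step 2: bounding $\bbE[\|\cE^{(\delta)}_N\|^2]$ (and the higher-moment analogues needed for the joint statement) uniformly in large $N$ by a quantity vanishing as $\delta\to0$. This demands sharp renewal-type control of iterated convolutions of the sequence $(R_n)$ against the heat kernel across \emph{all} intermediate scales --- precisely the regime where the logarithmic divergences and the renormalisation \eqref{intro:sigma} interact --- together with a delicate treatment of marked points near block boundaries, so that the coarse-grained disorder can be taken genuinely independent across blocks without losing an appreciable fraction of the $L^2$ mass. A second delicate point is that the coarse-grained variables $\Theta^{(\delta)}_N$ are themselves effectively critical, so their high moments may blow up; one must truncate them in a way that is simultaneously compatible with the Lindeberg swaps and with the $\delta\to0$ consistency argument.
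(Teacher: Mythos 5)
Your overall architecture (chaos expansion, mesoscopic coarse-graining with an $L^2$ error vanishing as $\delta\downarrow 0$ uniformly in large $N$, a Lindeberg swap on the coarse-grained disorder, and a diagonal argument) is the same skeleton as the paper's. But there is a genuine gap at the step where you extract convergence and uniqueness, namely your Step~3 claim that for fixed $\delta$ the coarse-grained disorder variables $\Theta^{(\delta)}_N$, after truncation in chaos degree or in $L^2$, become finite-degree polynomial chaoses converging to explicit multiple Wiener--It\^o integrals $\Theta^{(\delta)}$. This fails at criticality. Each $\Theta^{(\delta)}_N$ is itself a critical-window partition function on the time scale $\delta N$ (with effective parameter $\theta+\log\delta$, cf.\ the scaling relation \eqref{eq:scaling}), so identifying its $N\to\infty$ limit is the same problem as the theorem you are trying to prove --- the argument is circular. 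Moreover, in the critical window the variance of such objects is carried by chaos components of degree of order $\log N$ (this is exactly the renewal/Dickman-subordinator picture behind \eqref{eq:Uren} and Lemma~\ref{lem:Levyconv}, and the reason hypercontractivity is unavailable): any fixed-degree truncation captures a vanishing fraction of the $L^2$ mass as $N\to\infty$, so there are no nondegenerate Wiener--It\^o limits ``with matching second moments'' to swap in, and consequently the objects $\mathscr{Z}^{\theta,(\delta)}$ on which your refinement-consistency/Cauchy-in-$\delta$ argument rests are not constructed. The paper avoids this entirely: it never identifies an $N\to\infty$ limit of the coarse-grained disorder. Instead it compares $\Theta_{N,\epsilon}$ and $\Theta_{M,\epsilon}$ \emph{directly}, plugging both into the same coarse-grained polynomial and controlling the distributional distance by a Lindeberg principle that uses only (i) convergence of the second moments $\sigma_\epsilon^2(\vec\sfi,\vec\sfa)$ (Lemma~\ref{lem:theta}) and (ii) uniform fourth-moment bounds on $\Theta_{N,\epsilon}$ and on the coarse-grained model (Lemma~\ref{Theta4th}, Theorem~\ref{th:cgmom}), proved via functional inequalities for Green's functions rather than truncation. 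Uniqueness then comes from showing that the laws of $\cZ_N(\varphi,\psi)$ form a Cauchy sequence in $N$, not from identifying the limit.

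A second, smaller discrepancy: you posit coarse-grained variables that are genuinely independent across blocks, with free heat-kernel propagation between \emph{all} consecutive visited blocks. The kernel replacement is only sharp in $L^2$ when consecutive visited mesoscopic time intervals are far apart (gap $\geq K_\epsilon$); contributions from temporally adjacent blocks are not negligible and cannot be decoupled this way. The paper's resolution is to give up independence: it allows coarse-grained variables spanning two nearby time intervals (the no-triple structure \eqref{eq:bcA}), which makes the family $\Theta_{N,\epsilon}$ only \emph{locally dependent}, and this is precisely why an enhanced Lindeberg principle for multilinear polynomials of dependent variables (Appendix~\ref{app:Lind}) is needed in place of the independent-variable version you invoke. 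Your ``truncate the high moments'' suggestion for the influence bounds is also not how the difficulty is overcome: truncation is hard to reconcile with the multilinear structure, and the actual work is in proving true fourth-moment bounds in weighted $\ell^p$--$\ell^q$ spaces.
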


We can infer directly from its construction some basic properties of the
Critical $2d$ Stochastic Heat Flow, which we collect in the next result,
also proved in Section~\ref{s:Thm1.3}.

\begin{theorem}\label{th:main1}
The Critical $2d$ Stochastic Heat Flow $\mathscr{Z}^\theta$ is (space-time) translation invariant in law:
\begin{equation*}
	(\mathscr{Z}_{s+\sfa, t+\sfa}^\theta(\dd (x+\sfb) , \dd (y+\sfb)))_{0 \le s \le t <\infty}
	\stackrel{\rm dist}{=}
	(\mathscr{Z}_{s,t}^{\theta}(\dd x , \dd y))_{0 \le s \le t <\infty}
	\quad \ \forall \sfa \ge 0, \ \forall \sfb \in \R^2 \,,
\end{equation*}
and it satisfies the following scaling relation:
\begin{equation}\label{eq:scaling}
	(\mathscr{Z}_{\sfa s, \sfa t}^\theta(\dd (\sqrt{\sfa} x) , \dd (\sqrt{\sfa} y)))_{0 \le s \le t <\infty}
	\stackrel{\rm dist}{=}
	(\sfa\, \mathscr{Z}_{s,t}^{\theta+ \log \sfa}(\dd x , \dd y))_{0 \le s \le t <\infty}
	\quad \ \forall \sfa > 0 \,.
\end{equation}
The first and second moments of $\mathscr{Z}^\theta$ are given by
\begin{equation}
\begin{aligned}
	\label{eq:SHFmean}
	\bbE[\mathscr{Z}^\theta_{s,t}(\dd x, \dd y)]
	&= \tfrac{1}{2} \, g_{\frac{1}{2}(t-s)}(y-x) \, \dd x \, \dd y \,, \\
	\bbcov[\mathscr{Z}^\theta_{s,t}(\dd x, \dd y), \mathscr{Z}^\theta_{s,t}(\dd x', \dd y')]
	&= \tfrac{1}{2} \, K_{t-s}^\theta(x,x'; y, y') \, \dd x \, \dd y \, \dd x' \, \dd y' \,,
\end{aligned}
\end{equation}
where $g$ denotes the heat kernel in $\R^2$, see \eqref{eq:gt},
and $K^\theta$ is an explicit kernel, see \eqref{eq:m2-lim}.
\end{theorem}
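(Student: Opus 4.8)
\emph{Overall approach.} The plan is to read off all four properties from the construction of $\mathscr{Z}^\theta$ in Theorem~\ref{th:main0} as the finite-dimensional limit of the rescaled field $\cZ^{\beta_N}_N$: I would check that each identity holds — exactly, or up to parity corrections that vanish in the limit — at the level of the discrete partition functions $Z^{\beta_N,\omega}_{M,N}(w,z)$, and then pass to the limit. Wherever I need to promote the distributional convergence of Theorem~\ref{th:main0} to convergence of first or second moments, I would invoke uniform integrability of $\cZ^{\beta_N}_{N;\,s,t}(\phi)$ and of $\cZ^{\beta_N}_{N;\,s,t}(\phi)\,\cZ^{\beta_N}_{N;\,s,t}(\psi)$ for $\phi,\psi\in C_c(\R^2\times\R^2)$, which follows from the uniform $L^2$ and $L^4$ bounds on the discrete partition functions established on the way to Theorem~\ref{th:main0}.

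\emph{Translation invariance.} Since $(\omega(n,z))_{n,z}$ is i.i.d.\ it is stationary under space-time shifts, so for $\sfm\in 2\N_0$ and $\sfw\in\Z^2_{\rm even}$ the family $(Z^{\beta,\omega}_{M+\sfm,N+\sfm}(w+\sfw,z+\sfw))_{M\le N,\,w,z}$ has the same law as $(Z^{\beta,\omega}_{M,N}(w,z))_{M\le N,\,w,z}$. Taking $\sfm=\ev{N\sfa}$, $\sfw=\ev{\sqrt N\sfb}$ in \eqref{eq:rescZmeas} and noting that $\ev{N(s+\sfa)}-\ev{Ns}-\ev{N\sfa}$ and $\ev{\sqrt N(x+\sfb)}-\ev{\sqrt N x}-\ev{\sqrt N\sfb}$ are $O(1)$ in lattice units — i.e.\ $O(1/N)$ in macroscopic time and $O(1/\sqrt N)$ in macroscopic space — I would deduce that the shifted family $(\cZ^{\beta_N}_{N;\,s+\sfa,t+\sfa}(\dd(x+\sfb),\dd(y+\sfb)))_{s,t}$ has, up to negligible corrections, the same law as $(\cZ^{\beta_N}_{N;\,s,t}(\dd x,\dd y))_{s,t}$, hence the same finite-dimensional limit. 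Absorbing the $O(1)$-time-step discrepancy requires only a mild continuity estimate (adding or removing a single time layer changes $\cZ^{\beta_N}_{N;\,s,t}(\phi)$ by a quantity whose $L^2$-norm tends to $0$), of the same nature as the estimates already needed for Theorem~\ref{th:main0}.

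\emph{Scaling.} Fix $\sfa>0$ and set $M:=\ev{\sfa N}$. A change of variables $x\mapsto\sqrt\sfa x$, $y\mapsto\sqrt\sfa y$ in \eqref{eq:rescZmeas} gives, up to vanishing parity corrections,
\[
	\cZ^{\beta_N}_{N;\,\sfa s,\sfa t}\big(\dd(\sqrt\sfa x),\dd(\sqrt\sfa y)\big) \;=\; \sfa\,\cZ^{\beta_N}_{M;\,s,t}(\dd x,\dd y),
\]
where the prefactor $\sfa$ absorbs the two Jacobians together with the change of normalisation $N\mapsto M$ in \eqref{eq:rescZmeas}. It then remains to identify the critical-window parameter that the fixed sequence $\beta_N$ carries at scale $M=\ev{\sfa N}$ rather than $N$: from $R_n=\tfrac{\log n}{\pi}+O(1)$ (see \eqref{eq:RN0}) one gets $\tfrac{1}{R_N}=\tfrac{1}{R_M}\big(1+\tfrac{\log\sfa+o(1)}{\log N}\big)$, so \eqref{intro:sigma} rewrites as $e^{\lambda(2\beta_N)-2\lambda(\beta_N)}-1=\tfrac{1}{R_M}\big(1+\tfrac{(\theta+\log\sfa)+o(1)}{\log M}\big)$, i.e.\ $\beta_N$ lies in the critical window at scale $M$ with parameter $\theta+\log\sfa$. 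Hence $\cZ^{\beta_N}_{\ev{\sfa N};\,s,t}\Rightarrow\mathscr{Z}^{\theta+\log\sfa}_{s,t}$ by Theorem~\ref{th:main0}, and letting $N\to\infty$ in the displayed identity yields \eqref{eq:scaling}.

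\emph{Moments, and the main obstacle.} For the mean: since $\bbE[e^{\beta\omega-\lambda(\beta)}]=1$, independence gives $\bbE[Z^{\beta_N,\omega}_{M,N}(w,z)]=\P(S_N=z\mid S_M=w)$; the local limit theorem together with the prefactor $N/4$ then turns $\bbE[\cZ^{\beta_N}_{N;\,s,t}(\dd x,\dd y)]$ into $\tfrac12\,g_{\frac12(t-s)}(y-x)\,\dd x\,\dd y$ in the limit, and uniform integrability upgrades this to the first line of \eqref{eq:SHFmean}. For the covariance I would use the standard expansion of $\bbcov[Z^{\beta_N,\omega}_{M,N}(w,z),Z^{\beta_N,\omega}_{M,N}(w',z')]$ over the first and last collision points of two independent walks on $[M,N]$, whose weight is a geometric-type (replica) series in $\sigma_N^2:=e^{\lambda(2\beta_N)-2\lambda(\beta_N)}-1$ weighted by random-walk transition kernels; by \eqref{intro:sigma} and the local limit theorem this expansion, rescaled by $(N/4)^2$ and integrated against test functions, converges to the kernel $\tfrac12 K^\theta_{t-s}$ of \eqref{eq:SHFmean}--\eqref{eq:m2-lim} (the point-to-point counterpart of the computations in \cite{CSZ19b,GQT21}), and uniform integrability of the products yields the second line of \eqref{eq:SHFmean}. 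I expect the only delicate points to be (i) checking that the parity/discretisation corrections in the translation and scaling steps are genuinely negligible against $C_c$ test functions, and (ii) that the uniform $L^2/L^4$ bounds needed to pass moments to the limit are in hand — both being by-products of the analysis already carried out for Theorem~\ref{th:main0}; the identification of the limiting kernel $K^\theta$ itself is the standard discrete polynomial-chaos computation.
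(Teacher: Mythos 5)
Your proposal is correct and follows essentially the same route as the paper: translation invariance from stationarity of the disorder (modulo negligible rounding), the scaling relation from the reparametrization of the critical window ($\beta_N(\theta)$ lies in the window at scale $\sfa N$ with parameter $\theta+\log\sfa$, equivalent to the paper's $\beta_N(\theta)=\beta_{N/\sfa}(\theta-\log\sfa)$) combined with two applications of Theorem~\ref{th:main0}, and the moments by upgrading the convergence of first and second moments of the averaged partition functions (the paper's Proposition~\ref{th:m2}, which is exactly the ``standard expansion'' you describe) via uniform integrability supplied by the uniform fourth-moment bound of Theorem~\ref{th:mom}. The only cosmetic difference is that the paper cites Proposition~\ref{th:m2} directly rather than re-deriving the covariance kernel, and dismisses the parity/rounding corrections you carefully flag as obvious.
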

\begin{remark}
The covariance kernel $K_{t-s}^\theta(x,x'; y, y')$ was
first identified in \cite{BC98} $($see also \cite{CSZ19b}$)$
and is {\em logarithmically divergent} near the diagonals $x=x'$ or $y=y'$.
\end{remark}

\smallskip

We now briefly explain the proof strategy.  As noted before, the moments of $\mathscr{Z}^\theta$
identified in \cite{GQT21} and \cite{C21} grow too fast to uniquely characterize the law of
$\mathscr{Z}^\theta$.  The bounds given in these works suggest that the $n$-th moment is at most of order
$\exp(\exp(n^2))$, while our recent work \cite{CSZ22} gives a lower bound of $\exp(c n^2)$. Physical arguments on the Delta-Bose gas \cite{R99}
suggest that the growth should be $\exp(\exp(n))$. It may thus be surprising that we are still able to prove Theorem~\ref{th:main0}
and show that the limit is unique, without
criteria to uniquely identify the limit. Another prominent result of this nature, which gave us inspiration,
is the work
of Kozma \cite{Koz07} on the convergence of the three-dimensional loop erased random walk
with dyadic scaling of the lattice $2^{-N}\Z^3$.

The basic strategy is to show that the laws of $(\cZ^{\beta_N}_{N})_{N\in\N}$ form
a \emph{Cauchy sequence}, i.e.\
\begin{equation} \label{eq:close}
	\text{\emph{$\cZ^{\beta_N}_{N}$ and $\cZ^{\beta_M}_{M}$
	are close in distribution for large $N,M \in \N$}} \,.
\end{equation}
To accomplish this, we first construct a \emph{coarse-grained model}
$\mathscr{Z}_{\epsilon}^{(\cg)}(\,\cdot\, |\Theta)$, for each
$\eps\in (0,1)$, which is a function of a family $\Theta$
of coarse-grained disorder variables.
We then perform a coarse-graining approximation of the partition function
on the time-space scale $(\eps N, \sqrt{\eps N})$, which shows that
\emph{$\cZ^{\beta_N}_{N}$ can be approximated by
the coarse-grained model $\mathscr{Z}_{\epsilon}^{(\cg)}(\,\cdot\, |\Theta)$}
for a specific choice of coarse-grained disorder $\Theta=\Theta_{N, \eps}$ that depends
on $N$ and~$\eps$, with an approximation error which is small for small $\eps$ and large~$N$
(shown via second moment bounds). As a consequence, we finally prove \eqref{eq:close}
by showing that the coarse-grained models $\mathscr{Z}_{\epsilon}^{(\cg)}(\,\cdot\, |\Theta)$
with $\Theta=\Theta_{N, \eps}$ and $\Theta=\Theta_{M, \eps}$ are close
in distribution, for small $\eps > 0$ and large $N, M \in \N$ (shown via a Lindeberg principle).

\smallskip

We give a more detailed proof outline in Section~\ref{sec:outline}.
Let us just highlight here the key proof ingredients:
\begin{itemize}
\item[{\bf A.}] \emph{Coarse-Graining}, which leads to a \emph{coarse-grained model}
with the same structure as the original model, demonstrating a degree of self-similarity;
\item[{\bf B.}] \emph{Time-Space Renewal Structure},
which sheds probabilistic light on second moment computations and leads in the continuum limit
to the so-called {\it Dickman subordinator};
\item[{\bf C.}] \emph{Lindeberg Principle} for multilinear polynomials of {\em dependent} random
variables, which controls the effect of changing $\Theta$ in the coarse-grained model
$\mathscr{Z}_{\epsilon}^{(\cg)}(\,\cdot\, |\Theta)$;
\item[{\bf D.}] \emph{Functional Inequalities for Green's Functions} of multiple random walks on $\Z^2$,
which yield sharp higher moment bounds for the coarse-grained model.
\end{itemize}
This framework is robust enough that it can also be used to show convergence of
other approximations of SHE~\eqref{eq:2dSHE} to the Critical 2d Stochastic Heat Flow.

\begin{remark}[Mollified SHE]
The same proof steps \textbf{A}, \textbf{B}, \textbf{C}, \textbf{D} can be carried out
for the solution $u_\delta$ of the \emph{mollified SHE} \eqref{eq:2dSHE}, where
the space-time white noise $\dot{W}$ is mollified spatially on the scale $\delta$
and $\beta=\beta_\delta$ is chosen in the corresponding critical window,
that is $\beta^2_\delta=\tfrac{2\pi}{|\log \delta|} +\tfrac{\theta+o(1)}{(\log \delta)^2}$
$($cf.~\eqref{eq:betaN}$)$. A key point is that coarse-graining $u_\delta$ on the
mesoscopic scale leads to exactly \emph{the same coarse-grained model
$\mathscr{Z}_{\epsilon}^{(\cg)}(\,\cdot\, |\Theta)$ constructed in this paper},
just with a different family of coarse-grained disorder variables $\Theta = \Theta_{\delta,\epsilon}$.
This means that the solution $u_\delta$ of the mollified SHE
would converge as $\delta \downarrow 0$
to \emph{the same universal limit $\mathscr{Z}^\theta$ in Theorem~\ref{th:main0}}.
We will not carry out the details here since the paper is long enough.
\end{remark}

We remark that Clark has proved in~\cite{Cla21} an analogue of Theorem~\ref{th:main0} for
DPRE on the hierarchical diamond lattice, which is particularly useful for
renormalization analysis and can mimic Euclidean lattices of different dimensions as the lattice
parameters vary.  Furthermore, in \cite{Cla19a, Cla19b}, he
also constructed the continuum polymer measures and studied their properties. This raises
interesting questions as to whether similar results can be proved for DPRE on the Euclidean lattice,
where exact renormalization analysis is no longer available.
We point out that our work developed in parallel
to that of Clark, and our proof strategies share some common features, such as coarse-graining
and controlling distributional distances via a Lindeberg principle in our case vs.\
Stein's method in \cite{Cla21}, and showing that the laws of the partition functions form a Cauchy sequence.

\smallskip

Now that we have proved the existence of a unique limit  $\mathscr{Z}^\theta$ ---the
Critical 2d Stochstic Heat Flow ---
the next challenge will be to investigate its properties and characterize its law.

\begin{remark}[Alternative scaling]
The simple random walk on $\Z^2$ is $2$-periodic and
each component has variance $\frac{1}{2}$. As a consequence,
the diffusively rescaled partition functions $\cU_N(t,x)$ in \eqref{eq:field}
provide a discretization of a slightly modified SHE \eqref{eq:2dSHE}, namely
\begin{equation*}
	\partial_t \tilde u = \frac{1}{4}\Delta \tilde u + \sqrt{2} \, \beta \, \dot{W} \, \tilde u
\end{equation*}
(see \cite[Appendix~A.3]{CSZ22} for more details).
The SHE with the usual parameters in \eqref{eq:2dSHE} can be recovered via the change of variable
%simple rescaling
$\cU_N(t,\frac{x}{\sqrt{2}})$.
Therefore to describe a candidate solution of \eqref{eq:2dSHE}, 
we should consider the rescaled Critical $2d$ Stochastic Heat Flow given by
(recall \eqref{eq:scaling})
\begin{equation*}
	\widehat{\mathscr{Z}}^{\,\theta} \,:=\,
	\big( \mathscr{Z}_{s,t}^\theta\big( \dd\tfrac{ x}{\sqrt{2}} ,
	\dd\tfrac{ y}{\sqrt{2}} \big) \big)_{0 \le s \le t <\infty}
	\,\overset{d}{=}\,
	\big( 2\, \mathscr{Z}_{2s,2t}^{\theta+\log 2}( \dd x ,
	\dd y ) \big)_{0 \le s \le t <\infty} \,,
\end{equation*}
which is also normalized to have mean~$1$ rather than~$\frac{1}{2}$ (see \eqref{eq:SHFmean}).
\end{remark}

\subsection{Related literature} \label{sec:lit}
We next discuss the connection between our work and various results in the literature
and point out some future directions of research.

\subsubsection*{Singular SPDEs}
As explained in Section~\ref{sec:d=2}, the scaling limit $\mathscr{Z}^\theta$ in
Theorem~\ref{th:main0} can be interpreted as the solution of the $2$-dimensional SHE \eqref{eq:2dSHE} in
the critical window. For SHE, dimension $d=2$ marks the {\it critical dimension} in the language of singular
SPDEs and renormalisation group theory. To define a solution for singular SPDEs, such as SHE and KPZ
in \eqref{eq:2dSHE}-\eqref{eq:2dKPZ}, a standard
approach is to mollify the space-time noise $\dot{W}$ in space on the scale of $\eps$,
and then try to identify a scaling limit as $\eps\downarrow 0$. Discretizing
space-time by considering a lattice model, such as the DPRE that we study in this paper,
is just another way of removing the singularity on small scales (also known as ultraviolet cutoff).

All existing solution theories for singular SPDEs, including regularity
structures \cite{H13,H14}, paracontrolled distributions \cite{GIP15, GP17}, the
renormalization group approach \cite{Kup14}, or energy solutions \cite{GJ14}, do not apply at the
critical dimension. The only singular SPDEs
for which progress has been made in defining its solution at the critical dimension are
SHE and KPZ (via the Cole-Hopf transform). The phase
transition identified in \cite{CSZ17b} was unexpected, and
to the best of our knowledge no such transition has been established for
other singular SPDEs in the critical dimension. Theorem~\ref{th:main0} is thus the first result to define
a solution for a singular SPDE at the critical dimension and
for critical disorder strength.

In dimension $d=2$, recently there has also been significant progress in understanding the solution of
the \emph{anisotropic version} of the KPZ equation (aKPZ), which differs from
\eqref{eq:2dKPZ} in that
$|\nabla v|^2= (\partial_{x_1}v)^2+(\partial_{x_2}v)^2$ therein is replaced by
$(\partial_{x_1}v)^2- (\partial_{x_2}v)^2$. This case is also beyond the reach of existing solution
theories, and unlike the isotropic KPZ, it cannot be linearized via the Cole-Hopf transformation.
Cannizzaro, Erhard, and Sch\"onbauer \cite{CES21} regularized the aKPZ via a cutoff in
Fourier space, instead of
discretizing space and time or mollifying the noise on the spatial scale $\eps$ (all are ultraviolet cutoffs).
They showed that if the non-linear term $(\partial_{x_1}v)^2- (\partial_{x_2}v)^2$ is rescaled by a factor
$\lambda/\sqrt{|\log\epsilon|}$, then the solution of the regularized aKPZ is tight with non-trivial
limit points, which is the anisotropic analogue of \cite{CD20}. Very recently, Cannizzaro, Erhard,
and Toninelli \cite{CET21} succeeded in proving that the limit is in fact Gaussian and solves the
Edwards-Wilkinson equation, which is the anisotropic analogue of \cite{CSZ20, Gu20}. In contrast to
the isotropic case \eqref{eq:2dKPZ}, there is no phase transition in $\lambda$ for the aKPZ.
The same authors also
studied the aKPZ without scaling the non-linearity, and in a surprising result \cite{CET20a, CET20b},
they showed that the solution exhibits logarithmic superdiffusive behaviour.

In the supercitical dimensions $d\geq 3$, the transition between the weak and strong disorder phases
for the directed polymer is long known \cite{C17} and
has a natural counterpart for SHE and KPZ. In recent years, there have been many studies on the solutions
of SHE and KPZ via mollification, namely, analogues of the random fields $\cU_N$ and
$\cH_N$ defined in \eqref{eq:field}-\eqref{eq:field2}. These studies are all in the
weak disorder regime and are analgous to results in $d=2$,
see e.g.\ \cite{MU18, MSZ16, CN21, CNN20, CCM20, GRZ18, DGRZ20, LZ20}.

\subsubsection*{Coarse-graining}

The first step in our approach is to construct a coarse-grained model. Coarse-graining
has a long history in statistical mechanics and renormalisation theory.
In the framework of directed polymer models, coarse-graining has played a crucial role
in the studies by Lacoin \cite{L10} and Berger-Lacoin \cite{BL17} on free energy asymptotics,
which extended previous works in the literature of pinning models,
see \cite{Gi11},
from which we single out the fundamental work of Giacomin-Lacoin-Toninelli \cite{GLT10}.

In our analysis, we need a family of coarse-grained models which provide a \emph{sharp approximation}
of the partition function \emph{at the critical point}, while the works mentioned above used
coarse-grained models to provide upper bounds away from
the critical point. The need for a sharper approximation
creates several challenges, which lead to the refined estimates in Sections~\ref{sec:2ndmoment}
and~\ref{Sec:4MomCoarse} and the
development of the enhanced Lindeberg principle in Appendix~\ref{app:Lind}.

\subsubsection*{DPRE on hierarchical lattices}

In a series of papers \cite{Cla21, Cla19a, Cla19b}, Clark successfully
treated the directed polymer model on hierarchical diamond lattices at the ``critical dimension''
and in the critical window of disorder strength,
which contains an analogue of Theorem~\ref{th:main0} and more.
Due to their tree-like structure, hierarchical lattices are especially convenient for performing
exact renormalization group calculations that are typically intractable on the Euclidean lattice. By tuning
suitable parameters (such as the number of branches and
the number of segments along each branch),
hierarchical lattices can mimic Euclidean lattices with different spatial dimensions. When the
branch number equals the segment number, hierarchical lattices mimic $\Z^2$. For DPRE
on these lattices, Clark was able to prove in \cite{Cla21} the analogue of Theorem~\ref{th:main0}.

Exploiting the structure of hierarchical lattices, in \cite{Cla19a},
Clark was able to use the limiting partition functions obtained in \cite{Cla21}
to construct a continuum version of the polymer measure
and study its properties. Furthermore, in \cite{Cla19b}, he identified
an interesting conditional  Gaussian Multiplicative Chaos (GMC) structure among the continuum
polymer measures with different parameter $\theta$ (similar to $\theta$ in Theorem~\ref{th:main0}).
These results raise
interesting questions as to whether similar results can be obtained for DPRE on the Euclidean lattice.
In this respect, Theorem~\ref{th:main0} provides the starting point.

\subsubsection*{Continuum polymer measure}

A continuum version of the DPRE polymer measure in dimension $d=1$
was constructed in \cite{AKQ14a,AKQ14b}, exploiting the continuum limit of the point-to-point partition
functions. The same approach was
applied in \cite{CSZ16} to pinning models with tail exponent $\alpha \in (\frac{1}{2},1)$.
An essential feature of these constructions, as well as the one by Clark \cite{Cla19a}
in the hierarchical setting in the critical regime, is that
the continuum partition functions are random \emph{functions} of the polymer endpoints.
The same holds for DPRE in dimension $d=2$ in the
\emph{subcritical regime} $\beta_N \sim \hat\beta / \sqrt{\log N}$,
with $\hat\beta < \hat\beta_c = \sqrt{\pi}$,
where it was recently shown in \cite{G21+} that the
discrete polymer measure, diffusively rescaled,
converges to the law of Brownian motion.

The situation for DPRE in dimension $d=2$
\emph{in the critical window} is radically different, because the continuum
partition functions $\cZ^\theta_{z,t}(\dd x, \dd y)$ given in Theorem~\ref{th:main0}
are only random \emph{measures} and undefined pointwise. The point-to-plane partition
function $Z^{\beta_N, \omega}_N$ defined in \eqref{eq:paf} in fact converges to $0$ as
$N\to\infty$, as shown in \cite{CSZ17b}. For this reason,
constructing a continuum version of the polymer measure ---
or studying the scaling properties of the discrete polymer measure ---
started from a \emph{fixed} point, remains a significant challenge.
However, if we consider discrete polymer measures with the starting point
chosen uniformly from a ball on the diffusive scale, then the same proof strategy
as that for Theorem \ref{th:main0} should be applicable to show that the measures
converge to a continuum polymer measure starting from a ball, whose finite dimensional
distributions are uniquely determined.

\subsubsection*{Schr\"odinger operators with point interactions}

When the disorder $\omega$ is standard normal, a direct calculation shows that for $k\in\N$,
the $k$-th moment of the polymer partition function in \eqref{eq:paf}
is the exponential moment (with parameter $\beta^2$) of the total pairwise collision local time up to
time $N$ among $k$ independent random walks on $\Z^2$.
When $k=2$, by a classic result of Erd\"os and Taylor~\cite{ET60} (see also \cite{GS09}), the collision
local time rescaled by $1/\log N$ converges to an exponential random variable with parameter~$\pi$.
In the critical window we consider here, we have
$\beta_N=\hat\beta_c/\sqrt{\log N}$ with $\hat\beta_c=\sqrt{\pi}$, and hence the parameter of the
exponential moment matches exactly the parameter of the limiting exponential random variable, making
the moment analysis particularly delicate.

Via the Feynman-Kac formula, it can also be seen that the
$k$-th moment of the partition function is the solution of a discrete space-time parabolic
Schr\"odinger equation with a potential supported
on the diagonal (point interaction). In the continuum setting, there have been a number of studies on
the Schr\"odinger operator with point interactions (also called Delta-Bose gas) in dimension $d=2$
\cite{AGHKH05, AFHKKL92, DFT94, DR04}. Using ideas from these studies, especially the works
of Dell'Antonio-Figari-Teta \cite{DFT94} and of Dimock-Rajeev \cite{DR04},
Gu, Quastel, and Tsai \cite{GQT21} were able to compute asymptotically
all moments of the averaged solution of
the mollified SHE, which are analogues of the averaged polymer partition functions
$\cZ^{\beta_N}_{N;\, s,t}(\varphi, \psi) :=\iint \varphi(x) \, \psi(y) \,
\cZ^{\beta_N}_{N;\, s,t}(\dd x, \dd y)$
in \eqref{eq:rescZmeas}, with $\varphi$ and $\psi$ assumed to be in $L^2$ in \cite{GQT21}.
Previously, only the third moment had been obtained in \cite{CSZ19b}.
When $\varphi$ is a delta function, the moments of $\cZ^{\beta_N}_{N;\, s,t}(\varphi, \psi)$
diverge as $N\to\infty$, and the asymptotics of
the third moment has been investigated in \cite{Feng}. But all mixed moments of the form
$\E\big[\prod_{i=1}^n\cZ^{\beta_N}_{N;\, s,t}(\varphi_i, \psi_i)\big]$ converge if $\varphi_i$
are chosen to be distinct $\delta$ functions, which was shown recently by Chen in~\cite{C21}.

As an input to the Lindeberg principle mentioned in the proof sketch for Theorem~\ref{th:main0},
we need to bound the fourth moment of the coarse-grained model,
which approximates the original
partition function. The results from the Schr\"odinger operator literature and \cite{GQT21}
are not applicable in our setting, because they rely on explicit Fourier
calculations. We therefore develop an alternative and more robust approach based on
\emph{functional inequalities}
for Green's function of multiple random walks on $\Z^2$,
see Lemma~\ref{HLSineq}.
Instead of working with
$\varphi, \psi \in L^2$ as in \cite{GQT21}, we can work with \emph{weighted $L^p$--$L^q$ spaces with
$\frac{1}{p}+\frac{1}{q}=1$}. The choice of a weight
allows us to consider a wider class of boundary conditions, such as $\psi\equiv 1$
and $\varphi$ an approximate delta function,
and also to control spatial decay when the support of $\varphi$
and $\psi$ are far apart, all of which are needed in our proof.
See Section~\ref{Sec:functional} for more details.

\subsubsection*{Lindeberg principle}

A Lindeberg principle is said to hold when the law of a function
$\Phi$ of a family of random variables does not change much if the family of random variables is switched
to another family with some matching moments.
Lindeberg principles have been very powerful tools
in proving universality. The usual formulation such as in \cite{Cha06} requires the
family of random variables to be independent (or exchangeable), and $\Phi$ needs to have bounded
first three derivatives. This is not satisfied when $\Phi$ is a multilinear polynomial,
whose derivatives are unbounded. This case was addressed in \cite{R79, MOO10}
when the arguments are independent random variables (see  also \cite{CSZ17a}).

In the proof of Theorem~\ref{th:main0}, we need to deal with
a multilinear polynomial of \emph{dependent} random variables with a local form of dependence.
We formulate an extension of the Lindeberg principle to this setting in Appendix~\ref{app:Lind}.
Our calculations are inspired by a work of R\"ollin on Stein's method \cite{R13},
which is an analogue
of  \cite{Cha06} for a function $\Phi$ (with bounded first three derivatives)
of dependent random variables.

\subsection{Structure of the paper}
The rest of the paper is organized as follows.
\begin{itemize}
\item In Section~\ref{sec:outline}, we give a detailed proof outline.

\item In Section~\ref{sec:notation}, we introduce some basic notation and tools
that we need for the rest of the paper, which includes in particular
the polynomial chaos expansion and second moment
asymptotics for the partition function.

\item In Section~\ref{sec:CGmodel}, we define
the coarse-grained model $\mathscr{Z}_{\epsilon}^{(\cg)}(\,\cdot\, |\Theta)$
and the coarse-grained disorder $\Theta = \Theta_{N,\epsilon}$. Then in Section~\ref{sec:2ndmoment},
we show that $\mathscr{Z}_{\epsilon}^{(\cg)}(\,\cdot\, |\Theta_{N,\eps})$
provides a good $L^2$ approximation for the diffusively rescaled
partition functions $\cZ_N$ in \eqref{eq:rescZmeas}.

\item In Sections~\ref{Sec:functional}, \ref{Sec:MomCoarse}
and~\ref{Sec:4MomCoarse}, we derive key moment bounds for $\cZ_N$,
$\Theta_{N, \eps}$ and $\mathscr{Z}_{\epsilon}^{(\cg)}(\,\cdot\, |\Theta)$.

\item In Section~\ref{s:Thm1.3}, we wrap up the proof of our main results: Theorems~\ref{th:main0}
and~\ref{th:main1}.

\item In Appendix~\ref{app:Lind}, we formulate
an enhanced Lindeberg principle for multilinear polynomials of dependent random variables.
\end{itemize}

\subsection*{Notation}
We denote by $C_b(\R^d)$, resp.\ $C_c(\R^d)$, the space of bounded,
resp.\ compactly supported functions $\varphi: \R^d \to \R$.
The usual $L^p$ norms will be denoted by $\|\varphi\|_p$ for functions $\varphi: \R^d \to \R$
and by $\|X\|_{L^p}$ for random variables~$X$.
For notational simplicity, we will use $c, C, C', C''$
to denote generic constants, whose values may change from place to place.

\section{Proof outline}\label{sec:outline}

We elaborate in more detail our proof strategy for Theorem~\ref{th:main0},
especially the coarse-graining procedure. After reading the proof strategy,
to see how the pieces fit together more precisely,
we encourage the reader to go directly to Section~\ref{s:Thm1.1}
to read the proof of Theorems~\ref{th:main0} and~\ref{th:main1}. The proof is contingent on some earlier
results, such as Theorems~\ref{th:cg-main} and~\ref{th:cgmom}, but otherwise is mostly self-contained.

Recalling \eqref{eq:rescZmeas},
we just consider a single averaged partition function
\begin{equation*}
	\cZ_N := \cZ_{N, 0, 1}^{\beta_N}(\varphi, \psi) =
	\int_{\R^2\times\R^2} \varphi(x) \, \psi(y) \, \cZ_{N, 0, 1}^{\beta_N}(\dd x , \dd y ) \,,
\end{equation*}
for some $\varphi \in C_c(\R^2)$, $\psi\in C_b(\R^2)$, and $\beta_N=\beta_N(\theta)$
chosen as in \eqref{intro:sigma} for some fixed $\theta\in \R$.
To prove that $\cZ_N$ converges in distribution to a limit as claimed in Theorem~\ref{th:main0},
we will show that  the laws of $(\cZ_N)_{N\in\N}$ form a Cauchy sequence.

The starting point of our analysis is a polynomial chaos expansion for $\cZ_N$, which will be recalled in more detail in Section~\ref{sec:poly}. In short, by introducing the i.i.d.\ random variables
$$
\xi_N(n,z) := e^{\beta_N\omega(n,z)-\lambda(\beta_N)}-1, \qquad (n, z) \in \N\times \Z^2,
$$
which have mean $0$ and variance $\sigma_N^2$ as in \eqref{intro:sigma}, we can expand $\cZ_N$
as a multilinear polynomial in the $\xi_N$'s as follows:
\begin{equation}\label{chaos-intro}
\begin{split}
	& \cZ_{N}  \, = \, q^N_{0,N}(\varphi,\psi)  \,+\, \\
	& \frac{1}{N}
	\sum_{r=1}^{\infty} \!\!\!\!
	\sum_{\substack{ z_1, \ldots, z_r \in \Z^2\\
	0 < n_1 < \ldots < n_r < N}}\!\!\!\!\!\!\!\!
	q^N_{0,n_1}(\varphi ,z_1) \, \xi_N(n_1,z_1)
	\Bigg\{ \prod_{j=2}^r q_{n_{j-1}, n_j}(z_{j-1},z_j) \xi_N(n_j,z_j) \Bigg\}
	q^N_{n_r, N}(z_r, \psi)   \,,
\end{split}
\end{equation}
where $q_{m,n}(x,y):= \P(S_n = y|S_m = x)$ is the random walk transition kernel,
and $q^N_{m,n}(\varphi, z_1)$, $q^N_{m,n}(z_r, \psi)$, $q^N_{m,n}(\varphi, \psi)$
are the averages of $q_{m,n}(x,y)$ w.r.t.\ $\varphi(x/\sqrt{N})$, $\psi(y/\sqrt{N})$, or both
(see \eqref{eq:qNphi}-\eqref{eq:qNphipsi}).

Each term in
the sum in \eqref{chaos-intro} contains a sequence of disorder
variables $(\xi_N(n_j, z_j))_{1\leq j\leq r}$ linked by random walk transition kernels, and
different terms in the sum are $L^2$-orthogonal. We will see that when it comes to second
moment calculations, the sequence of points
$(n_1, z_1), \ldots, (n_r, z_r)$ can be interpreted as a {\em time-space renewal configuration}.
\medskip

Before explaining our proof strategy and ingredients, we first give a heuristic calculation that already shows
universality, namely that as $N\to\infty$, the limiting law of $\cZ_{N}$ in \eqref{chaos-intro} (if a unique limit exists)
does not depend on the law of the i.i.d.\ random variables $\xi_N(\cdot, \cdot)$ provided the first two moments are unchanged.
The heuristic is based on a Lindeberg principle, which will help to illustrate some key ideas in our proof.

\subsection*{A Heuristic Calculation} Let us write $\cZ_N(\xi_N)$ to emphasise the dependence on the i.i.d.\ family $\xi_N(\cdot, \cdot)$,
and let $\cZ_N(\eta_N)$ be defined similarly with $\xi_N$ replaced by an i.i.d.\ family $\eta_N$ with matching first two moments and finite
third moment. To show that $\cZ_N(\xi_N)$ and $\cZ_N(\eta_N)$ are close in law, it suffices to show that for any $f:\R\to\R$ with bounded
first three derivatives,
\begin{equation}\label{eq:flind}
\lim_{N\to\infty}|f(\cZ_N(\xi_N))-f(\cZ_N(\eta_N))|=0.
\end{equation}
This difference can be bounded by a Lindeberg principle. In particular, we can apply Theorem \ref{lem:lind3} to the case of i.i.d.\
random variables (the sums in \eqref{eq:I1bd}-\eqref{eq:I2bd} will only contain indices $k=l=m$ due to the i.i.d.\ assumption) to get the bound
\begin{equation}
|f(\cZ_N(\xi_N))-f(\cZ_N(\eta_N))|\leq C \Vert f'''\Vert_\infty \!\!\!\!\!\!\!\! \sum_{1\leq n\leq N, z\in \Z^2} \int_0^1 \E[|\partial_{(n,z)}\cZ(\xi^{(t)}_N)|^3] {\rm d}t, \label{eq:heur}
\end{equation}
where $\xi_N^{(t)}:= \sqrt{t}\, \xi_N + \sqrt{1-t}\, \eta_N$ interpolates between $\eta_N$ and $\xi_N$, and $\partial_{(n,z)}\cZ(\xi_N)$ denotes partial derivative w.r.t.\ $\xi_N(n,z)$.  Since $\cZ(\xi^{(t)}_N)$ is a multilinear polynomial in $\xi^{(t)}_N(\cdot, \cdot)$, it is easily seen from \eqref{chaos-intro} that
$$
\partial_{(n,z)}\cZ(\xi^{(t)}_N) = \frac{1}{N} \cZ(\varphi, (n,z)) \cZ((n,z), \psi),
$$
where $\cZ((n,z), \psi)$ is the point-to-plane partition function starting from the point $(n,z)$ and terminating at time $N$ with boundary condition $\psi$, and $\cZ(\varphi, (n,z))$ is the plane-to-point partition function with initial boundary condition $\varphi$ and terminating at the point $(n,z)$.
Since $\varphi$ has compact support, only $(n,z)$ on the diffusive scale ($n$ of order $N$ and $z\in \Z^2$ of order $\sqrt{N}$) contribute to the sum in \eqref{eq:heur}, and there are $N^2$ such terms. This sum is more than compensated by the factor $\frac{1}{N^3}$ from $\E[|\partial_{\xi_N(n,z)}\cZ(\xi_N)|^3]= \frac{1}{N^3} \E[|\cZ(\varphi, (n,z))|^3] \E[|\cZ((n,z), \psi)|^3]$, where we used the independence between $\cZ(\varphi, (n,z))$ and $\cZ((n,z), \psi)$. To deduce \eqref{eq:flind}, it suffices to show that the moment of the point-to-plane partition function
$\E[|\cZ((n,z), \psi)|^3] \ll N$ as $N\to\infty$, which holds by Remark \ref{r:2} below. 

This heuristic can be made rigorous using the results we establish in Section \ref{Sec:functional}. But this argument will not show that $\cZ_N(\xi_N)$ has a unique limit in law. For that, we need to define coarse-grained models and compare $\cZ_N(\xi_N)$, for different $N$, with the same coarse-grained model. We outline the proof strategy below, which contains many of the same ideas in the heuristic above, but in a more complicated setting.

\subsection*{A. Coarse-graining}

As a first step, for each $\eps\in (0,1)$, we approximate $\cZ_N$ in $L^2$ by a
coarse-grained model $\mathscr{Z}_{\epsilon}^{(\cg)}(\varphi, \psi |\Theta_{N,\eps})$,
which is a multi-linear polynomial in suitable coarse-grained disorder variables $\Theta_{N,\eps}$
and depends on $N$ only through $\Theta_{N,\eps}$.
The details will be given in Section~\ref{sec:CGmodel}. Here we give a sketch.

We partition  $\N \times \Z^2$ into \emph{mesoscopic} time-space boxes
\begin{equation}
	\cB_{\epsilon N}(\sfi,\sfa)
	\ := \ \underbrace{((\sfi-1)\epsilon N, \sfi \epsilon N]}_{\cT_{\epsilon N}(\sfi)}
	\times \underbrace{((\sfa-(1,1))\sqrt{\epsilon N},
	\sfa\sqrt{\epsilon N}]}_{\cS_{\eps N}(\sfa)}
	 \ \, \cap \ \ \Z^3_\even \,,
\end{equation}
where $(\sfi, \sfa) \in\N\times \Z^2$ is the mesoscopic time-space index of $\cB_{\epsilon N}(\sfi,\sfa)$,
which has temporal width $\eps N$ and
spatial side length $\sqrt{\eps N}$, and
$(\sfa-\sfb, \sfa] = (\sfa_1-\sfb_1,\sfa_1] \times (\sfa_2-\sfb_2,\sfa_2]$ for squares in $\R^2$. We then decompose the sum in \eqref{chaos-intro}
according to the sequence of mesoscopic time intervals
$\cT_{\eps N}(\sfi_1), \ldots, \cT_{\eps N}(\sfi_k)$ visited by the renewal configuration
$(n_1, z_1), \ldots, (n_r, z_r)$. For each $\cT_{\eps N}(\sfi_j)$, we then further decompose according to
the first and last mesoscopic spatial boxes $\cS_{\eps N}(\sfa_j), \cS_{\eps N}(\sfa_j')$ visited in this
time interval. This replaces the microscopic sum over $(n_1, z_1), \ldots, (n_r, z_r)$
in~\eqref{chaos-intro} by a mesoscopic sum over time-space
renewal configurations $(\sfi_1; \sfa_1, \sfa_1'), \ldots, (\sfi_k; \sfa_k, \sfa_k')$, which specify
the sequence of mesoscopic boxes $\cB_{\epsilon N}(\sfi_j,\sfa_j)$
and $\cB_{\epsilon N}(\sfi_j,\sfa_j')$ visited. See Figure~\ref{CG-intro} for an illustration.

Ideally, we would like to replace each random walk kernel $q_{n, m}(x,y)$ in \eqref{chaos-intro}
that connects two consecutive visited mesoscopic boxes
$\cB_{\epsilon N}(\sfi_j,\sfa_j')\ni (n,x)$ and $\cB_{\epsilon N}(\sfi_{j+1},\sfa_{j+1})\ni (m,y)$
by a corresponding heat kernel. Namely, by the local limit theorem \eqref{eq:llt}, replace $q_{n, m}(x,y)$ by
\begin{equation*}
	2 \, g_{\tfrac{1}{2}(\sfi_{j+1}-\sfi_j)\eps N}((\sfa_{j+1}-\sfa_j')\sqrt{\eps N})
	= \frac{2}{\eps N} g_{\tfrac{1}{2}(\sfi_{j+1}-\sfi_j)}(\sfa_{j+1}-\sfa_j') \,,
\end{equation*}
where the factor $2$ is due to periodicity.
With such replacements, given
a mesoscopic renewal configuration $(\sfi_1; \sfa_1, \sfa_1'), \ldots, (\sfi_k; \sfa_k, \sfa_k')$,
as we sum over compatible microscopic renewal configurations $(n_1, z_1), \ldots, (n_r, z_r)$
in \eqref{chaos-intro}, the contributions of $\xi_N(n,z)$ from each interval $\cT_{\eps N}(\sfi_j)$
would decouple, leading to a product of coarse-grained disorder variables of the form
\begin{equation}\label{eq:Thet1}
	\Theta_{N, \eps}(\sfi_j; \sfa_j, \sfa_j') := \frac{2}{\eps N}
	\sum_{r=1}^\infty \sum_{\substack{(n_1,z_1), \ldots, (n_r, z_r)\in \Z^3_{\rm even} \\
	z_1\in \cS_{\eps N}(\sfa_j), z_r\in \cS_{\eps N}(\sfa_j') \\
	n_1<\cdots <n_r, \, n_i \in \cT_{\eps N}(\sfi_j)}} \!\!\!\!\!\! \xi_N(n_1,z_1)
	\prod_{j=2}^r q_{n_{j-1}, n_j}(z_{j-1},z_j) \xi_N(n_j,z_j) \,,
\end{equation}
with consecutive coarse-grained disorder variables $\Theta_{N, \eps}(\sfi_j; \sfa_j, \sfa_j')$ and
$\Theta_{N, \eps}(\sfi_{j+1}; \sfa_{j+1}, \sfa_{j+1}')$ linked by the heat kernel
$g_{\tfrac{1}{2}(\sfi_{j+1}-\sfi_j)}(\sfa_{j+1}-\sfa_j')$
(we absorbed the factor $\frac{2}{\epsilon N}$ into \eqref{eq:Thet1}).
This would give our desired coarse-grained model
$\mathscr{Z}_{\epsilon}^{(\cg)}(\varphi, \psi |\Theta_{N,\eps})$.

\begin{figure}
\hskip -1.5cm
\begin{tikzpicture}[scale=0.4]
\draw[ystep=1cm, xstep=2,gray,very thin] (0,0) grid (2, 10);
\draw[ystep=1cm, xstep=2,gray,very thin] (4,0) grid (6, 10);
\draw[ystep=1cm, xstep=2,gray,very thin] (10,0) grid (12, 10);
\draw[ystep=1cm, xstep=2,gray,very thin] (14,0) grid (16, 10);
\draw[ystep=1cm, xstep=2,gray,very thin] (20,0) grid (22, 10);
\node at (3,5) {{$\cdots$}}; \node at (8,5) {{$\cdots$}}; \node at (18,5) {{$\cdots$}}; \node at (24,5) {{$\cdots$}};
\node at (1,-0.5) {\scalebox{2.0}{$\leftrightarrow$}}; \node at (-0.3, 0.5) {\scalebox{1.5}{$\updownarrow$}};
\node at (1,-1) {\scalebox{0.8}{$\epsilon N$}}; \node at (-1.5, 0.5) {\scalebox{0.8}{$\sqrt{\epsilon N}$}};
\node at (7.5,5.9) {\scalebox{1.5}{$\longleftarrow$}}; \node at (8.5,5.9) {\scalebox{1.5}{$\longrightarrow$}};
\node at (17.5,5.9) {\scalebox{1.5}{$\longleftarrow$}}; \node at (18.5,5.9) {\scalebox{1.5}{$\longrightarrow$}};
\node at (12.7,5.9) {\scalebox{1.5}{$\leftarrow$}}; \node at (13.3,5.9) {\scalebox{1.5}{$\rightarrow$}};
\node at (8.1,6.7) {\scalebox{1.0}{$\geq K_\epsilon$}};
\node at (18.1,6.7) {\scalebox{1.0}{$\geq K_\epsilon$}};
\node at (13.1,6.7) {\scalebox{0.9}{$\leq \!K_\epsilon$}};
\draw (0, 5)  circle [radius=0.1];
\draw (12, 2) circle [radius=0.1]; \draw (6, 5) circle [radius=0.1];
\draw (16, 2) circle [radius=0.1]; \draw (22, 5) circle [radius=0.1];
\draw[thick] (0,5) to [out=30,in=150] (6,5);
\draw[thick] (6, 5) to [out=-40, in=170] (12, 2);
\draw[thick, dashed] (11.6,1.5) to [out=-30,in=-150] (14.2, 1.5);
\draw[thick] (16, 2) to [out=45,in=190] (22, 5);
\draw[black!70] (9.7, -0.3) -- (16.3, -0.3) -- (16.3, 4.2) -- (9.7, 4.2) --(9.7,-0.3);
\begin{pgfonlayer}{background}
 \fill[color=gray!10] (4,4) rectangle (6,7);
 \fill[color=gray!10] (10,0) rectangle (12,4);
  \fill[color=gray!10] (14,0) rectangle (16,4);
 \fill[color=gray!10] (20,3) rectangle (22,6);
 \end{pgfonlayer}
\end{tikzpicture}
\caption{An illustration of the chaos expansion for the coarse-grained model \eqref{CGchaos-intro}.
The solid laces represent heat kernels linking consecutively
visited mesoscopic time-space boxes. The grey blocks represent the regions defining the
coarse-grained disorder variables $\Theta_{N, \eps}$.
The double block in the middle represents a coarse-grained disorder
variable $\Theta_{N, \eps}(\vec \sfi, \vec\sfa)$ visiting two mesoscopic time
intervals $\cT_{\eps N}(\sfi)$ and $\cT_{\eps N}(\sfi')$ with
$|\sfi'-\sfi|\leq K_\eps =(\log \frac{1}{\eps})^6$ and cannot be decoupled.}
\label{CG-intro}
\end{figure}
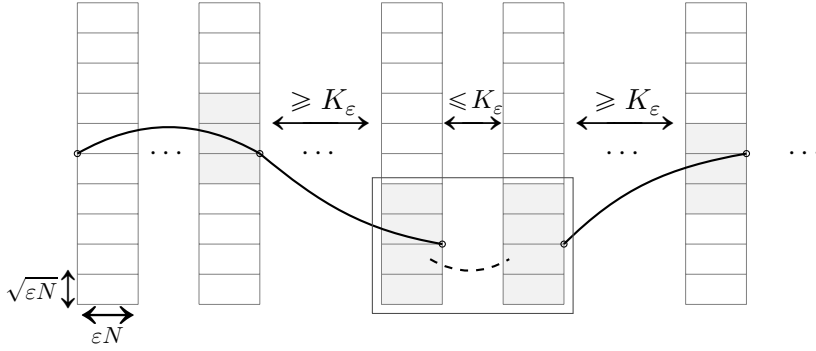

Unfortunately, this ideal procedure \emph{does not produce a sharp approximation of
the partition function $\cZ_N$ in \eqref{chaos-intro}}. Indeed,
the kernel replacement
\begin{equation}\label{eq:kerrep}
	q_{n,m}(x, y) \rightsquigarrow \frac{2}{\eps N} g_{\tfrac{1}{2}(\sfi_{j+1}-\sfi_j)}(\sfa_{j+1}-\sfa_j')
\end{equation}
induces an $L^2$-error, and
this error is small (in the sense that it vanishes as $\eps\downarrow 0$, uniformly in large $N$)
\emph{only if $\sfi_{j+1}-\sfi_j$ is sufficiently large} (we will
choose it to be larger than $K_\eps= (\log \frac{1}{\eps})^6$) and
\emph{$|\sfa_{j+1}-\sfa_j'|$ is not too large on the diffusive scale}
(we will choose it to be smaller than
$M_\eps \sqrt{\sfi_{j+1}-\sfi_j}$ with $M_\eps =\log \log \frac{1}{\eps}$).
We address this issue as follows.

The first crucial observation is that, modulo a small $L^2$ error,
microscopic renewal configurations
$(n_1, z_1)$, \ldots, $(n_r, z_r)$  in \eqref{chaos-intro}
\emph{cannot visit three or more mesoscopic
time intervals $\cT_{\eps N}(\sfi_j)$, $\cT_{\eps N}(\sfi_{j+1})$, and
$\cT_{\eps N}(\sfi_{j+2})$ with both $\sfi_{j+1}-\sfi_j\leq K_\eps$ and $\sfi_{j+2}-\sfi_{j+1}\leq K_\eps$}
(see Lemma~\ref{th:no3} below). Furthermore, with a small $L^2$ error,
we can also enforce a diffusive truncation $|\sfa_{j+1}-\sfa_j'| \le M_\eps \sqrt{\sfi_{j+1}-\sfi_j}$
(see Lemma~\ref{th:diff} below).
We will then make the random walk/heat kernel replacement \eqref{eq:kerrep}
\emph{only between mesoscopic boxes
$\cB_{\epsilon N}(\sfi_j,\sfa_j')\ni (n,x)$ and $\cB_{\epsilon N}(\sfi_{j+1},\sfa_{j+1})\ni (m,y)$
that satisfy the constraint $\sfi_{j+1}-\sfi_j > K_\eps$}.

After such kernel replacements, what are left between the heat kernels decouple and appear as a product of
two types of coarse-grained disorder variables:
\begin{itemize}
\item one type is as given in \eqref{eq:Thet1}, which visits a single
mesoscopic time interval $\cT_{\eps N}(\sfi)$;
\item another type visits two mesoscopic time intervals $\cT_{\eps N}(\sfi)$
and $\cT_{\eps N}(\sfi')$, with $\sfi' - \sfi \le K_\epsilon$: we denote it by
$\Theta_{N, \eps}(\vec \sfi, \vec \sfa)$ with $\vec\sfi=(\sfi, \sfi')$
and $\vec \sfa=(\sfa, \sfa')$, where $\sfa$ identifies the first mesoscopic spatial box
visited in the time interval $\cT_{\eps N}(\sfi)$, while $\sfa'$ identifies the
last mesoscopic spatial box visited in the time interval $\cT_{\eps N}(\sfi')$ (see \eqref{eq:Theta}).
\end{itemize}
This leads to the actual coarse-grained model we will work with:
\begin{equation}\label{CGchaos-intro}
\begin{aligned}
	& \mathscr{Z}_{\epsilon}^{(\cg)}(\varphi,\psi|\Theta)
	 := \tfrac{1}{2} \, g_{\frac{1}{2}}(\varphi, \psi) \,  + \, \\
	 &\ \frac{\epsilon}{2} \!\!
	 \sum_{r=1}^{(\log\frac{1}{\epsilon})^2}
	 \!\!\!\!\!\! \sum_{\substack{(\vec\sfi_1, \ldots, \vec\sfi_r)  \\ (\vec\sfa_1, \ldots, \vec\sfa_r)}}
	\!\!\!\!\!\!   g_{\frac{1}{2}\sfi_1}(\varphi_\epsilon, \sfa_1)  \Theta(\vec\sfi_1, \vec\sfa_1)
	\Bigg\{ \prod_{j=2}^r  g_{\frac{1}{2}(\sfi_j -\sfi_{j-1}')} (\sfa_j -\sfa_{j-1}')
	\Theta(\vec\sfi_j, \vec\sfa_j) \Bigg\} g_{\frac{1}{2}(\frac{1}{\eps}- \sfi_r')}(\sfa_r', \psi_\eps),
\end{aligned}
\end{equation}
where $\varphi_\eps$ and $\psi_\eps$ are averaged versions of $\varphi$ and $\psi$ on the spatial scale
$\sqrt{\eps}$, while $g_{\sfi/2}(\varphi_\eps, \sfa)$, $g_{\sfi/2}(\sfa', \psi_\eps)$,
$g_{\sfi/2}(\varphi_\eps, \psi_\eps)$ are averages of the heat kernel $g_{\sfi/2}(\sfa-\sfa')$
w.r.t.\ $\varphi_\eps$, $\psi_\eps$, or both.

In the sum in \eqref{CGchaos-intro}, we have hidden the various constraints on the
mesoscopic time-space variables for simplicity
(see \eqref{eq:Zcg-gen} for the complete definition).
Also note that in \eqref{CGchaos-intro} we denote by $\Theta = (\Theta(\vec\sfi,\vec\sfa))$
a generic family of coarse-grained disorder variables; in order to approximate
the averaged partition function $\cZ_N$, we simply set $\Theta=\Theta_{N, \eps}$.

\begin{remark}[Self-similarity]
The coarse-grained model $\mathscr{Z}_{\epsilon}^{(\cg)}(\varphi,\psi|\Theta)$
in \eqref{CGchaos-intro}
has the same form as the original partition function $\cZ_N$
in \eqref{chaos-intro}, with $1/\eps$ in place of $N$, $\Theta_{N,\eps}$ in place of $\xi_N$, and the
heat kernel $g_{\sfi/2}$ in place of the random walk kernel $q_n$.
This shows a remarkable degree of self-similarity: coarse-graining
retains the structure of the model.
\end{remark}

\subsection*{B. Time-Space Renewal Structure}

Once we have defined precisely
the coarse-grained model $\mathscr{Z}_{\epsilon}^{(\cg)}(\varphi,\psi |\Theta_{N,\eps})$,
see Section~\ref{sec:CGmodel}, we need to show
that it indeed provides a good $L^2$ approximations of the original
partition function $\cZ_N$, in the following sense:
\begin{equation}\label{eq:cgapp}
	\lim_{\eps\downarrow 0} \ \limsup_{N\to\infty} \
	\big\| \mathscr{Z}_{\epsilon}^{(\cg)}(\varphi,\psi|\Theta_{N,\epsilon}) - \cZ_N \big\|_{L^2}^2
	\,=\, 0 \,.
\end{equation}
This approximations will
be carried out in Section~\ref{sec:2ndmoment}, where we
rely crucially on the time-space renewal interpretation of the sum in \eqref{chaos-intro}, which in
the continuum limit with $N\to\infty$ leads to the so-called Dickman subordinator \cite{CSZ19a}.
This will be reviewed in Section~\ref{sec:Dickman}.

\subsection*{C. Lindeberg Principle}

In view of \eqref{eq:cgapp},
given $\eps>0$ small, we can approximate $\cZ_N$ by
$\mathscr{Z}_{\epsilon}^{(\cg)}(\varphi,\psi|\Theta_{N,\epsilon})$, where the $L^2$ error
is uniform in large $N$ and tends to $0$ as $\eps\downarrow 0$.
To prove that the laws of $(\cZ_N)_{N\in\N}$ form a Cauchy sequence,
it then suffices to show that given $\eps>0$ we can bound the distributional distance
between $\mathscr{Z}_{\epsilon}^{(\cg)}(\varphi,\psi|\Theta_{M,\epsilon})$ and
$\mathscr{Z}_{\epsilon}^{(\cg)}(\varphi,\psi|\Theta_{N,\epsilon})$ uniformly in $M\geq N$ large,
and furthermore, this bound can be made arbitrarily small by choosing $\eps>0$ sufficiently small.
This would then complete the proof that $\cZ_N$ converges in distribution to a unique limit.

The control of the distributional distance is carried out via a Lindeberg principle for
the coarse-grained model $\mathscr{Z}_{\epsilon}^{(\cg)}(\varphi,\psi|\Theta_{N, \eps})$,
which is a multilinear polynomial in the family of coarse-grained disorder variables
$\Theta_{N, \eps}=\{\Theta_{N, \eps}(\vec \sfi, \vec \sfa)\}$. We note that
$\Theta_{N, \eps}((\sfi, \sfi'), (\sfa, \sfa'))$ and $\Theta_{N, \eps}((\sfj, \sfj'), (\sfb, \sfb'))$ have
non-trivial dependence if $(\sfi, \sfa)$ or $(\sfi', \sfa')$ coincides with either $(\sfj, \sfb)$ or
$(\sfj', \sfb')$. We thus need a Lindeberg principle for multilinear polynomials
of \emph{dependent} random variables, which we formulate in Appendix~\ref{app:Lind} and is of
independent interest.

\subsection*{D. Functional Inequalities for Green's Functions}

To successfully apply the Lindeberg principle, we need to control
the second and fourth moments of the coarse-grained disorder variables $\Theta_{N, \eps}$.
We also need to control the {\em influence} of each $\Theta_{N, \eps}$,
which boils down to bounding the fourth moment of the coarse-grained model
$\mathscr{Z}_{\epsilon}^{(\cg)}(\varphi,\psi|\Theta_{N, \eps})$,
with the choice of boundary
conditions $\psi\equiv 1$ and $\varphi(x)=\frac{1}{\eps} \ind_{|x|\leq \sqrt{\eps}}$.

The moment bounds on $\Theta_{N, \eps}$ and
$\mathscr{Z}_{\epsilon}^{(\cg)}(\varphi,\psi|\Theta_{N, \eps})$ are technically the most delicate parts
of the paper, especially since we need to allow $\varphi(x)=\frac{1}{\eps} \ind_{|x|\leq \sqrt{\eps}}$ and
$\psi\equiv 1$. Since the structure of $\Theta_{N, \eps}$
is similar to an averaged partition function,
we will  first derive general moment bounds on the averaged partition function
$\cZ_N$ in Section~\ref{Sec:functional}.
The fourth moment bound on $\Theta_{N, \eps}$ then follows as
a corollary in Section~\ref{Sec:MomCoarse}.

The approach we develop is different from the methods employed in \cite{GQT21}
to bound the moments of the averaged solution of the mollified SHE. Our approach is based on
functional inequalities for the Green's function of random walks
(see Lemma~\ref{HLSineq}) and it is
robust enough to be applied also to the coarse-grained model defined in \eqref{CGchaos-intro}, which
will be carried out Section~\ref{Sec:4MomCoarse}.

\section{Notation and tools}\label{sec:notation}
In this section, we introduce some basic notation and tools, including the polynomial chaos expansion
for the partition function, random walk estimates, the renewal
interpretation for the second moment of partition functions and the Dickman subordinator
that arises in the continuum limit.

\subsection{Random walk and disorder}\label{Intro-notation}
As in Section~\ref{sec:intro},
let $(S = (S_n)_{n\ge 0}, \P)$ be the simple symmetric random walk on $\Z^2$,
whose transition kernel we denote by
\begin{equation} \label{eq:q}
	q_n(z) := \P(S_n = z) \,, \qquad
	q_{m, n}(x,z) := q_{n-m}(z-x) =
	\P(S_n = z \,|\, S_m = x) \,.
\end{equation}
Let $(\omega = (\omega(n,z))_{n\in\N, z\in\Z^2}, \bbP)$ be the disorder,
given by a family of i.i.d.\
random variables with zero mean, unit variance and locally finite exponential moments,
see \eqref{eq:lambda}.

The expected overlap between two independent walks is (see \cite[Proposition~3.2]{CSZ19a})
\begin{equation} \label{eq:RN}
\begin{gathered}
	R_N := \sum_{n=1}^N \sum_{z\in\Z^2} q_n(z)^2
	= \sum_{n=1}^N q_{2n}(0) = \frac{\log N}{\pi} + \frac{\alpha}{\pi} + o(1)  \\
	\textstyle \text{with} \qquad \alpha := \gamma + \log 16 - \pi \simeq 0.208 \,, \qquad
	\gamma := -\int_0^\infty e^{-u} \, \log u \, \dd u \simeq 0.577 \,.
\end{gathered}
\end{equation}
Note that $R_N$ is the expected number of collisions up to time~$N$ between
two independent copies of the random walk $S$ when both
start from the origin.
Also note that $\gamma$ is the Euler-Mascheroni constant.
We further define
\begin{equation} \label{eq:u}
	u(n) := \sum_{x\in\Z^2} q_n(x)^2 = q_{2n}(0) \sim \frac{1}{\pi} \cdot \frac{1}{n}
	\qquad \text{as } n\to\infty \,,
\end{equation}
where the asymptotic behavior follows by the
local limit theorem, see \eqref{eq:llt} below.

In order to deal with the periodicity of simple random walk, we set
\begin{equation}\label{eq:Zdeven}
	\Z^d_\even := \{z = (z_1, \ldots, z_d) \in \Z^d: \ z_1 + \ldots + z_d \text{ is even}\} \,.
\end{equation}
Given $x \in\R^d$ with $d \ge 2$, we denote by  $\ev{x}$
the point in $\Z^d_\even$ closest to~$x$ (fix any convention to break the tie if $\ev{x}$ is not unique). More explicitly, we have
\begin{equation} \label{eq:B}
	\ev{x} = v \in \Z^d_\even \quad \ \iff \quad \
	x \in B(v) := \big\{ x\in\R^d: \ |x_1 - v_1| + \ldots + |x_d - v_d| < 1 \big\} \,.
\end{equation}
For $s \in \R$ it is convenient to define the even approximation $\ev{s} \in 2\Z$ by
\begin{equation}\label{eq:evs}
	\ev{s} := 2 \, \bigg\lfloor \frac{s}{2} \bigg\rfloor \,.
\end{equation}

\subsection{Partition functions at criticality}
The point-to-point partition functions $Z_{M,N}^\beta(w,z)$
were defined in \eqref{eq:Zab}. We mainly consider the case $M=0$, for which we write
\begin{equation} \label{eq:ZN}
\begin{split}
	Z_N^\beta(w,z) &=
	\E\bigg[ e^{\sum_{n=1}^{N-1} \{\beta \omega(n,S_n) - \lambda(\beta)\}}
	\, \ind_{S_N = z} \,\bigg|\, S_0 = w \bigg]  \,.
\end{split}
\end{equation}

The field of diffusively rescaled partition functions
$\mathcal{Z}^{\beta}_{N; s,t}(\dd x, \dd y)$
was introduced in \eqref{eq:rescZmeas}.
In the special case $s=0$ we simply write:
\begin{equation*}
	\mathcal{Z}^{\beta}_{N, t}(\dd x, \dd y) :=
	\frac{N}{4} \, Z_{\ev{Nt}}^{\beta}(\ev{\sqrt{N} x},
	\ev{\sqrt{N} y}) \, \dd x \, \dd y \,,
\end{equation*}
where we recall that $\dd x \, \dd y$ denotes the Lebesgue measure on $\R^2 \times \R^2$.
We next define {\em averaged partition functions}
$\mathcal{Z}^{\beta}_{N, t}(\varphi, \psi)$ for suitable $\varphi, \psi: \R^2 \to \R$:
\begin{equation} \label{eq:ZNav}
\begin{split}
	\mathcal{Z}^{\beta}_{N, t}(\varphi,\psi)
	&:=  \iint_{\R^2 \times \R^2} \varphi(x)
	\, \mathcal{Z}^{\beta}_{N, t}(\dd x , \, \dd y) \, \psi(y)	\\
	& =  \frac{1}{4N} \iint_{\R^2 \times \R^2} \varphi(\tfrac{x}{\sqrt{N}})
	\ Z_{\ev{Nt}}^{\beta}(\ev{x}, \ev{y})
	\ \psi(\tfrac{y}{\sqrt{N}}) \, \dd x \, \dd y  \,.
\end{split}
\end{equation}

We can rewrite the integrals in \eqref{eq:ZNav} as sums.
For a locally integrable function $\varphi: \R^2 \to \R$,
we define $\varphi_N : \Z^2_\even \to \R$ as the average of $\varphi(\frac{\cdot}{\sqrt{N}})$ over cells
$B(v) \subseteq \R^2$, see \eqref{eq:B}:
\begin{equation}\label{eq:phiNpsiN}
	\varphi_N(v) := \frac{1}{|B(v)|} \int\limits_{B(v)}
	\varphi\big(\tfrac{x}{\sqrt{N}}\big) \, \dd x =
	\frac{1}{2} \int\limits_{\{|x_1 - v_1| + |x_2 - v_2| < 1\}}
	\varphi\big(\tfrac{x}{\sqrt{N}}\big) \, \dd x \,.
\end{equation}
If we similarly define $\psi_N: \Z^2_\even \to \R$ given $\psi: \R^2 \to \R$,
we can rewrite the second line
of \eqref{eq:ZNav} as a sum over the points $v=\ev{x}, w= \ev{y} \in \Z^2_\even$ as follows:
\begin{equation} \label{eq:ZNavalt}
\begin{split}
	\mathcal{Z}^{\beta}_{N, t}(\varphi,\psi)
	& \,=\, \frac{1}{N} \, \sum_{v,w \,\in\, \Z^2_\even}
	\varphi_N(v)
	\; Z_{\ev{Nt}}^{\beta} ( v,w )  \; \psi_N(w) \,.
\end{split}
\end{equation}
\begin{remark}[Parity issue]
Let $\Z^d_\odd := \Z^d\backslash \Z^d_{\rm even}$.  If in \eqref{eq:ZNavalt}
we sum over $v,w \in \Z^2_\odd$,
we obtain an alternative ``odd version'' of the averaged partition function, which is
{\em independent of the ``even version''} because two simple random walks started
at even vs.\ odd sites can never meet. This explains why we enforce a parity restriction
in \eqref{eq:ZNavalt}.
\end{remark}

Finally, we recall the {\em critical window} of the disorder strength (inverse temperature)
that was introduced in \eqref{intro:sigma}.
Given the definition \eqref{eq:RN} of $R_N$,
for some fixed $\theta \in \R$,
we choose $\beta = \beta_N=\beta_N(\theta)$ such that
\begin{equation} \label{eq:sigma}
	\sigma_N^2 := \bbvar[e^{\beta_N \omega - \lambda(\beta_N)}]
	= e^{\lambda(2\beta_N)-2\lambda(\beta_N)} - 1
	= \frac{1}{R_N} \bigg(1 + \frac{\theta + o(1)}{\log N}\bigg) \,.
\end{equation}
We can spell out this condition more explicitly in terms of $\beta_N$
(see \cite[Appendix~A.4]{CSZ19a}):
\begin{equation} \label{eq:betaN}
\begin{split}
	\beta_N^2 & =
	\frac{1}{R_N} - \frac{\kappa_3}{(R_N)^{3/2}} +
	\frac{\theta/\pi + (\frac{3}{2} \kappa_3^2 - \frac{7}{12}\kappa_4 - \frac{1}{2})}{(R_N)^2}
	+ o\bigg(\frac{1}{(R_N)^2}\bigg) \\
	& = \frac{\pi}{\log N} - \frac{\kappa_3 \, \pi^{3/2}}{(\log N)^{3/2}}
	+ \frac{\pi(\theta - \alpha)
	+ \pi^2 (\frac{3}{2} \kappa_3^2 - \frac{7}{12}\kappa_4 - \frac{1}{2})}
	{(\log N)^2} +o\bigg(\frac{1}{(\log N)^2}\bigg)  \,,
\end{split}
\end{equation}
where $\kappa_3, \kappa_4$ are the disorder cumulants,
i.e.\ $\lambda(\beta) = \frac{1}{2}\beta^2 +
\frac{\kappa_3}{3!} \beta^3 + \frac{\kappa_4}{4!} \beta^4
+ o(\beta^4)$ as $\beta \downarrow 0$,
and $\alpha \simeq 0.208$ is as in \eqref{eq:RN}.
\emph{Henceforth we always set $\beta = \beta_N$.}

\subsection{Polynomial chaos expansion} \label{sec:poly}

We now recall the \emph{polynomial chaos expansion} of the partition function.
This is based on the following product expansion, valid for any set $A$ and
any family of real numbers $(h_n)_{n\in A}$ labelled by~$A$:
\begin{equation} \label{eq:product-expansion}
\begin{split}
	e^{\sum_{n \in A} h_n}
	& \ = \ \prod_{n \in A} \big(1 + (e^{h_n}-1) \big)
	\ = \ 1 + \sum_{\emptyset \ne B \subseteq A} \
	\prod_{n \in B} (e^{h_n}-1)  \,.
\end{split}
\end{equation}

If we apply \eqref{eq:product-expansion} to the partition function
$Z^{\beta_N}_{d,f}(x,y)$ in \eqref{eq:Zab}, by \eqref{eq:q} we obtain
\begin{equation*}
\begin{split}
	& Z^{\beta_N}_{d,f}(x,y)
	\,-\, q_{d,f}(x,y) \\
	& \quad \ \,=\,
	\E\bigg[ \Big( e^{\sum_{n=d+1}^{f-1}\sum_{z\in\Z^2} (\beta_N \omega(n,z)-\lambda(\beta_N))
	\ind_{S_n=z}} - 1 \Big)
	\, \ind_{S_f = y} \,\bigg|\, S_d=x \bigg] \\
	& \quad \ = \, \sum_{r=1}^{\infty} \ \sum_{\substack{d < n_1 < \ldots < n_r < f \\
	z_1, \ldots, z_r \in \Z^2}} \
	\E\Bigg[ \Bigg\{ \prod_{j=1}^r \Big(
	e^{(\beta_N \omega(n_j,z_j)-\lambda(\beta_N))\ind_{S_{n_j}=z_j}} - 1 \Big)
	\Bigg\}\, \ind_{S_f = y} \,\Bigg|\, S_d=x \Bigg] \,.
\end{split}
\end{equation*}
Recalling \eqref{eq:sigma}, we introduce a family $(\xi_N(n,z))_{(n,z) \in \Z^2}$
of i.i.d.\ random variables by
\begin{equation} \label{eq:xi}
\begin{gathered}
	\xi_N(n,z) := e^{\beta_N\omega(n,z)-\lambda(\beta_N)}-1 \\
	\text{so that} \qquad \bbE[\xi_N(n,z)] = 0 \,, \qquad
	\bbvar[\xi_N(n,z)] = \sigma_N^2 \,.
\end{gathered}
\end{equation}
These variables allow us to write
\begin{equation*}
	e^{(\beta_N \omega(n,z)-\lambda(\beta_N))\ind_{S_n=z}} - 1
	= (e^{\beta_N\omega(n,z)-\lambda(\beta_N)}-1) \, \ind_{S_n=z}
	= \xi_N(n,z) \,  \ind_{S_n=z} \,,
\end{equation*}
hence, by the Markov property for the random walk with kernel $q$,  we get
\begin{equation} \label{eq:Zpoly}
\begin{split}
	Z^{\beta_N}_{d,f}(x,y)  & \, = \, q_{d,f}(x,y) + \sum_{r=1}^{\infty} \
	 \sum_{\substack{d < n_1 < \ldots < n_r < f \\
	z_1, \ldots, z_r \in \Z^2}}  \\
	& \qquad q_{d,n_1}(x,z_1) \, \xi_N(n_1,z_1) \Bigg\{ \prod_{j=2}^r q_{n_{j-1}, n_j}(z_{j-1},z_j)
	\, \xi_N(n_j,z_j) \Bigg\} \, q_{n_j,f}(z_j,y)  \,,
\end{split}
\end{equation}
where $\prod_{j=2}^r (\ldots) := 1$ if $r=1$.
We have expressed the point-to-point partition function as a
multilinear polynomial (\emph{polynomial chaos}) in the independent random variables
$\xi_N(n,z)$.
\smallskip

A similar polynomial chaos representation holds for the \emph{averaged}
partition function $\mathcal{Z}^{\beta_N}_{N, t}(\varphi,\psi)$ given in \eqref{eq:ZNavalt}.
To simplify notation, it is convenient to define
an averaged version of the random walk transition kernel $q_{m,n}(x,y)$.
Given suitable $\varphi, \psi: \R^2 \to \R$, a time horizon $M \in (0,\infty)$,
and two points $(m, w), (n, z) \in \Z^3_{\rm even}$,
recalling $\varphi_N$ and $\psi_N$ from \eqref{eq:phiNpsiN},
we define
\begin{align}
	\label{eq:qNphi}
	q^N_{0,m}(\varphi, w) &\,:=\,
	\sum_{v\in\Z^2_\even} \varphi_N(v) \,
	q_{0,m}(v,w) \,, \\
	\label{eq:qNpsi}
	q^N_{n,  M}(z, \psi) &\,:=\,
	\sum_{w\in\Z^2_\even} q_{n, \ev{M}}(z,w)
	\, \psi_N(w)  \,, \\
	q^N_{0, M}(\varphi, \psi)
	& \,=\   \frac{1}{N} \sum_{v,w\in\Z^2_\even} \varphi_N(v) \, q_{0, \ev{M}}(v,w)
	\, \psi_N(w)   \,. \label{eq:qNphipsi}
\end{align}
Then \eqref{eq:Zpoly} yields the following polynomial chaos expansion for
$\mathcal{Z}^{\beta_N}_{N, t}(\varphi,\psi)$ in \eqref{eq:ZNavalt}:
\begin{equation} \label{eq:Zpolyav}
\begin{split}
	& \mathcal{Z}^{\beta_N}_{N, t}(\varphi,\psi)  \, = \,
	q^N_{0, Nt}(\varphi,\psi)  \ + \ \frac{1}{N} \sum_{r=1}^{\infty} \ \sum_{\substack{0 < n_1 < \ldots < n_r < Nt \\
	z_1, \ldots, z_r \in \Z^2}} \\
	& \qquad\quad  q^N_{0,n_1}(\varphi ,z_1) \, \xi_N(n_1,z_1) \cdot  \Bigg\{ \prod_{j=2}^r q_{n_{j-1}, n_j}(z_{j-1},z_j)
	\, \xi_N(n_j,z_j) \Bigg\} \cdot  q^N_{n_r, \ev{Nt}}(z_r, \psi) \,.
\end{split}
\end{equation}
As will be explained later,
when it comes to second moment calculations,
the time-space points $(n_1, z_1), \ldots, (n_r, z_r)$
in the sum can be interpreted as a time-space renewal configuration.

\subsection{Random walk estimates}
Let $g_t: \R^2 \to (0,\infty)$ denote the heat kernel on $\R^2$:
\begin{equation}\label{eq:gt}
	g_t(x) := \frac{1}{2\pi t} \, e^{-\frac{|x|^2}{2t}} \,,
	\qquad g_t(x,y) := g_t(y-x) \,,
\end{equation}
where, unless otherwise specified, we denote by $|\cdot|$ the Euclidean norm on $\R^d$.

The asymptotic behavior of the random walk transition kernel
$q_n(x) = \P(S_n = x)$ is given by the local central limit theorem:
as $n\to\infty$ we have, uniformly for $x\in\Z^2$,
\begin{equation}\label{eq:llt}
	\begin{aligned}
	q_n(x)
	& \,=\, \big(g_{\frac{n}{2}}(x) + O\big(\tfrac{1}{n^2}\big) \big)
	\, 2\,\ind_{(n, x)\in \Z^3_{\rm even}} \\
	&
	\,=\, g_{\frac{n}{2}}(x) \, e^{O\big(\tfrac{1}{n}\big) + O\big(\tfrac{|x|^4}{n^3}\big)}
	\, 2 \, \ind_{(n, x)\in \Z^3_{\rm even}} \,,
\end{aligned}
\end{equation}
where the two lines are two different variants of the local central limit theorem for the simple
symmetric random walk on $\Z^2$ given by Theorems 2.3.5 and 2.3.11  in \cite{LaLi10}.
We recall that $\Z^d_\even$ is defined in \eqref{eq:Zdeven},
the multiplicative factor $2$ comes from the periodicity of the simple random walk
$S_n = (S^{(1)}_n, S^{(2)}_n)$ on $\Z^2$,
while the factor $\frac{1}{2}$ in the time argument of the heat kernel
comes from the fact that $\E[S_n^{(i)} S_n^{(j)}] = \frac{n}{2} \, \ind_{i=j}$.
We also note that
\begin{equation} \label{eq:gtrans}
	g_{\frac{n}{2}}(x) = \tfrac{1}{N} \, g_{\frac{1}{2} \frac{n}{N}}(\tfrac{x}{\sqrt{N}})
	\qquad \forall n, N \in \N, \ \forall x \in \Z^2 \,.
\end{equation}

Similar to the averaged random walk kernels $q^N_{\cdot, \cdot}$
defined in \eqref{eq:qNphi}-\eqref{eq:qNphipsi}, given $\varphi \in L^1(\R^2)$,
$\psi \in L^\infty(\R^2)$, $t > 0$, and $a,b \in \R^2$, we define the averaged heat kernels
\begin{align} \label{eq:gstphi}
	g_t(\varphi,a) &\,:=\, \int_{\R^2} \varphi(x) \, g_{t}(a-x) \, \dd x \,, \\
	\label{eq:gstpsi}
	g_{t}(b, \psi) &\,:=\, \int_{\R^2} g_{t}(y-b) \, \psi(y) \, \dd y \,, \\
	\label{eq:gstphipsi}
	g_{t}(\varphi, \psi)
	&\,:=\, \int_{\R^2\times\R^2} \varphi(x) \, g_{t}(y-x) \, \psi(y) \, \dd x \, \dd y \,.
\end{align}
Recall $q^N_{0, Nt}(\varphi, \psi)$ from \eqref{eq:qNphipsi}.
By the local limit theorem \eqref{eq:llt}, recalling \eqref{eq:phiNpsiN}
and \eqref{eq:gtrans}, we have
\begin{align}
	\forall t > 0: \qquad
	\label{eq:gstphipsiconv}
	\lim_{N\to\infty} \, q^N_{0, Nt}(\varphi, \psi) & \,=\,
	\tfrac{1}{2} \, g_{\frac{t}{2}}(\varphi, \psi) \,,
\end{align}
where the prefactor $\frac{1}{2}$ is due to periodicity.

We will also need the following lemma, which allows us to replace a random walk transition kernel by a heat
kernel even if the time-space increments are perturbed.

\begin{lemma}\label{lem:ker}
Let $q_n(\cdot)$ be the transition kernel of the simple symmetric random walk on $\Z^2$,
see \eqref{eq:q}, and
let $g_t(\cdot)$ be  the heat kernel on $\R^2$, see \eqref{eq:gt}.
Then there exists $C\in (0, \infty)$ such that,
for all $n \in \N$ and for all $x \in \Z^2$ with $|x|\leq n^{\frac34}$, we have
\begin{equation}\label{eq:ker1}
	q_{n}(x)  \leq C \, g_{\frac{n}{2}} (x).
\end{equation}
Let $\rho_1, \rho_2>0$ and set $C := 2 e \, \rho_1 \, \rho_2$.
Then, given an arbitrary $m\in\N$, for
all $n_1, n_2 \in \N$ with $n_1 \ge m$
and $\frac{n_2}{n_1}\in [1/\rho_1, \rho_2]$, and for all $x_1, x_2\in \R^2$ with
$|x_1-x_2|\leq \sqrt{m}$, we have
\begin{equation}\label{eq:ker2}
	g_{\frac{n_1}{2}} (x_1) \leq  C g_{\rho_1 n_2} (x_2) =  \frac{C}{m} g_{\frac{\rho_1n_2}{m}}
	\Big(\frac{x_2}{\sqrt m}\Big).
\end{equation}
\end{lemma}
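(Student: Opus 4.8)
The plan is to prove the two bounds separately; each reduces to a short explicit estimate, and I do not expect a genuine obstacle beyond bookkeeping.

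\emph{Proof of \eqref{eq:ker1}.} I would invoke the second (multiplicative) form of the local limit theorem in \eqref{eq:llt}, which provides $n_0\in\N$ and $c\in(0,\infty)$ such that
\[
	q_n(x) \ \le\ 2\, g_{\frac n2}(x)\, \exp\!\Big(\tfrac cn + c\,\tfrac{|x|^4}{n^3}\Big)
	\qquad \text{for all } n\ge n_0 \text{ and all } x\in\Z^2 \,.
\]
For $|x|\le n^{3/4}$ we have $|x|^4/n^3\le 1$ and $1/n\le 1$, so the exponential factor is at most $e^{2c}$, and \eqref{eq:ker1} holds on this range with $C=2e^{2c}$. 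For the finitely many $n<n_0$, for each such $n$ there are only finitely many $x\in\Z^2$ with $|x|\le n^{3/4}$; since $g_{n/2}(x)>0$ and $q_n(x)\le 1$, the ratio $q_n(x)/g_{n/2}(x)$ is bounded over these pairs, and enlarging $C$ accordingly yields \eqref{eq:ker1} for all $n\in\N$.

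\emph{Proof of \eqref{eq:ker2}.} I would compute the ratio of the two Gaussians directly from \eqref{eq:gt}:
\[
	\frac{g_{n_1/2}(x_1)}{g_{\rho_1 n_2}(x_2)}
	\ =\ \frac{2\rho_1 n_2}{n_1}\,
	\exp\!\Big(-\frac{|x_1|^2}{n_1}+\frac{|x_2|^2}{2\rho_1 n_2}\Big) \,.
\]
The prefactor is at most $2\rho_1\rho_2$ because $n_2/n_1\le\rho_2$. For the exponent, the hypothesis $n_2/n_1\ge 1/\rho_1$ gives $\rho_1 n_2\ge n_1$, while the triangle inequality together with $2ab\le a^2+b^2$ gives $|x_2|^2\le(|x_1|+\sqrt m)^2\le 2|x_1|^2+2m$; hence
\[
	\frac{|x_2|^2}{2\rho_1 n_2}\ \le\ \frac{2|x_1|^2+2m}{2n_1}
	\ =\ \frac{|x_1|^2}{n_1}+\frac{m}{n_1}\ \le\ \frac{|x_1|^2}{n_1}+1 \,,
\]
where the last step uses $n_1\ge m$. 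Thus the exponent is at most $1$, so the whole ratio is at most $2\rho_1\rho_2\,e=C$, which is \eqref{eq:ker2}. The final equality in \eqref{eq:ker2} is just the heat-kernel scaling identity $g_t(x)=\tfrac1s\, g_{t/s}(x/\sqrt s)$ applied with $s=m$, cf.\ \eqref{eq:gtrans}.

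The only points requiring a little care are the small-$n$ range in \eqref{eq:ker1}, disposed of by a finite check, and keeping straight which direction each of the hypotheses $n_2/n_1\in[1/\rho_1,\rho_2]$ and $n_1\ge m$ is used in \eqref{eq:ker2}; there is otherwise no real difficulty.
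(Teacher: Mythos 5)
Your proof is correct and follows essentially the same route as the paper: \eqref{eq:ker1} via the multiplicative form of the local limit theorem \eqref{eq:llt} with $|x|^4/n^3\le 1$ on the stated range (your explicit finite check for small $n$ is a harmless extra precaution the paper leaves implicit), and \eqref{eq:ker2} via the same direct computation of the Gaussian ratio, using $n_2/n_1\le\rho_2$ for the prefactor and $\rho_1 n_2\ge n_1$, $|x_2|^2\le 2|x_1|^2+2m$, $n_1\ge m$ for the exponent.
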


\begin{proof}
Let us prove \eqref{eq:ker1}: by the second variant of the local limit theorem in \eqref{eq:llt},
\begin{align*}
	q_{n_1}(x_1) & = \ind_{\{(n_1, x_1)\in \Z^3_{\rm even}\}}
	\,2\, g_{\frac{n_1}{2}}(x_1)
	\, \exp\Big\{ O\Big(\frac{1}{n_1}\Big) + O\Big( \frac{|x_1|^4}{n_1^3} \Big) \Big\}
	\leq C \, g_{\frac{n_1}{2}}(x_1) \,.
\end{align*}
We next prove \eqref{eq:ker2}: by the assumption $\frac{n_2}{n_1}\in [1/\rho_1, \rho_2]$, we have
\begin{align*}
	\frac{g_{\frac{n_1}{2}}(x_1)}{g_{\rho_1 n_2}(x_2)}
	= \frac{2\rho_1n_2}{n_1} \exp\Big\{\frac{|x_2|^2}{2\rho_1n_2} - \frac{|x_1|^2}{n_1}\Big\}
	\leq 2\rho_1\rho_2 \, \exp\Big\{\frac{|x_2|^2}{2n_1} - \frac{|x_1|^2}{n_1}\Big\} \leq
	2\rho_1\rho_2 \, e \,,
\end{align*}
where the last inequality holds because $|x_2|^2
\le 2 (|x_1|^2 + |x_2-x_1|^2) \le 2 |x_1|^2 + 2 m$ and $n_1 \ge m$ by assumption.
\end{proof}

\subsection{Renewal estimates and Dickman subordinator}\label{sec:Dickman}
We next present the time-space renewal process underlying the second moment calculations
for the partition function.
Under diffusive scaling, this leads to the so-called Dickman subordinator in the continuum limit.
This approach was developed in \cite{CSZ19a, CSZ19b}.

We first define a slight modification of the partition function $Z_{d,f}^\beta(x,y)$
in \eqref{eq:Zab}, where we ``attach'' disorder variables $\xi_N(n,z)$,
see \eqref{eq:xi},
at the boundary points $(d,x)$ and $(f,y)$ (which may coincide, if $d=f$):
\begin{equation} \label{eq:X}
	X^{\beta_N}_{d,f}(x,y) := \begin{cases}
	\xi_N(d,x) \, \ind_{\{y=x\}} & \text{if } f=d \\
	\rule{0pt}{1.2em}\xi_N(d,x) \,
	Z^{\beta_N}_{d,f}(x,y) \, \xi_N(f,y) & \text{if }f \ge d+1
	\end{cases} \,.
\end{equation}
Such quantities will appear as basic building blocks in our proofs. Note that
$\bbE[X^{\beta_N}_{d,f}(x,y)] = 0$. The second moment of $X^{\beta_N}_{d,f}(x,y)$
can be computed explicitly by the polynomial chaos expansion \eqref{eq:Zpoly}
and it can be expressed as follows:
\begin{equation}\label{eq:secondX}
	\bbE\big[ X^{\beta_N}_{d,f}(x,y)^2 \big] = \sigma_N^2 \, U_N(f-d,y-x) \,,
\end{equation}
where we recall that $\sigma_N^2=\mathbb{V}{\rm ar}(\xi_N(a,x))$,
and for $n\in\N_0 = \{0,1,2,\ldots\}$ and $x \in \Z^2$ we define
\begin{equation}\label{eq:U}
\begin{split}
	U_N(n,x) := \begin{cases}
	\displaystyle
	\ind_{\{x=0\}}
	& \text{ if } n = 0, \\
	\rule{0pt}{4.3em}
	\displaystyle
\begin{split}
	\sigma_N^2 \, q(n,x)^2 \,+\,
	& \sum_{r=1}^\infty
	 (\sigma_N^2)^{r+1} \sum_{\substack{0 < n_1 < \ldots < n_r < n\\ z_1,\ldots, z_r \in \Z^2}}
	\, q_{0,n_1}(0,z_1)^2 \, \times \\
	& \ \times \bigg\{ \prod_{j=2}^r q_{n_{j-1},n_j}(z_{j-1},z_j)^2 \bigg\}
	\, q_{n_r,n}(z_r,x)^2
\end{split}
	& \text{ if } n \ge 1.
	\end{cases}
\end{split}
\end{equation}

The quantity $U_N(n,x)$, which plays an important role throughout this paper,
admits a probabilistic interpretation as a \emph{renewal function}.
More precisely, let $(\tau^{(N)}_r, S^{(N)}_r)_{r\ge 0}$
denote the random walk (time-space renewal process) on $\N_0 \times \Z^2$
starting at $(0,0)$ and with one-step distribution
\begin{equation}\label{eq:tauS}
	\P\big(\tau_1^{(N)}=n, \, S_1^{(N)}=x\big) = \frac{q_n(x)^2}{R_N} \,
	\ind_{\{1,\ldots, N\}}(n) \,,
\end{equation}
where $R_N$ is the random walk overlap defined in \eqref{eq:RN}. Then we can write,
recalling \eqref{eq:sigma},
\begin{equation} \label{eq:Uren}
\begin{gathered}
	U_N(n,x) = \sum_{r = 1}^\infty (\lambda_N)^r \,
	\P\big(\tau_r^{(N)}=n, \, S_r^{(N)}=x\big) \\
	\text{where} \quad \ \lambda_N := \sigma_N^2 \, R_N
	= 1 + \frac{\theta + o(1)}{\log N}\,.
\end{gathered}
\end{equation}
When $\lambda_N=1$, we see that $U_N(n,x)$ is just the renewal function of
$(\tau^{(N)}_r, S^{(N)}_r)_{r\ge 0}$.
When $\lambda_N\neq1$, we can think of $U_N(n,x)$ as an exponentially weighted renewal function,
weighted according to the number of renewals.
Note that the first component $\tau^{(N)} = (\tau^{(N)}_r)_{r\ge 0}$ is a renewal process
with one-step distribution
\begin{equation}\label{eq:tau}
	\P\big(\tau_1^{(N)}=n\big) = \frac{u(n)}{R_N} \,
	\ind_{\{1,\ldots, N\}}(n) \,,
\end{equation}
where $u(n)=\sum_x q_n(x)^2$ is defined in \eqref{eq:u}.
Correspondingly, we can define
\begin{equation}\label{eq:Usingle}
	U_N(n) := \sum_{x\in\Z^2} U_N(n,x)
	= \sum_{r = 1}^\infty (\lambda_N)^r \,
	\P\big(\tau_r^{(N)}=n\big) \,.
\end{equation}

The asymptotic behaviors of $U_N(n,x)$ and $U_N(n)$
were obtained in \cite{CSZ19a}, exploiting the fact that $\tau^{(N)}$
is in the domain of attraction of the so-called \emph{Dickman subordinator},
defined as
the pure jump L\'evy process with L\'evy measure $\frac{1}{x} \, \ind_{(0,1)}(x) \, \dd x$.
More precisely, we have the following convergence result, which
is an extension of \cite[Proposition 2.2]{CSZ19a} from finite dimensional distribution
convergence to process level convergence.

\begin{lemma}\label{lem:Levyconv}
Let $(\tau^{(N)}_r, S^{(N)}_r)_{r\ge 0}$ be the
space-time random walk defined in \eqref{eq:tauS}.
Let $(Y_s)_{s\geq 0}$ be the so-called Dickman subordinator \cite{CSZ19a},
i.e.\ the pure jump L\'evy process with L\'evy measure $\frac{1}{t}\ind_{(0,1)}(t){\rm d}t$,
and let $V_s:= \frac{1}{2} W_{Y_s}$ where $W$ is an independent Brownian motion.
Then we have
the convergence in distribution
\begin{equation}\label{eq:YV}
	\Bigg(\frac{\tau^{(N)}_{\lfloor s\log N\rfloor}}{N},
	\frac{S^{(N)}_{\lfloor s\log N\rfloor }}{\sqrt{N}} \Bigg)_{s\geq 0}
	\ \Asto{N} \ (\bsY_s)_{s\geq 0} := (Y_s, V_s)_{s\geq 0},
\end{equation}
on the space of c\`adl\`ag paths equipped with the Skorohod topology.
\end{lemma}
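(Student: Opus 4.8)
The plan is to regard $(\tau^{(N)}_r, S^{(N)}_r)_{r\ge 0}$ for what it is — a genuine random walk on $\N_0\times\Z^2$ with i.i.d.\ increments of law \eqref{eq:tauS} — so that the rescaled process on the left of \eqref{eq:YV} is a rescaled random walk, i.e.\ a null triangular array run for $\lfloor s\log N\rfloor$ steps. One can then invoke the classical functional limit theorem for triangular arrays of i.i.d.\ increments converging to a L\'evy process: for such arrays, convergence of the suitably scaled step laws (equivalently, convergence of the one--time marginals to an infinitely divisible law) self--improves to convergence of the partial--sum processes in the Skorohod $J_1$ topology on $D([0,\infty),\R_{\ge 0}\times\R^2)$. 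Since the finite--dimensional convergence in \eqref{eq:YV} is precisely \cite[Proposition~2.2]{CSZ19a}, the task reduces to (a) checking that the limit $\bsY=(Y,V)$ is a L\'evy process with no fixed times of discontinuity, and (b) recording that the matching of characteristics is exactly the computation already carried out in \cite{CSZ19a}.

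For (a): the subordinator $Y$ is pure--jump with L\'evy measure $\tfrac1t\ind_{(0,1)}(t)\,\dd t$, and a jump of $Y$ of size $t$ is accompanied by an independent $\mathcal N(0,\tfrac t4 I_2)$ spatial displacement of $V=\tfrac12 W_Y$; hence $\bsY$ is itself a pure--jump L\'evy process on $\R_{\ge0}\times\R^2$ with L\'evy measure
\[
	\nu(\dd t,\dd x) \,=\, \tfrac1t\,\ind_{(0,1)}(t)\,g_{t/4}(x)\,\dd t\,\dd x \,,
\]
no Gaussian part, and drift supported on the first coordinate only. In particular $\bsY$ is stochastically continuous, so it has no fixed discontinuities, which is the only hypothesis on the limit needed by the functional limit theorem. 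For (b): writing $\nu_N$ for the law of $(\tau_1^{(N)}/N, S_1^{(N)}/\sqrt N)$, the relations $R_N\sim\log N/\pi$ from \eqref{eq:RN}, the local limit theorem \eqref{eq:llt} and the scaling identity \eqref{eq:gtrans} give $\tfrac{\log N}{R_N}\,q_{Nt}(\sqrt N\,\xi)^2 \to \pi\cdot\tfrac4{N^2}\,g_{t/2}(\xi)^2$, and together with the elementary identity $g_{t/2}(\xi)^2 = \tfrac1{2\pi t}\,g_{t/4}(\xi)$ this yields (up to the periodicity factors) vague convergence of $(\log N)\,\nu_N$ to $\nu$ away from the origin; one checks likewise that the truncated first moments converge to the drift and the truncated covariances vanish, and that the array is infinitesimal (since $\log N\cdot\P(\tau_1^{(N)}>\epsilon N)\to\log(1/\epsilon)$).

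\textbf{Main obstacle.} The one genuine subtlety is that the time increment $\tau_1^{(N)}$ and the space increment $S_1^{(N)}$ are \emph{dependent} — conditionally on $\tau_1^{(N)}=n$ the law of $S_1^{(N)}$ is $q_n(\cdot)^2/u(n)$ — so tightness of the two coordinates cannot be established in isolation; this is precisely why it is essential to treat $(\tau^{(N)},S^{(N)})$ as a single $\R_{\ge0}\times\R^2$--valued walk and verify \emph{joint} vague convergence of the scaled step law. A self--contained alternative, avoiding the general theorem, would be: (i) tightness of the first coordinate is immediate, since $\tau^{(N)}$ is nondecreasing and its one--dimensional marginals converge to a stochastically continuous limit; (ii) for the second coordinate, decompose $S^{(N)}_{\lfloor s\log N\rfloor}/\sqrt N$ into the contribution of the finitely many ``macroscopic'' steps with $\tau$--increment $\ge cN$ — which converges by the continuous--mapping theorem and the conditional CLT for $S_1^{(N)}/\sqrt N$ given a large $\tau_1^{(N)}$ — plus a remainder, and bound the remainder uniformly in $L^2$ as $c\downarrow 0$ using $R_N\sim\log N/\pi$ and the local--CLT estimate $\sum_{n\le cN} n\,u(n) = O(cN)$. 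Either way the estimates are routine variants of those already in \cite{CSZ19a}, so the upgrade from finite--dimensional to process--level convergence is essentially soft; the only point requiring real attention is the joint time--space treatment of the walk.
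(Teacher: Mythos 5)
Your argument is correct, but it follows a genuinely different route from the paper. The paper's proof is very short: finite-dimensional convergence is quoted from \cite[Proposition~2.2]{CSZ19a}, and tightness is obtained from Aldous' criterion \cite[Theorem~14.11]{Kal97}, which for a process with i.i.d.\ increments reduces to the trivial observation that $\bsY^{(N)}_{h_N}\to(0,0)$ in probability when $h_N\downarrow 0$; no characteristics are computed. You instead invoke the classical functional limit theorem for row-wise i.i.d.\ triangular arrays (random walks converging to a L\'evy process), under which convergence of the one-time marginals — equivalently of the characteristics — upgrades automatically to Skorohod convergence, and you verify that the limit $(Y,V)$ is a L\'evy process with L\'evy measure $\tfrac1t\ind_{(0,1)}(t)\,g_{t/4}(x)\,\dd t\,\dd x$ and no fixed discontinuities. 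Both routes lean on \cite{CSZ19a} for the hard input; yours is equally standard, and in fact, once you cite that theorem, the fdd convergence from \cite[Proposition~2.2]{CSZ19a} at a single time already suffices, so your hands-on verification of the L\'evy measure (where the only delicate point is the periodicity bookkeeping that turns $\pi\cdot 4\,g_{t/2}(\xi)^2=\tfrac{2}{t}g_{t/4}(\xi)$ into $\tfrac1t g_{t/4}(\xi)$ after accounting for the density $\tfrac12$ of even sites — which you flag but do not carry out) is redundant rather than load-bearing. What the paper's Aldous-criterion argument buys is precisely that it sidesteps all such constant-chasing; what your argument buys is a self-contained identification of the limit as a space-time L\'evy process with explicit characteristics, and your correct emphasis that the time and space increments are dependent and must be treated as a single $\R_{\ge0}\times\R^2$-valued step law is exactly the right point to stress in either approach.
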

\begin{proof} Denote
$\bsY^{(N)}_s=(Y^{(N)}_s, V^{(N)}_s):= \Big(\frac{\tau^{(N)}_{\lfloor s\log N\rfloor}}{N},
\frac{S^{(N)}_{\lfloor s\log N\rfloor }}{\sqrt{N}} \Big)$. The convergence of finite dimensional distributions
was already proved in \cite[Proposition 2.2]{CSZ19a}. We prove tightness
by verifying Aldous' tightness criterion \cite[Theorem 14.11]{Kal97}, namely that for
any bounded sequence of stopping times $\tau_N$ with respect to $(\bsY^{(N)}_s)_{s\geq 0}$
and any positive constants $h_N\downarrow 0$, we have
$\bsY^{(N)}_{\tau_N+h_N}-\bsY^{(N)}_{\tau_N}\to 0$ in probability as $N\to\infty$.
This follows immediately from the fact that the increments of $\bsY^{(N)}$
are i.i.d.\ and
$\bsY^{(N)}_{h_N}\to (0,0)$ in probability as $N\to\infty$.
\end{proof}

\smallskip

For $\theta \in (0,\infty)$, we define the exponentially weighted Green's function for
$\bsY = (\bsY_s)_{s\geq 0}$:
\begin{equation}\label{eq:G3}
\widehat G_\theta(t,x)  = \int_0^\infty e^{\theta s} \bsf_s(t,x) {\rm d}s,
\end{equation}
where $\bsf_s(\cdot, \cdot)$ is the density of the law of
$\bsY_s$ on $[0,\infty)\times\R^2$,
given that $\bsY_0=(0,0)$
(we take notation from \eqref{eq:YV}). It was shown in ~\cite{CSZ19a} that
\begin{equation}\label{eq:G2}
	\widehat G_\theta(t,x) := \widehat G_\theta(t) \, g_{\frac{t}{4}}(x) \,,
\end{equation}
where $g_\cdot(\cdot)$ is the heat kernel, see \eqref{eq:gt}, and
$\widehat G_\theta(t) :=\int_{\R^2} \widehat G_\theta(t,x) \dd x$ is closely related to the so-called Dickman
function in number theory. For $t\leq 1$, it can be computed explicitly as
\begin{equation}\label{eq:G1}
	\widehat G_\theta(t) = G_\theta(t)
	 := \int_0^\infty \frac{e^{(\theta-\gamma)s} \, s \, t^{s-1}}{\Gamma(s+1)} \, \dd s \,,
\end{equation}
with $\gamma$ as in \eqref{eq:RN} (see \cite{CSZ19a}\footnote{In \cite{CSZ19a}, there was no separate notation $\widehat G_\theta$ for the weighted Green's function, which might cause some confusion.}).
%\blue{while for $t>1$ its formula is derived exploiting a recursive formula for $\bsf_s(t,x)$ as in (1.4) and (1.15) of \cite{CSZ19a}  }.
We will also denote $G_\theta(t,x) := G_\theta(t) \, g_{\frac{t}{4}}(x)$. Note that for $t\leq 1$, $G_\theta(t,x)$ and $G_\theta(t)$ are the continuum
analogues of $U_N(n,x)$ and $U_N(n)$, respectively. It is therefore no surprise that the asymptotics of $U_N$
will be expressed in terms of $G_\theta$, which we record below for later use.

In light of \eqref{eq:secondX}, it is convenient to define
\begin{equation}\label{eq:barU}
	\overline{U}_N(n,x) := \sigma_N^2 \, U_N(n,x) \,, \qquad
	\overline{U}_N(n) := \sigma_N^2 \, U_N(n) = \sum_{x\in\Z^2} \overline{U}_N(n,x) \,.
\end{equation}
Recalling \eqref{eq:U}, we can give a graphical representation for
$\overline{U}_N(b-a,y-x)$ as follows:
\begin{align}\label{U-diagram}
\overline{U}_N(b-a,y-x)
&\equiv \quad
\begin{tikzpicture}[baseline={([yshift=1.3ex]current bounding box.center)},vertex/.style={anchor=base,
    circle,fill=black!25,minimum size=18pt,inner sep=2pt}, scale=0.45]
\draw  [fill] (0, 0)  circle [radius=0.2];  \draw  [fill] ( 4,0)  circle [radius=0.2];
\draw [-,thick, decorate, decoration={snake,amplitude=.4mm,segment length=2mm}] (0,0) -- (4,0);
\node at (0,-1) {\scalebox{0.8}{$(a,x)$}}; \node at (4,-1) {\scalebox{0.8}{$(b,y)$}};
\end{tikzpicture} \\
&:= \sum_{k\geq 1 }\sumtwo{n_1,...,n_k}{x_1,...,x_k}
\begin{tikzpicture}[baseline={([yshift=1.3ex]current bounding box.center)},vertex/.style={anchor=base,
    circle,fill=black!25,minimum size=18pt,inner sep=2pt}, scale=0.45]
\draw  [fill] (0, 0)  circle [radius=0.2];  \draw  [fill] ( 2,0)  circle [radius=0.2]; \draw  [fill] (4,0)  circle [radius=0.2];
\draw  [fill] (8, 0)  circle [radius=0.2]; \draw  [fill] (10, 0)  circle [radius=0.2];
\draw [thick] (0,0)  to [out=45,in=135]  (2,0) to [out=45,in=135]  (4,0) to [out=45,in=180]  (4.5,0.3);
\draw [thick] (0,0)  to [out=-45,in=-135]  (2,0) to [out=-45,in=-135]  (4,0)  to [out=-45,in=180]  (4.5,-0.3);
\draw [thick] (7.5,0.3)  to [out=0,in=135]  (8,0) to [out=45,in=135]  (10,0);
\draw [thick] (7.5,-0.3) to [out=0,in=-135] (8,0)  to [out=-45,in=-135]  (10,0);
\node at (0,-1) {\scalebox{0.7}{$(a,x)$}}; \node at (2,-1) {\scalebox{0.7}{$(n_1,x_1)$}};
\node at (4,-1) {\scalebox{0.7}{$(n_2,x_2)$}}; \node at (8,-1) {\scalebox{0.7}{$(n_k,x_k)$}};
\node at (10,-1) {\scalebox{0.7}{$(b,y)$}};
\node at (6,0) {$\cdots$};
\end{tikzpicture} \notag
\end{align}
where in the second line we assign weights $q_{n'-n}(x'-x)$ to any solid line going from
$(n,x)$ to $(n',x')$ and we assign weight
$\sigma_N^2$ to every solid dot.

Recall that $\sigma_N^2 \sim \frac{\pi}{\log N}$, see \eqref{eq:sigma} and \eqref{eq:RN}.
%\blue{With some abuse of notation, we will follow the convention, below, that $G_\theta(t)$ is given by formula
%\eqref{eq:G1} for {\it all} $t>0$ (even though as a Green's functions of the Dickman subordinator it this formula is valid only for $t<1$).}
We now rephrase some results from~\cite{CSZ19a}. Fix $T>0$.

\begin{itemize}
\item By \cite[Theorem~1.4]{CSZ19a}, for any fixed $\delta > 0$, as $N\to\infty$ we have
\begin{equation}\label{eq:asU1}
	\overline{U}_N(n) = \frac{\pi}{N} \, \big( G_\theta\big(\tfrac{n}{N}\big)
	+ o(1)\big) \qquad
	{\rm uniformly for } \ \ \delta N \le n \le T N  \,,
\end{equation}
and moreover there is $C < \infty$ such that
\begin{equation}\label{eq:asU1ub}
	\overline{U}_N(n) \le \frac{C}{N} \, G_\theta\big(\tfrac{n}{N}\big)
	\qquad \forall 0 < n \le T N \,.
\end{equation}

\item By \cite[Theorem~2.3 and~3.7]{CSZ19a},
for any fixed $\delta > 0$, as $N\to\infty$ we have
\begin{equation}\label{eq:asU2}
\begin{gathered}
	\overline{U}_N(n,x) = \frac{\pi}{N^2} \,
	\big( G_\theta\big(\tfrac{n}{N}, \tfrac{x}{\sqrt{N}}\big) + o(1)\big)
	\, 2 \, \ind_{(n,x) \in \Z^3_\even} \\
	\text{uniformly for} \ \ \delta N \le n \le T N \ \ \text{and} \ \
	|x| \le \tfrac{1}{\delta} \sqrt{N} \,.
\end{gathered}
\end{equation}
The prefactor $2$ is due to periodicity and,
moreover, there is $C < \infty$ such that
\begin{equation}\label{eq:asU2ub}
	\overline{U}_N(n,x) \le \frac{C}{N} \, \frac{1}{n} \, G_\theta\big(\tfrac{n}{N}\big)
	\qquad \forall 0 < n \le T N  \,, \ \forall x \in \Z^2 \,.
\end{equation}

\item
By \cite[Proposition~1.6]{CSZ19a}, for $t\in (0,1]$
the function $G_\theta(t)$ is $C^\infty$ and strictly positive,
and as $t \downarrow 0$ it has the following asymptotic behavior:
\begin{equation}\label{eq:Gas}
	G_\theta(t) = \frac{1}{t(\log\frac{1}{t})^2} \bigg\{ 1 + \frac{2\theta}{\log\frac{1}{t}}
	+ O\bigg(\frac{1}{(\log\frac{1}{t})^2}\bigg) \bigg\} \,,
\end{equation}
hence as $t \downarrow 0$
\begin{equation}\label{eq:G5}
\int_0^t G_\theta(s) {\rm d}s = \frac{1}{\log \frac{1}{t}} \bigg\{ 1 + \frac{\theta}{\log\frac{1}{t}}
	+ O\bigg(\frac{1}{(\log\frac{1}{t})^2}\bigg) \bigg\} .
\end{equation}
\end{itemize}

\begin{remark}\label{R:NtildeN}
In the proof of \eqref{eq:asU1}-\eqref{eq:asU2ub}, the case $T>1$ has to be treated differently from $T=1$. In \cite{CSZ19a}, the case $T>1$ was reduced to $T=1$ through a renewal decomposition and recursion (see \cite[Section 7]{CSZ19a}). Alternatively, we can reduce the case $T>1$ to $T=1$ by first setting $\widetilde N:=TN$, $\tilde \theta:= \theta +\log T+o(1)$ so that $\sigma_N^2=\sigma_N^2(\theta) = \sigma_{\widetilde N}^2(\tilde \theta)$ by their definitions in \eqref{eq:sigma}, and then applying \eqref{eq:asU1}-\eqref{eq:asU2ub} with $N$ replaced by $\widetilde N$, using the observation that $\frac{1}{T}G_{\theta+\log T}(\tfrac{t}{T}) = G_\theta(t)$.
\end{remark}

We will also need the following bound to complement \eqref{eq:asU2}.

\begin{lemma}\label{lem:UnGauss}
There exists $c\in (0,\infty)$ such that for all $\lambda \geq 0$ and $0\leq n\leq N$,
\begin{equation}\label{eq:asU3ub}
	\sum_{x\in \Z^2} \overline{U}_N(n,x) \, e^{\lambda |x|}
	\,\leq\, c\, e^{c\lambda^2 n} \, \overline{U}_N(n)   \,.
\end{equation}
\end{lemma}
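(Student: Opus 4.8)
The plan is to establish the exponential-moment bound \eqref{eq:asU3ub} by exploiting the renewal representation \eqref{eq:Uren} of $\overline{U}_N(n,x)$ together with a Gaussian-type bound on the one-step spatial increments of the renewal process $(\tau^{(N)}_r, S^{(N)}_r)$. First I would write, using \eqref{eq:Uren}--\eqref{eq:barU},
\begin{equation*}
	\sum_{x\in\Z^2} \overline{U}_N(n,x)\, e^{\lambda|x|}
	= \sigma_N^2 \sum_{r=1}^\infty (\lambda_N)^r\,
	\E\big[ e^{\lambda |S^{(N)}_r|} \,\ind_{\{\tau^{(N)}_r = n\}} \big] \,,
\end{equation*}
and then bound $e^{\lambda|S^{(N)}_r|}$ by a product of exponentials of the individual increments. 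Since $|S^{(N)}_r| \le \sum_{i=1}^r |S^{(N)}_i - S^{(N)}_{i-1}|$, and the pairs $(\tau^{(N)}_i - \tau^{(N)}_{i-1},\, S^{(N)}_i - S^{(N)}_{i-1})$ are i.i.d.\ with law \eqref{eq:tauS}, the quantity above factorizes once we condition on the increments of $\tau^{(N)}$.

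The key estimate I would need is that there is $c<\infty$ such that for every single step,
\begin{equation*}
	\E\big[ e^{\lambda |S^{(N)}_1 - S^{(N)}_0|} \,\big|\, \tau^{(N)}_1 - \tau^{(N)}_0 = m \big]
	\,\le\, c\, e^{c \lambda^2 m} \,,
\end{equation*}
which follows from \eqref{eq:tauS}: conditionally on $\tau^{(N)}_1 = m$, the increment $S^{(N)}_1$ has law proportional to $q_m(x)^2$ over $x\in\Z^2$, and since $q_m(x)^2 \le C\, g_{m/2}(x)^2 / \text{(normalization)}$ is subgaussian with variance parameter of order $m$ by the local limit theorem \eqref{eq:llt} (split into $|x|\le m^{3/4}$, where Lemma~\ref{lem:ker} gives $q_m(x)\le C g_{m/2}(x)$, and $|x|> m^{3/4}$, where large-deviation bounds for the simple random walk make the tail negligible against $e^{\lambda|x|}$ for $\lambda = O(1)$; one may reduce to $\lambda$ bounded since for large $\lambda$ the claimed bound $e^{c\lambda^2 n}$ is trivially large when $n\ge 1$, and the $n=0$ case is immediate). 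Multiplying these conditional bounds over the $r$ steps and using $\sum_{i} m_i = n$ on the event $\{\tau^{(N)}_r = n\}$ gives a factor $c^r e^{c\lambda^2 n}$, so that
\begin{equation*}
	\sum_{x} \overline{U}_N(n,x)\, e^{\lambda|x|}
	\,\le\, e^{c\lambda^2 n}\, \sigma_N^2 \sum_{r=1}^\infty (c\,\lambda_N)^r\,
	\P\big(\tau^{(N)}_r = n\big) \,.
\end{equation*}

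At this point the remaining task is to absorb the extra factor $c^r$ inside the sum back into something comparable to $\overline{U}_N(n) = \sigma_N^2 \sum_r (\lambda_N)^r \P(\tau^{(N)}_r = n)$. The main obstacle is precisely this: the naive bound replaces $\lambda_N \approx 1$ by $c\lambda_N$ with $c>1$, which changes the effective renewal weighting and a priori could blow up. The resolution is that $c$ in the per-step bound can be taken arbitrarily close to $1$ at the cost of a large prefactor: choosing the variance parameter slightly larger, one gets $\E[e^{\lambda|S^{(N)}_1|}\,|\,\tau^{(N)}_1 = m] \le c_0\, e^{c\lambda^2 m}$ with $c_0 = c_0(c)$ a constant and $c$ any fixed number $>c_*$ for some universal $c_*$; but since we only need \emph{some} constant $c$ in \eqref{eq:asU3ub} (not a sharp one), an even cleaner route is to note that for each individual step the conditional Laplace transform obeys $\E[e^{\lambda|S^{(N)}_1|}\,|\,\tau^{(N)}_1=m]\le e^{c\lambda^2 m}$ \emph{without} any multiplicative constant once $m$ is large, and for the finitely many small values of $m$ one uses a crude bound; the small-$m$ contributions only produce an overall constant. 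Then $c^r$ is replaced by a uniformly bounded quantity and the sum is exactly $\pi^{-1}\sigma_N^{-2}$-comparable to $\overline{U}_N(n)$ after adjusting $\theta$ by an $O(1)$ amount (equivalently, by \eqref{eq:asU1ub} applied with $\lambda_N$ replaced by $\lambda_N(1+o(1))$, which only shifts $\theta$ and changes constants). I would carry this out by first reducing to $0\le\lambda\le\lambda_0$ for a fixed $\lambda_0$, then to $n$ large, handling $n$ bounded separately via \eqref{eq:asU2ub} and the trivial bound on the finite sum over $x$.
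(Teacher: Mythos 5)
Your overall strategy (renewal representation \eqref{eq:Uren}, conditioning on the $\tau^{(N)}$-increments, per-step Gaussian-type exponential moment bounds) is the same as the paper's, and your reductions to $\lambda$ bounded and to $n\ge 1$ are fine. But there is a genuine gap at exactly the point you identify as the crux. By writing $|S^{(N)}_r|\le\sum_i|\zeta_i|$ you are forced to bound the per-step quantities $\E\big[e^{\lambda|\zeta_i|}\,\big|\,\tau\text{-increment}=m\big]$, and your claimed fix --- that this is at most $e^{c\lambda^2 m}$ \emph{with no multiplicative constant} once $m$ is large --- is false. Since $\E|\zeta_i|\asymp\sqrt m$, one has $\E[e^{\lambda|\zeta_i|}]\ge 1+c'\lambda\sqrt m$, whereas $e^{c\lambda^2 m}=1+O(\lambda^2 m)$; for $\lambda\lesssim 1/\sqrt m$ the linear term dominates, so for every large $m$ there is a range of (small, bounded) $\lambda$ where the constant-free bound fails. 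The obstruction is the first absolute moment created by taking $|\cdot|$ per step, not the small values of $m$. Your other escape route --- keeping a per-step constant $c_0>1$ (possibly close to $1$) and absorbing $c_0^r$ by ``shifting $\theta$'' or invoking \eqref{eq:asU1ub} with $\lambda_N(1+o(1))$ --- also does not work: a shift of $\theta$ corresponds to per-renewal weights $1+O(1/\log N)$, whereas any fixed $c_0>1$ multiplies the weight of configurations with $r$ renewals by $c_0^r$, and the relevant number of renewals is of order $\log N$ (cf.\ the scaling $\tau^{(N)}_{\lfloor s\log N\rfloor}$ in Lemma~\ref{lem:Levyconv}), so this produces a factor growing like a power of $N$, not an $O(1)$ adjustment.

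The paper's proof avoids this entirely by never taking absolute values per step: it applies Cauchy--Schwarz to split the two coordinates and uses $e^{|x|}\le e^x+e^{-x}$ \emph{once, at the level of the total sum} $\sum_i\zeta_{i,j}$, so that after factorizing over the independent increments only the signed exponentials $\E[e^{\pm2\lambda\zeta_{i,j}}]$ appear. For these, symmetry kills the linear term and the Gaussian-type even-moment bounds $\E[\zeta_{i,\cdot}^{2k}]\le(Cn_i)^k(2k-1)!!$ give $\E[e^{\pm2\lambda\zeta_{i,j}}]\le e^{c\lambda^2 n_i}$ with no prefactor; the product over steps is then exactly $e^{c\lambda^2 n}$ on $\{\tau^{(N)}_r=n\}$, with a single global constant coming from the one application of $e^{|x|}\le e^x+e^{-x}$, and \eqref{eq:asU3ub} follows. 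If you restructure your argument this way (absolute value taken once for the total displacement, signed per-step Laplace transforms), the rest of your proposal goes through.
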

\noindent
Note that by the Markov inequality and optimisation over $\lambda >0$, \eqref{eq:asU3ub} implies that
the probability kernel $\overline{U}_N(n,\cdot)/\overline{U}_N(n)$ has
Gaussian decay on the spatial scale $\sqrt{n}$.

\begin{proof}
Recall the definition of $U_N(n,x)$ from \eqref{eq:Uren}.
Conditioned on $\tau^{(N)}_1, \ldots, \tau^{(N)}_r$ with $\tau^{(N)}_r=n$, we can
write $S^{(N)}_r=\zeta_1+\cdots +\zeta_r$ for independent $\zeta_i$'s with
\begin{equation*}
	\p(\zeta_i=x) =\frac{q_{n_i}(x)^2}{\sum_{y\in \Z^2} q_{n_i}(y)^2} \,,
\end{equation*}
where  $n_i:=\tau^{(N)}_i - \tau^{(N)}_{i-1}$ and $x\in \Z^2$. For each $i$, denote by
$\zeta_{i,1}$ and $\zeta_{i,2}$ the two components of $\zeta_i\in \Z^2$. Then we note that
there exists $c>0$ such that for any $\lambda\geq 0$, $n_i \in \N$,
\begin{equation}\label{eq:qiGauss}
	\e[e^{\pm \lambda \zeta_{i, j}}] \leq e^{c\lambda^2 n_i}, \qquad
	 j=1,2 \,.
\end{equation}
This can be seen by Taylor expanding the exponential and using that
$\e[\zeta_{i,\cdot}]=0$ by symmetry,  $|\e[\zeta^{2k+1}_{i,\cdot}]|
\leq \frac{1}{2}(\e[\xi^{2k}_{i,\cdot}] + \e[\zeta^{2k+2}_{i,\cdot}])$
by Young's inequality,
as well as $\e[\zeta^{2k}_{i,\cdot}] \leq (Cn_i)^k (2k-1)!!$ for some $C>0$ uniformly in $n_i, k\in \N$. The bound
on $\e[\zeta^{2k}_{i,\cdot}]$ holds because by \eqref{eq:llt},
\begin{equation*}
	\p(\zeta_i=x) = \frac{q_{n_i}(x)^2}{q_{2n_i}(0)} \leq
	\bigg\{ \frac{\sup_{x\in\Z^2} q_{n_i}(x)}{q_{2n_i}(0)}\bigg\} \, q_{n_i}(x)
	\le C' \, q_{n_i}(x) \,,
\end{equation*}
where $q_{n_i}$ has the same Gaussian tail decay as the heat kernel $g_{n_i/2}$. Using  $e^{|x|} \le e^x+e^{-x}$,
this then implies
$$
\begin{aligned}
	\quad \qquad \e\big[e^{\lambda |S^{(N)}_r|} \big| \tau^{(N)}_\cdot \big]
	= \e\Big[e^{\lambda |\sum_{i=1}^r \zeta_i |} \Big] & \leq
	\e\Big[e^{2\lambda |\sum_{i=1}^r \zeta_{i,1} |} \Big]^{\frac12}
	\e\Big[e^{2\lambda |\sum_{i=1}^r \zeta_{i,2} |} \Big]^{\frac12} \\
	& \leq \prod_{j=1,2}\Big(\e\Big[e^{2\lambda \sum_{i=1}^r \zeta_{i,j}}\Big]
	+ \e\Big[e^{-2\lambda \sum_{i=1}^r \zeta_{i,j}} \Big] \Big)^{\frac12} \\
	& = \prod_{j=1,2} \Big(\prod_{i=1}^r \e\Big[e^{2\lambda \zeta_{i,j}}\Big]
	+ \prod_{i=1}^r \e\Big[e^{-2\lambda \zeta_{i,j}} \Big]\Big)^{\frac12} \\
	& \leq 2 e^{4c\lambda^2 n}.
\end{aligned}
$$
The bound \eqref{eq:asU3ub} then follows readily from the definitions of $U_N(n,x)$
and $U_N(n)$ in \eqref{eq:Uren} and \eqref{eq:Usingle}, recalling that $\overline{U}_N(n,x)$ and $\overline{U}_N(n)$
are defined in \eqref{eq:barU}.
\end{proof}

\subsection{Second moment of averaged partition function}
Using $X^{\beta_N}_{d,f}(x,y)$ as introduced in \eqref{eq:X}, and recalling \eqref{eq:Zpoly},
we can now rewrite the chaos expansion for the averaged partition
function $\mathcal{Z}^{\beta_N}_{N, t}(\varphi,\psi)$ in \eqref{eq:Zpolyav} as follows:
\begin{equation}\label{eq:Zpolysimple}
	\mathcal{Z}^{\beta_N}_{N, t}(\varphi,\psi)  \, = \,
	q^N_{0, Nt}(\varphi,\psi)  + \frac{1}{N} \sum_{\substack{0<d\leq f<Nt\\ x,y \in \Z^2}}
	q^N_{0,d}(\varphi, x) \, X^{\beta_N}_{d,f}(x,y) \, q^N_{f, Nt}(y, \psi) \,,
\end{equation}
so that by \eqref{eq:secondX} and the fact that $\overline{U}_N:=\sigma_N^2 U_N$, we have
\begin{equation}\label{eq:m2-repr}
	 \bbE\big[\mathcal{Z}^{\beta_N}_{N, t}(\varphi,\psi)^2 \big]
	\,=\,  q^N_{0, Nt}(\varphi,\psi)^2 + \frac{1}{N^2} \!\!\!\!\!\!
	\sum_{\substack{x,y \in \Z^2 \\ 0<d\leq f<Nt}} \!\!\!\!\!\!
	q^N_{0,d}(\varphi, x)^2 \, \overline{U}_N(f-d, y-x) \, q^N_{f, Nt}(y, \psi)^2.
\end{equation}

We now compute the limit of $\bbE\big[\mathcal{Z}^{\beta_N}_{N, t}(\varphi,\psi)^2 \big]$
as $N\to\infty$. This was first obtained
for the Stochastic Heat Equation in~\cite{BC98}
in the special case $\psi \equiv 1$;
see also \cite[Theorems~1.2 and~1.7]{CSZ19b}
for an alternative derivation, that also includes directed polymers.

\begin{proposition}[First and second moments]\label{th:m2}
Recall $G_\theta(t)$ from \eqref{eq:G1} for all $t>0$. For $\varphi: \R^2 \to \R$, define
\begin{equation}\label{eq:phinorm}
	\Vert \varphi \Vert_{\cG_t}^2 :=
	\iint_{\R^2\times\R^2} \varphi(z) \, \cG_t(z'-z) \, \varphi(z')
	\,\dd z \, \dd z',\footnote{The positivity
	of $\Vert \varphi\Vert_{\cG_t}^2$ can be seen via Fourier transform.}
	\qquad \mbox{where}\quad \cG_t(x) \,:=\, \int_0^t g_s(x) \, \dd s \,.
\end{equation}
Then for all $\varphi$ with $\Vert \varphi \Vert_{\cG_t}<\infty$ and all $\psi \in L^\infty(\R^2)$, we have
\begin{align}
	\label{eq:m1-lim0}
	\lim_{N\to\infty} \bbE\big[\mathcal{Z}^{\beta_N}_{N, t}(\varphi, \psi) \big]
	& \,=\, \tfrac{1}{2} \, g_{\frac{t}{2}}(\varphi,\psi) \,, \\
	\label{eq:m2-lim0}
	\lim_{N\to\infty} \bbE\big[\mathcal{Z}^{\beta_N}_{N, t}(\varphi, \psi)^2 \big]
	& \,=\, \tfrac{1}{4} \, g_{\frac{t}{2}}(\varphi,\psi)^2
	\,+\, \tfrac{1}{2} \mathscr{V}_{t}^{\theta}(\varphi,\psi) \,,
\end{align}
where
\begin{equation}
\label{eq:m2-lim1}
\begin{split}
	\mathscr{V}_{t}^{\theta}(\varphi,\psi)
	& \,=\, \iiiint\limits_{(\R^2)^4}
	\varphi(z) \, \varphi(z') \, K_{t}^{\theta}(z,z'; w,w') \,
	\psi(w) \, \psi(w') \, \dd z \, \dd z' \, \dd w \, \dd w' \\
	& \, \leq \, \pi
	\, \Vert \psi\Vert_\infty^2 \,  \Vert \varphi\Vert_{\cG_t}^2 \, \int_0^t G_\theta(u) \, \dd u \,,
\end{split}
\end{equation}
and the kernel $K_{t}^{\theta}$ is defined by
\begin{equation}
\label{eq:m2-lim}
\begin{split}
	K_{t}^{\theta}(z,z'; w,w')
	&\,:=\, \pi \, \, g_{\frac{t}{4}}\big(\tfrac{w+w'}{2} - \tfrac{z+z'}{2}\big) \\
	& \qquad\quad \times \iint\limits_{0<s<u<t} g_s(z'-z) \,
	G_\theta(u-s) \, g_{t-u}(w'-w) \, \dd s \, \dd u \,.
\end{split}
\end{equation}
\end{proposition}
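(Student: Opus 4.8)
The proof rests on the exact second-moment formula \eqref{eq:m2-repr}, the sharp renewal asymptotics \eqref{eq:asU1ub}--\eqref{eq:asU2ub}, and the local limit theorem \eqref{eq:llt}. I would first establish everything for nice test functions (say $\varphi\in C_c(\R^2)$, $\psi\in C_b(\R^2)$) and then extend to general $\varphi$ with $\|\varphi\|_{\cG_t}<\infty$ by approximation in $\cG_t$-norm, using the uniform-in-$N$ bound on $\bbE[\mathcal Z^{\beta_N}_{N,t}(\varphi,\psi)^2]$ obtained along the way. For the first moment, since every $\xi_N(n,z)$ has zero mean and distinct chaos terms in \eqref{eq:Zpolyav} carry distinct disorder variables, $\bbE[\mathcal Z^{\beta_N}_{N,t}(\varphi,\psi)]=q^N_{0,Nt}(\varphi,\psi)$, and \eqref{eq:gstphipsiconv} yields \eqref{eq:m1-lim0}.

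\textbf{Reduction and uniform bound.} By \eqref{eq:m2-repr}, $\bbE[\mathcal Z^{\beta_N}_{N,t}(\varphi,\psi)^2]=q^N_{0,Nt}(\varphi,\psi)^2+\mathcal I_N$ with
\[
	\mathcal I_N:=\frac1{N^2}\sum_{\substack{x,y\in\Z^2\\ 0<d\le f<Nt}} q^N_{0,d}(\varphi,x)^2\,\overline{U}_N(f-d,y-x)\,q^N_{f,Nt}(y,\psi)^2 .
\]
The first term tends to $\tfrac14 g_{t/2}(\varphi,\psi)^2$ by the first-moment computation, so it remains to prove $\mathcal I_N\to\tfrac12\mathscr{V}_t^\theta(\varphi,\psi)$. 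Bounding $q^N_{f,Nt}(y,\psi)\le\|\psi\|_\infty$ pointwise, summing over $y$ to get $\sum_y\overline{U}_N(f-d,y-x)=\overline{U}_N(f-d)$, then over $f$ and invoking \eqref{eq:asU1ub} gives $\sum_{m\le Nt}\overline{U}_N(m)\le C\int_0^t G_\theta(s)\,ds<\infty$; combined with $\tfrac1{N^2}\sum_{x}\sum_{0<d<Nt}q^N_{0,d}(\varphi,x)^2=\tfrac1{N^2}\sum_{0<d<Nt}\sum_{v,v'}\varphi_N(v)\varphi_N(v')q_{2d}(v-v')\to\tfrac12\|\varphi\|_{\cG_t}^2$ (via \eqref{eq:llt}, \eqref{eq:gtrans}), this yields $\mathcal I_N\le C\|\psi\|_\infty^2\|\varphi\|_{\cG_t}^2\int_0^t G_\theta$ uniformly in $N$. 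The corresponding limiting bound \eqref{eq:m2-lim1} follows from the explicit kernel formula \eqref{eq:m2-lim} by integrating out $w,w'$ and the midpoint against $g_{t-u}$, $g_{t/4}$ (each integrates to $1$), using $\int_0^{t-s}G_\theta\le\int_0^t G_\theta$ and the non-negativity of $\iint\varphi(z)\varphi(z')g_s(z'-z)\,dz\,dz'$.

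\textbf{Identifying the limit of $\mathcal I_N$.} Fix $\delta>0$ and restrict the sum to the \emph{bulk} $\{\delta N\le d,\ \delta N\le f-d,\ f\le Nt-\delta N,\ |y-x|\le\delta^{-1}\sqrt N\}$. The complementary parts vanish letting $N\to\infty$ then $\delta\downarrow0$: the small-gap region $f-d<\delta N$ is handled as in the uniform bound with $\int_0^t G_\theta$ replaced by $\int_0^\delta G_\theta\to0$; the temporal-boundary regions $d<\delta N$ and $Nt-f<\delta N$ likewise, using in addition $\int_0^\delta\iint|\varphi||\varphi|g_s\to0$ (finite as $\varphi\in C_c$); and $|y-x|>\delta^{-1}\sqrt N$ by the Gaussian tail estimate of Lemma~\ref{lem:UnGauss}. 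On the bulk one inserts \eqref{eq:asU2}, i.e.\ $\overline{U}_N(f-d,y-x)\approx\tfrac{2\pi}{N^2}G_\theta\big(\tfrac{f-d}{N}\big)\,g_{\frac{f-d}{4N}}\big(\tfrac{y-x}{\sqrt N}\big)$ (recall \eqref{eq:G2}), together with the local-limit approximations $q^N_{0,d}(\varphi,x)\approx g_{d/(2N)}(\varphi,x/\sqrt N)$ and $q^N_{f,Nt}(y,\psi)\approx g_{(Nt-f)/(2N)}(y/\sqrt N,\psi)$ from \eqref{eq:llt}, \eqref{eq:phiNpsiN}, \eqref{eq:gtrans}. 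Setting $s=d/N$, $u=f/N$, $\xi=x/\sqrt N$, $\upsilon=y/\sqrt N$ turns the sum into a Riemann sum, and tracking the periodicity factors (the $2$'s in \eqref{eq:llt} and \eqref{eq:asU2}, the $\tfrac12$'s in $\varphi_N,\psi_N$) gives
\[
	\mathcal I_N\ \longrightarrow\ \frac{\pi}{2}\iint_{0<s<u<t}\iint_{(\R^2)^2} g_{s/2}(\varphi,\xi)^2\, G_\theta(u-s)\, g_{(u-s)/4}(\upsilon-\xi)\, g_{(t-u)/2}(\upsilon,\psi)^2 \, d\xi\,d\upsilon\,ds\,du .
\]
Finally, the Gaussian product identity $g_a(\xi-z)g_a(\xi-z')=g_{a/2}\big(\xi-\tfrac{z+z'}{2}\big)g_{2a}(z-z')$ (applied with $a=s/2$ and $a=(t-u)/2$) followed by two applications of Chapman--Kolmogorov collapses the $\xi,\upsilon$-integrals to the single Gaussian $g_{t/4}\big(\tfrac{w+w'}{2}-\tfrac{z+z'}{2}\big)$, which rewrites the right-hand side as $\tfrac12\mathscr{V}_t^\theta(\varphi,\psi)$ with $K_t^\theta$ exactly as in \eqref{eq:m2-lim}; this proves \eqref{eq:m2-lim0}.

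\textbf{Main obstacle.} The delicate point is the truncation step: isolating the bulk where the \emph{sharp} asymptotics \eqref{eq:asU2} hold and showing, uniformly in $N$, that the contributions from small time-gaps, from the temporal boundaries near $0$ and $t$, and from large spatial displacements are negligible --- this is precisely where the sharp upper bounds \eqref{eq:asU1ub}, \eqref{eq:asU2ub} and the Gaussian tail bound of Lemma~\ref{lem:UnGauss} are indispensable. Getting the precise prefactors $\tfrac14$ and $\tfrac12$ further requires careful bookkeeping of the simple-random-walk periodicity, and the passage from $\varphi\in C_c$ to general $\varphi$ with $\|\varphi\|_{\cG_t}<\infty$ relies on the matching uniform $L^2$-bound established above.
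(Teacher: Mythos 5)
Your proposal is correct and follows essentially the same route as the paper: the exact chaos/second-moment identity \eqref{eq:m2-repr}, local limit theorem replacements of the kernels $q^N$, the renewal asymptotics \eqref{eq:asU1ub}--\eqref{eq:asU2ub} for $\overline{U}_N$, Riemann-sum convergence to the integral \eqref{eq:m2-lim3}, and the Gaussian product identity plus Chapman--Kolmogorov to collapse the inner integrals into the kernel $K_t^\theta$, with \eqref{eq:m2-lim1} obtained by integrating out $\psi$, $w$, $w'$, $u$, $s$. Your more explicit bulk truncation (small gaps, temporal boundaries, spatial tails via Lemma~\ref{lem:UnGauss}) and the density step for general $\varphi$ with $\Vert\varphi\Vert_{\cG_t}<\infty$ just spell out what the paper treats tersely via the uniform bounds and a footnote, so there is no substantive difference.
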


\begin{proof}
The first moment convergence \eqref{eq:m1-lim0} holds because
by $\bbE\big[\mathcal{Z}^{\beta_N}_{N, t}(\varphi, \psi) \big] =
q^{N}_{0,Nt}(\varphi,\psi)$, see \eqref{eq:Zpolysimple}, in view
of the asymptotic relation \eqref{eq:gstphipsiconv}.

For the second moment computation \eqref{eq:m2-lim0}
we exploit \eqref{eq:m2-repr}, where the first term
in the r.h.s.\ converges to $\frac{1}{4}
\, g_{t/2}(\varphi,\psi)^2$
by \eqref{eq:gstphipsiconv}, which matches the first term in the r.h.s.\ of \eqref{eq:m2-lim0}.
It remains to show that the sum in \eqref{eq:m2-repr} converges to
the term $\frac{1}{2} \mathscr{V}_{t}^{\theta}(\varphi,\psi)$ in \eqref{eq:m2-lim0}.

Recall the definition of $q^N_\cdot$ in \eqref{eq:qNphi}-\eqref{eq:qNpsi}.
By the local limit theorem \eqref{eq:llt} and in view of \eqref{eq:gtrans},
we see that for any $\epsilon > 0$, uniformly for $m > \epsilon N$ and $w \in \Z^2$,
we have as $N\to\infty$
\begin{equation*}
	q^N_{0, m}(\varphi, w)  \,=\,
	\Big( g_{\frac{1}{2} \frac{m}{N}}\big(\varphi, \tfrac{w}{\sqrt{N}} \big) \,+\,
	o(1) \Big)
	\, \ind_{(m,w) \in \Z^3_\even} \,,
\end{equation*}
and similarly, uniformly for $n \le (1- \epsilon) Nt$ and $z \in \Z^2$,
\begin{equation*}
	q^N_{n, Nt}(z,\psi) \,=\,
	\Big( g_{\frac{1}{2}(t - \frac{n}{N})}\big(\tfrac{z}{\sqrt{N}}, \psi \big) \,+\,
	o(1) \Big) \, \ind_{(n,z) \in \Z^3_\even} \,.
\end{equation*}
Applying the asymptotic relation \eqref{eq:asU2} for
$\overline{U}_N(f-d,y-x)$, we see that the sum in \eqref{eq:m2-repr}
is a Riemann sum that converges as $N\to\infty$
to the multiple integral\footnote{The contributions to the sum in \eqref{eq:m2-repr}
given by $m \le \epsilon N$ and $n > (1-\epsilon)Nt$ are small
when $\epsilon > 0$ is small, uniformly in large $N$, as can be checked using the
uniform bound \eqref{eq:ker1}.}
\begin{equation}\label{eq:m2-lim3}
	\frac{1}{2} \mathscr{V}_{t}^{\theta}(\varphi,\psi)
	\,:=\, \frac{\pi}{2} \ \iiiint\limits_{\substack{0 < s < u < t\\ a,b \in \R^2}}
	g_{\frac{s}{2}}(\varphi, a)^2 \, G_\theta(u-s,b-a) \, g_{\frac{t-u}{2}}(b,\psi)^2
	\dd s \, \dd u \, \dd a \, \dd b \,,
\end{equation}
where the prefactor $\frac{1}{2}$ results from combining the
periodicity factor $2$ in \eqref{eq:asU2}
with the volume factor $\frac{1}{2} \cdot \frac{1}{2}$ which originates
from the restrictions $(d,x), (f,y)\in \Z^3_{\rm even}$ in \eqref{eq:m2-repr}.
Then it follows by \eqref{eq:gstphi}, \eqref{eq:gstpsi} and \eqref{eq:G2}
that the equality in \eqref{eq:m2-lim1} holds with
\begin{equation*}
\begin{split}
	K_{t}^{\theta}(z,z'; w,w')
	&\,=\, \pi \, \iiiint\limits_{\substack{0 < s < u < t \\ a,b \in \R^2}}
	\big\{ g_{\frac{s}{2}}(a-z) \, g_{\frac{s}{2}}(a-z') \big\}
	 \, G_\theta(u-s) \, g_{\frac{u-s}{4}}(b-a)   \\
	& \qquad\qquad\qquad\qquad
	\times \big\{ g_{\frac{t-u}{2}}(w-b)\, g_{\frac{t-u}{2}}(w'-b) \big\} \,
	\dd s \, \dd u \, \dd a \, \dd b \,.
\end{split}
\end{equation*}
We can simplify both brackets via the identity $g_t(x) \, g_t(y) = g_{2t}(x-y)
\, g_{\frac{t}{2}}(\frac{x+y}{2})$, see \eqref{eq:gt}.
Performing the integrals over $a,b \in \R^2$ we then obtain \eqref{eq:m2-lim}.

The bound in \eqref{eq:m2-lim1} follows by bounding $\psi$ with $\Vert \psi\Vert_\infty$
and then successively integrating out $w, w'$, followed by $u$ and $s$ in \eqref{eq:m2-lim}.
\end{proof}

\begin{remark}[Point-to-plane partition function]\label{rem:paf}
For $\psi(w) = \ind(w) \equiv 1$, we can view
$\mathcal{Z}^{\beta_N}_{N, t}(\varphi, \ind)$
as the point-to-plane partition function $Z_N^{\beta_N}(z)$ in \eqref{eq:paf}
averaged over its starting point~$z$.
By \eqref{eq:m1-lim0}-\eqref{eq:m2-lim},
\begin{align*}
	\lim_{N\to\infty} \bbE\big[\mathcal{Z}^{\beta_N}_{N, t}(\varphi, \ind) \big]
	& = \frac{1}{2} \, g_{\frac{t}{2}}(\varphi,1)
	= \frac{1}{2} \, \int\limits_{\R^2} \varphi(z) \, \dd z \,, \\
	\lim_{N\to\infty} \bbvar\big[\mathcal{Z}^{\beta_N}_{N, t}(\varphi, \ind) \big]
	& = \frac{1}{2} \mathscr{V}_{t}^{\theta}(\varphi, 1)
	= \frac{1}{2} \iint\limits_{(\R^2)^2} \varphi(z) \, \varphi(z') \,
	K_{t}^{\theta}(z-z') \dd z\dd z',
\end{align*}
where we set
\begin{gather*}
	K_{t}^{\theta}(x)
	:= \pi \, \iint_{0 < s < u < t} g_s(x) \,
	G_\theta(u-s) \, \dd s \, \dd u \,.
\end{gather*}
We note that both the asymptotic mean and the
asymptotic variance of $\mathcal{Z}^{\beta_N}_{N, t}(\varphi, \ind)$ are
half of those obtained in \cite[eq.~(1.19)-(1.20)]{CSZ19b}.
This is because here we have defined $\mathcal{Z}^{\beta_N}_{N, t}(\varphi,\psi)$
as a sum over $\Z^2_\even$, see~\eqref{eq:ZNavalt}, while in \cite{CSZ19b},
the sum is over both $\Z^2_\odd$ and $\Z^2_\even$, which
give rise to two i.i.d.\ limits as $N\to\infty$ by the parity of the simple random walk on $\Z^2$.
\end{remark}

\section{Coarse-graining}\label{sec:CGmodel}

In this section, we give the details of how to coarse-grain the averaged partition
function and what is the precise definition
of the coarse-grained model, which were outlined in Section~\ref{sec:outline}.
The main result is Theorem~\ref{th:cg-main},
which shows that the averaged partition function
$\mathcal{Z}^{\beta_N}_{N, t}(\varphi,\psi)$, see \eqref{eq:ZNavalt},
can be approximated in $L^2$ by the coarse-grained model.

\subsection{Preparation}

The starting point is the polynomial
chaos expansion \eqref{eq:Zpolyav} for the averaged partition function
$\mathcal{Z}^{\beta_N}_{N, t}(\varphi,\psi)$, which is a multilinear polynomial in
the disorder variables $\xi_N(n,z)$. We will call the sequence
of time-space points $(n_1, z_1)$, \ldots, $(n_r, z_r)\in \N\times \Z^2$
in the sum in \eqref{eq:Zpolyav} a {\em microscopic (time-space) renewal configuration}.
We assume that the disorder strength is chosen
to be $\beta_N=\beta_N(\theta)$ as defined in  \eqref{eq:sigma}-\eqref{eq:betaN}.
For simplicity, we assume the time horizon to be $tN$ with $t=1$.

Given $\epsilon \in (0,1)$ and $N\in\N$, we partition
discrete time-space $\{1,\ldots, N\} \times \Z^2$
into \emph{mesoscopic boxes}
\begin{equation} \label{eq:cB}
	\cB_{\epsilon N}(\sfi,\sfa)
	\ := \ \underbrace{((\sfi-1)\epsilon N, \sfi \epsilon N]}_{\cT_{\epsilon N}(\sfi)}
	\times \underbrace{((\sfa-(1,1))\sqrt{\epsilon N},
	\sfa\sqrt{\epsilon N}]}_{\cS_{\eps N}(\sfa)}
	 \ \, \cap \ \ \Z^3_\even \,,
\end{equation}
where $\cT_{\epsilon N}(\sfi)$ is mesoscopic time interval and $\cS_{\eps N}(\sfa)$
a mesoscopic spatial square.\footnote{We use the
notation $(\sfa-\sfb, \sfa] = (\sfa_1-\sfb_1,\sfa_1] \times (\sfa_2-\sfb_2,\sfa_2]$ for squares in $\R^2$.}
These boxes
are indexed by \emph{mesoscopic variables}
\begin{equation*}
	(\sfi,\sfa) \in \{1,\ldots, \lfloor\tfrac{1}{\epsilon}\rfloor\} \times \Z^2 \,.
\end{equation*}

Recall from Section~\ref{sec:outline} that to carry out the coarse-graining,
we need to organize the chaos expansion
\eqref{eq:Zpolyav} according to which mesoscopic boxes $\cB_{\eps N}$ are visited by the microscopic
renewal configuration $(n_1, z_1)$, \ldots, $(n_r, z_r)$.
To perform the kernel replacement \eqref{eq:kerrep}, which
allows each summand in the chaos expansion \eqref{eq:Zpolyav} to factorize into
a product of coarse-grained disorder variables $\Theta_{N, \eps}$
connected by heat kernels, we will impose some constraints on the set of visited
mesoscopic time intervals
$\cT_{\eps N}(\cdot)$ and spatial boxes $\cS_{\eps N}(\cdot)$, which will be shown
to have negligible costs in $L^2$.
We first introduce the necessary notation.

Let us fix two thresholds
\begin{equation}\label{eq:KM}
	K_\epsilon := (\log\tfrac{1}{\epsilon})^6 \,, \qquad
	M_\epsilon := \log \log\tfrac{1}{\epsilon}  \,.
\end{equation}
We will require that the visited mesoscopic time intervals $\cT_{\eps N}(\sfi_1)$, \ldots, $\cT_{\eps N}(\sfi_k)$ belong to
\begin{equation}\label{eq:cA0}
\begin{aligned}
	\cA_{\epsilon}^{(\notri)} \,:=\,
	\bigcup_{k\in\N}
	\Big\{(\sfi_1, \ldots, \sfi_k) \in \N^k : \
	& K_\eps \le
	\sfi_1 < \sfi_2 < \ldots < \sfi_k
	 \leq \lfloor\tfrac{1}{\epsilon}\rfloor -K_\eps
	\ \ \text{such that}  \\
	& \ \ \text{if} \ \ \sfi_{j+1}-\sfi_j < K_\epsilon \,, \ \text{then}
	\ \ \sfi_{j+2} - \sfi_{j+1} \geq K_\epsilon \ \Big\} \,.
\end{aligned}
\end{equation}
We call this the {\em no-triple condition}, since it forbids three consecutive mesoscopic time indices $\sfi_j, \sfi_{j+1}, \sfi_{j+2}$ with
both $\sfi_{j+1} -\sfi_j< K_\eps$ and $\sfi_{j+2} -\sfi_{j+1}< K_\eps$. We can then partition $(\sfi_1, \ldots, \sfi_k)$ into time blocks such that
$\sfi_j, \sfi_{j+1}$ belong to the same block whenever $\sfi_{j+1}-\sfi_j<K_\eps$.
\begin{definition}[Time block]\label{def:time-block}
We call a \emph{time block} any pair
$\vec\sfi = (\sfi, \sfi') \in \N \times \N$ with $\sfi \le \sfi'$.
The width of a time block is
\begin{equation*}
	|\vec\sfi| := \sfi' - \sfi +1\,.
\end{equation*}
The (non symmetric) ``distance'' between two time blocks $\vec\sfi, \vec\sfm$ is defined by
\begin{equation*}
	\dist(\vec\sfi \,,\, \vec\sfm) := \sfm - \sfi' \qquad \text{for } \
	\vec\sfi = (\sfi,\sfi') \, \text{ and } \ \vec\sfm = (\sfm,\sfm') \,,
\end{equation*}
and we write ``$\,\vec\sfi < \vec\sfm$''
to mean that ``$\,\vec\sfi$ precedes $\vec\sfm$''$:$
\begin{equation*}
	\vec\sfi < \vec\sfm \qquad \iff \qquad
	\dist(\vec\sfi \,,\, \vec\sfm) > 0 \ \text{ i.e. } \ \sfi' < \sfm \,.
\end{equation*}
\end{definition}
With the partitioning of the indices $(\sfi_1, \ldots, \sfi_k)$ of the visited mesoscopic
time intervals into consecutive time blocks as defined
above, which we denote by $\vec\sfi_1=(\sfi_1, \sfi_1')$, \ldots, $\vec \sfi_r=(\sfi_r, \sfi_r')$ with possibly $\sfi_\ell=\sfi_\ell'$,
the constraint $\cA_{\epsilon}^{(\notri)}$ then
becomes the following:
\begin{equation}\label{eq:bcA}
\begin{split}
	\bcA_{\epsilon}^{(\notri)} :=
	\bigcup_{r\in\N} \ \Big\{\,
	\text{time blocks} \quad
	K_\epsilon \,\le\, \vec\sfi_1 \,<\, \ldots \,<\, \vec\sfi_r \,\le\,
	\lfloor\tfrac{1}{\epsilon}\rfloor - K_\epsilon
	 \quad \text{such that}  \quad \ & \\
	|\vec\sfi_j|\leq K_\eps \ \ \forall j=1, \ldots, r \,, \quad
	\dist(\vec\sfi_{j-1} \,, \, \vec\sfi_{j}) \geq K_\epsilon \ \
	\forall j=2, \ldots, r \, & \Big\} \,.
\end{split}
\end{equation}
If the time horizon is $Nt$ with $t\neq 1$, then
in \eqref{eq:cA0} and \eqref{eq:bcA} we just replace the upper bound
$\vec\sfi_r \le \lfloor\tfrac{1}{\epsilon}\rfloor - K_\epsilon$ by
$\vec\sfi_r \le \lfloor\tfrac{t}{\epsilon}\rfloor - K_\epsilon$.

Given a time block $\vec\sfi=(\sfi, \sfi')$ with $\sfi'-\sfi+1\leq K_\eps$ (possibly $\sfi=\sfi'$),
which identifies two mesoscopic
time intervals $\cT_{\eps N}(\sfi)$ and $\cT_{\eps N}(\sfi')$ visited by the microscopic
renewal configuration
$(n_1, z_1), \ldots, (n_r, z_r)$ from \eqref{eq:Zpolyav} and no intervals in-between is visited, we can
identify the
first and last mesoscopic spatial boxes visited in the time intervals $\cT_{\eps N}(\sfi)$ and $\cT_{\eps N}(\sfi')$, respectively.
We call this pair of mesoscopic spatial indices a space block.

\begin{definition}[Space block]\label{def:space-block}
We call a \emph{space block} any pair $\vec\sfa = (\sfa,\sfa') \in \Z^2 \times \Z^2$.
The width of a space block is
\begin{equation*}
	|\vec\sfa| := |\sfa' - \sfa| \,,
\end{equation*}
with $|\cdot|$ being the Euclidean norm.
The (non symmetric) ``distance'' between two space blocks $\vec\sfa, \vec\sfb$ is
\begin{equation*}
	\dist(\vec\sfa \,,\, \vec\sfb) := |\sfb - \sfa'| \qquad \text{for } \
	\vec\sfa = (\sfa,\sfa') \, \text{ and } \ \vec\sfb = (\sfb,\sfb') \,.
\end{equation*}
\end{definition}
Putting the time block and space block together, we have the following.

\begin{definition}[Time-space block]
We call a \emph{time-space block} any pair $(\vec\sfi, \vec\sfa)$ where $\vec\sfi$
is a time block and $\vec\sfa$ is a space block. We also define
\begin{equation}\label{eq:iacond}
\bbT_\eps:=\Big\{ \text{ time-space blocks } \ (\vec\sfi, \vec\sfa) \ \text{ with } \
 	|\vec\sfi| \leq K_\eps \  \text{ and  }   \
	|\vec\sfa| \le M_\epsilon \sqrt{|\vec\sfi|} \Big\}.
\end{equation}
\end{definition}
In \eqref{eq:Zpolyav}, we will restrict to $(n_1, z_1), \ldots, (n_r, z_r)$ (interpreted as a time-space renewal configuration)
that satisfy condition \eqref{eq:cA0}, so that they determine a sequence of mesoscopic time-space blocks $\{(\vec\sfi_1, \vec \sfa_1)$, \ldots,
$(\vec\sfi_r, \vec \sfa_r)\} \in \bcA_{\epsilon}^{(\notri)}$. This would give the main contribution in \eqref{eq:Zpolyav}.
We now impose further constraints on the spatial components that still capture the main contribution.

Given two ``boundary variables'' $\sfb, \sfc \in \Z^2$ and a sequence of time blocks $(\vec\sfi_1, \ldots, \vec\sfi_r)$,
we denote by $\bcA_{\epsilon; \,\sfb, \sfc}^{(\diff)} = \bcA_{\epsilon; \, \sfb, \sfc}^{(\diff)}(\vec\sfi_1, \ldots, \vec\sfi_r)$
the following subset of space blocks $(\vec\sfa_1, \ldots, \vec\sfa_r)$, where we impose \emph{diffusive constraints}
on their widths and distances:
\begin{equation}\label{eq:bcA2}
\begin{split}
	\bcA_{\epsilon; \,\sfb, \sfc}^{(\diff)} :=
	\bigg\{ \text{space blocks} \ \ \vec\sfa_1, \ldots, \vec\sfa_r \quad \text{such that} \ \
	|\vec\sfa_j| \le  M_\epsilon \sqrt{|\vec\sfi_j|} \quad \forall j=1,\ldots, r \,, & \\
	\dist(\vec\sfa_{j-1}, \vec\sfa_{j})  \le
	M_\epsilon \, \sqrt{\dist(\vec\sfi_{j-1}\,,\, \vec\sfi_{j})} \quad \forall j=2,\ldots, r \,, & \\
	|\sfa_1 - \sfb| \le M_\epsilon \sqrt{\sfi_1} \quad \text{and} \quad
	|\sfc - \sfa'_r| \le M_\epsilon \sqrt{\lfloor\tfrac{1}{\epsilon}\rfloor - \sfi'_r}  &
	\, \bigg\} \,.
\end{split}
\end{equation}
Given a sequence of mesoscopic time-space blocks $(\vec\sfi_1, \vec \sfa_1)$, \ldots,
$(\vec\sfi_r, \vec \sfa_r)$ determined by the microscopic renewal configuration
$(n_1, z_1), \ldots, (n_r, z_r)$ from \eqref{eq:Zpolyav}, which
satisfies the constraints $\bcA_{\epsilon}^{(\notri)}$ and
$\bcA_{\epsilon; \,\sfb, \sfc}^{(\diff)}$,
we will perform the kernel replacement \eqref{eq:kerrep},
which leads to a factorization of each summand
in \eqref{eq:Zpolyav} as the product of coarse-grained disorder variables
$\Theta_{N, \eps}(\vec \sfi_j, \vec \sfa_r)$, $1\leq j\leq r$,
connected by the heat kernels $g_{\tfrac{1}{2}(\sfi_{j+1}-\sfi_j)}(\sfa_{j+1}-\sfa_j')$.
See Figure~\ref{CG-fig2}.

\begin{figure}
\hskip -1.5cm
\begin{tikzpicture}[scale=0.45]
\draw[ystep=1cm, xstep=2,gray,very thin] (0,0) grid (2, 10);
\draw[ystep=1cm, xstep=2,gray,very thin] (4,0) grid (6, 10);
\draw[ystep=1cm, xstep=2,gray,very thin] (10,0) grid (12, 10);
\draw[ystep=1cm, xstep=2,gray,very thin] (14,0) grid (16, 10);
\draw[ystep=1cm, xstep=2,gray,very thin] (20,0) grid (22, 10);
\node at (3,5) {{$\cdots$}}; \node at (7.7,5) {{$\cdots$}}; \node at (18,5) {{$\cdots$}}; \node at (23.5,5) {{$\cdots$}};
\node at (1,-0.5) {\scalebox{2.0}{$\leftrightarrow$}}; \node at (-0.3, 0.5) {\scalebox{1.5}{$\updownarrow$}};
\node at (1,-1) {\scalebox{0.8}{$\epsilon N$}}; \node at (-1.5, 0.5) {\scalebox{0.8}{$\sqrt{\epsilon N}$}};
\node at (7.5,6.3) {\scalebox{1.5}{$\longleftarrow$}}; \node at (8.5,6.3) {\scalebox{1.5}{$\longrightarrow$}};
\node at (17.5,6.3) {\scalebox{1.5}{$\longleftarrow$}}; \node at (18.5,6.3) {\scalebox{1.5}{$\longrightarrow$}};
\node at (12.7,6.2) {\scalebox{1.5}{$\leftarrow$}}; \node at (13.3,6.2) {\scalebox{1.5}{$\rightarrow$}};
\node at (8.1,6.9) {\scalebox{1.0}{$\geq K_\epsilon$}};
\node at (18.1,6.9) {\scalebox{1.0}{$\geq K_\epsilon$}};
\node at (13.1,6.9) {\scalebox{0.9}{$\leq \!K_\epsilon$}};
\draw (0, 5)  circle [radius=0.1];  \draw [fill] (4.2, 5.5)  circle [radius=0.08];  \draw [fill] (5.8, 5.6)  circle [radius=0.08];
 \draw [fill] (10.2, 2.3)  circle [radius=0.08];   \draw [fill] (11.6,1.5) circle [radius=0.08]; \draw [fill] (14.2, 1.5)  circle [radius=0.08];
 \draw [fill] (15.8, 1.6)  circle [radius=0.08];  \draw [fill] (20.2, 4.5)  circle [radius=0.08];
\draw [fill] (21.8, 4.6) circle [radius=0.08]; \draw (6, 6) circle [radius=0.1];
\draw (12, 3) circle [radius=0.1];
\draw (16, 2) circle [radius=0.1]; \draw (22, 5) circle [radius=0.1];
\draw[thick] (0,5) to [out=45,in=150] (6,6);
\draw[thick, dashed] (4.2, 5.5) to [out=20,in=180] (4.4,5.8) to [out=-30,in=120] (4.8,5.2) to [out=0, in=180] (5.8, 5.6);
\draw[thick] (6, 6) to [out=0, in=135] (12, 3);
\draw[thick, dashed] (10.2, 2.3) to [out=80,in=180] (11,3.3) to [out=-50,in=90] (11.6,1.5) ;
\draw[thick, dashed] (14.2, 1.5) to [out=20,in=180] (14.4,1.8) to [out=-30,in=120] (14.8,1.2) to [out=0, in=180] (15.8, 1.6);
\draw[thick, dashed] (11.6,1.5) to [out=30,in=150] (14.2, 1.5);
\draw[thick, dashed] (20.2, 4.5) to [out=20,in=180] (20.4,4.8) to [out=-30,in=120] (20.8,4.2) to [out=0, in=180] (21.8, 4.6);
\draw[thick] (16, 2) to [out=70,in=160] (22, 5);
\draw[thick] (22, 5) to [out=50,in=180] (24, 6);
\draw[lightgray] (9.7, -0.3) -- (16.3, -0.3) -- (16.3, 4.2) -- (9.7, 4.2) --(9.7,-0.3);
\begin{pgfonlayer}{background}
 \fill[color=gray!10] (4,4) rectangle (6,7);
 \fill[color=gray!10] (10,0) rectangle (12,4);
  \fill[color=gray!10] (14,0) rectangle (16,4);
 \fill[color=gray!10] (20,3) rectangle (22,6);
 \end{pgfonlayer}
\end{tikzpicture}
\caption{
An illustration of the coarse-graining procedure. The solid lines represent the heat kernels
after the kernel replacement \eqref{eq:kerrep},
which connect adjacent coarse-grained disorder
variables $\Theta^{(\cg)}_{N, \epsilon}(\vec\sfi, \vec\sfa)$ consisting of
sums over the dashed lines in each visited time-space block $\cB_{\epsilon N}(\vec\sfi,\vec\sfa)$
(see \eqref{eq:Theta}). The solid and the dashed
lines satisfy the diffusive constraint given in $\bcA_{\epsilon; \,\sfb, \sfc}^{(\diff)}$
and \eqref{eq:Theta}, respectively.
\label{CG-fig2}}
\end{figure}
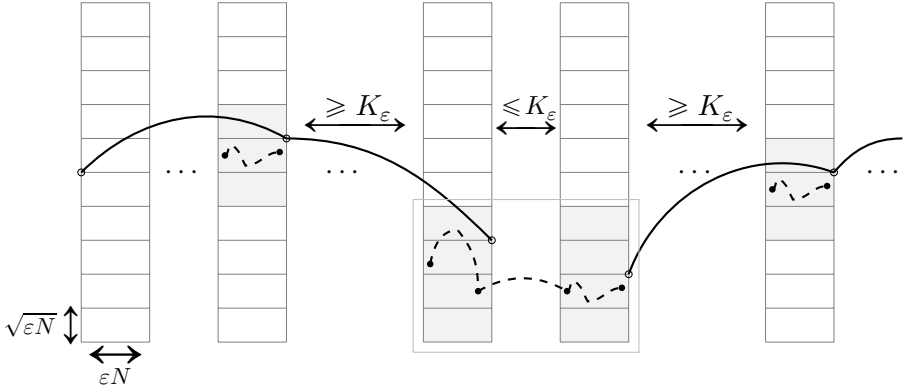

\subsection{Coarse-grained model}

We are now ready to give the precise definition of the \emph{coarse-grained model}
$\mathscr{Z}_{\epsilon}^{(\cg)}(\varphi,\psi|\Theta)$ given earlier in \eqref{CGchaos-intro}, which
depends on $\eps\in (0,1)$ and is a multilinear polynomial of a
given family of random variables $\Theta=\{\Theta(\vec\sfi, \vec \sfa)\}$
indexed by time-space blocks $(\vec\sfi,\vec\sfa)$.

\begin{definition}[Coarse-grained model]\label{def:cg}
Fix $\epsilon \in (0,1)$ and a family of random variables
$\Theta = (\Theta(\vec\sfi,\vec\sfa))_{(\vec\sfi,\vec\sfa) \in \bbT_\epsilon}$
indexed by the set $\bbT_\epsilon$ of time-space blocks defined in \eqref{eq:iacond}.
Fix two locally integrable functions $\varphi, \psi: \R^2 \to\R$ and define
$\varphi_\epsilon, \psi_\epsilon: \Z^2 \to \R$ as follows:
\begin{equation}\label{eq:phieps-psieps}
	\varphi_\epsilon(\sfb) := \int\limits_{(\sfb-(1,1), \sfb]} \varphi(\sqrt{\epsilon} x)
	\, \dd x \,, \qquad
	\psi_\epsilon(\sfc) := \int\limits_{(\sfc-(1,1), \sfc]} \psi(\sqrt{\epsilon} y) \, \dd y
	\qquad \text{for } \sfb, \sfc\in\Z^2 \,.
\end{equation}
Recall the heat kernel $g_t(\cdot)$ from \eqref{eq:gt}.
Then the \emph{coarse-grained model}
$\mathscr{Z}_{\epsilon}^{(\cg)}(\varphi,\psi|\Theta)$ is
\begin{equation}\label{eq:Zcg-gen}
\begin{aligned}
	\mathscr{Z}_{\epsilon}^{(\cg)}(\varphi,\psi|\Theta)
	 & :=  \frac{1}{2} \, g_{\frac{1}{2}}(\varphi, \psi) \,  + \, \frac{\epsilon}{2}
	 \sum_{r=1}^{(\log\frac{1}{\epsilon})^2} \sum_{{\sfb}, {\sfc} \in \Z^2} 	
	\sum_{\substack{(\vec\sfi_1, \ldots, \vec\sfi_r) \in \bcA_{\epsilon}^{(\notri)} \\
	(\vec\sfa_1, \ldots, \vec\sfa_r) \in \bcA_{\epsilon; \, b,c}^{(\diff)}}}
	\!\!\!\!\!\!\!\!\!\!\! \varphi_\epsilon({\sfb})  g_{\frac{1}{2}\sfi_1}(\sfa_1 - \sfb)
	\Theta(\vec\sfi_1, \vec\sfa_1)  \\
	& \times
	\Bigg\{ \prod_{j=2}^r  g_{\frac{1}{2}(\sfi_j -\sfi_{j-1}')} (\sfa_j -\sfa_{j-1}')
	\, \Theta(\vec\sfi_j, \vec\sfa_j) \Bigg\}
	\, g_{\frac{1}{2}(\lfloor\frac{1}{\epsilon}\rfloor- \sfi_r')}({\sfc}-\sfa_r')
	 \psi_\epsilon({\sfc}) \,.
\end{aligned}
\end{equation}
\end{definition}
Note that in \eqref{eq:Zcg-gen}, for technical reasons that will become clear later
 (to control the error induced by kernel replacements - see Section \ref{sec:5.3}), we also imposed
the constraint that the number of time-space blocks
cannot exceed $(\log \frac{1}{\eps})^2$. This coarse-grained model has the same structure
as the original averaged partition function
$\mathcal{Z}^{\beta_N}_{N, 1}(\varphi,\psi)$ in \eqref{eq:Zpolyav}, with $1/\eps$ replacing $N$,
$\Theta$ replacing $\xi_N$, and  the heat kernels replacing
the random walk kernels. Note that when $\varphi$ has compact support, \eqref{eq:Zcg-gen} is a sum
over finitely many terms.

\begin{remark}
To approximate the averaged partition function $\mathcal{Z}^{\beta_N}_{N, t}(\varphi,\psi)$
with $t\neq 1$, we define a corresponding coarse-grained model
$\mathscr{Z}_{\epsilon, t}^{(\cg)}(\varphi,\psi|\Theta)$ which is obtained from
\eqref{eq:Zcg-gen} simply replacing $g_{\frac{1}{2}}(\varphi, \psi)$ by $g_{\frac{t}{2}}(\varphi, \psi)$
and $g_{\frac{1}{2}(\lfloor\tfrac{1}{\epsilon}\rfloor- \sfi_r')}$
by $g_{\frac{1}{2}(\lfloor\tfrac{t}{\epsilon}\rfloor- \sfi_r')}$, as well as
modifying accordingly $\bcA_{\epsilon}^{(\notri)}$ and
$\bcA_{\epsilon; \, b,c}^{(\diff)}$ (replacing $\lfloor \frac{1}{\epsilon}\rfloor$ by
$\lfloor \frac{t}{\epsilon}\rfloor$ therein).
\end{remark}

\subsection{Coarse-grained disorder variables}

We now identify the coarse-grained
disorder variables $\Theta_{N, \eps}^{\rm (cg)}$ so that
the averaged partition function $\mathcal{Z}^{\beta_N}_{N, t}(\varphi, \psi)$
can be approximated in $L^2$ by the coarse-grained model
$\mathscr{Z}_{\epsilon, t}^{(\cg)}(\varphi,\psi|\Theta)$ with $\Theta=\Theta_{N, \eps}^{\rm (cg)}$.

Recall the point-to-point partition function $Z_{d,f}^{\beta_N}(x,y)$
with its chaos expansion as in \eqref{eq:Zpoly}.
Assuming $f-d\leq \eps N$, we introduce a diffusive truncation as follows, the effect of which will be
negligible in $L^2$, but it ensures that the coarse-grained disorder variable $\Theta_{N, \eps}^{\rm (cg)}$
will only depend on $\xi_N(n,z)$ in a localized time-space window.  In \eqref{eq:Zpoly}, let
$\sfa = \sfa(x) \in \Z^2$ be such that $x \in \cS_{\eps N}(\sfa)$ (recall \eqref{eq:cB}). We then restrict $y$
and all space
variables $z_j$ in \eqref{eq:Zpoly} to those mesoscopic boxes
$\cS_{\eps N}(\tilde\sfa)$ with $|\tilde\sfa - \sfa| \le M_\epsilon= \log \log \frac{1}{\eps}$
as in \eqref{eq:KM}, and define
\begin{equation}\label{eq:Zpolydiff}
	Z^{(\diff)}_{d,f}(x,y)  \, := \,
	\begin{cases}
	0 & \text{if } y \not\in \bigcup\limits_{|\tilde\sfa-\sfa| \le M_\epsilon} \cS_{\eps N}(\tilde\sfa), \\
	\begin{aligned}
	& q_{d,f}(x,y)  + \, \sum_{r=1}^{\infty} \
	\sum_{\substack{d < n_1 < \ldots < n_r < f  \\
	z_1, \ldots, z_r \,\in\, \bigcup_{|\tilde\sfa-\sfa| \le M_\epsilon} \cS_{\eps N}(\tilde\sfa)}} \\
	& \ \ \ q_{d,n_1}(x,z_1)  \, \xi_N(n_1,z_1) \times \\
	& \ \ \ \Bigg\{ \prod_{j=2}^r q_{n_{j-1}, n_j}(z_{j-1},z_j)
	\, \xi_N(n_j,z_j) \Bigg\} \, q_{n_j,f}(z_j,y)
	\end{aligned}
	& \text{if } y \in \bigcup\limits_{|\tilde\sfa-\sfa| \le M_\epsilon} \cS_{\eps N}(\tilde\sfa).
	\end{cases}
\end{equation}
Similar to the definition of $X_{d,f}^{\beta_N}(x,y)$ in \eqref{eq:X}, we define
\begin{equation}\label{eq:Xdiff}
	X^{(\diff)}_{d,f}(x,y) := \begin{cases}
	\xi_N(d,x) & \text{if } f=d \\
	\rule{0pt}{1.2em}\xi_N(d,x) \,
	Z^{(\diff)}_{d,f}(x,y) \, \xi_N(f,y) & \text{if }f \ge d+1
	\end{cases} \,.
\end{equation}
Note that we omit the dependence of $Z^{(\diff)}_{d,f}(x,y)$ and $X^{(\diff)}_{d,f}(x,y)$
on $N, \epsilon$.

The coarse-grained disorder
variables $\Theta^{(\cg)}_{N, \epsilon}(\vec \sfi, \vec\sfa)$ are defined as follows
(see Figure~\ref{CG-fig2}).

\begin{definition}[Coarse-grained disorder variable]\label{def:cg-charge}
Given $N\in\N$, $\epsilon \in (0,1)$ and
a time-space block $(\vec\sfi,\vec\sfa)$, with $\vec\sfi = (\sfi,\sfi')$ and $\vec\sfa = (\sfa,\sfa')$,
the associated \emph{coarse-grained disorder variable} $\Theta^{(\cg)}_{N, \epsilon}(\vec\sfi, \vec\sfa)$
is defined by
\begin{equation}\label{eq:Theta}
	\Theta^{(\cg)}_{N, \epsilon}(\vec\sfi, \vec\sfa) :=
	\begin{cases}
	\displaystyle \frac{2}{\epsilon N} \,
	\sum_{\substack{(d,x) \in \cB_{\epsilon N}(\sfi, \sfa)\\
	(f,y) \in \cB_{\epsilon N}(\sfi, \sfa')\\\text{with }d \le f}}
	X_{d,f}^{(\diff)}(x,y) & \text{ if } \ |\vec \sfi| = 1 \,, \\
	\displaystyle
	\rule{0pt}{5.5em}\begin{split}
	\frac{2}{\epsilon N} \,
	& \sum_{\substack{(d,x) \in \cB_{\epsilon N}(\sfi, \sfa)\\
	(f',y') \in \cB_{\epsilon N}(\sfi', \sfa')}}
	\sum_{\substack{\sfb: \, |\sfb - \sfa| \le M_\epsilon \\
	\sfb': \, |\sfb' - \sfa'| \le M_\epsilon \\
	\text{such that} \\ |\sfb' - \sfb| \le M_\epsilon \sqrt{\sfi'-\sfi}}}
	\sum_{\substack{(f,y) \in \cB_{\epsilon N}(\sfi, \sfb)\\
	(d',x') \in \cB_{\epsilon N}(\sfi', \sfb')\\
	\text{such that} \\ d \le f, \, d' \le f'}}  \\
	& \qquad \qquad \quad X^{(\diff)}_{d, f}(x, y) \; q_{f, d'}(y, x') \; X^{(\diff)}_{d',f'}(x',y')
	\end{split}
	& \text{ if } \ |\vec \sfi| >1 \,.
	\end{cases}
\end{equation}
In the special case $|\vec \sfi|=1$, i.e., $\sfi=\sfi'$,
we will also write $\Theta^{(\cg)}_{N, \epsilon}(\sfi; \sfa, \sfa')$
in place of $\Theta^{(\cg)}_{N, \epsilon}(\vec\sfi, \vec\sfa)$.
\end{definition}

\noindent
We point out that
the prefactor $2$ in \eqref{eq:Theta} is due to periodicity, because the sums are restricted to
$\cB_{\epsilon N}(\sfi,\sfa) \subseteq \Z^3_\even$, see \eqref{eq:cB}.

\subsection{Coarse-graining approximation}

We can finally state the key result of this section,
which approximates the averaged partition function $\mathcal{Z}^{\beta_N}_{N, t}(\varphi, \psi)$
in $L^2$ by the coarse-grained model
$\mathscr{Z}_{\epsilon, t}^{(\cg)}(\varphi,\psi|\Theta_{N, \eps}^{\rm (cg)})$,
with an error which is much smaller than $\mathbb{V}{\rm ar}(\mathcal{Z}^{\beta_N}_{N, t}(\varphi,\psi))$
in \eqref{eq:m2-lim1} when $N$ is
large and $\eps$ is small. Recall $\Vert \cdot \Vert_{\cG_t}$ from \eqref{eq:phinorm}.

\begin{theorem}[Coarse-graining]\label{th:cg-main}
Let  $\cZ_{N,t}^{\beta_{N}}(\varphi, \psi)$ be
the averaged partition function in \eqref{eq:ZNav},
where $\beta_{N}=\beta_{N}(\theta)$ satisfies \eqref{eq:sigma}
for some fixed $\vartheta\in \R$.
Let $\mathscr{Z}_{\epsilon,t}^{(\cg)}(\varphi,\psi|\Theta)$ be the coarse-grained model
from \eqref{eq:Zcg-gen}, with
$K_\epsilon=(\log \frac{1}{\eps})^6$ and $M_\epsilon=\log \log\frac{1}{\eps}$ as in \eqref{eq:KM},
and let $\Theta(\vec\sfi,\vec\sfa) = \Theta_{N,\epsilon}^{(\cg)}(\vec\sfi,\vec\sfa)$ be
the coarse-grained disorder variables from Definition~\ref{def:cg-charge}.
Then, for any $T \in (0,\infty)$, there exists $\sfC = \sfC(T) < \infty$
such that, for $\epsilon > 0$ small enough, we have
\begin{equation}\label{eq:cg-approx}
\begin{gathered}
	\limsup_{N\to\infty} \
	\big\| \mathcal{Z}^{\beta_N}_{N, t}(\varphi,\psi) \,-\,
	\mathscr{Z}_{\epsilon,t}^{(\cg)}(\varphi,\psi|\Theta_{N,\epsilon}^{(\cg)}) \big\|_{L^2}^2 \,\le\,
	\sfC \, \Bigg(\Vert \varphi\Vert_{\cG_{\scriptscriptstyle K_\eps \eps }}^2 +
	\frac{\Vert \varphi\Vert_{\cG_T}^2}{\sqrt{\log\frac{1}{\epsilon}}}  \Bigg)\, \Vert \psi\Vert_\infty^2 \,,
\end{gathered}
\end{equation}
uniformly in $t\in [0,T]$, $\psi \in L^\infty(\R^2)$ and $\varphi:\R^2\to\R$ with
$\Vert \varphi\Vert_{\cG_T}<\infty$.
\end{theorem}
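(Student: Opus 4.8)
\emph{Strategy.} The whole proof is a chain of approximations carried out at the level of the polynomial chaos expansion \eqref{eq:Zpolyav}--\eqref{eq:Zpolysimple} for $\mathcal{Z}^{\beta_N}_{N,t}(\varphi,\psi)$: I successively restrict the sum over microscopic renewal configurations $(n_1,z_1),\ldots,(n_r,z_r)$, then perform the kernel replacement \eqref{eq:kerrep}, and at the end I recognise the resulting expression as $\mathscr{Z}_{\epsilon,t}^{(\cg)}(\varphi,\psi|\Theta_{N,\epsilon}^{(\cg)})$. Each step introduces an $L^2$ error which I control using the second-moment formula \eqref{eq:m2-repr}, the renewal asymptotics \eqref{eq:asU1}--\eqref{eq:asU2ub}, the Gaussian bound of Lemma~\ref{lem:UnGauss}, and the local limit theorem \eqref{eq:llt}; orthogonality of distinct chaos terms means the $L^2$ norm of any sub-sum equals the corresponding restricted renewal sum. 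The constant term $q^N_{0,Nt}(\varphi,\psi)$ matches the term $\tfrac12 g_{t/2}(\varphi,\psi)$ of \eqref{eq:Zcg-gen} in the limit $N\to\infty$ by \eqref{eq:gstphipsiconv}, so it does not contribute to the $\limsup_N$.

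\emph{Step 1: no-triple condition and boundary truncation (Lemma~\ref{th:no3}).} Organising the configurations by the sequence of mesoscopic time intervals $\cT_{\eps N}(\cdot)$ they visit, I would show that the part of the chaos expansion carried by configurations \emph{violating} $\bcA_{\eps}^{(\notri)}$ --- i.e.\ visiting three mesoscopic time indices $\sfi_j<\sfi_{j+1}<\sfi_{j+2}$ with both gaps $<K_\eps$ --- together with the part whose first visited interval has $\sfi_1<K_\eps$ or whose last has $\sfi_r>\lfloor t/\eps\rfloor-K_\eps$, has $L^2$ norm bounded by $\sfC\bigl(\|\varphi\|_{\cG_{K_\eps\eps}}^2+\|\varphi\|_{\cG_T}^2/\sqrt{\log\frac1\eps}\bigr)\|\psi\|_\infty^2$. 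The mechanism is that the renewal mass carried across a time span of order $K_\eps\eps N$ is $\le\pi\int_0^{K_\eps\eps}G_\theta(s)\,\dd s=O(1/\log\tfrac1{K_\eps\eps})=O(1/\log\tfrac1\eps)$ by \eqref{eq:G5}; two such short gaps inside a forbidden triple produce a square, and after summing over the position of the triple along $[0,t]$ (which restores one factor of $\int_0^tG_\theta=O(1)$) one is left with a factor $1/\sqrt{\log\frac1\eps}$, while the short initial segment produces the $\|\varphi\|_{\cG_{K_\eps\eps}}^2$ term directly through \eqref{eq:m2-repr} (this is why the sharp norm $\|\varphi\|_{\cG_\cdot}$, rather than $\|\varphi\|_1$, must be kept: the latter is too lossy near the diagonal and is anyway infinite for the boundary data $\varphi(x)=\tfrac1\eps\ind_{\{|x|\le\sqrt\eps\}}$ needed later). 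This brings the surviving configurations into $\bcA_{\eps}^{(\notri)}$ and defines the associated time blocks $\vec\sfi_1<\cdots<\vec\sfi_r$.

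\emph{Steps 2--4: diffusive truncation, block count, kernel replacement.} On the surviving configurations I would enforce the diffusive constraints $\bcA_{\eps;\sfb,\sfc}^{(\diff)}$ on the space blocks (Lemma~\ref{th:diff}): bounding the excess via the Gaussian decay of $\overline U_N(n,\cdot)/\overline U_N(n)$ from Lemma~\ref{lem:UnGauss} and of the random walk kernel, each truncation at diffusive scale $M_\eps\sqrt{\cdot}$ costs a factor $e^{-cM_\eps^2}$, summable over blocks and vanishing as $\eps\downarrow0$ since $M_\eps=\log\log\frac1\eps\to\infty$. Restricting further to $r\le(\log\frac1\eps)^2$ time-space blocks costs a super-polynomially small (in $\log\frac1\eps$) error, by a second-moment estimate: the per-block renewal weight is $O(1/\log\frac1\eps)$, so the $r$-fold sum is dominated by $r=O(\log\frac1\eps)$ and has a light tail. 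Now, between two consecutive blocks the gap exceeds $K_\eps$, so the linking random walk kernel $q_{f,d'}(y,x')$ has time argument $\gg1$ and spatial displacement $\le M_\eps\sqrt{\sfi_{j+1}-\sfi_j'}$; by the second variant of \eqref{eq:llt} it equals $\tfrac{2}{\eps N}g_{\frac12(\sfi_{j+1}-\sfi_j')}(\sfa_{j+1}-\sfa_j')$ up to a multiplicative factor $1+O(M_\eps^4/K_\eps)$, and similarly the boundary kernels become $\varphi_\eps(\sfb)g_{\frac12\sfi_1}(\sfa_1-\sfb)$ and $g_{\frac12(\lfloor t/\eps\rfloor-\sfi_r')}(\sfc-\sfa_r')\psi_\eps(\sfc)$. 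Telescoping these replacements over the $\le(\log\frac1\eps)^2$ kernels and bounding each intermediate difference by \eqref{eq:m2-repr} with \eqref{eq:asU1ub}--\eqref{eq:asU2ub}, the total $L^2$ error is $O\bigl((\log\frac1\eps)^2M_\eps^4/K_\eps\bigr)\|\varphi\|_{\cG_T}^2\|\psi\|_\infty^2=o(1)$ because $K_\eps=(\log\frac1\eps)^6$ dominates $(\log\frac1\eps)^2(\log\log\frac1\eps)^4$. Once all inter-block kernels are continuum heat kernels, the microscopic sum over each block $\cB_{\eps N}(\vec\sfi_j,\vec\sfa_j)$ decouples and, after matching the prefactor $\tfrac{2}{\eps N}$ and the periodicity factors, is exactly $\Theta_{N,\eps}^{(\cg)}(\vec\sfi_j,\vec\sfa_j)$ of Definition~\ref{def:cg-charge} (the internal kernel $q_{f,d'}$ of a width-$>1$ block is \emph{not} replaced, matching \eqref{eq:Theta}); the remaining sum over $r$ and blocks is precisely $\mathscr{Z}_{\epsilon,t}^{(\cg)}(\varphi,\psi|\Theta_{N,\eps}^{(\cg)})$ in \eqref{eq:Zcg-gen}. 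Collecting the errors from Steps 1--4 yields \eqref{eq:cg-approx}.

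\emph{Main obstacle.} The delicate step is Step~1 (Lemma~\ref{th:no3}): it is the only place where a crude bound does not suffice, since the full variance $\mathscr{V}^\theta_t(\varphi,\psi)$ in \eqref{eq:m2-lim1} is itself only of order $\int_0^tG_\theta(u)\,\dd u=O(1)$, so extracting a genuinely vanishing error requires the sharp logarithmic asymptotics \eqref{eq:Gas}--\eqref{eq:G5} of the Dickman Green's function and careful tracking of how the two short gaps inside a forbidden triple combine, all uniformly for $t\in[0,T]$ and with the $\varphi$-dependence recorded in the sharp norm $\|\varphi\|_{\cG_\cdot}$. The diffusive-truncation and kernel-replacement estimates, though lengthy, are comparatively routine once the Gaussian bounds of Lemma~\ref{lem:UnGauss} and Lemma~\ref{lem:ker} are in hand.
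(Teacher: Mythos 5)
Your proposal follows essentially the same route as the paper's own proof: the identical three-step decomposition into the no-triple/boundary restriction (Lemma~\ref{th:no3}), the diffusive truncation (Lemma~\ref{th:diff}) and the kernel replacement/identification with the coarse-grained model (Lemma~\ref{th:cg}), controlled in $L^2$ through chaos orthogonality, the renewal/Dickman asymptotics \eqref{eq:Gas}--\eqref{eq:G5}, Lemma~\ref{lem:UnGauss} and the local limit theorem, and you correctly single out the no-triple estimate as the one step requiring the sharp logarithmic analysis. The only small inaccuracy is in the kernel-replacement bookkeeping: the dominant per-kernel error is not the $O(|y-x|^4/(t-s)^3)\sim M_\eps^4/(K_\eps\,\eps N)$ correction in \eqref{eq:llt} but the mesoscopic rounding of the time-space increments, which costs a factor $e^{O(M_\eps/\sqrt{K_\eps})}$ per replaced kernel; this does not affect the conclusion, since $(\log\frac1\eps)^2 M_\eps/\sqrt{K_\eps}\to0$ with $K_\eps=(\log\frac1\eps)^6$, exactly as in the paper.
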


Note that the r.h.s.\ of \eqref{eq:cg-approx} tends to $0$ as $\eps\downarrow 0$,
because $K_\epsilon \, \epsilon \to 0$. The whole of Section~\ref{sec:2ndmoment} 
is devoted to the proof of  Theorem~\ref{th:cg-main}.

%%%%%%%%%%%%%%%%%%%%%%%%%%%%%%%%%

\section{Second moment bounds for averaged partition functions}
\label{sec:2ndmoment}

This section is devoted mainly to the proof of Theorem~\ref{th:cg-main}, which approximates the averaged
partition function
$\mathcal{Z}^{\beta_N}_{N, t}(\varphi,\psi)$ from \eqref{eq:ZNavalt} by the coarse-grained
model $\mathscr{Z}_{\epsilon,t}^{(\cg)}(\varphi,\psi|\Theta_{N,\epsilon}^{(\cg)})$
from \eqref{eq:Zcg-gen}. We may assume $t=T=1$ without loss of generality. The uniformity in $t\leq T$
will be clear from the proof.
%which uses the uniform bound $|\psi|\leq \Vert\psi\Vert_\infty$.
\emph{Throughout this section we simply write $\mathcal{Z}_{N}(\varphi,\psi)$,
omitting the dependence on $t=1$ and on $\beta_N$}.

The starting point of our proof of Theorem~\ref{th:cg-main}
is the polynomial chaos expansion \eqref{eq:Zpolyav}.
In the second moment calculations, the time-space
renewal representation and the limiting Dickman subordinator presented in
Section~\ref{sec:Dickman} play  a crucial role.
The proof will be carried out in three steps, presented in
Subsections~\ref{sec:5.1}-\ref{sec:5.3} below: given $\epsilon > 0$,
we introduce two intermediate approximations $\cZ_{N,\epsilon}^{(\notri)}(\varphi,\psi)$
and $\cZ_{N,\epsilon}^{(\diff)}(\varphi,\psi)$
of the averaged partition function $\mathcal{Z}_{N}(\varphi,\psi)$, and bound the following in $L^2$.
\begin{itemize}
\item Step 1. We bound $\cZ_N(\varphi,\psi) - \cZ_{N,\epsilon}^{(\notri)}(\varphi,\psi)$,
see Lemma~\ref{th:no3} in Section~\ref{sec:5.1}: this
is the cost of imposing the constraints $\cA_{\epsilon}^{(\notri)}$ and $\bcA_{\epsilon}^{(\notri)}$,
see \eqref{eq:cA0}  and \eqref{eq:bcA}.

\smallskip
\item Step 2. We bound $\cZ_{N,\epsilon}^{(\notri)}(\varphi,\psi)
- \cZ_{N,\epsilon}^{(\diff)}(\varphi,\psi)$, see
Lemma~\ref{th:diff} in Section~\ref{sec:5.2}: this is
the cost of imposing diffusive constraints,
including $\bcA_{\epsilon; \, b,c}^{(\diff)}$ in \eqref{eq:bcA2} and the diffusive truncation
in the definition of $\Theta^{(\cg)}_{N, \epsilon}$ in \eqref{eq:Theta}.

\smallskip
\item Step 3. We bound $\cZ_{N,\epsilon}^{(\diff)}(\varphi,\psi)
- \mathscr{Z}_{\epsilon}^{(\cg)}(\varphi,\psi|\Theta_{N,\epsilon})$,
see Lemma~\ref{th:cg} in Section~\ref{sec:5.3}: this is
the cost of the kernel replacement \eqref{eq:kerrep}.
\end{itemize}
Combining Lemmas~\ref{th:no3}, \ref{th:diff} and~\ref{th:cg} then gives Theorem~\ref{th:cg-main}.

In the last Subsection~\ref{sec:5.4}, we
will prove a separate second moment estimate for the coarse-grained model,
which is needed later in Section~\ref{Sec:4MomCoarse} for higher moment bounds.

\smallskip

The proof details in this section are technically a bit heavy and could be skipped in a first reading.

\subsection{Step 1: Constraints on mesoscopic time variables}\label{sec:5.1}
In this step, we introduce our first approximation $\cZ_{N,\epsilon}^{(\notri)}(\varphi,\psi)$
and show that it is close to  $\cZ_N(\varphi,\psi)$.

Recall the mesoscopic time intervals $\cT_{\epsilon N}(\sfi) : = ((\sfi-1)\epsilon N, \, \sfi \epsilon N]$
introduced in \eqref{eq:cB}, to which we associate the mesoscopic time index $\sfi \in \{1, \ldots, \frac{1}{\eps}\}$.
In the chaos expansion for $\cZ_N^{\beta_N}(\varphi, \psi)$ in \eqref{eq:Zpolyav}, each time index $n_j$ belongs to
$\cT_{\epsilon N}(\sfi)$ for some $\sfi\in \{1, \ldots, \frac{1}{\epsilon}\}$. The first step of coarse-graining is
to group the terms in the expansion in \eqref{eq:Zpolyav} in terms of the mesoscopic time intervals
$\cT_{\epsilon N}(\cdot)$ visited by the sequence of time indices $n_1, \ldots, n_r$. Namely, we can rewrite \eqref{eq:Zpolyav} as
(omitting $\beta_N$ from $\cZ^{\beta_N}_N(\varphi,\psi)$, and expanding $q^N_{0,n_1}(\varphi, z_1)$ and $q^N_{n_r, N}(z_r, \psi)$ according to their definitions in
\eqref{eq:qNphi}-\eqref{eq:qNpsi}):
\begin{equation} \label{eq:Zpoly3}
\begin{split}
	\cZ_N(\varphi,\psi)
	\ & = \, q_{0,N}^N(\varphi,\psi) \\
	& \ \ + \ \frac{1}{N}
	\sum_{v,w \in \Z^2_\even} \varphi_N(v) \
	\sum_{k=1}^{\infty} \ \sum_{0 < \sfi_1 < \ldots < \sfi_k \le \frac{1}{\epsilon}} \
	\sum_{\substack{d_1 \le f_1 \,\in\,
	\cT_{\epsilon N}(\sfi_1), \, \ldots\, , \, d_k \le f_k \,\in\, \cT_{\epsilon N}(\sfi_k) \\
	x_1,\, y_1, \, \ldots \,, x_k, \, y_k \,\in\, \Z^2}} \\
	 q_{0,d_1}(v,x_1)
	& \, X_{d_1,f_1}(x_1,y_1) \, \Bigg\{ \prod_{j=2}^k q_{f_{j-1},d_j}(y_{j-1},x_j) \,
	X_{d_j,f_j}(x_j,y_j) \Bigg\} \,
	q_{f_k, N}(y_k, w)  \,  \psi_N(w)  \,,
\end{split}
\end{equation}
where $\varphi_N$, $\psi_N$ were defined in \eqref{eq:phiNpsiN}, and $X_{d,f}(x,y)$ was defined in \eqref{eq:X}.

Recall from \eqref{eq:KM} that $K_\epsilon= (\log \frac{1}{\eps})^6$.
We will show that in \eqref{eq:Zpoly3}, the
dominant contribution (in $L^2$) comes from mesoscopic time variables $(\sfi_1, \ldots, \sfi_k)$
which contains
\emph{no consecutive triples $\sfi_j, \sfi_{j+1}, \sfi_{j+2}$
with both $\sfi_{j+1}- \sfi_j < K_\epsilon$ and $\sfi_{j+2}-\sfi_{j+1} < K_\epsilon$}.
This is encoded in $\cA_{\epsilon}^{(\notri)}$ from \eqref{eq:cA0}, which we recall here
\begin{equation}\label{eq:cA}
\begin{aligned}
	\cA_{\epsilon}^{(\notri)} \,:=\,
	\bigcup_{k\in\N}
	\Big\{(\sfi_1, \ldots, \sfi_k) \in \N^k : \
	& K_\eps \le
	\sfi_1 < \sfi_2 < \ldots < \sfi_k
	 \leq \lfloor\tfrac{1}{\epsilon}\rfloor -K_\eps
	\ \ \text{such that}  \\
	& \ \ \text{if} \ \ \sfi_{j+1}-\sfi_j < K_\epsilon \,, \ \text{then}
	\ \ \sfi_{j+2} - \sfi_{j+1} \geq K_\epsilon \ \Big\} \,.
\end{aligned}
\end{equation}

We will further restrict the sum in \eqref{eq:Zpoly3} to $(\sfi_1, \ldots, \sfi_k)$ with $k \le (\log\frac{1}{\epsilon})^2$,
which leads to the following first approximation of $\cZ_N(\varphi,\psi)$:
\begin{equation} \label{eq:Zno3}
\begin{split}
	& \cZ_{N,\epsilon}^{(\notri)}(\varphi,\psi)
	\, := \, q_{0,N}^N(\varphi,\psi) \ + \ \frac{1}{N}
	\sum_{v,w \in \Z^2_\even} \varphi_N(v) \\
	& \qquad\qquad\qquad\qquad\qquad\qquad\sum_{k=1}^{(\log \frac{1}{\epsilon})^2} \
	\sum_{(\sfi_1, \ldots, \sfi_k) \in \cA_\epsilon^{(\notri)}} \
	\sum_{\substack{d_1 \le f_1 \,\in\,
	\cT_{\epsilon N}(\sfi_1), \, \ldots\, , \, d_k \le f_k \,\in\, \cT_{\epsilon N}(\sfi_k) \\
	x_1,\, y_1, \, \ldots \,, x_k, \, y_k \,\in\, \Z^2}} \\
	& \ q_{0,d_1}(v,x_1)
	 \, X_{d_1,f_1}(x_1,y_1)
	\Bigg\{ \prod_{j=2}^k q_{f_{j-1},d_j}(y_{j-1},x_j) \,
	X_{d_j,f_j}(x_j,y_j) \Bigg\} \,
	 q_{f_k, N}(y_k, w)  \,  \psi_N(w)  \,.
\end{split}
\end{equation}

The main result of this subsection is the following approximation,
which constitutes part of the bound in \eqref{eq:cg-approx}.
The proof is a bit lengthy, but it contains many important ingredients,
including a key renewal interpretation of second moment bounds.

\begin{lemma}[No close triples]\label{th:no3}
Recall from \eqref{eq:KM} that $K_\eps=(\log \frac{1}{\eps})^6$ and recall $\Vert \cdot\Vert_{\cG_t}$ from \eqref{eq:phinorm}.
There exists  $\sfC \in (0, \infty)$ such that for $\epsilon > 0$ small enough, we have:
for all $\varphi$ with $\Vert \varphi\Vert_{\cG_1}^2<\infty$ and $\psi \in L^\infty(\R^2)$,
\begin{equation}\label{eq:appr1no3}
\begin{gathered}
	\limsup_{N\to\infty \text{ with } N\in2\N}  \big\|\big(
	\cZ_{N,\epsilon}^{(\notri)} - \cZ_N\big)(\varphi, \psi) \big\|_{L^2}^2
	 \ \le \
	 \sfC\, \bigg(\Vert \varphi\Vert_{\cG_{\scriptscriptstyle K_\eps \eps }}^2
	+ \frac{(\log K_\epsilon)^2}{\log\frac{1}{\epsilon}} \Vert \varphi\Vert_{\cG_1}^2 \bigg)
	\, \Vert \psi\Vert_\infty^2 \,.
\end{gathered}
\end{equation}
\end{lemma}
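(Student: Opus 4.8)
The starting point is the polynomial chaos expansion \eqref{eq:Zpoly3}: $\cZ_N-\cZ_{N,\epsilon}^{(\notri)}$ is the sum, over the chains $(\sfi_1,\dots,\sfi_k)$ \emph{not} retained in \eqref{eq:Zno3}, of the corresponding monomials $q_{0,d_1}(v,x_1)X_{d_1,f_1}(x_1,y_1)\cdots$ in the i.i.d.\ disorder $\xi_N$. Since distinct monomials are $L^2$-orthogonal, I would split these chains into three disjoint families and estimate each separately: (i) \emph{boundary chains} with $\sfi_1<K_\epsilon$ or $\sfi_k>\lfloor1/\epsilon\rfloor-K_\epsilon$; (ii) \emph{close-triple chains}, having three consecutive indices with $\sfi_{j+1}-\sfi_j<K_\epsilon$ and $\sfi_{j+2}-\sfi_{j+1}<K_\epsilon$; (iii) chains with $k>(\log\tfrac1\epsilon)^2$. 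Then $\|\cZ_N-\cZ_{N,\epsilon}^{(\notri)}\|_{L^2}^2$ equals the sum of the second moments of the partial sums over (i), (ii) and (iii), which reduces the task to bounding each family.

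For the second moment I would use the ``diagonal'' identity $\bbE[X_{d,f}^{\beta_N}(x,y)\,X_{d',f'}^{\beta_N}(x',y')]=\ind_{\{(d,x,f,y)=(d',x',f',y')\}}\,\overline{U}_N(f-d,y-x)$ (from independence of the $\xi_N$'s and \eqref{eq:secondX}), which turns the $L^2$-norm of any family $\cS$ of chains into
\begin{equation*}
\tfrac{1}{N^2}\sum_{\text{chains}\in\cS}\ \sum_{\vec d,\vec f,\vec x,\vec y}q^N_{0,d_1}(\varphi,x_1)^2\,\overline{U}_N(f_1-d_1,y_1-x_1)\prod_{j=2}^{k}\Big[q_{d_j-f_{j-1}}(x_j-y_{j-1})^2\,\overline{U}_N(f_j-d_j,y_j-x_j)\Big]\,q^N_{f_k,N}(y_k,\psi)^2 .
\end{equation*}
Integrating out the space variables from the right gives $\sum_{w,w'}q_{f_k,N}(y_k,w)q_{f_k,N}(y_k,w')\psi_N(w)\psi_N(w')\le\|\psi\|_\infty^2$, then $\sum_{y}\overline{U}_N(m,y)=\overline{U}_N(m)$ and $\sum_{x}q_n(x)^2=u(n)$ alternately, and finally $\sum_{x_1}q^N_{0,d_1}(\varphi,x_1)^2=\sum_{v,v'}\varphi_N(v)\varphi_N(v')\,q_{2d_1}(v'-v)$, which after the eventual $d_1$-sum and the local limit theorem yields a factor $\tfrac12\|\varphi\|_{\cG_{d_1/N}}^2$ (exactly as in the proof of Proposition~\ref{th:m2}). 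This reduces every bound to a \emph{time-only renewal-chain sum}, with $\overline{U}_N(\cdot)$ on blocks and $u(\cdot)$ on gaps, to be controlled through the inputs $\overline{U}_N(m)\le\tfrac CN G_\theta(\tfrac mN)$ and $\int_0^\delta G_\theta\le\tfrac C{\log(1/\delta)}$ (see \eqref{eq:asU1ub}, \eqref{eq:G5}), $u(n)\le\tfrac Cn$ and $R_L\le C\log L$ for $L\le N$ (see \eqref{eq:u}, \eqref{eq:RN}), together with the scaling relation $\tfrac1T G_{\theta+\log T}(\tfrac tT)=G_\theta(t)$ of Remark~\ref{R:NtildeN}.

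Families (i) and (iii) are soft. For the boundary near time $0$, $d_1\le K_\epsilon\epsilon N$ forces the $\varphi$-factor to be $\le\tfrac12\|\varphi\|_{\cG_{K_\epsilon\epsilon}}^2$ (using $\|\varphi\|_{\cG_s}^2\le\|\varphi\|_{\cG_t}^2$ for $s\le t$, valid since $\cG_t-\cG_s$ is positive definite, cf.\ the footnote to \eqref{eq:phinorm}) while the remaining renewal-chain sum is $O(1)$; the boundary near time $N$ is symmetric and in fact smaller, of order $K_\epsilon\epsilon\,\|\varphi\|_{\cG_1}^2\|\psi\|_\infty^2$, because $G_\theta$ is bounded on $[\tfrac12,1]$. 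For (iii), each visited mesoscopic interval costs at least one renewal, and the renewal mass of chains making $\ge k_0$ ``macroscopic'' gaps (of size $\ge\epsilon N$) has a Poisson-type tail — macroscopic gaps occurring at ``rate'' $\asymp\log\tfrac1\epsilon$ over $[0,N]$ — so with $k_0\asymp(\log\tfrac1\epsilon)^2$ it is far smaller than $\tfrac1{\log(1/\epsilon)}$. Both families are thus absorbed into the right-hand side of \eqref{eq:appr1no3}.

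The core of the proof, and the step I expect to be the main obstacle, is the close-triple family (ii). The mechanism is that an inter-block gap, being interval-crossing, has its mass spread over crossing distances $m=1,\dots,\lfloor1/\epsilon\rfloor$ with weight $\asymp\epsilon N/m$, so the constraint $\sfi_{j+1}-\sfi_j<K_\epsilon$ captures only the fraction
\begin{equation*}
\frac{\sum_{m=1}^{K_\epsilon}(\epsilon N/m)}{\sum_{m=1}^{\lfloor1/\epsilon\rfloor}(\epsilon N/m)}\ \asymp\ \frac{\log K_\epsilon}{\log\tfrac1\epsilon}\,,\qquad\text{equivalently }\ \frac{\sum_{\epsilon N<g\le K_\epsilon\epsilon N}u(g)}{\sum_{\epsilon N<g\le N}u(g)}\ \le\ C\,\frac{\log K_\epsilon}{\log\tfrac1\epsilon}\,,
\end{equation*}
of a generic block-crossing. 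Hence, estimating the reduced renewal-chain sum over chains with a close triple at a prescribed position $j$ — by factoring it as an ``entry to the triple'', a ``triple transition'' supported on a time window of length $\le 3K_\epsilon\epsilon N$, and an ``exit'', and bounding the entry and exit by the full second moment of $\cZ_N$, which is $O(\|\varphi\|_{\cG_1}^2\|\psi\|_\infty^2)$ by Proposition~\ref{th:m2} — produces an extra factor $O\big((\log K_\epsilon/\log\tfrac1\epsilon)^2\big)$; summing over the $O(\log\tfrac1\epsilon)$ relevant positions $j$ (larger $j$ being negligible by the estimate of family (iii)) gives a bound of order $\tfrac{(\log K_\epsilon)^2}{\log(1/\epsilon)}\|\varphi\|_{\cG_1}^2\|\psi\|_\infty^2$. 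The delicate point is to make this ``fraction captured'' heuristic rigorous inside the mesoscopic decomposition: one has to bound, uniformly over the block lengths $a_j$ and the remaining gap structure, products $u(g_1)\,\overline{U}_N(a)\,u(g_2)$ over the admissible ranges, correctly handle the endpoint summations shared between consecutive blocks and gaps, and control the entry/exit factors without losing logarithmic powers. Collecting the four estimates and using $K_\epsilon\epsilon\to0$ and $\tfrac1{\log(1/\epsilon)}\le\tfrac{(\log K_\epsilon)^2}{\log(1/\epsilon)}$ for $\epsilon$ small then yields \eqref{eq:appr1no3}.
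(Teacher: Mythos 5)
Your overall architecture coincides with the paper's: expand in the polynomial chaos, use $L^2$-orthogonality of the $X_{d,f}$'s to reduce the squared error to a time–space renewal sum, integrate out the space variables ($\|\psi\|_\infty$, $\overline U_N(m)$, $u(n)$, $\Phi_N$), pass to the continuum via $G_\theta$ and \eqref{eq:asU1ub}, \eqref{eq:G5}, and then treat separately the left/right boundary chains, the close triples, and the chains with more than $(\log\frac1\epsilon)^2$ visited intervals. The left-boundary bound $\|\varphi\|^2_{\cG_{K_\epsilon\epsilon}}$ is exactly the paper's. However, the central estimate — the close-triple family — is not actually closed by your sketch, and the way you propose to assemble it would fail. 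You bound the entry and exit of the triple by the full second moment ($O(\|\varphi\|^2_{\cG_1}\|\psi\|^2_\infty)$), attach the factor $(\log K_\epsilon/\log\frac1\epsilon)^2$ per prescribed ordinal position $j$ of the triple, and sum over positions. But the only truncation you have on the chain length is $k\le(\log\frac1\epsilon)^2$ (your family (iii)), so the position sum contributes up to $(\log\frac1\epsilon)^2$ terms and the total is of order $(\log K_\epsilon)^2$, which does \emph{not} tend to $0$; the claim that only $O(\log\frac1\epsilon)$ positions are ``relevant'' is not delivered by the family-(iii) estimate and would require a genuine tail bound (e.g.\ exponential-type) on the number of visited mesoscopic intervals, which you neither state nor prove. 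The paper avoids any position counting altogether: using the renewal property \eqref{eq:renewal-Dickman2} of $G_\theta$ it sums directly over the time-location $\sfi<\sfi'<\sfi''$ of the triple, so the ``entry'' is the integral $\int\Phi(s)G_\theta(b-s)$ (total mass $O(\int_0^1\Phi)$, with the location sum absorbed) and the ``exit'' is $c_\theta$, and the smallness comes from the constrained integrals over the two short gaps. Moreover, your ``fraction captured'' ratio $\sum_{m\le K_\epsilon}m^{-1}/\sum_{m\le 1/\epsilon}m^{-1}$ presumes the constrained gap is at least one mesoscopic unit and decouples from the adjacent $G_\theta$ weights; the genuinely delicate cases $\sfi'=\sfi+1$ and $\sfi''=\sfi'+1$ (gap possibly far smaller than $\epsilon$, with the Green's function diverging at the matching endpoint) are exactly what forces the paper's inequality \eqref{eq:boint} and the three-step integration leading to \eqref{eq:bouint}; you flag this as ``the main obstacle'' but supply no substitute argument.

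Two secondary points. For the right boundary ($\sfi_k>\lfloor\frac1\epsilon\rfloor-K_\epsilon$) your justification is wrong: the last renewal increment $t-s$ is not confined to $[\frac12,1]$, so ``$G_\theta$ bounded on $[\frac12,1]$'' does not give an $O(K_\epsilon\epsilon)$ bound; the correct (and sufficient) bound is $\sup_a\int_a^{a+K_\epsilon\epsilon}G_\theta\le C/\log\frac1\epsilon$ times $\int_0^1\Phi$, as in the paper. For family (iii), your mechanism conflates the number of visited mesoscopic intervals with the number of macroscopic gaps of size $\ge\epsilon N$: a chain can visit arbitrarily many \emph{consecutive} intervals with no gap exceeding $\epsilon N$, so a Poisson-type tail for macroscopic gaps does not control this family. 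What is needed (and suffices) is the first-moment/Markov bound $\ind_{\{|\cI(\cS)|>(\log\frac1\epsilon)^2\}}\le|\cI(\cS)|/(\log\frac1\epsilon)^2$ combined with the renewal computation showing that the expected number of visited intervals is $O(\log\frac1\epsilon)$, which is how the paper obtains the $C/\log\frac1\epsilon$ contribution.
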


\begin{proof}
The random variables $X_{d,f}(x,y)$ depend on the disorder variables $\xi_N(n,x)$
for $d \le n \le f$, see \eqref{eq:X}. They are centered and orthogonal in $L^2$ and,
by \eqref{eq:secondX} and \eqref{eq:barU},
\begin{equation*}
	\bbE[X_{d,f}(x,y) \, X_{d',f'}(x',y')] = \ind_{\{(d,f,x,y) = (d',f',x',y')\}}
	\, \overline{U}_N(f-d,y-x) \,.
\end{equation*}
Since the sum which defines $\cZ_{N,\epsilon}^{(\notri)}(\varphi,\psi)$
\emph{is a subset of that of $\cZ_{N}(\varphi,\psi)$},
cf.\ \eqref{eq:Zpoly3} and \eqref{eq:Zno3}, it follows that we can write
\begin{equation} \label{eq:interm}
\begin{split}
	& \big\|\big(
	 \cZ_{N,\epsilon}^{(\notri)} - \cZ_N\big)(\varphi, \psi)
	\big\|_{L^2}^2  \\
	& \quad \,=\, \frac{1}{N^2} \
	\Bigg\{ \underbrace{\sum_{k > (\log\frac{1}{\epsilon})^2} \
	\sum_{0 < \sfi_1 < \ldots < \sfi_k \le \frac{1}{\epsilon}}}_{\mathrm{I}_{N,\epsilon}} \ + \
	\underbrace{\sum_{k=1}^{(\log\frac{1}{\epsilon})^2} \
	\sum_{(\sfi_1, \ldots, \sfi_k) \in (\cA_\epsilon^{(\notri)})^c}}_{\mathrm{II}_{N,\epsilon}} \Bigg\} \\
	& \quad \times \!\!\!\! \sum_{\substack{d_1 \le f_1 \,\in\, \cT_{\epsilon N}(\sfi_1),
	\, \ldots\, , \, d_k \le f_k \,\in\, \cT_{\epsilon N}(\sfi_k) \\
	x_1,\, y_1, \, \ldots \,, x_k, \, y_k \,\in\, \Z^2}}
	\,  \bigg( \sum_{v \in \Z^2_\even} \varphi_N(v) \,
	q_{0,d_1}(v,x_1) \bigg)^2
	\ \overline{U}_N(f_1-d_1,y_1-x_1)  \\
	& \quad \times \Bigg\{ \prod_{j=2}^k q_{f_{j-1},d_j}(y_{j-1},x_j)^2 \
	\overline{U}_N(f_j-d_j,y_j-x_j) \Bigg\}
	 \bigg( \sum_{w \in \Z^2_\even} q_{f_k, N}(y_k, w)
	\, \psi_N(w) \bigg)^2,
\end{split}
\end{equation}
where $\mathrm{I}_{N,\epsilon}$ and $\mathrm{II}_{N,\epsilon}$ are
the contributions of $\{k > (\log\frac{1}{\epsilon})^2\}$
and $\{(\sfi_1, \ldots, \sfi_k) \in (\cA_\epsilon^{(\notri)})^c\}$. We split the proof in two parts,
where we show that for some $\sfC<\infty$ we have:
\begin{align}\label{eq:toshIII0a}
	\limsup_{N\to\infty \text{ with } N\in2\N} \ \mathrm{I}_{N,\epsilon}
	\ &\le \ \sfC  \, \frac{1}{\log\frac{1}{\epsilon}} \, \Vert \varphi\Vert_{\cG_1}^2 \,
	\Vert \psi\Vert_\infty^2  \,, \\
	\label{eq:toshIII0b}
	\limsup_{N\to\infty \text{ with } N\in2\N} \ \mathrm{II}_{N,\epsilon}
	\ &\le \
	 \sfC \, \Bigg(\Vert \varphi\Vert_{\cG_{\scriptscriptstyle K_\eps \eps }}^2
	+ \frac{(\log K_\epsilon)^2}{\log\frac{1}{\epsilon}} \Vert \varphi\Vert_{\cG_1}^2 \Bigg)
	\, \Vert \psi\Vert_\infty^2 \,.
\end{align}

\smallskip

\begin{remark}\label{rem:interm}
Let us sketch a probabilistic interpretation of \eqref{eq:interm}.
From \eqref{eq:Uren}, we recall that the expansion for
$\overline{U}_N(f-d, y-x)$ has a time-space renewal
interpretation, and from  \eqref{eq:m2-repr}
the expansion of  $\bbE\big[\mathcal{Z}_{N}(\varphi,\psi)^2 \big]$ consists of a mixture of
$\overline{U}_N(f-d, y-x)$,
with weight $\frac{1}{N^2}q^N_{0,d}(\varphi, x)^2 q^N_{f, Nt}(y, \psi)^2$. We can therefore write
\begin{equation*}
	\bbE\big[\mathcal{Z}_{N}(\varphi,\psi)^2 \big]
	= \sum_{\substack{\cS =\{(n_1, z_1), \ldots, (n_r, z_r)\} \\
	1\leq n_1<\cdots n_r \leq N, \, z_1, \ldots, z_r\,\in \Z^2}} \sfM_N^{\varphi,\psi}(\cS),
\end{equation*}
where denoting $\cS=\{(n_1, z_1), \ldots, (n_r, z_r)\}$ with $r=|\cS|$, we define
\begin{equation}
\begin{aligned}
	\sfM_N^{\varphi,\psi}(\cS) & := \frac{\sigma_N^{2r}R_N^{2(r-1)}}{N^2}
	\bigg( \sum_{v \in \Z^2_\even} \varphi_N(v) \, q_{0,n_1}(v, z_1) \bigg)^2
	\bigg( \sum_{w \in \Z^2_\even} q_{n_r, N}(z_r, w) \, \psi_N(w) \bigg)^2 \\
	& \qquad \times \P\Big(\big(\tau_i^{(N)}, S_i^{(N)}\big)
	= (n_i, z_i) \, \forall\, 1\leq i\leq r \, \Big|\, (\tau_1^{(N)}, S_1^{(N)}) = (n_1, z_1)\Big).
\end{aligned}
\end{equation}
The measure $\sfM_N^{\varphi,\psi}(\cdot)$ is called a {\em spectral measure} since
$\sfM_N^{\varphi,\psi}(\cS)$ equals the square of the coefficient of
$\prod_{(n_i, z_i)\in \cS} \xi_N(n_i, z_i)$ in the chaos expansion \eqref{eq:Zpolyav}
for $\mathcal{Z}_{N}(\varphi,\psi)$, where different terms in the expansion
are orthogonal in $L^2$, similar to
a Fourier decomposition. For more on spectral measure, see e.g.~\cite{GS12}.

The r.h.s.\ of \eqref{eq:interm} can then be written as
\begin{equation}\label{eq:interm-bis}
	\big\|\big(
	 Z_{N,\epsilon}^{(\notri)}
	 - Z_N\big)(\varphi, \psi)
	\big\|_{L^2}^2 \,=\, \sfM_N^{\varphi,\psi}
	\Big( |\cI(\cS)| > (\log\tfrac{1}{\epsilon})^2
	\, \text{ or } \, \cI(\cS) \in (\cA_\epsilon^{(\notri)})^c \Big) \,,
\end{equation}
where given $\cS \subseteq \{1,\ldots, N\} \times \Z^2$,
\begin{equation*}
	\cI(\cS) := \big\{\sfi \in \{1,\ldots, \tfrac{1}{\epsilon}\}: \
	\cS \cap \big( \cT_{\epsilon N}(\sfi) \times \Z^2 \big) \ne \emptyset \big\} \,.
\end{equation*}
Thanks to Lemma~\ref{lem:Levyconv}, it can be shown that as $N\to\infty$,  $\sfM_N^{\varphi,\psi}$
converges to a similarly re-weighted measure for the continuum time-space renewal process introduced in
Lemma~\ref{lem:Levyconv}, whose time component is the Dickman subordinator $(Y_s)_{s\geq 0}$ with
exponentially weighted Green's function~$G_\theta$, see \eqref{eq:G3} and~\eqref{eq:G1}.
\end{remark}

\medskip

\noindent
\textbf{Second Moment Bound via Renewal.}
We first explain the common steps in bounding $\mathrm{I}_{N,\epsilon}$ and $\mathrm{II}_{N,\epsilon}$
from \eqref{eq:interm}, which also applies to the variance of $\cZ_N(\varphi, \psi)$ and the mean
squared error of later approximations. The common feature is that they all have the same expansion
as in \eqref{eq:interm}, except the summation constraints are different.

Consider $\mathrm{II}_{N,\epsilon}$ from \eqref{eq:interm}. We first sum out the space variables
in \eqref{eq:interm}. Recall \eqref{eq:phiNpsiN} and note that
\begin{equation} \label{eq:normsphipsi}
	\|\psi_N\|_{\ell^\infty} := \sup_{w\in\Z^2} |\psi_N(w)|
	\le \|\psi\|_\infty \,,
\end{equation}
so that in \eqref{eq:interm} we can bound
\begin{equation} \label{eq:firstsquare}
	\bigg( \sum_{w \in \Z^2_\even} q_{f_k, N}(y_k, w)
	\, \psi_N(w) \bigg)^2
	\le \|\psi_N\|_{\ell^\infty}^2
	\le  \|\psi\|_\infty^2 \,.
\end{equation}
We can plug this bound into \eqref{eq:interm} and sum over the space variables
in reverse order, from
$y_k, x_k$ until $y_2, x_2, y_1$, thus replacing
$\overline{U}_N(f_j-d_j,y_j-x_j)$ by $\overline{U}_N(f_j-d_j)$
and $q_{f_{j-1},d_j}(y_{j-1},x_j)^2$ by $u(d_j - f_{j-1})$,
see \eqref{eq:barU} and \eqref{eq:u}.
Finally, we sum over $x_1$ and observe that
\begin{equation} \label{eq:secondsquare}
\begin{split}
	\sum_{x_1\in\Z^2}
	\bigg( \sum_{v \in \Z^2_\even} \varphi_N(v) \,
	q_{0,d_1}(v,x_1) \bigg)^2
	=  \!\!\!\!\!\sum_{v, v' \in \Z^2_\even} \!\!\!\!\! \varphi_N(v)  \varphi_N(v') \,
	q_{2d_1}(v - v')
	=: N\, \Phi_{N}\Big(\frac{d_1}{N}\Big) \,,
\end{split}
\end{equation}
where we introduced the function $\Phi_N$.
Substituting these bounds into \eqref{eq:interm} then gives
\begin{equation} \label{eq:anana}
\begin{split}
	\mathrm{II}_{N,\epsilon} \le
	\frac{C \, \|\psi\|_\infty^2}{N}
	\sum_{k=1}^{(\log\frac{1}{\epsilon})^2}
	\sum_{(\sfi_1, \ldots, \sfi_k) \in (\cA_\epsilon^{(\notri)})^c} \
	\sum_{d_1 \le f_1 \,\in\, \cT_{\epsilon N}(\sfi_1) , \,
	\ldots, \, d_k \le f_k \,\in\, \cT_{\epsilon N}(\sfi_k)} & \\
	\Phi_N\Big(\frac{d_1}{N}\Big)
	\, \overline{U}_N(f_1-d_1) \, \Bigg\{ \prod_{j=2}^k u(d_j-f_{j-1}) \,
	\overline{U}_N(f_j-d_j) \Bigg\} &  \,.
\end{split}
\end{equation}
A similar estimate can be derived for $\mathrm{I}_{N,\epsilon}$, with a corresponding
summation constraint.

We now compute the limit as $N\to\infty$ of the r.h.s.\ of \eqref{eq:anana}.
Recalling $\varphi_N$ from \eqref{eq:phiNpsiN}, the local limit theorem \eqref{eq:llt},
and \eqref{eq:gtrans},
if $d_1/N\to s\in (0,1)$, then for $\Phi_N$ in \eqref{eq:secondsquare} we have
\begin{equation}\label{eq:PhiNlim}
\begin{aligned}
	\Phi_{N}\Big(\frac{d_1}{N}\Big) :=\ \ & \frac{1}{N} \sum_{v, v' \in \Z^2_\even}
	\varphi_N(v) \, \varphi_N(v') \, q_{2d_1}(v - v') \\
	\asto{N} & \iint_{\R^2\times\R^2} \varphi(z) \varphi(z')  g_s(z'-z)   \, \dd z \dd z' =: \Phi(s),
\end{aligned}
\end{equation}
where we note that, by the definition of $\Vert\varphi\Vert_{\cG_t}$ from \eqref{eq:phinorm},
\begin{equation}\label{eq:intf}
\int_0^t \Phi(s) \dd s = \Vert \varphi\Vert_{\cG_t}^2.
\end{equation}

We will use the following result, which says that as $N\to\infty$,
for each $\sfI:=(\sfi_1, \ldots, \sfi_k)\subset \{1, \ldots, \frac{1}{\eps}\}$,
the term in \eqref{eq:anana} converges to a limit that can be interpreted in terms
of the Dickman subordinator, as mentioned in Remark~\ref{rem:interm}.
\begin{lemma}\label{th:asycom}
Let $\Phi_N$ and $\Phi$ be defined as in \eqref{eq:PhiNlim}. For any fixed $\epsilon > 0$, $k\in\N$
and $\sfI:=\{\sfi_1, \ldots, \sfi_k\}\subset \{1, \ldots, \frac{1}{\eps}\}$
with $\sfi_1 < \sfi_2 < \ldots < \sfi_k$, we have
\begin{equation}\label{eq:usefullimit}
\begin{split}
	\lim_{N\to\infty}
	& \ \frac{1}{N}
	\sum_{d_1 \le f_1 \,\in\, \cT_{\epsilon N}(\sfi_1) , \,
	\ldots, \, d_k \le f_k \,\in\, \cT_{\epsilon N}(\sfi_k)} \\
	& \qquad\quad
	\Phi_N\big(\tfrac{d_1}{N}\big) \, \overline{U}_N(f_1-d_1)
	\, \Bigg\{ \prod_{j=2}^k u(d_j-f_{j-1}) \,
	 \overline{U}_N(f_j-d_j) \Bigg\}  \, = \, \cI_\epsilon^{\Phi}(\sfI) \,,
\end{split}
\end{equation}
with
\begin{equation}\label{eq:cIeps}
	\begin{split}
	\cI_\epsilon^{\Phi}(\sfI) :=  \ \
	\idotsint\limits_{a_1 \le b_1 \,\in\, \cT_{\epsilon}(\sfi_1) \,,
	\, \ldots\, , \, a_k \le b_k \,\in\, \cT_{\epsilon}(\sfi_k)}
	\dd a_1 \, \dd b_1 \, \cdots \, \dd a_k \, \dd b_k
	\qquad\qquad\qquad\qquad  & \\
	{\Phi(a_1)} \, G_\theta(b_1-a_1)
	\Bigg\{ \prod_{j=2}^k \frac{1}{a_j-b_{j-1}} \, G_\theta(b_j-a_j) \Bigg\} & \,,
\end{split}
\end{equation}
where $\cT_{\epsilon}(\sfi):= (\epsilon (\sfi-1), \epsilon \sfi]$, and $G_\theta$ $($see \eqref{eq:G1}
and \eqref{eq:G3}$)$
is the weighted Green's function for the Dickman subordinator
with L\'evy measure $\frac{1}{t}\ind_{(0,1)}(t){\rm d}t$ introduced in Lemma~\ref{lem:Levyconv}.
\end{lemma}
\begin{proof}
If we introduce the macroscopic variables $a_i := d_i/N$, $b_i := f_i/N$
in \eqref{eq:usefullimit}, the sums  converge to corresponding integrals
as $N\to\infty$ (for fixed $\epsilon > 0$),
by the asymptotic expansions  \eqref{eq:u}, \eqref{eq:sigma}, \eqref{eq:asU1}
and \eqref{eq:asU1ub} for $u(\cdot)$, $\sigma_N^2$,  $\overline{U}_N(\cdot)$
(also recall \eqref{eq:RN}), as well as the local limit theorem \eqref{eq:llt}.
This gives \eqref{eq:usefullimit}-\eqref{eq:cIeps}.
\end{proof}

We can interpret $\cI_\epsilon^{\Phi}(\sfI)$ in \eqref{eq:cIeps} as the weight associated to a Dickman subordinator. More precisely, recall that $G_\theta$ is the weighted Green's function of the Dickman subordinator $Y$ introduced in Lemma~\ref{lem:Levyconv}, and satisfies the following \emph{renewal property}
\cite[eq.~(6.14)]{CSZ19a}:
\begin{equation}\label{eq:renewal-Dickman}
	\forall s < \bar t < t: \qquad
	G_\theta(t-s) = \iint_{u \in (s,\bar t), \, v \in (\bar t, t)} G_\theta(u-s) \,
	\frac{1}{v-u} \, G_\theta(t-v) \, \dd u \, \dd v \,.
\end{equation}
In \eqref{eq:cIeps}, let us denote by $\cI_\epsilon^{s,t}(\sfi_1, \ldots, \sfi_k)$
the integral where the extreme variables $a_1$ and $b_k$
are not integrated out but rather fixed to be $s$ and $t$ respectively, namely,
\begin{equation}\label{eq:cIepsst}
	\begin{split}
	\cI_\epsilon^{s,t}(\sfI) :=  \ \
	\idotsint\limits_{s \le b_1 \,\in\, \cT_{\epsilon}(\sfi_1) \,,
	\, \ldots\, , \, a_k \le t \,\in\, \cT_{\epsilon}(\sfi_k)}
	\dd b_1 \, \dd a_2 \, \dd b_2 \, \cdots \, \dd a_{k-1} \, \dd b_{k-1} \, \dd a_k
	\qquad\quad  & \\
	G_\theta(b_1-s)
	\Bigg\{ \prod_{j=2}^{k-1} \frac{1}{a_j-b_{j-1}} \, G_\theta(b_j-a_j) \Bigg\}
	\frac{1}{a_k-b_{k-1}} \, G_\theta(t-a_k)
	& \,.
\end{split}
\end{equation}
This is the weight of renewal configurations that only visit the intervals $\cT_\epsilon(\sfi_1)$, \ldots, $\cT_\eps(\sfi_k)$, and $a_j, b_j$ are the first and last renewal points in $\cT_\eps(\sfi_j)$, while $\frac{1}{a_{j}-b_{j-1}}$ comes from the L\'evy measure of the Dickman subordinator.  An iterative application of \eqref{eq:renewal-Dickman} then shows that
\begin{equation}\label{eq:renewal-Dickman2}
\begin{gathered}
	\forall \sfj \le \sfj', \ \forall s \in \cT_\epsilon(\sfj),
	\ \forall t \in \cT_\epsilon(\sfj'): \\
	\sum_{k=1}^\infty \ \sum_{\sfj =: \sfi_1 < \sfi_2 < \ldots
	< \sfi_{k-1} < \sfi_k := \sfj'} \ \cI_\epsilon^{s,t}(\sfi_1, \ldots, \sfi_k)
	\,=\, G_\theta(t-s) \,,
\end{gathered}
\end{equation}
which is just a renewal decomposition by summing over the set of possible intervals $\cT_\eps(\sfi)$,
$\sfj \leq \sfi \leq\sfj'$, visited by the Dickman subordinator $Y$, given that $s,t$ are in the range of $Y$.

Applying Lemma~\ref{th:asycom} to \eqref{eq:anana} then gives
\begin{equation} \label{eq:cIeps0}
	\limsup_{N\to\infty} \,
	\mathrm{II}_{N,\epsilon}
	\,\le \, C  \, \|\psi\|_\infty^2  \,
	\sum_{k=1}^{(\log \frac{1}{\epsilon})^2} \,
	\sum_{(\sfi_1, \ldots, \sfi_k) \in (\cA_\epsilon^{(\notri)})^c} \
	\cI_\epsilon^{\Phi}(\sfi_1, \ldots, \sfi_k) \,,
\end{equation}
With the same arguments, we obtain a corresponding bound for $\mathrm{I}_{N,\epsilon}$:
\begin{equation} \label{eq:cIeps02}
	\limsup_{N\to\infty} \,
	\mathrm{I}_{N,\epsilon}
	\,\le \, C  \, \|\psi\|_\infty^2
	\, \sum_{k > (\log\frac{1}{\epsilon})^2} \,
	\sum_{0 < \sfi_1 < \ldots < \sfi_k \le \frac{1}{\epsilon}} \
	\cI_\epsilon^{\Phi}(\sfi_1, \ldots, \sfi_k) \,.
\end{equation}
To complete the proof of Lemma~\ref{th:no3},
it remains to derive \eqref{eq:toshIII0a} and \eqref{eq:toshIII0b}
from these bounds. We start with $\mathrm{II}_{N,\epsilon}$, which is more involved,
but we first make a remark.

\begin{remark}[Variance bound]\label{rem:variance}
If we remove any constraint on $k$ and $(\sfi_1, \ldots, \sfi_k)$
from  formula \eqref{eq:interm}, summing over all $k \in \N$
and $0<\sfi_1<\cdots <\sfi_k\leq \frac{1}{\eps}$,
we obtain $\mathbb{V}{\rm ar}(\cZ_N(\varphi, \psi))$
(recall \eqref{eq:Zpolyav}).
We thus have a simpler analogue of \eqref{eq:cIeps0} and \eqref{eq:cIeps02}:
\begin{equation}\label{eq:VarZbdd}
\begin{aligned}
\limsup_{N\to\infty} \,
	\mathbb{V}{\rm ar}(\cZ_N(\varphi, \psi))
	&\,\le \, C  \, \|\psi\|_\infty^2
	\sum_{k=1}^\infty \,
	\sum_{0<\sfi_1<\cdots <\sfi_k\leq \frac{1}{\eps}} \
	\cI_\epsilon^{\Phi}(\sfi_1,\ldots,\sfi_k)  \\
	&\, \le \, C  \, \|\psi\|_\infty^2  \iint\limits_{0<s<t<1} \Phi(s) G_\theta(t-s) \, \dd t \, \dd s \\
	&\, \le\, C  \, \|\psi\|_\infty^2
	\bigg(\int_0^1 G_\theta(t) \dd t \bigg) \bigg( \int_0^1 \Phi(s) \, \dd s \bigg)
	= C c_\theta \|\psi\|_\infty^2 \Vert \varphi\Vert_{\cG_1}^2,
\end{aligned}
\end{equation}
where in the second inequality we applied the renewal decomposition \eqref{eq:renewal-Dickman2},
with $s$ and $t$
being the first and last renewal points, we denoted $c_\theta:=\int_0^1 G_\theta(t)\dd t$,
see \eqref{eq:Gas},
and recalled \eqref{eq:intf}. Note that this bound is the same as the one in \eqref{eq:m2-lim1}
and does not depend on $\eps$.
\end{remark}

\medskip

\noindent
{\bf Bound for $\mathrm{II}_{N,\epsilon}$: proof of \eqref{eq:toshIII0b}.}
We start with \eqref{eq:cIeps0}. The constraint $(\cA_\epsilon^{(\notri)})^c$ contains
$\sfI=(\sfi_1,\ldots, \sfi_k)$ with either $1\leq \sfi_1 <K_\eps$, or $\sfi_k > \frac{1}{\eps} -K_\eps$,
or $\sfi_{j+1}-\sfi_j, \sfi_{j+2}-\sfi_{j+1} < K_\epsilon$ for some $j$ (hence $k\ge 3$). We will treat the three cases
one by one.

For the first case $1\leq \sfi_1 <K_\eps$, omitting the factor $\Vert\psi\Vert_\infty^2$, its contribution
is bounded by
\begin{equation} \label{eq:IIa}
	\sum_{k=1}^{(\log\frac{1}{\epsilon})^2} \!\!\!\!
	\sum_{\substack{0 < \sfi_1 <K_\eps \\ \sfi_1< \ldots < \sfi_k \leq \frac{1}{\epsilon}}}
	\cI_\epsilon^{\Phi}(\sfi_1, \ldots, \sfi_k)
	\leq
	\iint_{\substack{0<s<K_\eps \eps \\ s<t<1}} \Phi(s) \, G_\theta(t-s) \, \dd s\, \dd t
	\leq c_\theta \Vert \varphi\Vert_{\cG_{\scriptscriptstyle K_\eps \eps }}^2,
\end{equation}
where we applied the renewal decomposition \eqref{eq:renewal-Dickman2} as in \eqref{eq:VarZbdd}, and recalled from \eqref{eq:intf} that $\Vert \varphi\Vert_{\cG_{\scriptscriptstyle K_\eps \eps}}^2= \int_0^{K_\eps \eps} \Phi(s) \dd s$. This gives the first term in \eqref{eq:toshIII0b}.

For the second case $\sfi_k > \frac{1}{\eps} -K_\eps$, omitting $\Vert\psi\Vert_\infty^2$, its contribution
is bounded by
\begin{align*}
	\sum_{k=1}^{(\log\frac{1}{\epsilon})^2}
	\sum_{0 < \sfi_1 < \ldots < \sfi_{k-1} < \sfi_k \atop
	                          \frac{1}{\eps} -K_\eps < \sfi_k \leq \frac{1}{\epsilon}}
	\cI_\epsilon^{\Phi}(\sfi_1, \ldots, \sfi_k)
	\leq\ &
	\iint_{0<s<t \atop 1-K_\eps \eps <t<1} \Phi(s) \, G_\theta(t-s) \, \dd s\, \dd t  \\
	\leq\ & C \bigg(\int_0^{K_\eps \eps} G(t) \dd t\bigg) \bigg(\int_0^1 \Phi(s) \dd s \bigg)
	\leq  \frac{C \Vert \varphi\Vert_{\cG_1}^2}{\log\frac{1}{\epsilon}} \,,
\end{align*}
where in the second inequality, we used that  $\int_{a}^{a+K_\eps \epsilon} G_\theta(t) \, \dd t
\le C \int_0^{K_\eps \eps} G_\theta(t) \dd t\leq C/ \log \frac{1}{\epsilon}$ uniformly in $\eps$
small enough and $a\in (0,1)$ (by \eqref{eq:Gas}-\eqref{eq:G5} and the choice
\eqref{eq:KM} of $K_\epsilon$). This bound is much smaller than
the second term in \eqref{eq:toshIII0b}.

For the third case with $\sfi_{j+1}-\sfi_j, \sfi_{j+2}-\sfi_{j+1} < K_\epsilon$ for some $j$, we need to bound
\begin{align*}
	\cW_\epsilon
	:=\ &  \sum_{k=3}^{(\log \frac{1}{\epsilon})^2} \ \sum_{j=1}^{k-2} \
	\sum_{\substack{0 < \sfi_1 < \ldots < \sfi_k \le \frac{1}{\epsilon} \\
	\sfi_{j+1}-\sfi_j < K_\epsilon \text{ and }
	\sfi_{j+2}-\sfi_{j+1} < K_\epsilon}}
	\!\!\!\!\! \cI_\epsilon^{\Phi}(\sfi_1, \ldots, \sfi_k)  \\
	\leq \ & \sum_{\substack{0 < \sfi < \sfi' < \sfi'' \le \frac{1}{\epsilon}\\
	|\sfi'-\sfi| < K_\epsilon \\ |\sfi'' - \sfi'| < K_\epsilon}}
	\ \iiiint\limits_{\substack{b \in \cT_\epsilon(\sfi)\\ a' \le b' \in \cT_\epsilon(\sfi') \\
	a'' \in \cT_\epsilon(\sfi'')}} \,
	\dd b \, \dd a' \, \dd b' \, \dd a'' \
	\int_0^b \dd s \ \int_{a''}^1 \dd t \\
	& \hspace{2cm} {\Phi(s) \, }
	G_\theta(b-s) \, \frac{1}{a'-b} \, G_\theta(b'-a') \, \frac{1}{a''-b'} \,
	G_\theta(t-a'') \,.
\end{align*}
where we again applied the renewal decomposition \eqref{eq:renewal-Dickman2}. Bounding the integral of
$G_\theta(t-a'')$ over $t$
by $c_\theta = \int_0^1 G_\theta(u) \, \dd u$, we obtain
\begin{equation}\label{eq:progbound1}
\begin{split}
	\cW_\epsilon \,\le\,
	\sum_{\sfi = 1}^{\frac{1}{\epsilon}}
	\sum_{\sfi'=\sfi+1}^{\sfi+K_\epsilon}
	\sum_{\sfi''=\sfi'+1}^{\sfi'+K_\epsilon}
	\iiiint\limits_{\substack{b \in \cT_\epsilon(\sfi)\\ a' \le b' \in \cT_\epsilon(\sfi') \\
	a'' \in \cT_\epsilon(\sfi'')}} & \!\!\!\! \dd b \, \dd a' \, \dd b' \, \dd a'' \,
	\int_0^b c_\theta  \Phi(s) \frac{G_\theta(b-s)}{a'-b}\frac{G_\theta(b'-a')}{a''-b'} \, \dd s  \,.
\end{split}
\end{equation}
Note that if we restrict the sum to $2\leq \sfi'-\sfi, \sfi''-\sfi' \le K_\eps$, then using \eqref{eq:G5}, it is not difficult to see that the integrals can be bounded by $C \frac{(\log K_\eps)^2}{\log \frac{1}{\eps}}\int_0^1\Phi(s) \dd s$. Complications only arise when $\sfi'=\sfi+1$ or $\sfi''=\sfi'+1$.

We will proceed in three steps.
The following bound will be used repeatedly:
\begin{equation} \label{eq:boint}
	\forall \delta \in (0,\tfrac{1}{2})\,, \ \forall z \in [\delta,\infty): \qquad
	\int_0^\delta \, G_\theta(x) \,
	\log\bigg(1+\frac{z}{\delta -x}\bigg) \, \dd x
	\,\le\, C \, \frac{\log(1+\frac{z}{\delta})}{\log\frac{1}{\delta}} \,.
\end{equation}
Indeed, splitting the integral over $(0,\frac{\delta}{2})$ and $(\frac{\delta}{2}, \delta)$
and exploiting \eqref{eq:Gas}, we note that:
\begin{itemize}
\item for $x < \frac{\delta}{2}$ we have
$\log(1+\frac{z}{\delta -x}) \le
\log(1+\frac{z}{\delta/2}) \le C \, \log(1+\frac{z}{\delta})$ and
$\int_0^{\delta/2} G_\theta(x) \, \dd x \le \frac{C'}{\log\frac{1}{\delta}}$;

\item for $x > \frac{\delta}{2}$ we can bound
$G_\theta(x) \le \frac{C}{\delta (\log\frac{1}{\delta})^2}$ and,
by the change of variable $t := \frac{z}{\delta-x}$,
\begin{equation} \label{eq:bai}
	\int_{0}^\delta \log\bigg(1+\frac{z}{\delta -x}\bigg) \, \dd x
	= z \int_{\frac{z}{\delta}}^\infty \frac{\log(1+t)}{t^2} \, \dd t
	\le C' \, \delta \, \log\bigg( 1+\frac{z}{\delta} \bigg)  \,.
\end{equation}
\end{itemize}
We now continue to bound the r.h.s.\ of \eqref{eq:progbound1}.

\medskip

\noindent
\emph{Step 1.}
Given $\sfi' \in \N$ and $a' \in \cT_\epsilon(\sfi')$, we bound the integrals
over $a''$ and $b'$ in \eqref{eq:progbound1}:
\begin{equation*}
\begin{split}
	& \sum_{\sfi''=\sfi'+1}^{\sfi'+K_\epsilon} \
	\iint\limits_{\substack{b' \in \cT_\epsilon(\sfi'): \, b' > a'\\
	a'' \in \cT_\epsilon(\sfi'')}} \!\!\! \dd a'' \, \dd b' \,
	G_\theta(b'-a') \, \frac{1}{a''-b'}
	\,=\, \int_{a'}^{\epsilon\sfi'}
	\dd b' \, G_\theta(b'-a') \, \log\frac{\epsilon(\sfi'+K_\epsilon)-b'}{\epsilon\sfi' - b'} \\
	&\qquad\qquad \,=\, \bigg[ \int_0^\delta \dd x \, G_\theta(x) \,
	\log\bigg(1+\frac{\epsilon K_\epsilon}{\delta -x}\bigg) \bigg]
	\bigg|_{\delta := \epsilon\sfi' - a'}
	\,\le\, C \, \frac{\log(1+\frac{\epsilon K_\epsilon}{\epsilon\sfi'-a'})}{\log\frac{1}{\epsilon\sfi'-a'}}   \,,
\end{split}
\end{equation*}
where we used \eqref{eq:boint} and changed variable $x := b'-a'$.
Plugging it into \eqref{eq:progbound1}, we obtain
\begin{equation}\label{eq:progbound2}
	\cW_\epsilon \,\le\, C \,
	\sum_{\sfi = 1}^{\frac{1}{\epsilon}} \
	\sum_{\sfi'=\sfi+1}^{\sfi+K_\epsilon} \
	\iint\limits_{\substack{b \in \cT_\epsilon(\sfi)\\ a' \in \cT_\epsilon(\sfi')}}
	\dd b \, \dd a' \, \int_0^b \dd s \
	\Phi(s) \, G_\theta(b-s) \, \frac{1}{a'-b} \,
	\frac{\log(1+\frac{\epsilon K_\epsilon}{\epsilon\sfi' - a'})}{\log\frac{1}{\epsilon\sfi' - a'}} \,.
\end{equation}

\medskip
\noindent
\emph{Step 2.}
Given $\sfi \in \N$ and $b \in \cT_\epsilon(\sfi)$, we
focus on the integral over $a'$ in \eqref{eq:progbound2}:
\begin{equation}\label{eq:overa}
	\sum_{\sfi'=\sfi+1}^{\sfi+K_\epsilon} \
	\int_{\epsilon(\sfi'-1)}^{\epsilon\sfi'} \dd a' \,
	\frac{1}{a'-b} \,
	\frac{\log(1+\frac{\epsilon K_\epsilon}{\epsilon\sfi' - a'})}{\log\frac{1}{\epsilon\sfi' - a'}}
	= \sum_{\sfi'=\sfi+1}^{\sfi+K_\epsilon} \
	\int_0^\epsilon
	\frac{1}{\epsilon\sfi' -b - x} \,
	\frac{\log(1+\frac{\epsilon K_\epsilon}{x})}{\log\frac{1}{x}} \, \dd x \,,
\end{equation}
by the change of variables $x := \epsilon\sfi'- a'$. We first bound the sum from $\sfi'=\sfi+2$ onward, for which we note that for $x \in (0,\epsilon)$,
\begin{equation*}
\begin{split}
	\sum_{\sfi'=\sfi+2}^{\sfi+K_\epsilon} \frac{1}{\epsilon\sfi' - b - x}
	\,\le\, \frac{1}{\epsilon} \log\frac{\epsilon(\sfi + K_\epsilon + 1) -b-x}{\epsilon(\sfi+1)-b-x}
	& \,=\, \frac{1}{\epsilon} \log\bigg(1+\frac{\epsilon K_\epsilon}{\epsilon(\sfi+1)-b-x}\bigg) \\
	& \,\le\, \frac{1}{\epsilon} \log\bigg(1+\frac{\epsilon K_\epsilon}{\epsilon\sfi -b}\bigg) \,.
\end{split}
\end{equation*}
Moreover, by the change of variables $x = \epsilon \, e^{-t}$,
\begin{equation} \label{eq:boint2}
	\int_0^\epsilon
	\frac{\log(1+\frac{\epsilon K_\epsilon}{x})}{\log\frac{1}{x}} \, \dd x
	\,=\, \epsilon \int_0^\infty e^{-t} \,
	\frac{\log(1+K_\epsilon e^t)}{t + \log\frac{1}{\epsilon}} \,  \dd t
	\,\le\, \frac{\epsilon}{\log\frac{1}{\epsilon}} \,
	\int_0^\infty e^{-t} (t+ \log(1+K_\epsilon)) \,  \dd t \,,
\end{equation}
because $t + \log\frac{1}{\epsilon} \ge \log\frac{1}{\epsilon}$
and $1+K_\epsilon e^t \le (1+ K_\epsilon) e^t$. Therefore
$$
	\sum_{\sfi'=\sfi+2}^{\sfi+K_\epsilon} \
	\int_0^\epsilon
	\frac{1}{\epsilon\sfi' -b - x} \,
	\frac{\log(1+\frac{\epsilon K_\epsilon}{x})}{\log\frac{1}{x}} \, \dd x
	\leq C\,\frac{\log(1+K_\epsilon)}{\log \frac{1}{\eps}}
	\log\bigg(1+\frac{\epsilon K_\epsilon}{\epsilon\sfi -b}\bigg).
$$
Now for the case $\sfi'=\sfi+1$, we have
\begin{align*}
	& \int_0^\epsilon
	\frac{1}{\epsilon(\sfi+1) -b - x} \,
	\frac{\log(1+\frac{\epsilon K_\epsilon}{x})}{\log\frac{1}{x}} \, \dd x \\
	\leq\ & \frac{2}{\eps} \int_0^{\eps/2}  \frac{\log(1+\frac{\epsilon K_\epsilon}{x})}{\log\frac{1}{x}}
	\dd x + C\, \frac{\log (1+K_\eps)}{\log \frac{1}{\eps}}\int_{\eps/2}^\eps  \frac{1}{\epsilon(\sfi+1)
	-b - x} \,\dd x \\
	\leq\ & C\,\frac{\log(1+K_\epsilon)}{\log \frac{1}{\eps}}\bigg(1 +
	\log\bigg(1+\frac{\epsilon/2}{\epsilon\sfi -b}\bigg)\bigg)
	\leq C\,\frac{\log(1+K_\epsilon)}{\log \frac{1}{\eps}}
	\log\bigg(1+\frac{\epsilon K_\epsilon}{\epsilon\sfi -b}\bigg).
\end{align*}
Substituting the above bounds into
\eqref{eq:progbound2} then gives
\begin{equation}\label{eq:progbound3}
\begin{split}
	\cW_\epsilon & \,\le\, C' \, \frac{\log(1+K_\epsilon)}{\log\frac{1}{\epsilon}} \,
	\sum_{\sfi = 1}^{\frac{1}{\epsilon}} \
	\int\limits_{b \in \cT_\epsilon(\sfi)}
	\dd b \, \int_0^b \dd s \
	\Phi(s) \, G_\theta(b-s) \, \log\bigg(1+\frac{\epsilon K_\epsilon}{\epsilon\sfi -b}\bigg) \\
	& \,=\, C' \, \frac{\log(1+K_\epsilon)}{\log\frac{1}{\epsilon}} \,
	\sum_{\sfi = 1}^{\frac{1}{\epsilon}} \
	\int_0^{\epsilon\sfi} \dd s \, \Phi(s) \,
	\int_{\max\{s, \epsilon(\sfi-1)\}}^{\epsilon\sfi}
	\!\dd b \,
	G_\theta(b-s) \, \log\bigg(1+\frac{\epsilon K_\epsilon}{\epsilon\sfi -b}\bigg)\,.
\end{split}
\end{equation}

\medskip
\noindent
\emph{Step 3.} Given $s \in (0,1)$, we bound the integral over $b$ in \eqref{eq:progbound3}.
\begin{itemize}
\item For $s \in (\epsilon(\sfi-2),\epsilon\sfi)$ we can bound, by \eqref{eq:boint}
with $\delta = \epsilon\sfi-s$,
\begin{equation} \label{eq:alogous}
\begin{split}
	\int_{s}^{\epsilon\sfi}
	G_\theta(b-s) \, \log\bigg(1+\frac{\epsilon K_\epsilon}{\epsilon\sfi -b}\bigg) \, \dd b
	&\,\le\, C \, \frac{\log(1+\frac{\epsilon K_\epsilon}{\epsilon\sfi-s})}{\log\frac{1}{\epsilon\sfi-s}}
	\,\le\, C' \, \log(1+K_\epsilon) \,,
\end{split}
\end{equation}
where the last inequality (which is very rough) holds, say for $\epsilon \in (0,\frac{1}{4})$,
\emph{uniformly for $s \in (\epsilon(\sfi-2),\epsilon\sfi)$ and $K_\epsilon \ge 1$},
because $x := \frac{1}{\epsilon\sfi-s} \ge \frac{1}{2\epsilon} \ge 2$ and
\begin{equation*}
	\sup_{x\ge 2}\frac{\log(1+(\epsilon K_\epsilon) x)}{\log x} \le
	\sup_{x\ge 2} \frac{\log((1+ K_\epsilon) x)}{\log x}
	= \frac{\log((1+ K_\epsilon) 2)}{\log 2}
	\le \frac{2}{\log 2} \, \log(1+K_\epsilon) \,.
\end{equation*}

\item For $s \leq \epsilon(\sfi-2)$
we can bound $G_\theta(b-s) \le C \, G_\theta(\epsilon(\sfi-1)-s)$, see \eqref{eq:Gas}, and
\begin{equation*}
\begin{split}
	\int_{\epsilon(\sfi-1)}^{\epsilon\sfi}
	\log\bigg(1+\frac{\epsilon K_\epsilon}{\epsilon\sfi -b}\bigg) \, \dd b
	\,=\, \int_0^\epsilon \log\bigg(1+\frac{\epsilon K_\epsilon}{x}\bigg) \, \dd x
	\,\le\, C' \, \epsilon \, \log(1 + K_\epsilon) \,,
\end{split}
\end{equation*}
by the change of variables $x := \epsilon\sfi-b$ and the estimate
\eqref{eq:bai} with $\delta = \epsilon$ and $z =\epsilon K_\epsilon$.
\end{itemize}
Substituting these bounds into \eqref{eq:progbound3} then gives
\begin{equation} \label{eq:bouint}
\begin{split}
	\cW_\epsilon \le\ & C'' \, \frac{(\log(1+K_\epsilon))^2}{\log\frac{1}{\epsilon}}
	\sum_{\sfi = 1}^{\frac{1}{\epsilon}} \
	\bigg\{ \int_{\epsilon(\sfi-2)^+}^{\epsilon\sfi} \!\!\!\! \Phi(s) \, \dd s
	\,+\, \epsilon \, \int_0^{\epsilon(\sfi-2)^+} \!\!\!\!\!\!\!\!\!
	\Phi(s) \, G_\theta(\epsilon(\sfi-1)-s)  \,  \dd s  \bigg\} \\
	\leq \ &  C'' \, \frac{(\log(1+K_\epsilon))^2}{\log\frac{1}{\epsilon}}
	\int_0^1 \Phi(s) \, \Bigg\{ 2 \,+\,
	\sum_{\sfi=2 + \lceil \frac{s}{\epsilon}\rceil}^{\frac{1}{\epsilon}}
	\epsilon \, G_\theta(\epsilon(\sfi-1)-s) \Bigg\} \, \dd s \\
	\le\ & C \, \frac{(\log(1+K_\epsilon))^2}{\log\frac{1}{\epsilon}}  \int_0^1 \Phi(s) \, \dd s =
	C \, \frac{(\log(1+K_\epsilon))^2}{\log\frac{1}{\epsilon}}  \Vert \varphi\Vert_{\cG_1}^2, 	
\end{split}
\end{equation}
where the last sum can be seen as a Riemann sum and bounded by a multiple of $c_\theta = \int_0^1 G_\theta(x) \, \dd x$. This bound gives the second term in
\eqref{eq:toshIII0b} and completes its proof.
\bigskip

\noindent
{\bf Bound for $\mathrm{I}_{N,\epsilon}$: proof of \eqref{eq:toshIII0a}.}
In view of \eqref{eq:cIeps02} and \eqref{eq:intf}, we need to show that
\begin{equation} \label{eq:bao2}
	\cV_\epsilon \,:=\,
	\sum_{\sfI \subset \{1, \ldots, \frac{1}{\eps}\}, \, |\sfI|>(\log\frac{1}{\epsilon})^2} \
	\cI_\epsilon^{\Phi}(\sfI) \, \le \,
	C \,  \bigg(\int_0^1 \Phi(t) \, \dd t \bigg) \,
	\frac{1}{\log\frac{1}{\epsilon}} \,.
\end{equation}
By Markov's inequality, we can bound
\begin{equation*}
\begin{split}
	\cV_\epsilon  & \,\le\,
	\frac{1}{(\log \frac{1}{\epsilon})^2} \,
	\sum_{\sfI \subset \{1, \ldots, \frac{1}{\eps}\},\, |\sfI|>(\log\frac{1}{\epsilon})^2} \!\!\!\!\!\!\!\!
	|\sfI|\, \cI_\epsilon^{\Phi}(\sfI)
	\,\le\, \frac{1}{(\log \frac{1}{\epsilon})^2} \
	\sum_{\sfj=1}^{\frac{1}{\epsilon}}
	\sum_{\sfI \subset \{1, \ldots, \frac{1}{\eps}\}, \sfI\ni \sfj} \cI_\epsilon^{\Phi}(\sfI) \,.
\end{split}
\end{equation*}
Recalling the renewal interpretation of $\cI_\epsilon^{\Phi}(\sfI)$ after Lemma~\ref{th:asycom} and
the renewal decomposition
\eqref{eq:renewal-Dickman2}, we can integrate over the first renewal visit $s$, the last visited point
$u\in \cT_{\eps}(\sfj)$,
the first visited point $v$ after $\cT_{\eps}(\sfj)$, and the last renewal visit $t\leq 1$, to obtain the bound
\begin{align*}
	\cV_\epsilon
	& \,\le\,
	\frac{1}{(\log \frac{1}{\epsilon})^2} \, \sum_{1\le \sfj \le \frac{1}{\epsilon}}
	\ \ \iiiint\limits_{0<s<u<v<t\le1 \atop u\in \cT_\eps(\sfj), v>\eps \sfj} {\Phi(s)\,} G_\theta(u-s) \,
	\frac{1}{v-u} \, G_\theta(t-v) \, \dd s \, \dd u \, \dd v \, \dd t \\
	& \,\le\, \frac{c_\theta}{(\log \frac{1}{\epsilon})^2} \,
	\sum_{1 \le \sfj \le \frac{1}{\epsilon}}
	\ \ \iint\limits_{0<s <u, \, u \in \cT_\epsilon(\sfj)}
	{\Phi(s)\,} G_\theta(u-s) \,
	\log\frac{1}{\epsilon\sfj-u} \, \dd s \, \dd t \,.
\end{align*}
Observe that the sum of the integrals is exactly the same as in the r.h.s. of \eqref{eq:progbound3} with $K_\eps$ replaced by $\frac{1}{\eps}$.
Therefore the bounds leading to \eqref{eq:bouint} also applies, which gives
$$
\cV_\epsilon \leq \frac{c_\theta}{(\log \frac{1}{\epsilon})^2}  C'' \log\Big(1+\frac{1}{\eps}\Big) \int_0^1 \Phi(s) \dd s
\leq \frac{C}{\log\frac{1}{\epsilon}} \, \int_0^1 \Phi(s) \, \dd s \,.
$$
This matches our goal \eqref{eq:bao2} and completes the proof of \eqref{eq:toshIII0a}.
\end{proof}

\subsection{Step 2: Diffusive truncation in space}\label{sec:5.2}
In this step, we introduce our second approximation $\cZ_{N,\epsilon}^{(\diff)}(\varphi,\psi)$
and show that it is close to $\cZ_{N,\epsilon}^{(\notri)}(\varphi,\psi)$.

For $\sfa = (\sfa^{(1)}, \sfa^{(2)}) \in \Z^2$, recall from \eqref{eq:cB} the mesoscopic spatial square
\begin{equation}\label{eq:cI2}
\begin{split}
	\cS_{\eps N}(\sfa) := & \big((\sfa-(1,1))\sqrt{\epsilon N}, \, \sfa\sqrt{\epsilon N}\,\big] \\
	=\, & \big((\sfa^{(1)}-1)\sqrt{\epsilon N}, \, \sfa^{(1)}\sqrt{\epsilon N} \,\big] \times
	\big((\sfa^{(2)}-1)\sqrt{\epsilon N}, \, \sfa^{(2)}\sqrt{\epsilon N}\,\big] \,,
\end{split}
\end{equation}
to which we associate the mesoscopic space variable $\sfa$. We now perform coarse-graining
in space by grouping
terms in the expansion of $\cZ_{N,\epsilon}^{(\notri)}(\varphi,\psi)$ in \eqref{eq:Zno3}
according to the mesoscopic
spatial boxes visited by the space variables $v, x_i, y_i, w$ in \eqref{eq:Zno3}. Namely, recall the definition
\eqref{eq:cB} of the mesoscopic time-space box
\begin{equation} \label{eq:cB2}
	\cB_{\epsilon N}(\sfi,\sfa) =
	\big( \cT_{\epsilon N}(\sfi) \times \cS_{\epsilon N}(\sfa) \big) \ \cap \ \Z^3_\even \,.
\end{equation}
We can rewrite \eqref{eq:Zno3} by introducing the \emph{mesocopic space variables} 
$\sfb_0, \sfa_1, \sfb_1, \ldots, \sfa_k, \sfb_k, \sfa_{k+1}$:
\begin{align}
	& \cZ_{N,\epsilon}^{(\notri)}(\varphi,\psi)
	\,\,\, = \,\,\, q_{0,N}^N(\varphi,\psi) \, \nonumber \\
	+  \, & \frac{1}{N}
	\sum_{k=1}^{(\log \frac{1}{\epsilon})^2}\!\!\!
	\sum_{(\sfi_1, \ldots, \sfi_k) \in \cA_\epsilon^{(\notri)}}
	 \sum_{{\sfb_0} \in \Z^2}
	 \sum_{\sfa_1, \, \sfb_1, \,\ldots\,, \sfa_k, \, \sfb_k  \,\in\, \Z^2}
	\sum_{\substack{(d_1,x_1) \in \cB_{\epsilon N}(\sfi_1, \sfa_1) \nonumber \\
	(f_1,y_1) \in \cB_{\epsilon N}(\sfi_1, \sfb_1)\\\text{with }d_1 \le f_1}}
	\cdots
	\sum_{\substack{(d_k,x_k) \in \cB_{\epsilon N}(\sfi_k, \sfa_k) \\
	(f_k,y_k) \in \cB_{\epsilon N}(\sfi_k, \sfb_k)\\\text{with }d_k \le f_k}} \nonumber \\
	& \bigg(\sum_{v \in \cS_{\eps N}({\sfb})\cap\Z^2_\even} \!\!\!\!\!\! \varphi_N(v) \, q_{0, d_1}(v,x_1) \bigg)
	\, X_{d_1,f_1}(x_1,y_1)
	\Bigg\{ \prod_{j=2}^k  q_{f_{j-1},d_j}(y_{j-1},x_j) \,
	X_{d_j,f_j}(x_j,y_j) \Bigg\} \nonumber \\
	& \times \, \bigg( \sum_{\sfa_{k+1} \in\Z^2} \
	\sum_{w \in \cS_{\eps N}({\sfa_{k+1}})\cap \Z^2_\even} \!\!\!\!\!\!\!\!\!\!\!\!
	q_{f_k, N}(y_k,w) \, \psi_N(w) \bigg) \,. \label{eq:Zno3a}
\end{align}

We now perform a diffusive scale truncation by replacing each $X_{d_i,f_i}(x_i,y_i)$ in the above expansion by its truncated
version $X^{(\diff)}_{d_i,f_i}(x_i,y_i)$ defined in \eqref{eq:Xdiff}. Let us stress that
\begin{equation} \label{eq:Xdiff0}
	X^{(\diff)}_{d,f}(x,y) = 0 \qquad \text{for } x \in \cS_{\eps N}(\sfa), \ y \in \cS_{\eps N}(\sfb) \
	\text{with} \ |b-a| > M_\epsilon \,.
\end{equation}
Furthermore, we restrict the mesoscopic space variables $(\sfa_1, \, \sfb_1, \,\ldots\,, \sfa_k, \, \sfb_k)$ in \eqref{eq:Zno3a} to
a ``diffusive set'' that depends on the initial and final space variables $\sfb_0$ and $\sfa_{k+1}$, and time variables $(\sfi_0:=0, \sfi_1, \ldots, \sfi_k, \sfi_{k+1}:=\frac{1}{\eps})$:
\begin{align}
	\cA_{\epsilon; {\sfb_0, \sfa_{k+1}}}^{(\diff)}  & :=
	\Big\{ (\sfa_1, \, \sfb_1, \,\ldots\,, \sfa_k, \, \sfb_k) \, \in (\Z^2)^{2k} \
	\ \text{s.t.}\ \forall\, 1\leq j\leq k, \ |\sfb_j - \sfa_j| \le M_\epsilon, \notag\\
	&\hspace{3cm} \text{and} \quad \forall\, 1\leq j\leq k+1, \
	|\sfa_{j} - \sfb_{j-1}| \le M_\epsilon \sqrt{\sfi_{j} - \sfi_{j-1}} \ \Big\}. \label{eq:cAdiff}
\end{align}
\begin{remark}
Once $(\sfi_1, \ldots, \sfi_k)\in \cA_{\epsilon}^{(\notri)}$ are grouped into time blocks,
see Definition~\ref{def:time-block},
we can then group
$(\sfa_1, \, \sfb_1, \,\ldots\,, \sfa_k, \, \sfb_k) \in \cA_{\epsilon; \sfb, \sfc}^{(\diff)}$ into space-blocks,
see Definition~\ref{def:space-block}.
The constraint $\cA_{\epsilon; \sfb, \sfc}^{(\diff)}$
then maps to the constraint $\bcA_{\epsilon; \sfb, \sfc}^{(\diff)}$ defined in \eqref{eq:bcA2}.
\end{remark}
More explicitly, we can approximate $\cZ_{N,\epsilon}^{(\notri)}(\varphi,\psi)$ from \eqref{eq:Zno3a} by
\begin{align}
	&\cZ_{N,\epsilon}^{(\diff)}(\varphi,\psi)
	:=  q_{0,N}^N(\varphi,\psi)  +
	\frac{1}{N} \sum_{k=1}^{(\log \frac{1}{\epsilon})^2}
	\!\!\!\! \sum_{(\sfi_1, \ldots, \sfi_k) \in \cA_\epsilon^{(\notri)}}
	\sum_{{\sfb_0, \sfa_{k+1}}\, \in \Z^2}
	 \sum_{(\sfa_1, \, \sfb_1, \,\ldots\,, \sfa_k, \, \sfb_k)
	\in \cA_{\epsilon; {\sfb_0, \sfa_{k+1}}}^{(\diff)}}   \notag \\
	&  \sum_{\substack{(d_1,x_1) \in \cB_{\epsilon N}(\sfi_1, \sfa_1)\\
	(f_1,y_1) \in \cB_{\epsilon N}(\sfi_1, \sfb_1)\\\text{with }d_1 \le f_1}}
	\!\!\!\cdots \!\!\! \sum_{\substack{(d_k,x_k) \in \cB_{\epsilon N}(\sfi_k, \sfa_k) \\
	(f_k,y_k) \in \cB_{\epsilon N}(\sfi_k, \sfb_k)\\\text{with }d_k \le f_k}}
	\ \sum_{y_0 \in \cS_{\eps N}({\sfb_0})\cap \Z^2_\even} \varphi_N(y_0)  \label{eq:Zdiffa} \\
	& \quad \times \Bigg\{ \prod_{j=1}^k  q_{f_{j-1},d_j}(y_{j-1},x_j) \,
	X_{d_j,f_j}^{(\diff)}(x_j,y_j) \Bigg\}
	\sum_{x_{k+1} \in \cS_{\eps N}(\sfa_{k+1})\cap \Z^2_\even} \!\!\!\!\!\!\!\!\!\!
	q_{f_k, N}(y_k,x_{k+1}) \, \psi_N(x_{k+1}) \,, \notag
\end{align}
where $f_0:=0$. The main result of this subsection is the following, where the approximation
error is much smaller than
${\mathbb V}{\rm ar}(\cZ^{\beta_N}_{N,1}(\varphi, \psi))$ in \eqref{eq:m2-lim1}.

\begin{lemma}[Diffusive bound]\label{th:diff}
Recall from \eqref{eq:KM} that $M_\eps = \log \log \frac{1}{\eps}$ and recall $\Vert \cdot\Vert_{\cG_t}$ from \eqref{eq:phinorm}.
There exist $\sfc, \sfC \in(0, \infty)$ such that for $\epsilon > 0$ small enough, we have: for all $\varphi$ with $\Vert \varphi\Vert_{\cG_1}^2<\infty$
and $\psi \in L^\infty(\R^2)$,
\begin{equation}\label{eq:appr2diff}
\begin{gathered}
	\limsup_{N\to\infty \text{ with } N\in2\N} \
	\big\|\big(  \cZ_{N,\epsilon}^{(\diff)}
	- \cZ_{N,\epsilon}^{(\notri)} \big)(\varphi, \psi)
	\big\|_{L^2}^2
	\, \le \,
	\sfC \, e^{-\sfc M_\eps^2}\Vert \varphi\Vert_{\cG_1}^2 \Vert \psi\Vert_\infty^2 .
\end{gathered}
\end{equation}
\end{lemma}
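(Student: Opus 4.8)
The plan is to exploit that $\cZ_{N,\epsilon}^{(\diff)}(\varphi,\psi)$ is obtained from $\cZ_{N,\epsilon}^{(\notri)}(\varphi,\psi)$ by \emph{deleting chaos terms}: comparing \eqref{eq:Zno3a} and \eqref{eq:Zdiffa}, the two polynomial chaos expansions differ only in that $\cZ^{(\diff)}$ (i) replaces each block $X_{d_j,f_j}(x_j,y_j)$ by the diffusively truncated $X^{(\diff)}_{d_j,f_j}(x_j,y_j)$, whose expansion \eqref{eq:Zpolydiff} is a sub-expansion of that of $X_{d_j,f_j}$, and (ii) restricts the mesoscopic space variables to the diffusive set $\cA_{\epsilon;\sfb_0,\sfa_{k+1}}^{(\diff)}$ of \eqref{eq:cAdiff}. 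Since the $X$-blocks are centered and $L^2$-orthogonal, with $\bbE[X_{d,f}(x,y)X_{d',f'}(x',y')]=\ind_{(d,f,x,y)=(d',f',x',y')}\,\overline{U}_N(f-d,y-x)$, the squared $L^2$-norm of the difference equals the sum, over all renewal configurations that survive the constraints in \eqref{eq:Zno3a} but are killed by (i) or (ii), of the \emph{same nonnegative summand} appearing in the second-moment expansion \eqref{eq:interm}. So it suffices to bound this constrained sum.

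A configuration killed by (i) or (ii) must violate at least one of the following: (a) inside some time interval $\cT_{\epsilon N}(\sfi_j)$, the renewal excursion from $(d_j,x_j)$ to $(f_j,y_j)$ (together with its internal disorder points) leaves the window of $M_\epsilon$ mesoscopic boxes around $\sfa(x_j)$ --- this covers both the internal truncation in $X^{(\diff)}$ and the requirement $|\sfb_j-\sfa_j|\le M_\epsilon$; (b) for some $j\in\{1,\ldots,k+1\}$ the inter-interval displacement satisfies $|\sfa_j-\sfb_{j-1}|>M_\epsilon\sqrt{\sfi_j-\sfi_{j-1}}$, with $\sfi_0:=0$, $\sfi_{k+1}:=\lfloor\tfrac1\epsilon\rfloor$, and $\sfb_0$, $\sfa_{k+1}$ the initial/terminal boxes carrying $\varphi_N$, $\psi_N$. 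I would bound separately the contribution of each (index $j$, type) pair, of which there are at most $2k+1\le3(\log\tfrac1\epsilon)^2$ since $k\le(\log\tfrac1\epsilon)^2$ in $\cZ^{(\notri)}$. In every such case the violated requirement forces one portion of the configuration --- a single renewal excursion $\overline{U}_N(f_j-d_j,\cdot)$, or a single random-walk link $q_{d_j-f_{j-1}}(\cdot)^2$, or an endpoint link --- to span a distance $\ge(M_\epsilon-O(1))$ times its diffusive scale; this contributes an \emph{extra Gaussian factor} $e^{-cM_\epsilon^2}$, coming from the Gaussian decay of the renewal kernel $\overline{U}_N(n,\cdot)/\overline{U}_N(n)$ in Lemma~\ref{lem:UnGauss} in case (a), from the sub-Gaussian tails of the simple random walk in case (b), and the same way for the endpoint links (using $\varphi$ compactly supported together with $\sfi_1\ge K_\epsilon\gg1$ forced by $\cA_\epsilon^{(\notri)}$, and $\psi\in L^\infty$). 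Since the relevant time spans are $n\le\epsilon N$ in case (a) and $n\asymp(\sfi_j-\sfi_{j-1})\epsilon N$ in case (b), the displacement thresholds are indeed $\ge(M_\epsilon-O(1))\sqrt n$, so the factor is $e^{-cM_\epsilon^2}$ uniformly.

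Once this $e^{-cM_\epsilon^2}$ factor is extracted, the remaining sum over all other variables is dominated by the fully unconstrained second-moment sum, i.e.\ by $\bbvar(\cZ_N(\varphi,\psi))$, which by the Dickman renewal computation of Remark~\ref{rem:variance} (iterating the renewal decomposition \eqref{eq:renewal-Dickman2} of the weighted Green's function $G_\theta$, exactly as in the proof of Lemma~\ref{th:no3}) satisfies $\limsup_N\bbvar(\cZ_N(\varphi,\psi))\le C\,c_\theta\,\Vert\varphi\Vert_{\cG_1}^2\,\Vert\psi\Vert_\infty^2$. Summing over the $\le 3(\log\tfrac1\epsilon)^2$ choices of (index, type) then gives
\begin{equation*}
	\limsup_{N\to\infty,\,N\in2\N}\,\big\|\big(\cZ_{N,\epsilon}^{(\diff)}-\cZ_{N,\epsilon}^{(\notri)}\big)(\varphi,\psi)\big\|_{L^2}^2
	\ \le\ C'\,(\log\tfrac1\epsilon)^2\,e^{-cM_\epsilon^2}\,\Vert\varphi\Vert_{\cG_1}^2\,\Vert\psi\Vert_\infty^2\,,
\end{equation*}
and since $M_\epsilon=\log\log\tfrac1\epsilon$ one has $(\log\tfrac1\epsilon)^2e^{-cM_\epsilon^2}\le e^{-\sfc M_\epsilon^2}$ for $\epsilon$ small and any $\sfc<c$, which yields \eqref{eq:appr2diff}.

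The main obstacle is case (a): an excursion can exit an $M_\epsilon$-box of side $\sqrt{\epsilon N}$ without any single disorder-to-disorder increment being large (the exit may happen through a near-boundary point followed by a modest step), so one cannot simply isolate a large jump. The resolution is to decompose the excursion at the first disorder point $z_m$ lying outside the window and use the renewal structure of $\overline{U}_N$ together with Lemma~\ref{lem:UnGauss}: the weight of the initial segment, summed over $z_m$ (at distance $\ge(M_\epsilon-1)\sqrt{\epsilon N}$ from $\sfa(x_j)$) and over the number of steps, is $\le c\,e^{-cM_\epsilon^2}\,\overline{U}_N(\cdot)$, after which the remaining segment re-sums freely, and the union bound over which disorder point is the first one outside telescopes against the renewal decomposition at cost $O(1)$. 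The remaining difficulties --- parity restrictions, the box-rounding loss ($|\sfb_j-\sfa_j|\le M_\epsilon$ boxes means physical distance $\le(M_\epsilon+1)\sqrt{\epsilon N}$), and the endpoint terms --- are bookkeeping and only affect the $O(1)$ shifts in the thresholds, hence the constant $c$.
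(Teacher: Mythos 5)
Your proposal is correct and is essentially the paper's own argument: by $L^2$-orthogonality of the chaos terms the error reduces to the second-moment weight of configurations violating some diffusive constraint, a union bound over the at most $O((\log\frac1\epsilon)^2)$ possible (index, type) violations extracts a factor $e^{-cM_\epsilon^2}$ per violation (via Lemma~\ref{lem:UnGauss} for the internal truncation, which is exactly the paper's bound \eqref{eq:claimUsuperdiff} proved by splitting at the first point outside the window, and via Gaussian tails of the random walk kernel for the inter-interval and endpoint links, as in \eqref{eq:usetobound1}--\eqref{eq:usetobound2}), and the remaining sum is relaxed to the unconstrained variance bound of Remark~\ref{rem:variance}, with $(\log\frac1\epsilon)^2\,e^{-cM_\epsilon^2}\le e^{-\sfc M_\epsilon^2}$ absorbing the combinatorial factor, exactly as in the paper's treatment of $\mathrm{I}_{N,\epsilon}$ and $\mathrm{II}_{N,\epsilon}$. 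Two cosmetic points: compact support of $\varphi$ is neither assumed nor needed for the endpoint terms (the Gaussian decay of $q_{0,d_1}(v,x_1)$ over a super-diffusive distance suffices, as in the paper's $j=1$ case), and since the boundary sums over $\sfb_0,\sfa_{k+1}$ are only partially killed, the squared error actually carries pairs of boundary boxes (the paper's restriction to $(\cA^{(\diff)}_{\epsilon;\sfb_0,\sfa_{k+1}})^c\cap(\cA^{(\diff)}_{\epsilon;\sfb'_0,\sfa'_{k+1}})^c$), but your estimates handle this unchanged.
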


\begin{proof}
We argue as in the proof of Lemma~\ref{th:no3}. Note that the chaos expansion for
\emph{$\cZ_{N,\epsilon}^{(\diff)}$ is a restricted sum of terms in the expansion for $\cZ_{N,\epsilon}^{(\notri)}$}
due to the following two effects:
\begin{itemize}
	\item [(I)] the replacement of $X_{d_j,f_j}(x_j,y_j)$ by $X^{(\diff)}_{d_j,f_j}(x_j,y_j)$
	(cf.\ \eqref{eq:Xdiff}-\eqref{eq:Zpolydiff} and~\eqref{eq:X});
	\item [(II)] the restriction of
	$(\sfa_1, \, \sfb_1, \,\ldots\,, \sfa_k, \, \sfb_k)$ to $\cA_{\epsilon; \sfb_0, \sfa_{k+1}}^{(\diff)}$.
\end{itemize}
Since terms in the chaos expansion are mutually $L^2$ orthogonal, we can write
\begin{equation} \label{eq:splitdiff}
\begin{split}
	\big\|\big(  \cZ_{N,\epsilon}^{(\diff)}
	- \cZ_{N,\epsilon}^{(\notri)} \big)(\varphi, \psi)
	\big\|_{L^2}^2 \,=\,
	\mathrm{I}_{N,\epsilon} \,+\, \mathrm{II}_{N,\epsilon} \,,
\end{split}
\end{equation}
where $\mathrm{I}_{N,\epsilon}$ and $\mathrm{II}_{N,\epsilon}$ are the squared $L^2$ error
as we first make the replacement in (I) and then impose the restriction in (II).

To be more precise, we can define $X^{(\nodiff)}_{d,f}(x,y)$ by the equality
\begin{equation} \label{eq:diff-no-diff}
	X_{d,f}(x,y) = X^{(\diff)}_{d,f}(x,y) + X^{(\nodiff)}_{d,f}(x,y) \,.
\end{equation}
In view of \eqref{eq:secondX} and \eqref{eq:barU}, we define
\begin{gather}
	\label{eq:Udif}
	\overline{U}_{N ,\, d,f}^{(\diff)}(x,y) := \bbE\big[ X^{(\diff)}_{d,f}(x,y)^2 \big] \,, \\
	\label{eq:Unodif}
	\overline{U}_{N ,\, d,f}^{(\nodiff)}(x,y) := \bbE\big[ X^{(\nodiff)}_{d,f}(x,y)^2 \big] \,.
\end{gather}
Note that $X^{(\nodiff)}_{d,f}(x,y)$ and $X^{(\diff)}_{d,f}(x,y)$ are orthogonal in $L^2$,
see \eqref{eq:Xdiff}-\eqref{eq:Zpolydiff} and~\eqref{eq:X}.
As a consequence, if we plug \eqref{eq:diff-no-diff} into \eqref{eq:Zno3a} and expand the product,
we obtain
\begin{equation} \label{eq:intermI}
\begin{split}
	\mathrm{I}_{N,\epsilon} \ = \
	& \frac{1}{N^2} \!\! \sum_{k=1}^{(\log \frac{1}{\epsilon})^2}
	\!\!\!\! \sum_{(\sfi_1, \ldots, \sfi_k) \in \cA_\epsilon^{(\notri)}}
	\!\!\!\!\!\! \sum_{\substack{\quad \ \ {\sfb_0}, {\sfb_0}' \\ \sfa_1, \, \sfb_1, \,\ldots\,, \sfa_k, \, \sfb_k} \in\, \Z^2}
	\ \sum_{\substack{(d_1,x_1) \in \cB_{\epsilon N}(\sfi_1, \sfa_1)\\
	(f_1,y_1) \in \cB_{\epsilon N}(\sfi_1, \sfb_1)\\\text{with }d_1 \le f_1}}
	\!\! \cdots \!\!
	\sum_{\substack{(d_k,x_k) \in \cB_{\epsilon N}(\sfi_k, \sfa_k)\\
	(f_k,y_k) \in \cB_{\epsilon N}(\sfi_k, \sfb_k)\\\text{with }d_k \le f_k}} \\
	&\rule{0pt}{1.8em} \times \bigg(\sum_{\substack{v \in \cS_{\eps N}({\sfb_0}) \cap \Z^2_\even \\
	v' \in \cS_{\eps N}({\sfb_0}') \cap \Z^2_\even}} \varphi_N(v) \, \varphi_N(v') \,
	\, q_{0,d_1}(v,x_1) \, q_{0,d_1}(v',x_1)\bigg)  \\
	& \rule{0pt}{1.8em} \times \Bigg\{\prod_{j=2}^k q_{f_{j-1},d_j}(y_{j-1},x_j)^2\Bigg\} \
	  \bigg( \sum_{w\in\Z^2_\even} q_{f_k, N}(y_k,w) \, \psi_N(w) \bigg)^2  \\
	&\rule{0pt}{1.8em} \times \sum_{J \subseteq \{1,\ldots, k\}: \, |J| \ge 1 } \
	\prod_{j \in J} \overline{U}^{(\nodiff)}_{N,\, d_j, f_j}(x_j, y_j)  \
	\prod_{j \in \{1,\ldots,k\} \setminus J} \overline{U}^{(\diff)}_{N,\, d_j, f_j}(x_j, y_j) \,.
\end{split}
\end{equation}

The term $\mathrm{II}_{N,\epsilon}$ in \eqref{eq:splitdiff} accounts for the further restriction
to $(\sfa_1, \, \sfb_1, \,\ldots\,, \sfa_k, \, \sfb_k) \in \cA_{\epsilon; \sfb_0, \sfa_{k+1}}^{(\diff)} \cap \cA_{\epsilon; \sfb'_0, \sfa'_{k+1}}^{(\diff)}$ for some ${\sfb_0}, {\sfb_0}', \sfa_{k+1}, \sfa_{k+1}' \in \Z^2$,  and hence
\begin{align}
	\mathrm{II}_{N,\epsilon} \,=\, &
	\frac{1}{N^2}  \!\! \sum_{k=1}^{(\log \frac{1}{\epsilon})^2}
	\!\!\!\! \sum_{(\sfi_1, \ldots, \sfi_k) \in \cA_\epsilon^{(\notri)}}
	\hspace{-1.7cm}
	\sum_{\substack{{\sfb_0}, {\sfb_0}', \sfa_{k+1}, \sfa_{k+1}'\in \Z^2 \\
	(\sfa_1, \, \sfb_1, \,\ldots\,, \sfa_k, \, \sfb_k) \in
	(\cA_{\epsilon; {\sfb_0}, \sfa_{k+1}}^{(\diff)})^c \cap
	(\cA_{\epsilon;  {\sfb_0}', \sfa_{k+1}'}^{(\diff)})^c }}
	\sum_{\substack{(d_1,x_1) \in \cB_{\epsilon N}(\sfi_1, \sfa_1)\\
	(f_1,y_1) \in \cB_{\epsilon N}(\sfi_1, \sfb_1)\\\text{with }d_1 \le f_1}}
	\!\! \cdots \!\!
	\sum_{\substack{(d_k,x_k) \in \cB_{\epsilon N}(\sfi_k, \sfa_k)\\
	(f_k,y_k) \in \cB_{\epsilon N}(\sfi_k, \sfb_k)\\\text{with }d_k \le f_k}}  \notag \\
	& \quad  \bigg( \sum_{\substack{v \in \cS_{\eps N}({\sfb_0}) \cap \Z^2_\even \\
	v' \in \cS_{\eps N}({\sfb_0}') \cap \Z^2_\even}} \varphi_N(v) \, \varphi_N(v') \,
	\, q_{0,d_1}(v,x_1) \, q_{0,d_1}(v',x_1) \bigg)   \notag \\
	& \times  \quad \overline{U}^{(\diff)}_{N,\, d_1, f_1}(x_1, y_1) \Bigg\{ \prod_{j=2}^k q_{f_{j-1},d_j}(y_{j-1},x_j)^2 \,
	\overline{U}^{(\diff)}_{N,\, d_j, f_j}(x_j, y_j) \Bigg\} \notag \\
	& \times \bigg( \sum_{\substack{x_{k+1} \in \cS_{\eps N}({\sfa_{k+1}}) \cap \Z^2_\even \\
	x_{k+1}' \in \cS_{\eps N}({\sfa'_{k+1}}) \cap \Z^2_\even}}
	\!\!\!\!\!\!\!\! q_{f_k, N}(y_k,x_{k+1})q_{f_k, N}(y_k,x_{k+1}')  \psi_N(x_{k+1}) \psi_N(x_{k+1}')\bigg)  \,. \label{eq:intermII}
\end{align}
To prove \eqref{eq:appr2diff}, it suffices to show that for some $\sfc, \sfC \in(0, \infty)$,
\begin{equation}\label{eq:toshIII}
	\limsup_{N\to\infty \text{ with } N\in2\N} \!\!\!\!
	\mathrm{I}_{N,\epsilon} \le
	\sfC e^{-\sfc M_\eps^2}\Vert \varphi\Vert_{\cG_1}^2 \Vert \psi\Vert_\infty^2
	\quad \text{and} \
	\limsup_{N\to\infty \text{ with } N\in2\N} \!\!\!\!
	\mathrm{II}_{N,\epsilon}  \le
	\sfC e^{-\sfc M_\eps^2}\Vert \varphi\Vert_{\cG_1}^2 \Vert \psi\Vert_\infty^2.
\end{equation}

We need the following bound, which follows easily from Lemma~\ref{lem:UnGauss}. Recall  $U_N(n)$ and $\overline{U}_N(n)$ from \eqref{eq:Usingle} and \eqref{eq:barU}.
\begin{equation}\label{eq:claimUsuperdiff}
\begin{aligned}
& \exists\, c, C\in (0,\infty) \ \ s.t. \ \ \forall\, N\in \N, \  \eps>0, \  d\le f\in \N \mbox{ with } |f-d| \le \eps N: \\
& \hspace{2cm} \sum_{y\in\Z^2} \overline{U}^{(\nodiff)}_{N,\, d, f}(x, y) \le C \, e^{-c M_\epsilon^2} \, \overline{U}_N(f-d) \,.
\end{aligned}
\end{equation}

We are ready to bound $\mathrm{I}_{N,\epsilon}$ in \eqref{eq:intermI}. As in the proof of Lemma~\ref{th:no3},
the last term $\big( \sum_{w\in\Z^2} q_{f_k, N}(y_k,w) \, \psi_N(w) \big)^2$ can be bounded as in \eqref{eq:firstsquare}.
We then sum over all space variables in reverse order from $y_k, x_k$ until $y_1$. We will use $\sum_{x \in \Z^2} q_{f,d}(y,x)^2 = u(d - f)$
by \eqref{eq:u}, apply \eqref{eq:claimUsuperdiff} together with the fact that
\begin{equation} \label{eq:Udiffsum}
	\sum_{y\in\Z^2} \overline{U}^{(\diff)}_{N,\, d, f}(x, y)
	\le \sum_{y\in\Z^2} \overline{U}_{N}(f-d, y-x)
	= \overline{U}_N(f-d)  \,,
\end{equation}
and finally sum over $x_1$, noting that, by \eqref{eq:secondsquare},
\begin{equation}\label{eq:secondsquarebis}
\begin{split}
	& \sum_{\sfa_1 \in \Z^2}
	\sum_{x_1 \in \cS_{\epsilon N}(\sfa_1)}  \sum_{{\sfb_0}, {\sfb_0}' \in \Z^2}
	 \sum_{\substack{v \in \cS_{\eps N}({\sfb_0}) \cap \Z^2_\even \\
	v' \in \cS_{\eps N}({\sfb_0}') \cap \Z^2_\even}}  \, \varphi_N(v) \, \varphi_N(v') \,
	\, q_{0,d_1}(v,x_1) \, q_{0,d_1}(v',x_1) \\
	=\ & \sum_{v, v' \in \Z^2_\even} \varphi_N(v) \, \varphi_N(v') \,
	q_{2d_1}(v - v')
	\,=\, N \, \Phi_N\Big(\frac{d_1}{N}\Big).
\end{split}
\end{equation}
We then obtain an analogue of \eqref{eq:anana}:
\begin{equation} \label{eq:intermIbound}
\begin{aligned}
	& \mathrm{I}_{N,\epsilon} \  \le \
	\frac{C \, \|\psi\|_\infty^2}{N}
	\sum_{k=1}^{(\log\frac{1}{\epsilon})^2} \,
	\sum_{(\sfi_1, \ldots, \sfi_k) \in \cA_\epsilon^{(\notri)}} \
	\sum_{d_1 \le f_1 \,\in\, \cT_{\epsilon N}(\sfi_1) \, , \,
	\ldots\,, \, d_k \le f_k \,\in\, \cT_{\epsilon N}(\sfi_k)} \\
	&\ \  \Bigg[ \sum_{J \subseteq \{1,\ldots, k\}: \, |J| \ge 1 }\!\!\!\!\!\!
	C \, e^{-c M_\epsilon^2 |J|} \Bigg]
	\Phi_N\Big(\frac{d_1}{N}\Big) \, \overline{U}_N(f_1-d_1)
	\Bigg\{ \prod_{j=2}^k u(d_j-f_{j-1}) \, \overline{U}_N(f_j-d_j) \Bigg\}.
\end{aligned}
\end{equation}
For $k \le (\log\frac{1}{\epsilon})^2$, recalling that $M_\eps = \log \log \frac{1}{\eps}$, we can bound
for $\epsilon > 0$ small enough
\begin{equation*}
	\Bigg[\sum_{J \subseteq \{1,\ldots, k\}: \ |J| \ge 1 } \,
	C \, e^{-c M_\epsilon^2 |J|} \Bigg]
	= C\, \big\{ (1 +  e^{-cM_\epsilon^2})^k - 1\big\}
	\le  2C k \, e^{-cM_\epsilon^2} \leq 2C e^{-\frac{c}{2} M_\epsilon^2} \,.
\end{equation*}
We now plug this bound into \eqref{eq:intermIbound} and sum freely over
all $0 < \sfi_1 < \ldots < \sfi_k \le \frac{1}{\epsilon}$.
As $N\to\infty$, by Lemma~\ref{th:asycom} and similar to \eqref{eq:cIeps0}, we obtain
(with $\sfc = \frac{c}{2}$)
\begin{equation} \label{eq:argu1}
	\limsup_{N\to\infty} \ \mathrm{I}_{N,\epsilon} \, \le \,
	C \, \|\psi\|_\infty^2 \, e^{-\sfc M_\epsilon^2}
	\, \sum_{k=1}^{(\log\frac{1}{\epsilon})^2} \,
	\sum_{0 < \sfi_1 < \ldots < \sfi_k \le \frac{1}{\epsilon}} \
	\cI_\epsilon^{\Phi}(\sfi_1, \ldots, \sfi_k) \,,
\end{equation}
The renewal decomposition \eqref{eq:renewal-Dickman2}, together with $c_\theta =\int_0^1 G_\theta(x) \, \dd x < \infty$, yields
\begin{equation} \label{eq:argu2}
	\sum_{k=1}^{\infty} \,
	\sum_{0 < \sfi_1 < \ldots < \sfi_k \le \frac{1}{\epsilon}} \
	\cI_\epsilon^{\Phi}(\sfi_1, \ldots, \sfi_k)
	= \iint_{0 < s < t < 1} {\Phi(s) \,} G_\theta(t-s) \, \dd s \, \dd t
	\le c_\theta \int_0^1 \Phi(s) \, \dd s \,.
\end{equation}
By \eqref{eq:intf}, this proves
the first bound in \eqref{eq:toshIII}.
\smallskip

We now prove the bound on $\mathrm{II}_{N,\epsilon}$ in \eqref{eq:toshIII}.
Note from \eqref{eq:cAdiff} that
\begin{equation} \label{eq:deco}
	(\cA_{\epsilon; \sfb_0, \sfa_{k+1}}^{(\diff)})^c =
	\bigcup_{l=1}^k \big\{|\sfb_l - \sfa_l| > M_\epsilon\big\} \ \cup \ \,
	\bigcup_{j=1}^{k+1} \Big\{|\sfa_{j}-\sfb_{j-1}| > M_\epsilon
	\sqrt{\sfi_{j}-\sfi_{j-1}} \, \Big\} \,.
\end{equation}
The first union in \eqref{eq:deco} gives no contribution to \eqref{eq:intermII} since
$\overline{U}^{(\diff)}_{N,d,f}(x,y) = 0$ when $x \in \cS_{\epsilon N}(\sfa)$ and $y \in \cS_{\epsilon N}(\sfb)$
with $|\sfb - \sfa| > M_\epsilon$, by \eqref{eq:Xdiff0} and \eqref{eq:Udif}.
It remains to consider the contribution to \eqref{eq:intermII} from the
second union in \eqref{eq:deco}, namely,  $|\sfa_{j} - \sfb_{j-1}| > M_\epsilon \sqrt{\sfi_j - \sfi_{j-1}}$
for some $j \in \{1,\ldots, k, k+1\}$.

In contrast to the bound on $\mathrm{I}_{N,\epsilon}$ where a small factor $Ce^{- c M_\eps^2}$ comes from the bound
on $\sum_{y_j\in\Z^2} \overline{U}^{(\nodiff)}_{N,\, d_j, f_j}(x_j, y_j)$ in \eqref{eq:claimUsuperdiff}, in the bound for
$\mathrm{II}_{N,\epsilon}$, the same small factor now comes from the following estimates: there exists $c\in (0,\infty)$ such that
for any $\sfi<\sfi'$, $f\in \cT_{\eps N}(\sfi)$, $d'\in \cT_{\eps N}(\sfi')$, $\sfb\in \Z^2$, and $y\in  \cS_{\epsilon N}(\sfb)$, we have
\begin{align} \label{eq:usetobound1}
	\sum_{\sfa' \in \Z^2: \, |\sfa'-\sfb| > M_\epsilon \sqrt{\sfi' - \sfi}}
	\ \sum_{x' \in \cS_{\epsilon N}(\sfa')} \ q_{f,d'}(y,x')
	& \,\le\, C \, e^{-c M_\epsilon^2}, \\
	\label{eq:usetobound2}
	\sum_{\sfa' \in \Z^2: \, |\sfa'-\sfb| > M_\epsilon \sqrt{\sfi' - \sfi}}
	\ \sum_{x' \in \cS_{\epsilon N}(\sfa')} \ q_{f,d'}(y,x')^2
	& \,\le\, C \, e^{-c M_\epsilon^2} \, u(d'-f) \,,
\end{align}
where recall from \eqref{eq:u} that $u(n):=\sum_{z\in \Z^2} q_n(z)^2 = q_{2n}(0)$.
The first bound follows from the fact that
$q_n(\cdot)$ has Gaussian tail decay. The second bound is a consequence of the
first bound, because
$\sup_z q_n(z) \leq  \frac{C'}{n}\leq C u(n)$
by the local limit theorem~\eqref{eq:llt} and \eqref{eq:u}.

The bound  on $\mathrm{II}_{N,\epsilon}$ then follows the
same steps as that for $\mathrm{I}_{N,\epsilon}$,
where we take a union bound over all $1\leq j\leq k+1\leq (\log \frac{1}{\eps})^2+1$ with
$|\sfa_j - \sfb_{j-1}| > M_\epsilon \sqrt{\sfi_j - \sfi_{j-1}}$.
\begin{itemize}
\item For $j=k+1$, bounding $\psi$ by $\Vert\psi\Vert_\infty$ and applying
\eqref{eq:usetobound1}, the sum over $w,w'$ and $\sfa_{k+1}, \sfa_{k+1}'$
in \eqref{eq:intermII} under the super-diffusive constraint
$|\sfa_{k+1} - \sfb_k| > M_\epsilon \sqrt{\frac{1}{\eps} - \sfi_k}$ leads to an extra factor of
$C e^{-c M_\epsilon^2}$ compared with the bound when this constraint is not
present (see~\eqref{eq:firstsquare}).
\item For $2\leq j\leq k$, by \eqref{eq:usetobound2},
the sum of $q_{f_{j-1}, d_j}(y_{j-1}, x_j)^2$ over $x_j$ in \eqref{eq:intermII} under this
super-diffusive constraint gives an extra factor of $C e^{- c M_\eps^2}$ compared to the case when
this constraint is not present.
\item For $j=1$, given $\sfb_0, \sfb'_0\in \Z^2$, we could have either
$|\sfa_1 - \sfb_{0}| > M_\epsilon \sqrt{\sfi_1}$ or $|\sfa_1 - \sfb'_{0}| > M_\epsilon \sqrt{\sfi_1}$;
either way, given $v\in \cS_{\eps N}(\sfb_0)$, $v'\in \cS_{\eps N}(\sfb'_0)$ and
$d_1\in \cT_{\eps N}(\sfi_1)$,  the sum of $q_{0, d_1}(v, x_1)q_{0, d_1}(v', x_1)$
in \eqref{eq:intermII} under this super-diffusive constraint gives a factor $C e^{-c M_\eps^2}$ compared
to the case when this constraint is not present.
\end{itemize}
Since there are at most $(\log \frac{1}{\eps})^2+1$ choices
of such $j$, this leads to the same bound we had for  $\mathrm{I}_{N,\epsilon}$ in \eqref{eq:argu1},
which establishes the second inequality in \eqref{eq:toshIII}.

This completes the proof of Lemma~\ref{th:diff}.
\end{proof}

\subsection{Step 3: Kernel replacement}\label{sec:5.3}
In this  step, we introduce a
last approximation $\cZ_{N,\epsilon}^{(\cg)}(\varphi,\psi)$
and show that it is close both to $\cZ_{N,\epsilon}^{(\diff)}(\varphi,\psi)$
and to the coarse-grained model
$\mathscr{Z}_{\epsilon}^{(\cg)}(\varphi,\psi|\Theta_{N,\epsilon}^{(\cg)})$.
This completes the proof of Theorem~\ref{th:cg-main}.

\smallskip

Let us first summarize the previous steps.
So far, we have performed coarse-graining by grouping terms in the
chaos expansion for $\cZ_N(\varphi, \psi)$ in \eqref{eq:Zpolyav}
according to the mesoscopic time-space boxes $\cB_{\eps N}(\sfi, \sfa)$ visited by the microscopic
time-space renewal configuration $(n_1, z_1), \ldots, (n_r, z_r)$ in  \eqref{eq:Zpolyav}.
Imposing suitable restrictions, we have defined the approximations
$\cZ_{N,\epsilon}^{(\notri)}(\varphi,\psi)$ in \eqref{eq:Zno3}
and $\cZ_{N,\epsilon}^{(\diff)}(\varphi,\psi)$ in \eqref{eq:Zdiffa}.
The next step is to replace the relevant random
walk transition kernels in the expansion \eqref{eq:Zdiffa} for $\cZ_{N,\epsilon}^{(\diff)}(\varphi,\psi)$
by heat kernels as in \eqref{eq:kerrep}, i.e., replace the random walk
transition kernels $q_{f_{j-1}, d_j}(y_{j-1}, x_j)$ connecting the microscopic points
$(f_{j-1}, y_{j-1})\in \cB_{\eps N}(\sfi_{j-1}, \sfb_{j-1})$ and $(d_j, x_j) \in \cB_{\eps N}(\sfi_j, \sfa_j)$
by a heat kernel that depends on the mesoscopic time-space variables $(\sfi_{j-1}, \sfb_{j-1})$ and
$(\sfi_j, \sfa_j)$. However, such an approximation is only good if $\sfi_j-\sfi_{j-1}$ is sufficiently large, say
at least $K_\eps=(\log \frac{1}{\eps})^6$ as in \eqref{eq:KM}.

This naturally leads to the decomposition of
$(\sfi_1, \ldots, \sfi_k)$ into time blocks, where consecutive $\sfi_{j-1}$, $\sfi_j$ with distance less
than $K_\eps$ are grouped into a single block. The constraint $\cA_{\epsilon}^{(\notri)}$
in \eqref{eq:cA} ensures that each time block consists of either a single $\sfi_j$, or two consecutive
$\sfi_{j-1}$, $\sfi_j$, leading to the definition of \emph{time blocks $\vec\sfi$}
in Definition~\ref{def:time-block}, while
$\cA_{\epsilon}^{(\notri)}$ is mapped to the constraint $\bcA_\epsilon^{(\notri)}$ introduced
in \eqref{eq:bcA} for a sequence of time blocks.
Given a sequence of time blocks $(\vec\sfi_1, \ldots, \vec\sfi_r)$ visited by the microscopic
time-space renewal configuration, for each time block $\vec \sfi_l=(\sfi_l, \sfi_l')$, we can identify the first
mesoscopic box $\cS_{\eps N}(\sfa_l)$ visited by the renewal configuration in the time interval $\cT_{\eps N}(\sfi_l)$,
as well as the last mesoscopic box $\cS_{\eps N}(\sfa_l')$ visited by the renewal configuration in the time interval
$\cT_{\eps N}(\sfi_l')$. This produces a \emph{space block $\vec \sfa_l:=(\sfa_l, \sfa_l')$}
as in Definition~\ref{def:space-block}.

Summarizing: we can \emph{rewrite the expansion for
$\cZ_{N,\epsilon}^{(\diff)}(\varphi,\psi)$  in \eqref{eq:Zdiffa} according to the sequence of visited
time-space blocks $(\vec\sfi_1, \vec \sfa_1)$, \ldots,  $(\vec\sfi_r, \vec \sfa_r)$}, where the
diffusive constraint $\cA_{\epsilon; \sfb, \sfc}^{(\diff)}$ in \eqref{eq:cAdiff} is mapped to
$\bcA_{\epsilon; \sfb, \sfc}^{(\diff)}$ defined in \eqref{eq:bcA2} for the sequence of space blocks.
See Figure~\ref{CG-fig2}.

\smallskip

We are ready to define our last
approximation $\cZ_{N,\epsilon}^{(\cg)}(\varphi,\psi)$: having rewritten
the expansion \eqref{eq:Zdiffa} for $\cZ_{N,\epsilon}^{(\diff)}(\varphi,\psi)$
in terms of the visited time-space blocks $(\vec\sfi_1, \vec \sfa_1)$,
\ldots,  $(\vec\sfi_r, \vec \sfa_r)$, we replace \emph{each random walk transition kernel connecting
two consecutive time-space blocks} by a heat kernel depending only on the mesoscopic variables
$(\vec \sfi_\cdot, \vec \sfa_\cdot)$. More precisely, given
$(f_{j-1}, y_{j-1})\in \cB_{\eps N}(\sfi_{j-1}', \sfa'_{j-1})$ and $(d_j, x_j) \in \cB_{\eps N}(\sfi_j, \sfa_j)$,
we make within \eqref{eq:Zdiffa} the replacement
\begin{equation} \label{eq:shorten2}
	q_{f_{j-1},d_j}(y_{j-1},x_j) \ \ \rightsquigarrow \  \
	\frac{2}{\epsilon N} \,
	g_{\frac{1}{2}(\sfi_{j} - \sfi'_{j-1})} ( \sfa_{j} - \sfa'_{j-1} ),	
\end{equation}
where the prefactor $2$ is due to periodicity
(note that $\sfi_j-\sfi'_{j-1}\geq K_\eps$ by the constraint
$\bcA_\epsilon^{(\notri)}$ from \eqref{eq:bcA}).
We similarly replace the ``boundary kernels'' in \eqref{eq:Zdiffa}, namely
\begin{align}\label{eq:replacement1}
	q^N_{0,N}(\varphi,\psi) \ &\rightsquigarrow \
	\frac{1}{2} \, g_{\frac{1}{2}}(\varphi, \psi) \,,\\
	\label{eq:replacement2}
	\rule{0pt}{2.2em} q_{0, d_1}(y_0,x_1) \ &\rightsquigarrow \
	\frac{2}{\epsilon N} \, g_{\frac{1}{2} \sfi_1} ( \sfa_1 - {\sfb_0} ) \qquad
	\text{for} \quad \begin{array}{l}y_0\in \cS_{\eps N}(\sfb_0)\,, \\
	\rule{0pt}{1.1em} (d_1, x_1)\in \cB_{\eps N}(\sfi_1, \sfa_1) \,,
	\end{array} \\
	\label{eq:replacement3}
	\rule{0pt}{2.2em}q_{f_k, N}(y_k, x_{k+1}) \ &\rightsquigarrow \
	\frac{2}{\epsilon N} \, g_{\frac{1}{2} (\frac{1}{\epsilon}-\sfi_k)} ( \sfa_{k+1} -  \sfb_k)
	\qquad \text{for} \quad
	\begin{array}{l}(f_k, y_k)\in \cB_{\eps N}(\sfi_k, \sfb_k) \,, \\
	\rule{0pt}{1.1em} x_{k+1}\in \cS_{\eps N}(\sfa_{k+1}) \,,
	\end{array}
\end{align}
where the constraint $\cA_\epsilon^{(\notri)}$ (which maps to $\bcA_\epsilon^{(\notri)}$)
ensures $\sfi_1\geq K_\eps$ and $\frac{1}{\epsilon}-\sfi_k\geq K_\eps$.
We thus define $\cZ_{N,\epsilon}^{(\cg)}(\varphi,\psi)$
as the expression obtained from \eqref{eq:Zdiffa}
via the replacements \eqref{eq:shorten2} and
\eqref{eq:replacement1}-\eqref{eq:replacement3}
(this description is useful to compare $\cZ_{N,\epsilon}^{(\cg)}(\varphi,\psi)$ with
$\cZ_{N,\epsilon}^{(\diff)}(\varphi,\psi)$).

An alternative, equivalent description of $\cZ_{N,\epsilon}^{(\cg)}(\varphi, \psi)$
is through the following formula:
\begin{align}
	&  \cZ_{N,\epsilon}^{(\cg)}(\varphi, \psi) =  \frac{1}{2} g_{\frac{1}{2}}(\varphi, \psi)
	  +  \frac{1}{N} \! \sum_{r=1}^{(\log \frac{1}{\eps})^2} \!\!\!\!\!\!
	\sum_{\substack{(\vec\sfi_1, \ldots, \vec\sfi_r) \in \bcA_\epsilon^{(\notri)} \\
	\sfa'_0, \sfa_{r+1} \in \Z^2, (\vec\sfa_1, \ldots, \vec\sfa_r) \in
	\bcA_{\epsilon; \sfa'_0, \sfa_{r+1}}^{(\diff)}	 }}
	\!\!\!\!\! \bigg( \!\!\! \sum_{v \in \cS_{\eps N}({\sfa'_0})\cap \Z^2_\even}
	\!\!\!\!\!\!\!\!\!\!\! \varphi_N(v) \!\bigg)   \notag \\
	& \quad \times  \prod_{l=1}^r
	g_{\frac{1}{2}(\sfi_l -\sfi_{l-1}')} (\sfa_l -\sfa_{l-1}')
	 \Theta^{(\cg)}_{N, \epsilon}(\vec\sfi_l, \vec\sfa_l)
	\times  \bigg( g_{\frac{1}{2} (\frac{1}{\epsilon}-\sfi'_r)} (\sfa_{r+1} -  \sfa'_r ) \, \frac{2}{\epsilon N}
	 \!\!\!\!\!\!\!\!\!\!\!\!\!\!\!
	 \sum_{w \in \cS_{\epsilon N}({\sfa_{r+1}})\cap \Z^2_\even}  \!\!\!\!\!\!\!\!\!\!\!\!\!
	 \psi_N(w)\bigg), \label{eq:Zcg}
\end{align}
where $\sfi_0':=0$ and $\Theta^{(\cg)}_{N, \epsilon}(\vec\sfi_j, \vec\sfa_j)$ is the
\emph{coarse-grained disorder variable} defined in \eqref{eq:Theta}, which
collects the contributions in \eqref{eq:Zdiffa} from a given visited time-space block
$(\vec \sfi_j, \vec \sfa_j)$, and it arises thanks to the factorisations induced by the replacements in \eqref{eq:shorten2}
and \eqref{eq:replacement1}-\eqref{eq:replacement3}.

\begin{remark}
Only the prefactor $\frac{2}{\epsilon N}$ arising from the last replacement \eqref{eq:replacement3} appears
explicitly in \eqref{eq:Zcg}: all other factors of $\frac{2}{\epsilon N}$ arising from \eqref{eq:shorten2}
and \eqref{eq:replacement2}
have been absorbed in the coarse-grained disorder variable
$\Theta^{(\cg)}_{N, \epsilon}(\vec\sfi_j, \vec\sfa_j)$ following the
replaced kernel $q$, see \eqref{eq:Theta}.
\end{remark}

Finally, to compare $\cZ_{N,\epsilon}^{(\cg)}(\varphi, \psi)$ with the coarse-grained model
$\mathscr{Z}_{\epsilon}^{(\cg)}(\varphi,\psi|\Theta_{N,\epsilon}^{(\cg)})$
defined in \eqref{eq:Zcg-gen}, we introduce the notation
\begin{equation}\label{eq:phiNeps-psiNeps}
	\varphi^{(N)}_\epsilon(\sfa'_0) := \frac{2}{\epsilon N} \bigg(\!\!\!\!
	\sum_{v \in \cS_{\eps N}({\sfa'_0})\cap \Z^2_\even} \!\!\!\!\!\!\!\!\!\! \varphi_N(v)
	\bigg) \,, \qquad
	\psi^{(N)}_\epsilon(\sfa_{r+1}) := \frac{2}{\epsilon N} \bigg(\!\!\!\!\!
	\sum_{w \in \cS_{\eps N}({\sfa_{r+1}})\cap \Z^2_\even} \!\!\!\!\!\!\!\!\!\!\!\!\! \psi_N(v) \bigg) \,,
\end{equation}
which allows us to rewrite \eqref{eq:Zcg} more compactly as
\begin{equation}\label{eq:Zcg+}
\begin{split}
	 & \cZ_{N,\epsilon}^{(\cg)}(\varphi, \psi) \, = \, \frac{1}{2} \, g_{\frac{1}{2}}(\varphi, \psi)
	+ \frac{\epsilon}{2} \sum_{r=1}^{(\log \frac{1}{\eps})^2}
	\!\!\!\! \sum_{{\sfa'_0}, {\sfa_{r+1}}\, \in \Z^2}
	\sum_{\substack{(\vec\sfi_1, \ldots, \vec\sfi_r) \in \bcA_\epsilon^{(\notri)} \\
	(\vec\sfa_1, \ldots, \vec\sfa_r) \in \bcA_{\epsilon; \sfa'_0, \sfa_{r+1}}^{(\diff)}}} \\
	&\ \varphi^{(N)}_\epsilon(\sfa'_0) \times \Bigg\{ \prod_{l=1}^r
	g_{\frac{1}{2}(\sfi_l -\sfi_{l-1}')} (\sfa_l -\sfa_{l-1}')
	\, \Theta^{(\cg)}_{N, \epsilon}(\vec\sfi_l, \vec\sfa_l) \Bigg\} \,
	g_{\frac{1}{2} (\frac{1}{\epsilon}-\sfi'_r)} ( {\sfa_{r+1}} -  \sfa'_r )
	\, \psi^{(N)}_\epsilon(\sfa_{r+1}) \,.
\end{split}
\end{equation}
Compare this with $\mathscr{Z}_{\epsilon}^{(\cg)}(\varphi,\psi|\Theta_{N,\epsilon}^{(\cg)})$
in \eqref{eq:Zcg-gen}, the only difference is that
$\varphi_\eps$ and $\psi_\eps$ in \eqref{eq:Zcg-gen} are now replaced by $\varphi^{(N)}_\epsilon$
and $\psi^{(N)}_\epsilon$.

\smallskip

The main result of this subsection is the following $L^2$ approximation, which completes the proof of Theorem~\ref{th:cg-main}.

\begin{lemma}[Coarse graining]\label{th:cg}
Recall $\cZ_{N,\epsilon}^{(\diff)}(\varphi, \psi)$ from \eqref{eq:Zcg},
$\mathscr{Z}_{\epsilon}^{(\cg)}(\varphi,\psi|\Theta_{N,\epsilon}^{(\cg)})$ from \eqref{eq:Zcg-gen}
and $\Vert \cdot\Vert_{\cG_t}$ from \eqref{eq:phinorm}.  There exists $\sfC \in (0, \infty)$
such that for $\epsilon > 0$ small enough, we have: for all $\varphi$
with $\Vert \varphi\Vert_{\cG_1}^2<\infty$
and $\psi \in L^\infty(\R^2)$,
\begin{align}\label{eq:appr3cg}
	& \limsup_{N\to\infty \text{ with } N\in2\N} \
	\big\|\big(\cZ_{N,\epsilon}^{(\cg)}(\varphi, \psi) -
	 \cZ_{N,\epsilon}^{(\diff)}\big)(\varphi, \psi)
	\big\|_{L^2}^2
	\ \le \  \frac{\sfC}{\log\frac{1}{\epsilon}}\,
	\Vert \varphi\Vert_{\cG_1}^2 \Vert \psi\Vert_\infty^2 \,, \\
	\label{eq:appr3cg2}
	& \lim_{N\to\infty \text{ with } N\in2\N} \
	\big\| \mathscr{Z}_{\epsilon}^{(\cg)}(\varphi,\psi|\Theta^{\rm (cg)}_{N,\epsilon})
	- \cZ_{N,\epsilon}^{(\cg)}(\varphi,\psi) \big\|_{L^2}^2
	\ = \ 0.
\end{align}
\end{lemma}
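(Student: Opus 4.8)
Since the two assertions of Lemma~\ref{th:cg} are of rather different nature, I would prove them separately, beginning with the softer statement \eqref{eq:appr3cg2}.

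\textbf{Proof of \eqref{eq:appr3cg2}.} By the construction in \eqref{eq:Zcg}--\eqref{eq:Zcg+}, the random variable $\cZ_{N,\epsilon}^{(\cg)}(\varphi,\psi)$ and the coarse-grained model $\mathscr{Z}_{\epsilon}^{(\cg)}(\varphi,\psi|\Theta_{N,\epsilon}^{(\cg)})$ of \eqref{eq:Zcg-gen} are literally the \emph{same} multilinear polynomial in the \emph{same} coarse-grained disorder variables $\Theta_{N,\epsilon}^{(\cg)}$; the only difference is that the discrete boundary weights $\varphi_\epsilon^{(N)},\psi_\epsilon^{(N)}$ of \eqref{eq:phiNeps-psiNeps} are replaced by $\varphi_\epsilon,\psi_\epsilon$ of \eqref{eq:phieps-psieps}. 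Hence the difference is again a polynomial chaos in $\Theta_{N,\epsilon}^{(\cg)}$, with boundary weights $\varphi_\epsilon-\varphi_\epsilon^{(N)}$ and $\psi_\epsilon-\psi_\epsilon^{(N)}$. For fixed $\epsilon$ and each $\sfb\in\Z^2$ one has $\varphi_\epsilon^{(N)}(\sfb)\to\varphi_\epsilon(\sfb)$ as $N\to\infty$: recalling \eqref{eq:phiNpsiN} and that the cells $\{B(v):v\in\Z^2_\even\}$ tile $\R^2$ with $|B(v)|=2$, the sum $\tfrac{2}{\epsilon N}\sum_{v\in\cS_{\epsilon N}(\sfb)\cap\Z^2_\even}\varphi_N(v)$ is a Riemann sum converging to $\int_{(\sfb-(1,1),\sfb]}\varphi(\sqrt\epsilon z)\,\dd z=\varphi_\epsilon(\sfb)$; likewise $\psi_\epsilon^{(N)}(\sfc)\to\psi_\epsilon(\sfc)$ with $|\psi_\epsilon^{(N)}(\sfc)|\le C\|\psi\|_\infty$ uniformly, and $\varphi_\epsilon^{(N)}$ is supported on a fixed finite set of $\sfb$ since $\varphi\in C_c$. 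For fixed $\epsilon$ the number of time blocks in \eqref{eq:Zcg-gen} is at most $(\log\tfrac1\epsilon)^2$, and the heat kernels together with the diffusive constraints $\bcA_\epsilon^{(\diff)}$ make the sums over the spatial block variables summable; combining this with the second moment estimate for the coarse-grained model from Subsection~\ref{sec:5.4} (which bounds $\bbE[\Theta_{N,\epsilon}^{(\cg)}(\vec\sfi,\vec\sfa)^2]$ and the resulting chaos) bounds the $L^2$ norm of the difference by $C(\epsilon)$ times a quantity that vanishes as $N\to\infty$, by the pointwise convergence above and dominated convergence.

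\textbf{Proof of \eqref{eq:appr3cg}.} The key point is that $\cZ_{N,\epsilon}^{(\diff)}(\varphi,\psi)$ in \eqref{eq:Zdiffa} and $\cZ_{N,\epsilon}^{(\cg)}(\varphi,\psi)$ are sums over the \emph{same} set of (mesoscopically constrained) microscopic renewal configurations, and $\cZ_{N,\epsilon}^{(\cg)}$ is obtained from $\cZ_{N,\epsilon}^{(\diff)}$ through the replacements \eqref{eq:shorten2}--\eqref{eq:replacement3}, which act \emph{only} on the transition kernels joining consecutive time blocks --- all of which have time gap $\ge K_\epsilon$ by $\bcA_\epsilon^{(\notri)}$ --- leaving the within-block structure (hence each $X^{(\diff)}$, and the internal kernel of a two-interval block) untouched. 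I would telescope the difference of the products of these inter-block kernels, so that $\cZ_{N,\epsilon}^{(\diff)}-\cZ_{N,\epsilon}^{(\cg)}=\sum_{m}\cE_m$, where in $\cE_m$ exactly the $m$-th inter-block kernel carries the difference $q-(\text{heat kernel})$, the preceding inter-block kernels are already heat kernels and the following ones remain random walk kernels. Since $m$ ranges over at most $(\log\tfrac1\epsilon)^2+2$ positions, Cauchy--Schwarz gives $\|\cZ_{N,\epsilon}^{(\diff)}-\cZ_{N,\epsilon}^{(\cg)}\|_{L^2}^2\le C\,(\log\tfrac1\epsilon)^4\,\max_m\|\cE_m\|_{L^2}^2$. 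By $L^2$-orthogonality of the chaos, $\|\cE_m\|_{L^2}^2$ is evaluated by the second moment machinery of Steps~1--2: summing out spatial variables collapses the within-block contributions to $\overline U_N$-type factors (using $\sum_y\overline{U}^{(\diff)}_{N,d,f}(x,y)\le\overline U_N(f-d)$ and Lemma~\ref{lem:UnGauss}), the already-replaced heat links to mesoscopic renewal weights as in Subsection~\ref{sec:5.4}, and the not-yet-replaced kernels to $u$-type factors as in \eqref{eq:anana}; the only genuinely new ingredient is
\[
	\sum_{\substack{y_{m-1}\in\cS_{\epsilon N}(\sfa'_{m-1})\\ x_m\in\cS_{\epsilon N}(\sfa_m)}}
	\Big(q_{f_{m-1},d_m}(y_{m-1},x_m)-\tfrac{2}{\epsilon N}\,g_{\frac12(\sfi_m-\sfi'_{m-1})}(\sfa_m-\sfa'_{m-1})\Big)^2 .
\]
Since the time gap is $\ge K_\epsilon$ and, by $\bcA_\epsilon^{(\diff)}$, $|\sfa_m-\sfa'_{m-1}|\le M_\epsilon\sqrt{\sfi_m-\sfi'_{m-1}}$, the local limit theorem \eqref{eq:llt} and Lemma~\ref{lem:ker} bound the summand by $\eta_{\epsilon,N}^2$ times $\big(\tfrac{2}{\epsilon N}g_{\frac12(\sfi_m-\sfi'_{m-1})}(\sfa_m-\sfa'_{m-1})\big)^2$, with $\eta_{\epsilon,N}\le o_N(1)+C\,M_\epsilon/\sqrt{K_\epsilon}$ --- the $o_N(1)$ from the local limit correction $e^{O(1/n)+O(|x|^4/n^3)}-1$ (which vanishes as $N\to\infty$ since $n\ge K_\epsilon\epsilon N$ and $|x|\le C M_\epsilon\sqrt n$), the $C\,M_\epsilon/\sqrt{K_\epsilon}$ from rounding the heat-kernel time and space arguments to their mesoscopic values. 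Performing the sum leaves $\eta_{\epsilon,N}^2\,g_{\frac12(\sfi_m-\sfi'_{m-1})}(\sfa_m-\sfa'_{m-1})^2$; letting $N\to\infty$ the remaining block variables become unconstrained, and gluing at link $m$ --- using the renewal property in both its microscopic and its coarse-grained form, which match because of the self-similarity of the coarse-grained model --- collapses everything to the unconstrained renewal sum of Remark~\ref{rem:variance}, so $\limsup_N\|\cE_m\|_{L^2}^2\le C\,(M_\epsilon^2/K_\epsilon)\,\|\varphi\|_{\cG_1}^2\|\psi\|_\infty^2$. Altogether $\limsup_N\|\cZ_{N,\epsilon}^{(\diff)}-\cZ_{N,\epsilon}^{(\cg)}\|_{L^2}^2\le C\,(\log\tfrac1\epsilon)^4\,(M_\epsilon^2/K_\epsilon)\,\|\varphi\|_{\cG_1}^2\|\psi\|_\infty^2$, which with $K_\epsilon=(\log\tfrac1\epsilon)^6$ and $M_\epsilon=\log\log\tfrac1\epsilon$ from \eqref{eq:KM} equals $C\,(\log\log\tfrac1\epsilon)^2(\log\tfrac1\epsilon)^{-2}\,\|\varphi\|_{\cG_1}^2\|\psi\|_\infty^2\le\tfrac{\sfC}{\log\frac1\epsilon}\,\|\varphi\|_{\cG_1}^2\|\psi\|_\infty^2$ for $\epsilon$ small.

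\textbf{Main obstacle.} The delicate part is \eqref{eq:appr3cg}: the per-replacement relative error must be made quantitatively small (it will be $O(M_\epsilon/\sqrt{K_\epsilon})$), small enough that after multiplication by the number $\sim(\log\tfrac1\epsilon)^2$ of inter-block links it still beats $\tfrac{1}{\log\frac1\epsilon}$ --- this is precisely what forces the choices $K_\epsilon=(\log\tfrac1\epsilon)^6$ and $M_\epsilon=\log\log\tfrac1\epsilon$ in \eqref{eq:KM} --- and one must carry out the accompanying second moment bookkeeping, which requires splicing the microscopic renewal structure (on the not-yet-replaced links) with the mesoscopic renewal structure of the coarse-grained model (on the replaced links), then collapsing everything via the renewal property \eqref{eq:renewal-Dickman2} to a single unconstrained continuum renewal sum bounded uniformly in $\epsilon$ by $C\|\varphi\|_{\cG_1}^2\|\psi\|_\infty^2$ as in Remark~\ref{rem:variance}.
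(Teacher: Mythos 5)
Your treatment of \eqref{eq:appr3cg2} coincides with the paper's: the two objects differ only through the boundary weights $\varphi_\epsilon^{(N)},\psi_\epsilon^{(N)}$ versus $\varphi_\epsilon,\psi_\epsilon$, and pointwise convergence plus uniform second-moment control (and truncation of the support of $\varphi$, if needed) finishes the argument. For \eqref{eq:appr3cg} your quantitative core is also the right one and matches the paper's: by the local limit theorem each kernel replacement carries a relative error $O(M_\epsilon/\sqrt{K_\epsilon})$ plus $o_N(1)$ (this is exactly \eqref{eq:qgratio}), there are at most $O((\log\tfrac1\epsilon)^2)$ replacements per configuration, and the second-moment bookkeeping collapses to the unconstrained renewal bound of Remark~\ref{rem:variance}, giving $(\log\tfrac1\epsilon)^4 M_\epsilon^2/K_\epsilon \le \sfC/\log\tfrac1\epsilon$ with the choices in \eqref{eq:KM}. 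Where you differ in route: you telescope over the position $m$ of the replaced link and apply Cauchy--Schwarz across the $\le(\log\tfrac1\epsilon)^2+2$ positions, which forces you to estimate mixed terms in which some links are already heat kernels and some are still random walk kernels; the paper instead absorbs all replacements within a single chaos term into one multiplicative factor $\gamma_i$ with $|\gamma_i-1|\le C(\log\tfrac1\epsilon)^2 M_\epsilon/\sqrt{K_\epsilon}$ and then uses exact $L^2$-orthogonality of the chaos terms, which avoids the splicing of microscopic and mesoscopic renewal structures altogether (one can also repair your mixed terms simply by using \eqref{eq:qgratio} to dominate each already-replaced heat kernel by a constant times the original walk kernel, at a total cost $e^{c(\log\frac1\epsilon)^2M_\epsilon/\sqrt{K_\epsilon}}=O(1)$).

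There is, however, one genuine gap in your decomposition: it is not true that $\cZ_{N,\epsilon}^{(\diff)}$ and $\cZ_{N,\epsilon}^{(\cg)}$ are sums over the same set of mesoscopically constrained configurations. In \eqref{eq:Zdiffa} the truncation is $k\le(\log\tfrac1\epsilon)^2$ on the number of \emph{visited mesoscopic time intervals}, whereas in \eqref{eq:Zcg} it is $r\le(\log\tfrac1\epsilon)^2$ on the number of \emph{time blocks}, and a block may contain two intervals; hence $\cZ_{N,\epsilon}^{(\cg)}$ contains additional configurations with $(\log\tfrac1\epsilon)^2<k\le 2(\log\tfrac1\epsilon)^2$, and your identity $\cZ_{N,\epsilon}^{(\diff)}-\cZ_{N,\epsilon}^{(\cg)}=\sum_m\cE_m$ as stated is false without an extra residual term. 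The fix is exactly the tail estimate used in Step~1: the $L^2$ contribution of configurations with more than $(\log\tfrac1\epsilon)^2$ visited intervals is bounded as in \eqref{eq:bao2} by $C\,\Vert\varphi\Vert_{\cG_1}^2\Vert\psi\Vert_\infty^2/\log\tfrac1\epsilon$, which is of the same order as the target, so the gap is patchable with tools already at hand, but it must be stated. (A further, trivial, omission is the zeroth-chaos replacement \eqref{eq:replacement1}, a deterministic error vanishing as $N\to\infty$ by \eqref{eq:gstphipsiconv}.)
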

\begin{proof}

We first prove \eqref{eq:appr3cg}.
To define $\cZ_{N,\epsilon}^{(\cg)}(\varphi,\psi)$ from $\cZ_{N,\epsilon}^{(\diff)}(\varphi,\psi)$
in \eqref{eq:Zdiffa}, we first replaced the summation
constraint $1\leq k\leq (\log \frac{1}{\eps})^2$ in \eqref{eq:Zdiffa} (on the number of visited
mesoscopic time intervals) with the
constraint $1\leq r\leq (\log \frac{1}{\eps})^2$ in \eqref{eq:Zcg} (on the number of visited
coarse-grained disorder variables), where each coarse-grained
disorder variable can visit either one or two mesoscopic time intervals. This amounts to adding some terms
with $(\log \frac{1}{\eps})^2<k \leq 2(\log \frac{1}{\eps})^2$ in \eqref{eq:Zdiffa}.
The error from such additions is bounded as in \eqref{eq:bao2} and
agrees with the bound
in \eqref{eq:appr3cg}. We then replaced the random walk kernels by heat
kernels as in \eqref{eq:shorten2} and \eqref{eq:replacement1}-\eqref{eq:replacement3}.
We will make these replacements sequentially and control the error in each step,
showing that it is bounded by the r.h.s.\ of \eqref{eq:appr3cg}.

\smallskip

First note that the replacement \eqref{eq:replacement1} simply changes the first term
 in \eqref{eq:Zdiffa}, which is a deterministic constant.
Since the l.h.s.\ of \eqref{eq:replacement1} converges to the r.h.s. as $N\to\infty$,
see \eqref{eq:gstphipsiconv}, the $L^2$ cost of
the replacement \eqref{eq:replacement1} vanishes as $N\to\infty$,
which does not contribute to the bound in \eqref{eq:appr3cg}.

Next note that thanks to the diffusive constraint $\cA_{\epsilon; {\sfb_0, \sfa_{k+1}}}^{(\diff)}$ in \eqref{eq:Zdiffa}, which maps to $\bcA_{\epsilon; \sfa'_0, \sfa_{r+1}}^{(\diff)}$ in \eqref{eq:Zcg}, the replacements  in \eqref{eq:shorten2} and \eqref{eq:replacement2}-\eqref{eq:replacement3} are all of the form
\begin{equation*}
	q_{s, t}(x, y) \quad \rightsquigarrow \quad
	\frac{2}{\epsilon N} \,
	g_{\frac{1}{2}(\sfi_{j} - \sfi)} ( \sfb - \sfa)	
\end{equation*}
for some $(s, x) \in \cB_{\eps N}(\sfi, \sfa)$ and $(t,y) \in \cB_{\eps N}(\sfj, \sfb)$, with
$(t-s, y-x)\in \Z^3_{\rm even}$, $\sfj-\sfi\geq K_\eps$, and $|\sfb-\sfa|\leq M_\eps\sqrt{\sfj-\sfi}$
(recall from \eqref{eq:KM} that $M_\eps=\log \log \frac{1}{\eps}$). We can then apply the local
limit theorem \eqref{eq:llt} and refine the bounds in Lemma~\ref{lem:ker} as follows:
\begin{align*}
	q_{s, t}(x, y) & =  2 g_{\frac{t-s}{2}}(y-x) \exp\Big\{ O\Big(\tfrac{1}{t-s}\Big)
	+ O\Big(\tfrac{|y-x|^4}{(t-s)^3}\Big)\Big\} = 2 g_{\frac{t-s}{2}}(y-x) \,
	e^{O\big(\tfrac{M_\eps^4}{\eps K_\eps N}\big)} \\
	& = \frac{2}{\eps N} g_{\frac{\sfj-\sfi}{2}}(\sfb-\sfa) \cdot
	\frac{\eps N g_{\frac{t-s}{2}}(y-x)}{g_{\frac{\sfj-\sfi}{2}}(\sfb-\sfa)} \,
	e^{O\big(\tfrac{M_\eps^4}{\eps K_\eps N}\big)} \\
	& = \frac{2}{\eps N} g_{\frac{\sfj-\sfi}{2}}(\sfb-\sfa)\,
	e^{O\big(\tfrac{M_\eps^4}{\eps K_\eps N}\big)} \cdot \frac{\eps N(\sfj-\sfi)}{t-s}
	\exp\Big\{-\frac{|y-x|^2}{t-s} + \frac{|\sfb-\sfa|^2}{\sfj-\sfi}\Big\} \,,
\end{align*}
where since
$|(t-s) - \epsilon N(\sfj-\sfi)| \le \epsilon N$ and
$|(y-x)-\sqrt{\epsilon N}(\sfb-\sfa)| \le \sqrt{2\epsilon N}$ we can bound
\begin{equation*}
	\frac{\eps N(\sfj-\sfi)}{t-s} = 1+ O\big(\tfrac{1}{K_\eps}\big) \,, \qquad
	-\frac{|y-x|^2}{t-s} + \frac{|\sfb-\sfa|^2}{\sfj-\sfi}
	= O\Big(\tfrac{M_\eps}{\sqrt{K_\eps}} \Big) \,,
\end{equation*}
so that for some $c>0$, uniformly in $\eps>0$ small enough and $N$ large, we have
\begin{equation}\label{eq:qgratio}
	e^{-c M_\eps/\sqrt{K_\eps}} \leq \frac{q_{s, t}(x, y)}{ \frac{2}{\eps N}
	g_{\frac{\sfj-\sfi}{2}}(\sfb-\sfa)} \leq e^{c M_\eps/\sqrt{K_\eps}}.
\end{equation}
Namely, every time we replace a random walk kernel by the corresponding heat kernel,
we introduce an error factor of $e^{\pm c M_\eps/\sqrt{K_\eps}}$.

We first estimate the cost of the bulk replacements \eqref{eq:replacement3}.
Consider each term in the sum in \eqref{eq:Zdiffa}, which we abbreviate by $\cZ_i$
for simplicity, where $i$ gathers the indices $k$ and $\sfi_j, \sfa_j, \sfb_j, (d_j, x_j), (f_j, y_j)$
for $1 \le j \le k$ (excluding $\sfb_0, y_0$ and $\sfa_{k+1}, x_{k+1}$).
Note that within each term $\cZ_i$,
we replace at most $(\log \frac{1}{\eps})^2$ random walk kernels,
which amounts to replacing $\cZ_i$ by $\gamma_i \cZ_i$ with
$e^{-c (\log \frac{1}{\eps})^2 M_\eps/\sqrt{K_\eps}}\leq \gamma_i
\leq  e^{c (\log \frac{1}{\eps})^2 M_\eps/\sqrt{K_\eps}}$. We then have
\begin{equation}\label{eq:qgratio2}
	\bbE[(\gamma_i \cZ_i- \cZ_i)^2]
	= (\gamma_i-1)^2 \, \bbE[\cZ_i^2]
	\leq C \, \big(\log \tfrac{1}{\eps}\big)^4 \, \frac{M^2_\eps}{K_\eps}  \, \bbE[\cZ_i^2]
	\leq \frac{C}{\log \frac{1}{\eps}} \bbE[\cZ_i^2],
\end{equation}
since $M_\eps=\log\log\frac{1}{\eps}$ and $K_\eps=\big(\log \frac{1}{\eps}\big)^6$
by \eqref{eq:KM}.
Since the terms $\cZ_i$ in the sum in \eqref{eq:Zdiffa} are \emph{mutually orthogonal},
we can sum the bound above over $i$ and we see that the contribution of the
bulk replacements to \eqref{eq:appr3cg} is at most
\begin{align} \label{eq:asino}
	\frac{C}{\log \frac{1}{\eps}}  \limsup_{N\to\infty \text{ with } N\in2\N}
	\mathbb{V}{\rm ar}(\cZ_{N,\epsilon}^{(\diff)}(\varphi, \psi))
	\ \le \ \frac{C}{\log \frac{1}{\eps}} \Vert \psi\Vert_\infty^2 \Vert \varphi\Vert_{\cG_1}^2,
\end{align}
where the last bound follows from \eqref{eq:VarZbdd}.
This agrees with \eqref{eq:appr3cg}.

We next consider the boundary replacements \eqref{eq:replacement1}
and \eqref{eq:replacement3}.
Replacing the leftmost random walk kernel $q_{0,d_1}(y_0, x_1)$ in \eqref{eq:Zdiffa}
by the corresponding heat kernel
introduces an error factor $e^{-c M_\eps/\sqrt{K_\eps}} \leq
\gamma_{y_0, (d_1, x_1)} \leq e^{+c M_\eps/\sqrt{K_\eps}}$, see \eqref{eq:qgratio},
which affects the $L^2$ norm by
$\{\sum_{y_0} \varphi_N(y_0) (\gamma_{y_0, (d_1, x_1)}-1)\}^2
\le C \frac{M_\epsilon^2}{K_\epsilon} \{\sum_{y_0} |\varphi_N(y_0)|\}^2$
(no disorder variable is attached to~$y_0$). Thus, as in \eqref{eq:asino},
the left boundary replacement contributes to \eqref{eq:appr3cg} at most by
\begin{equation*}
	\frac{M_\epsilon^2}{K_\epsilon}
	\limsup_{N\to\infty \text{ with } N\in2\N}
	\mathbb{V}{\rm ar}(\cZ_{N,\epsilon}^{(\diff)}(|\varphi|, \psi))
	\ \le \ \frac{C}{(\log \frac{1}{\eps})^{5}} \Vert \psi\Vert_\infty^2 \Vert \varphi\Vert_{\cG_1}^2,
\end{equation*}
which is a stronger bound than \eqref{eq:appr3cg}.
The right boundary replacement \eqref{eq:replacement3} is
controlled in a similar fashion, which completes the proof of \eqref{eq:appr3cg}.

\smallskip

Lastly, we prove \eqref{eq:appr3cg2}.
As noted before, the only difference between
$\cZ_{N,\epsilon}^{(\cg)}(\varphi,\psi)$ and
$\mathscr{Z}_{\epsilon}^{(\cg)}(\varphi,\psi|\Theta^{\rm (cg)}_{N,\epsilon})$ is that
$\varphi_\epsilon, \psi_\epsilon$ in \eqref{eq:Zcg-gen} are replaced by
$\varphi^{(N)}_\epsilon, \psi^{(N)}_\epsilon$ in \eqref{eq:Zcg+}.
For $\sfa\in \Z^2$, $\eps\varphi_\eps(\sfa)$ is the integral of
$\varphi$ over the square
$\cS_{\eps}(\sfa)=(\sqrt{\eps}\sfa-(\sqrt{\eps},\sqrt{\eps}), \sqrt{\eps}\sfa]$, by the definition of
$\varphi_\eps$ in \eqref{eq:phieps-psieps}.
On the other hand, by the definition of $\varphi_\eps^{(N)}$ in \eqref{eq:phiNeps-psiNeps} and
$\varphi_N$ in \eqref{eq:phiNpsiN}, $\eps \varphi_\eps^{(N)}$ is the integral of $\varphi$ over
the region $\tilde \cS_\eps(\sfa):= \bigcup_{v\in \cS_\eps(\sfa) \cap (\Z^2_{\rm even}/\sqrt{N}) }
\,\{x\in \R^2: |x-v|_1\leq \frac{1}{\sqrt N}\}$. The difference between $\cS_\eps(\sfa)$ and
$\tilde \cS_{\eps}(\sfa)$ is contained in a shell of thickness $1/\sqrt{N}$ around the boundary of
$\cS_\eps(\sfa)$. Therefore, if $\varphi: \R^2 \to \R$ is locally
integrable, then $\varphi_\eps^{(N)}$ converges pointwise to $\varphi_\eps$ as functions on
$\Z^2$, while if $\psi: \R^2 \to \R$ is also bounded,
then $\psi_\eps^{(N)}$ converges uniformly to $\psi_\eps$. If $\varphi$ has bounded
support, then \eqref{eq:appr3cg2} is easily seen to hold since we already have control over
$\mathbb{V}{\rm ar}(\cZ_{N,\epsilon}^{(\cg)}(\varphi,\psi))$ that is uniform in $N$. General $\varphi$
can then be handled by truncating its support. This concludes the proof of Lemma~\ref{th:cg}.
\end{proof}

Combining Lemmas~\ref{th:no3}, \ref{th:diff} and~\ref{th:cg} then gives Theorem~\ref{th:cg-main}.

\subsection{A second moment bound for the coarse-grained model}\label{sec:5.4}
In this subsection, we prove a second moment bound for the coarse-grained model, which is loosely speaking the analogue of Lemma~\ref{lem:UnGauss}. This is needed when we bound the fourth moment of the coarse-grained model in Section~\ref{Sec:4MomCoarse}.

First we introduce some notation. Let us define the following variants of $\bcA_{\epsilon}^{(\notri)}$ and $\bcA_{\epsilon; \,\sfb, \sfc}^{(\diff)}$ (see
\eqref{eq:bcA} and \eqref{eq:bcA2}), without dependence on the boundary conditions:
\begin{equation}\label{eq:bcAbis}
\begin{split}
	\bcA_{\epsilon}^{(\notri)} :=
	\Big\{\,
	\text{time blocks} \ \
	\vec\sfi_1 \,<\, \ldots \,<\, \vec\sfi_r
	 \quad \text{such that}  \ \
	|\vec\sfi_j|\leq K_\eps \ \ \forall j=1, \ldots, r & \\
	\text{and} \ \
	\dist(\vec\sfi_{j-1} \,, \, \vec\sfi_{j}) \geq K_\epsilon \ \
	\forall j=2, \ldots, r  & \, \Big\} \,,
\end{split}
\end{equation}
\begin{equation}\label{eq:bcA2bis}
\begin{split}
	\bcA_{\epsilon}^{(\diff)} :=
	\bigg\{ \text{space blocks} \ \ \vec\sfa_1, \ldots, \vec\sfa_r \quad \text{such that} \ \
	|\vec\sfa_j| \le  M_\epsilon \sqrt{|\vec\sfi_j|} \quad \forall j=1,\ldots, r \,, & \\
	\dist(\vec\sfa_{j-1}, \vec\sfa_{j})  \le
	M_\epsilon \, \sqrt{\dist(\vec\sfi_{j-1}\,,\, \vec\sfi_{j})} \quad \forall j=2,\ldots, r &
	\, \bigg\} \,.
\end{split}
\end{equation}

Recall the definition \eqref{eq:barU}-\eqref{U-diagram} of
$\overline{U}(n-m,x-y)$. We introduce an analogous quantity
for the coarse-grained model defined in \eqref{eq:Zcg-gen} (illustrated in Figure~\ref{CG-fig2}).
Given $\sfn \in \N_0$ and $\sfx \in \Z^2$, we define a
coarse-grained analogue of $X_{d,f}(x,y)$ in \eqref{eq:X}:
\begin{equation}\label{eq:Zcgnx}
	\mathscr{X}^{(\cg)}_{N, \eps}(\sfn, \sfx)
	:=  \begin{cases}
	\rule{0pt}{1.3em}
	 \Theta^{\rm(cg)}_{N,\eps}\big( (0,\sfn), (0,\sfx)\big)
	& \text{if } \sfn < K_\epsilon \,, \\
	\rule{0pt}{6.5em}\begin{split}
	\sum_{r =2}^{(\log \frac{1}{\eps})^2}
	\!\!\! \sum_{\substack{
	(\vec\sfi_1, \ldots, \vec\sfi_r) \in  \bcA_\epsilon^{(\notri)} \\
	\sfi_1=0,\ \sfi_r'=\sfn }} \,
	\sum_{\substack{
	(\vec\sfa_1, \ldots, \vec\sfa_r) \in  \bcA_\epsilon^{(\diff)} \\
	\sfa_1=0, \ \sfa_r'=\sfx }} \, \Theta^{\rm(cg)}_{N,\eps}(\vec\sfi_1, \vec\sfa_1)
	\ \ & \\
       \times  \prod_{j=2}^r g_{\frac{1}{2}(\sfi_j -\sfi_{j-1}')} (\sfa_j -\sfa_{j-1}') \,
	\Theta^{\rm(cg)}_{N,\eps}(\vec\sfi_j, \vec\sfa_j) &
	\end{split}
	& \text{if } \sfn \ge K_\epsilon \,,
	\end{cases}
\end{equation}
and define
\begin{equation} \label{eq:Ucgnx}
	\overline{U}^{\rm (cg)}_{N, \eps}(\sfn, \sfx)
	:= \bbE[(\mathscr{X}^{(\cg)}_{N, \eps}(\sfn, \sfx))^2].
\end{equation}

We prove the following analogue of Lemma~\ref{lem:UnGauss} (with an extra sum in the time index).

\begin{lemma}\label{th:ren-cg}
For every $\sfc \in (0,\infty)$, there exist $\sfC = \sfC(\sfc) \in (0, \infty)$
and $\hat\lambda_0=\hat \lambda_0(\sfc) \in (0,\infty)$
such that:  there exists $\epsilon_0 >0$ such that for all $\epsilon \in (0, \epsilon_0)$
and $\hat\lambda \in (\hat\lambda_0,\infty)$, we have
\begin{equation}\label{eq:ren-cg}
	\limsup_{N\to\infty} \,
	\sum_{\sfn=0}^{2/\epsilon} e^{-\hat\lambda \epsilon \sfn} \, \sum_{\sfx \in \Z^2}
	e^{\sfc \sqrt{\epsilon} |\sfx|} \,
	\overline{U}^{\rm (cg)}_{N, \eps}(\sfn, \sfx)
	\,\le\, \frac{\sfC}{\log\hat\lambda} \,.
\end{equation}
\end{lemma}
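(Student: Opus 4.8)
The plan is to reduce \eqref{eq:ren-cg} to an exponentially--weighted Green's--function estimate for the Dickman kernel $G_\theta$, exploiting the self--similarity noted in Section~\ref{sec:CGmodel}: the quantity $\overline U^{(\cg)}_{N,\eps}$ plays for the coarse--grained model exactly the role that $\overline U_N$ plays for the averaged partition function, so in the $N\to\infty$ limit it should be comparable to $\pi\eps\,G_\theta(\eps\,\cdot)$, and the weighted sum in \eqref{eq:ren-cg} should behave like $\pi\int_0^\infty e^{-\hat\lambda t}G_\theta(t)\,\dd t$, which is of order $1/\log\hat\lambda$. \emph{Step 1 (removing the spatial weight).} First I would establish the coarse--grained analogue of Lemma~\ref{lem:UnGauss}: there is $C<\infty$ such that $\sum_{\sfx}e^{\sfc\sqrt\eps|\sfx|}\overline U^{(\cg)}_{N,\eps}(\sfn,\sfx)\le C\,e^{C\sfc^2\eps\sfn}\,\overline U^{(\cg)}_{N,\eps}(\sfn)$ with $\overline U^{(\cg)}_{N,\eps}(\sfn):=\sum_\sfx\overline U^{(\cg)}_{N,\eps}(\sfn,\sfx)$. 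Expanding $\overline U^{(\cg)}_{N,\eps}(\sfn,\sfx)=\bbE[\mathscr X^{(\cg)}_{N,\eps}(\sfn,\sfx)^2]$ and using the $L^2$--orthogonality of the chaos monomials writes it as a renewal--type sum over matched sequences of visited time--space blocks, each block contributing $\bbE[\Theta^{(\cg)}_{N,\eps}(\vec\sfi,\vec\sfa)^2]$ and consecutive blocks connected by squared heat kernels $g_{\cdot/2}(\cdot)^2$; all of these factors carry Gaussian spatial tails on the appropriate mesoscopic scale (for the $\Theta$'s this follows from $\overline U^{(\diff)}_{N,d,f}\le\overline U_N$, the orthogonality of the $X^{(\diff)}_{d,f}$'s, and Lemma~\ref{lem:UnGauss}), so the per--step exponential moments telescope and produce the factor $e^{C\sfc^2\eps\sfn}$. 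Setting $\hat\lambda_0(\sfc):=2C\sfc^2$, for $\hat\lambda>\hat\lambda_0$ the left--hand side of \eqref{eq:ren-cg} is then at most $C\limsup_N\sum_{\sfn=0}^{2/\eps}e^{-\frac{\hat\lambda}{2}\eps\sfn}\,\overline U^{(\cg)}_{N,\eps}(\sfn)$, and it suffices to bound this by $\sfC/\log\hat\lambda$.

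\emph{Step 2 (renewal structure, space--summed).} Summing the expansion of Step~1 over all space variables, using $\sum_\sfx g_{\sfk/2}(\sfx)^2\sim\tfrac1{2\pi\sfk}$ (the discrete counterpart of the Dickman L\'evy density $\tfrac1{\mathrm{dist}}$, cf.\ \eqref{eq:tauS}) and the fact that $\sum_{\vec\sfa}\bbE[\Theta^{(\cg)}_{N,\eps}(\vec\sfi,\vec\sfa)^2]$ behaves, in the $N\to\infty$ limit, like $\pi$ times the $G_\theta$--mass of the corresponding mesoscopic time interval (by $\overline U^{(\diff)}_N\le\overline U_N$ and \eqref{eq:asU1}--\eqref{eq:asU1ub}), one obtains that $\overline U^{(\cg)}_{N,\eps}(\sfn)$ is dominated by a one--dimensional renewal sum in the time variable whose increments and connections match those of the exponentially--weighted Dickman renewal. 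The width--$\ge 2$ time blocks, and the matchings internal to a single coarse--grained disorder variable, are handled exactly as in Lemma~\ref{th:cg}: the kernel--replacement error \eqref{eq:qgratio} is negligible over the at most $(\log\tfrac1\eps)^2$ blocks, and the renewal property \eqref{eq:renewal-Dickman} of $G_\theta$ absorbs the extra mesoscopic intervals. The upshot is $\limsup_N\overline U^{(\cg)}_{N,\eps}(\sfn)\le(1+o_\eps(1))\,\pi\eps\,G_\theta(\eps\sfn)$ for $1\le\sfn\le 2/\eps$, while the short--range terms $\sfn<K_\eps$ (within the mesoscopic resolution) are estimated directly and shown to contribute to the weighted sum only a term of order $1/\log\tfrac1{K_\eps\eps}$, hence $\le C/\log\hat\lambda$ (using \eqref{eq:G5}, since $\hat\lambda<(K_\eps\eps)^{-1}$ in the relevant range).

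\emph{Step 3 (exponentially--weighted Green's function).} Combining Steps~1--2,
\[
	\limsup_{N\to\infty}\sum_{\sfn=0}^{2/\eps}e^{-\frac{\hat\lambda}{2}\eps\sfn}\,\overline U^{(\cg)}_{N,\eps}(\sfn)
	\ \le\ \frac{C}{\log\hat\lambda}\ +\ (1+o_\eps(1))\,\pi\int_0^\infty e^{-\frac{\hat\lambda}{2}t}\,G_\theta(t)\,\dd t .
\]
By \eqref{eq:Gas}--\eqref{eq:G5} one has $\int_0^\delta G_\theta(s)\,\dd s\le C/\log\tfrac1\delta$ for all small $\delta$, and a dyadic decomposition of the integral (which is dominated by $t\asymp 1/\hat\lambda$) gives $\int_0^\infty e^{-\frac{\hat\lambda}{2}t}G_\theta(t)\,\dd t\le C'/\log\hat\lambda$. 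Choosing $\hat\lambda_0$ large and then $\eps_0$ small enough for all the $\eps$--uniform estimates above to hold yields \eqref{eq:ren-cg}. The main obstacle is Step~2: organizing the second--moment expansion of $\overline U^{(\cg)}_{N,\eps}$ into a genuine one--dimensional, exponentially--weighted renewal sum while accounting for the \emph{local dependence} among the coarse--grained disorder variables (coming from the width--$\ge2$ blocks and shared mesoscopic boxes), and verifying the comparison with $\pi\eps\,G_\theta(\eps\,\cdot)$ uniformly in $N$ and $\eps$. This is the same technical core as the proof of Theorem~\ref{th:cg-main}, now carried out at the level of the coarse--grained model itself, and it is where the functional--inequality and renewal estimates of Sections~\ref{sec:2ndmoment}--\ref{Sec:functional} are reused.
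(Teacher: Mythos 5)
Your overall target is the right one (reduce everything to $\int_0^\cdot e^{-\hat\lambda t}G_\theta(t)\,\dd t \lesssim 1/\log\hat\lambda$), but the core of your argument, Step~2, is asserted rather than proved, and it is where the real difficulty sits. You claim $\limsup_N \overline{U}^{\rm (cg)}_{N,\eps}(\sfn) \le (1+o_\eps(1))\,\pi\eps\,G_\theta(\eps\sfn)$ pointwise in $\sfn$, uniformly over $K_\eps\le \sfn\le 2/\eps$, "exactly as in Lemma~\ref{th:cg}" / "the same technical core as Theorem~\ref{th:cg-main}". Those results give $L^2$ approximations of \emph{averaged} partition functions against fixed macroscopic test functions; they do not give pointwise renewal-function asymptotics for the coarse-grained second moment, uniformly in the mesoscopic time down to $\sfn\asymp K_\eps$, and establishing such a sharp comparison (with constant $1+o_\eps(1)$) would be a substantial new argument — indeed for $\sfn$ of order $1$ the analogous statement is false by a factor $\log\frac1\eps$ (the single-block variance is $\asymp 1/\log\frac1\eps$ while $\eps G_\theta(\eps\sfn)\asymp 1/(\log\frac1\eps)^2$), so near the boundary $\sfn\approx K_\eps$ the claimed constant is at best delicate. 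Only a constant-factor bound is needed, but your proposal does not actually supply a proof of even that along the route you describe; it defers it as "the main obstacle". A secondary point: your treatment of the short-range terms invokes "$\hat\lambda<(K_\eps\eps)^{-1}$", which is not among the hypotheses of the lemma and should at least be made explicit as a restriction on the regime of $\hat\lambda$ versus $\eps$.

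The paper's proof avoids all of this by running the coarse-graining \emph{backwards}, which makes the lemma a short corollary of results already proved for the original model. Undoing the heat-kernel replacements inside $\mathscr X^{(\cg)}_{N,\eps}$ costs only a multiplicative factor $1+O(1/\log\frac1\eps)$ on second moments, by the bounds \eqref{eq:qgratio}--\eqref{eq:qgratio2} already established in Lemma~\ref{th:cg}; and then \emph{dropping} the no-triple and diffusive constraints can only increase the second moment, because distinct chaos monomials are orthogonal. This bounds $\overline{U}^{\rm (cg)}_{N,\eps}(\sfn,\sfx)$ by $\tfrac{C}{(\eps N)^2}\sum_{(m,y)\in\cB_{\eps N}(0,0),\,(n,x)\in\cB_{\eps N}(\sfn,\sfx)}\overline U_N(n-m,x-y)$, after which the weighted sum in \eqref{eq:ren-cg} becomes a Riemann sum for the original $\overline U_N$: Lemma~\ref{lem:UnGauss} absorbs the spatial weight $e^{\sfc\sqrt\eps|\sfx|}$ at the cost of $e^{c\sfc^2 l/N}$ (fixing $\hat\lambda_0=2c\sfc^2$), \eqref{eq:asU1ub} replaces $\overline U_N(l)$ by $\tfrac{C}{N}G_\theta(l/N)$, and \eqref{eq:G5} gives the $\sfC/\log\hat\lambda$ bound. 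In particular your Step~1 (a coarse-grained analogue of Lemma~\ref{lem:UnGauss} proved by telescoping Gaussian tails block by block) becomes unnecessary: it is subsumed by applying Lemma~\ref{lem:UnGauss} to $\overline U_N$ after the comparison. If you want to keep your architecture, the fix is to replace Step~2 by exactly this reversal argument (constant-factor comparison with $\overline U_N$), rather than attempting sharp renewal asymptotics at the coarse-grained level.
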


\begin{proof}
The basic strategy is to first undo the replacement of the random walk kernels by heat kernels in the definition of the
coarse-grained model $\cZ_{N,\epsilon}^{(\cg)}(\varphi,\psi)$ in Section~\ref{sec:5.3}. We then undo the summation
constraints imposed in Sections \ref{sec:5.1} and \ref{sec:5.2}, which allows us to bound $\overline{U}^{\rm (cg)}_{N, \eps}(\sfn, \sfx)$, the second moment of $\mathscr{X}^{(\cg)}_{N, \eps}(\sfn, \sfx)$, in terms of the second moment of
the original partition function, so that Lemma~\ref{lem:UnGauss} can be applied. The details are as follows.

Let us recall how the coarse-grained model $\cZ_{N,\epsilon}^{(\cg)}(\varphi, \psi)$ in \eqref{eq:Zcg} was
defined from $\cZ_{N,\epsilon}^{(\diff)}(\varphi,\psi)$ in \eqref{eq:Zdiffa} by replacing the random walk
kernels with heat kernels in the chaos expansion (see \eqref{eq:shorten2}, \eqref{eq:replacement2}-\eqref{eq:replacement3}).
It was shown in the proof of Lemma~\ref{th:cg}, in particular, in \eqref{eq:qgratio} and
\eqref{eq:qgratio2}, that the aggregate effect of such replacements is to change the second moment by a factor that is bounded
between $1-C/\log \frac{1}{\eps}$ and $1+ C/\log\frac{1}{\eps}$. We can therefore undo these replacements, which only changes the second moment by a factor that is bounded between $1-C'/\log \frac{1}{\eps}$ and $1+ C'/\log\frac{1}{\eps}$ ($\leq 2$ for $\eps$ small). More precisely,
define
$$
	\mathcal{X}_{N, \eps}(\sfn, \sfx) 	
	:=  \frac{2}{\eps N}\, \sum_{r=1}^{\infty}
	\!\!\!\!\!\!\!\!\!\!\!\!\!\!\!\! \sum_{\substack{n_1 < \ldots < n_r  \\
	z_1, \ldots, z_r \,\in\,\Z^2 \\
	(n_1, z_1)\in \cB_{\eps N}(0,0), \, (n_r, z_r)\in \cB_{\eps N}(\sfn, \sfx)}}
	\!\!\!\!\!\!\!\!\!\!\!\!\!\!\!\! \xi_N(n_1,z_1) \prod_{j=2}^r q_{n_{j-1}, n_j}(z_{j-1},z_j)
	\, \xi_N(n_j,z_j),
$$
which is obtained by reversing the replacements of the random walk transition kernels by heat kernels in the definition of
$\mathscr{X}^{(\cg)}_{N, \eps}(\sfn, \sfx)$ in \eqref{eq:Zcgnx}, plugging in the chaos expansion for
the coarse-grained disorder variables $\Theta^{(\rm cg)}_{N,\eps}$ from \eqref{eq:Theta}, \eqref{eq:Xdiff} and \eqref{eq:Zpolydiff},
and then relaxing the constraints on the time-space summation indices. The pre-factor $2/\eps N$ comes from the first $\Theta^{(\rm cg)}_{N, \eps}$ in \eqref{eq:Zcgnx} and is a normalising factor in its definition in \eqref{eq:Theta}. Since relaxing the summation
constraint only increases the second moment because the terms of the chaos expansion are
$L^2$ orthogonal, we have that for $\eps$ sufficiently small and all $N$ large,
\begin{equation}
\begin{aligned}
	\overline{U}^{\rm (cg)}_{N, \eps}(\sfn, \sfx)
	= \bbE[(\mathscr{X}^{(\cg)}_{N, \eps}(\sfn, \sfx))^2]
	& \leq 2 \, \bbE[(\mathcal{X}_{N, \eps}(\sfn, \sfx))^2]  \\
	& = \frac{8}{(\eps N)^2} \sum_{(m,y) \in
	\cB_{\eps N}(0,0), \, (n, x)\in \cB_{\eps N}(\sfn, \sfx)} \!\!\!\!\!\!\!\!\!\! \overline{U}_N(n-m,x-y) \,,
\end{aligned}
\end{equation}
where we have used the definition of $\overline{U}_N(n-m,x-y)$ from \eqref{eq:barU}-\eqref{U-diagram}.
Substituting this bound into the l.h.s.\ of \eqref{eq:ren-cg} then gives
\begin{align*}
	& \sum_{\sfn=0}^{2/\epsilon} e^{-\hat \lambda \epsilon \sfn} \,
	\sum_{\sfx \in \Z^2}
	e^{\sfc \sqrt{\epsilon} |\sfx|} \, \overline{U}^{\rm (cg)}_{N, \eps}(\sfn, \sfx) \\
	& \leq\ \frac{8}{(\eps N)^2} \sum_{(m,y) \in \cB_{\eps N}(0,0)}
	\sum_{\sfn=0}^{2/\epsilon} e^{-\hat \lambda \epsilon \sfn} \, \sum_{\sfx \in \Z^2}
	e^{\sfc \sqrt{\epsilon} |\sfx|} \!\!\!\!\!  \sum_{(n, x)\in \cB_{\eps N}(\sfn, \sfx)}
	\!\!\!\!\! \overline{U}_N(n-m,x-y) \,.
\end{align*}
We now observe that for $(m,y) \in \cB_{\eps N}(0,0)$ and
$(n,x)\in \cB_{\eps N}(\sfn, \sfx)$ we have
$\frac{n-m}{\eps N} \in [\sfn-1, \sfn+1]$ and $\frac{|x-y|}{\sqrt{\eps N}}
\in [|\sfx|-\sqrt{2}, |\sfx|+\sqrt{2}]$, hence $\epsilon \sfn = \frac{n-m}{N} + O(\epsilon)$ and
$\sqrt{\epsilon} |\sfx| =\frac{x-y}{\sqrt{N}} + O(\sqrt{\epsilon})$.
Recalling that $|\cB_{\eps N}(0,0)| = O((\epsilon N)^2)$,
the change of variables $(n-m, x-y) = (l, z)$ then yields
\begin{align*}
	& \limsup_{N\to\infty} \ \sum_{\sfn=0}^{2/\epsilon} e^{-\hat \lambda \epsilon \sfn} \,
	\sum_{\sfx \in \Z^2}
	e^{\sfc \sqrt{\epsilon} |\sfx|} \, \overline{U}^{\rm (cg)}_{N, \eps}(\sfn, \sfx)
	\ \leq \ C \, \limsup_{N\to\infty}\
	\sum_{l=0}^{3N} e^{-\hat \lambda \frac{l}{N}} \, \sum_{z \in \Z^2}
	e^{\sfc \frac{|z|}{\sqrt N}} \ \overline{U}_N(l,z) \\
	& \ \leq\ C \limsup_{N\to\infty} \ \sum_{l=0}^{3N} e^{-(\hat \lambda -c \sfc^2)
	\frac{l}{N}}  \overline{U}_N(l)
	\ \leq \ C \limsup_{N\to\infty} \ \sum_{l=1}^{3N} e^{-(\hat \lambda -c \sfc^2)\frac{l}{N}}
	\frac{1}{N} G_\theta\Big(\frac{l}{N}\Big) \\
	&\  =\ C \int_0^3 e^{-(\hat\lambda -c \sfc^2)s } G_{\theta}(s) \dd s
	\leq C \int_0^{\frac{1}{\sqrt{\hat \lambda}}} e^{-\frac{\hat\lambda}{2} s } G_{\theta}(s) \dd s
	+ C \int^3_{\frac{1}{\sqrt{\hat \lambda}}} e^{-\frac{\hat\lambda}{2} s } G_{\theta}(s) \dd s
	\ \leq\ \frac{\sfC}{\log \hat\lambda},
\end{align*}
where we applied Lemma~\ref{lem:UnGauss} and~\eqref{eq:asU1ub} in the second
and third inequalities, then we chose $\hat\lambda \geq 2 c \sfc^2=:\hat\lambda_0$ and
applied \eqref{eq:G5} in the last line. This concludes the proof of Lemma~\ref{th:ren-cg}.
\end{proof}

\section{Higher moment bounds for averaged partition functions}
\label{Sec:functional}

In this section, we bound higher moments of the averaged partition function $\cZ_N^\beta(\varphi, \psi)$
(see \eqref{eq:ZNav} and \eqref{eq:Zpolyav}) in the critical window as specified in Theorem~\ref{th:main0}
and \eqref{eq:sigma}-\eqref{eq:betaN}. As noted in Section~\ref{sec:lit} and the discussion therein on Schr\"odinger operators with
point interactions, these bounds are very delicate in the critical window. Unlike in the sub-critical regime
considered in \cite{CSZ17b, CSZ20}, where the chaos expansion of $\cZ_N^\beta(\varphi, \psi)$ is
supported (up to a small $L^2$ error)
on chaoses of finite order independent of $N$, for $\beta=\beta_N$ in the critical window, the
expansion is supported on chaoses of order $\log N$, so hypercontractivity can no longer be used to bound
higher moments in terms of the second moment. Instead, the expansion has to be controlled with much
greater care. Bounds on the third moment were first obtained in \cite{CSZ19b}. Bounds on higher moments
of the averaged solution of the mollified stochastic heat equation (continuum analogues of
$\cZ_N^\beta(\varphi, \psi)$), for $\varphi, \psi \in L^2$, were then obtained in \cite{GQT21} using
techniques from the study of Schr\"odinger operator with point interactions (also called Delta-Bose gas)
\cite{DFT94, DR04}. The recent work \cite{C21} studied the semigroup associated with the Schr\"odinger
operator and allowed $\varphi$ to be delta functions.

Our goal is to develop similar moment bounds as in \cite{GQT21} for the averaged polymer partition
function $\cZ_N^\beta(\varphi, \psi)$. The approach of
\cite{GQT21} used explicit Fourier calculations and the underlying space-time white noise, which cannot be
easily adapted to lattice models with general disorder. We develop an alternative approach,
where the key
ingredient is a functional inequality for the Green's function of multiple random walks on $\Z^2$
(see Lemma~\ref{HLSineq}).
This leads to Theorem~\ref{th:mom}, which is the main result of this section, where
instead of working with $\varphi, \psi \in L^2$ as in \cite{GQT21}, we
will work with weighted $L^p$--$L^q$ spaces with $\frac{1}{p}+\frac{1}{q}=1$, which allows
$\psi(y)\equiv 1$ and $\varphi(x) = \epsilon^{-1} \ind_{|x|\le \sqrt{\epsilon}}$
to be an approximate delta function on the scale $\sqrt{\eps}$, and it also
gives spatial decay if the support of $\varphi$ and $\psi$ are far apart. Our
approach is robust enough that
it can be applied the coarse-grained disorder variables $\Theta_{N,\epsilon}^{(\cg)}$,
which can be seen as an averaged partition functions (see Lemma~\ref{Theta4th}), and
it can also be adapted to the coarse-grained model
$\mathscr{Z}_{\epsilon,t}^{(\cg)}(\varphi,\psi|\Theta_{N,\epsilon}^{(\cg)})$,
as we will show in Theorem~\ref{th:cgmom}.

\subsection{Statement and proof}

Given a countable set $\bbT$ and a function $f: \bbT \to \R$, we use the standard notation
\begin{equation} \label{eq:ell}
	\|f\|_{\ell^p}
	= \|f\|_{\ell^p(\bbT)} := \bigg( \sum_{z\in\bbT} |f(z)|^p \bigg)^{1/p}
	\quad \text{for } p \in [1,\infty) \,, \qquad
	\|f\|_{\ell^\infty} := \sup_{z\in\bbT} |f(z)| \,,
\end{equation}
while we let $\|g\|_p$ denote the usual $L^p$ norm of $g: \R^2 \to \R$.
We will ignore parity issues, since this only affects the bounds by a constant multiple:
for a locally integrable function $\varphi: \R^2 \to \R$, we consider its discretization
$\varphi_N: \Z^2 \to \R$ in \eqref{eq:phiNpsiN} to be defined on the whole $\Z^2$
(rather than just on $\Z^2_\even$).
Here is the main result of this section.

\begin{theorem}[Higher moments]\label{th:mom}
For $N \le \tilde N \in \N$,
let $\cZ_{N}^{\beta_{\tilde N}}(\varphi, \psi) = \cZ_{N,1}^{\beta_{\tilde N}}(\varphi, \psi)$ be
the averaged partition function in \eqref{eq:ZNav},
where $\beta = \beta_{\tilde N}=\beta_{\tilde N}(\theta)$ satisfies \eqref{eq:sigma}
for some $\vartheta\in \R$.
Fix $p, q \in (1,\infty)$ with $\frac{1}{p}+\frac{1}{q}=1$, an integer $h\ge 3$, and
a weight function $w: \R^2 \to (0,\infty)$ such that $\log w$
is Lipschitz continuous.
Then there exist $\sfC, \sfC' < \infty$ such that, uniformly
in large $N \le \tilde N\in\N$
and locally integrable $\varphi, \psi: \R^2 \to \R$, we have
\begin{equation}\label{eq:mombd}
\begin{split}
	\Big|\bbE\Big[\Big(\cZ_{N}^{\beta_{\tilde N}}(\varphi, \psi) -
	\bbE[\cZ_{N}^{\beta_{\tilde N}}(\varphi, \psi)]\Big)^h\Big]\Big|
	& \,\leq\, \frac{\sfC}{\log (1+\frac{\tilde N}{N})}
	\, \frac{1}{N^{h}} \, \Big\Vert \frac{\varphi_N}{w_N}
	\Big\Vert_{\ell^p}^h \, \Vert \psi_N\Vert_{\ell^\infty}^h
	\, \Vert w_N \ind_{B_N} \Vert_{\ell^q}^h \\
	& \,\leq\, \frac{\sfC'}{\log (1+\frac{\tilde N}{N})} \, \Big\Vert \frac{\varphi}{w}
	\Big\Vert_{p}^h \, \Vert \psi\Vert_{\infty}^h
	\, \Vert w \ind_{B} \Vert_{q}^h \,,
\end{split}
\end{equation}
where $\varphi_N, \psi_N, w_N: \Z^2 \to \R$
are defined from $\varphi, \psi, w: \R^2 \to \R$ by \eqref{eq:phiNpsiN},
we denote by $B \subseteq \R^2$ a ball on which $\psi$ is supported (possibly $B=\R^2$),
and we set $B_N := B \sqrt{N}$.
\end{theorem}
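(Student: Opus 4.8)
The plan is to expand the centered partition function $\cZ_{N}^{\beta_{\tilde N}}(\varphi,\psi)-\bbE[\cZ_{N}^{\beta_{\tilde N}}(\varphi,\psi)]$ into its polynomial chaos, as in \eqref{eq:Zpolyav}, and then compute its $h$-th moment. Writing out $\bbE[(\cdots)^h]$ as a product of $h$ chaos expansions and using $\bbE[\xi_N]=0$, each surviving term corresponds to a configuration of $h$ labelled random walk paths $S^{(1)},\dots,S^{(h)}$ whose ``marked'' time-space points (carrying a $\xi_N$) must be matched: every point $(n,z)$ that is visited by some subset of the $h$ walks must be visited by at least two of them, since $\bbE[\xi_N]=0$ and $\bbE[\xi_N^2]=\sigma_N^2$, $\bbE[\xi_N^m]=O(\sigma_N^2)$. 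The combinatorial structure is thus encoded by a sequence of ``collision blocks'': maximal intervals during which a fixed pair (or larger subset) of walks coincide at the marked sites, separated by free stretches governed by the random walk transition kernels $q$. Between consecutive collision blocks the walks move independently, and the bound for a single collision is controlled by $\overline U_N$, with the asymptotics \eqref{eq:asU1}--\eqref{eq:asU2ub} available from Section~\ref{sec:Dickman}.

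The heart of the argument is a functional inequality for the Green's function of multiple random walks on $\Z^2$ --- this is the forthcoming Lemma~\ref{HLSineq}, which I would use as a black box here. Concretely, after summing over all internal time-space labels, the $h$-th moment gets bounded by an operator-norm-type estimate: one views the contribution of each collision block as an integral operator acting on functions of the spatial configuration of the $h$ endpoints, and the functional (Hardy--Littlewood--Sobolev--type) inequality lets one bound the composition of these operators on weighted $L^p$--$L^q$ spaces with $\tfrac1p+\tfrac1q=1$. The weight $w$ enters precisely to make this work when $\psi\equiv 1$ and $\varphi$ is an approximate delta function: the Lipschitz continuity of $\log w$ guarantees that $w$ varies slowly on the diffusive scale, so it can be commuted past the random walk kernels at bounded multiplicative cost, and the factor $\|w\ind_B\|_q$ absorbs the spatial decay / localization from the support of $\psi$. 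The key small factor $\tfrac{1}{\log(1+\tilde N/N)}$ comes from the integral $\int_0^{N/\tilde N}G_{\tilde\theta}(s)\,\dd s$ via \eqref{eq:G5} (after the reduction $\tilde N\to N$ described in Remark~\ref{R:NtildeN}, with $\tilde\theta=\theta+\log(\tilde N/N)+o(1)$): whenever $h\ge 3$, at least one collision block is ``short'' on the macroscopic scale, contributing $\int_0^{N/\tilde N}G(s)\,\dd s\sim 1/\log(\tilde N/N)$ rather than $O(1)$.

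The steps, in order: (1) chaos-expand and identify the collision-block decomposition of $\bbE[(\cdots)^h]$, reducing to a sum over the number and type of blocks; (2) handle the higher-power terms $\bbE[\xi_N^m]$, $m\ge3$, which are lower order because each costs an extra $\sigma_N^2=O(1/\log N)$ without a compensating renewal sum; (3) reduce $\tilde N\to N$ by Remark~\ref{R:NtildeN}; (4) for each block configuration, sum out the internal labels and recast the result as a composition of integral operators, then apply Lemma~\ref{HLSineq} to bound its $\ell^p\to\ell^q$ (or dual) norm, pulling out $\|\varphi_N/w_N\|_{\ell^p}$, $\|\psi_N\|_{\ell^\infty}$, $\|w_N\ind_{B_N}\|_{\ell^q}$; (5) use $\log w$ Lipschitz to pass $w_N$ through the kernels; (6) sum the geometric-type series over the number of blocks, extracting the $1/\log(1+\tilde N/N)$ from the shortest block; (7) pass from discrete $\ell^p$ norms to continuum $L^p$ norms using the local limit theorem \eqref{eq:llt} and standard Riemann-sum comparison, which yields the second inequality in \eqref{eq:mombd}.

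The main obstacle I expect is step (4): organizing the multi-walk collision structure so that it genuinely factorizes into a bounded composition of operators to which the functional inequality applies, uniformly in $h$ and in $N\le\tilde N$, and in particular tracking that the number of distinct ``collision topologies'' for $h$ walks is finite (depending only on $h$) and that each contributes a bound of the claimed form. A secondary technical point is ensuring the weight manipulation in step (5) does not accumulate an $h$-dependent constant that ruins uniformity --- but since $h$ is fixed and $\log w$ Lipschitz gives a per-kernel factor $e^{O(\mathrm{Lip}(\log w)\cdot\sqrt{\mathrm{length}})}$ that is tame on bounded time horizons, this should be manageable.
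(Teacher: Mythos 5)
Your outline follows the same route as the paper's proof: chaos-expand the $h$-th centred moment, organize it according to which of the $h$ renewal sequences collide at each marked time (partitions $I\vdash\{1,\dots,h\}$, with consecutive identical pairings collapsed into a replica-overlap kernel of type $\sfU$), insert Chapman--Kolmogorov for the non-colliding walks so that the configuration of all $h$ endpoints is recorded at each block boundary, and bound the resulting composition of kernels on weighted $\ell^p$--$\ell^q$ spaces via the Hardy--Littlewood--Sobolev-type inequality, which is exactly Lemma~\ref{HLSineq}; the boundary terms produce $\Vert\varphi_N/w_N\Vert_{\ell^p}$, $\Vert\psi_N\Vert_{\ell^\infty}$, $\Vert w_N\ind_{B_N}\Vert_{\ell^q}$, and a Riemann-sum comparison gives the second line of \eqref{eq:mombd}. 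So the skeleton and the key functional inequality are the right ones.

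As written, though, your steps (4) and (6) have a genuine gap. First, the sum over the internal times does not factorize into a composition of operators by itself: the increments $n_1,n_2-n_1,\dots$ are tied together by the fixed horizon $n_r<N$. The paper decouples them by averaging the terminal time over $[N,2N]$ and inserting the tilt $e^{-\lambda\sum_i(n_i-n_{i-1})}$ with $\lambda=\hat\lambda/N$, i.e.\ by passing to the Laplace-transformed kernels \eqref{eq:QU}; this is what makes the operator-composition bounds \eqref{eq:Mbd1}--\eqref{eq:Mbd2} legitimate, and your sketch has no substitute for it. Second, and more seriously, your proposed source of smallness (``the shortest block contributes $\int_0^{N/\tilde N}G_\theta\sim 1/\log(\tilde N/N)$'') cannot simultaneously furnish the prefactor and make the series over the number of blocks converge uniformly in $N\le\tilde N$: when $\tilde N\asymp N$, which the statement allows, each block contributes only an $O(1)$ factor and $\sum_r(\mathrm{const})^r$ diverges. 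In the paper the per-block factor is $c''/\log(\hat\lambda\,\tilde N/N)$ (Proposition~\ref{prop:opnorm}, in particular \eqref{eq:norm3} combined with the moments of $\xi$), and the free parameter $\hat\lambda$ is then taken large, depending on $p,q,h$, so the ratio of the geometric series is at most $1/2$ uniformly in $\tilde N\ge N$, see \eqref{eq:Mbd3}; the overall $1/\log(1+\tilde N/N)$ comes from retaining one such factor (present because $r\ge1$), not from a specially short block, and not from the hypothesis $h\ge3$. Finally, your sketch does not say where the $N^{-h}$ normalization is reconciled with the stated norms; this comes from the boundary operator bounds \eqref{eq:norm2}, whose factors $N^{1/q}$ and $N^{1/p}$ cancel the extra $1/N$ created by the terminal-time averaging --- a routine but necessary ingredient.
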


\smallskip

Theorem~\ref{th:mom} will be needed later in the proof of Lemma~\ref{Theta4th},
where we consider $N= \epsilon \tilde N$ with $\epsilon \in (0,1)$;
this is why we allow for $\beta = \beta_{\tilde N}$ with $\tilde N \ge N$.

\begin{remark}\label{rem:aswe}
The second line
of \eqref{eq:mombd} follows from the first line by Riemann sum approximation
(note that $w(\frac{x+y}{\sqrt{N}}) = (1+O(\frac{|y|}{\sqrt{N}})) \, w(\frac{x}{\sqrt{N}})$
by the Lipschitz continuity of  $\log w$).
\end{remark}

\begin{remark}\label{r:1}
Typically we will let $w(x) = e^{-|x|}$, which allows $\psi\equiv 1$ provided $\varphi$
decays sufficiently fast at $\infty$. If $\psi$ is bounded with compact support, \eqref{eq:mombd} also gives
exponential spatial decay as the support of $\psi$ moves to infinity. This answers the conjecture in
\cite[Remark 1.2]{GQT21} in our lattice setting, which can be extended to their continuum setting.
\end{remark}

\begin{remark}
Similar to \cite[Theorem 1.1]{GQT21}, we can show that the moments
in the l.h.s.\ of \eqref{eq:mombd} converge as $N\to\infty$. However, the limits are
expressed as series of iterated integrals and are not very informative, so we will not state them here.
\end{remark}

\begin{remark}\label{r:2}
In the bound \eqref{eq:mombd},
we could first assign $\varphi_N, \psi_N : \Z^2 \to \R$
and then define the corresponding $\varphi, \psi: \R^2 \to \R$, e.g.\ by piecewise constant extension
$\varphi(x) := \varphi_N(\ev{\sqrt{N}x})$ and
$\psi(y) := \psi_N(\ev{\sqrt{N}x})$, because
 $\cZ_N^\beta(\varphi, \psi)$
depends on the functions $\varphi, \psi: \R^2 \to \R$ only through their discretizations
$\varphi_N, \psi_N$ in \eqref{eq:phiNpsiN}, see \eqref{eq:ZNavalt}.

In particular, we can apply the bound \eqref{eq:mombd}
to the point-to-plane partition function $Z_N^{\beta_N}(0)$ defined in \eqref{eq:paf}.
More precisely, we can write
$Z_N^{\beta_N}(0) = \sum_{y\in \Z^2}Z^{\beta_{N}}_N(0, y)=:\cZ^{\beta_{N}}_N(\varphi, \ind)$
with $\varphi_N(w)= N \ind_{\{w=0\}}$
and $\psi_N(z) = \ind(z) \equiv 1$,
cf.~\eqref{eq:ZN} and \eqref{eq:ZNavalt},
which correspond to $\varphi(x) := \varphi_N(\ev{\sqrt{N}x})
= N \, \ind_{\{|x_1|+|x_2|\le1/\sqrt{N}\}}$ and $\psi(y) \equiv 1$.
Note that $\|\varphi\|_p = O(N^{1-\frac{1}{p}})$. Then,
applying \eqref{eq:mombd} with $w(x)= e^{-| x |}$
implies that for any integer $h\ge 3$ and for any $p>1$,
there exists $C_{p,h} < \infty$ such that for all $N\in\N$,
\begin{equation}
	\Big|\bbE\Big[\Big(
	Z_N^{\beta_N}(0) -
	\bbE\big[Z_N^{\beta_N}(0)\big]\Big)^h\Big]\Big|
	\,=\, \Big|\bbE\Big[\big(
	Z_N^{\beta_N}(0) - 1 \big)^h\Big]\Big|
	\, \leq \, C_{p,h} \, N^{h (1-\frac{1}{p})} \,.
\end{equation}
Since we can take any $p>1$, this shows that centered moments of any order $h\ge 3$
of the point-to-plane partition function $Z_N^{\beta_N}(0)$ diverge as $N\to\infty$ more slowly
than any polynomial.
\end{remark}

\begin{proof}[Proof of Theorem~\ref{th:mom}]
Our starting point is the polynomial chaos expansion of $\cZ_N^{\beta_{\tilde N}}(\varphi, \psi)$ as in \eqref{eq:Zpolyav}, which gives
\begin{align}
	&M_{N, \tilde N, h}^{\varphi, \psi}:=
	\bbE\Big[\Big(\cZ_N^{\beta_{\tilde N}}(\varphi, \psi)
	- \bbE[\cZ_N^{\beta_{\tilde N}}(\varphi, \psi)]\Big)^h\Big] \label{eq:MNh} \\
	= & \frac{1}{N^h} \bbE\Big[ \Big(\sum_{r=1}^{\infty} \!\!\!\!\!\!
	\sum_{\substack{z_1, \ldots, z_r \in \Z^2 \\ 0 < n_1 < \ldots < n_r \le N }}  \!\!\!\!\!\!\!\!\!\!
	q^N_{0,n_1}(\varphi,z_1) \, \xi(n_1,z_1)
	 \Big\{ \prod_{j=2}^r q_{n_{j-1}, n_j}(z_{j-1},z_j)
	\, \xi(n_j,z_j) \Big\} \, q^N_{n_j,N}(z_j,\psi)\Big)^h \Big],  \nonumber
\end{align}
where $\xi(n,z) = \xi_{\tilde N}(n,z)$ is as defined in \eqref{eq:xi} with $\beta_N$
therein replaced by $\beta_{\tilde N}(\theta)$ so that $\mathbb{V}{\rm ar}(\xi)=\sigma_{\tilde N}^2$.
We will expand the $h$-fold product above, which gives a sum over $h$ {\em microscopic
time-space renewals}
$(n^i_1, z^i_1), \ldots, (n^i_{r_i}, z^i_{r_i})$, $1\leq i\leq h$. Given these $h$ renewals, each lattice point
$(m,x)$ will contribute a factor of $\bbE[\xi(m,x)^{\#}]$, where $\#$ is the number of times $(m,x)$
appears among the $h$ time-space renewals. Recall $\xi$ and $\sigma_{\tilde N}$  from \eqref{eq:xi}
and \eqref{eq:sigma}, we have
\begin{equation}\label{eq:ximom}
	\bbE[\xi(m,x)]=0, \quad \bbE[\xi(m,x)^2] = \sigma_{\tilde N}^2\sim \frac{\pi}{\log \tilde N},
	\quad \big|\bbE[\xi(m,x)^l]\big| \leq C \sigma_{\tilde N}^l \quad \mbox{for } l\geq 3.
\end{equation}
Therefore a given configuration of $h$ time-space renewals will give a non-zero contribution to the
expansion in \eqref{eq:MNh}
if each $(m,x)$ is visited by none or by at least two of the $h$ renewals. We will rewrite the expansion by
first summing over all possible choices of the set of time coordinates
$\bigcup_{i=1}^h \{n^i_1, \ldots , n^i_{r_i}\}$, then for each time $n$ in this set, sum over the locations
$x\in\Z^2$ such that $(n,x)$ is visited by (at least two) of the $h$ renewals
$(n^i_1, z^i_1), \ldots, (n^i_{r_i}, z^i_{r_i})$, and lastly, determine which of the $h$ renewals visits $(n,x)$.
Note that in the expansion \eqref{eq:MNh}, for each of the $h$ renewal sequences that visits $(n,x)$, there is
a random walk transition kernel $q$ entering $(n,x)$ and another one exiting $(n,x)$, while for each renewal
that does not visit the time plane $\{(n,y): y \in \Z^2\}$,  there is a transition kernel $q_{a,b}(x, z)$ with
$a<n<b$, for which we will use Chapman-Kolmogorov to rewrite it as
$q_{a,b}(x,z) = \sum_{y\in \Z^2} q_{a,n}(x,y) q_{n,b}(y, z)$.\footnote{This is the key difference
between the expansions in \cite{CSZ19b} and \cite{GQT21}. This decomposition was used in \cite{GQT21},
which allows a functional analytic interpretation of the iterated sums and helps bypass the combinatorial
complexity encountered in \cite{CSZ19b}, which the authors could control for the third moment but seemed
intractable for higher moments.}

To expand the centred moment $M_{N,\tilde N, h}^{\varphi, \psi}$ in \eqref{eq:MNh} as described
above, we first introduce some notation. Given $h\geq 2$, let $I \vdash\{1,...,h\}$ denote a partition
$I=I(1)\sqcup\cdots\sqcup I(m)$ of $\{1,...,h\}$ into disjoint subsets $I(1),...,I(m)$ with cardinality $|I|=m$.
Write $k\stackrel{I}{\sim} l$ if $k$ and $l$ belong to the same partition element of $I$.
The interpretation is that, for a given time $n$,
we have $k\stackrel{I}{\sim} l$ if the $k$-th and $l$-th time-space renewals visit
the same time-space point $(n,x)$ for some $x\in\Z^2$,
which leads to a power of the disorder variable $\xi(n,x)$. Given $I \vdash\{1, \ldots, h\}$, denote
\begin{equation}\label{eq:Exi}
\bbE[\xi^I] := \prod_{1\leq j \leq |I|, |I(j)|\geq 2}\bbE[\xi^{|I(j)|}].
\end{equation}
For $\bx \in (\Z^2)^h$, we denote
\begin{equation}\label{eq:xsimI}
\bx \sim I \quad \mbox{if  }  x_k=x_l \ \forall\,  k \stackrel{I}{\sim} l.
\end{equation}
For $\bx, \tilde \bx \in (\Z^2)^h$, denote the $h$-component random walk transition probabilities by
\begin{align}\label{eq:Qdef}
	Q_{t}(\bx, \tilde \bx):= \prod_{i=1}^h q_{t}(\tilde x_i-x_i), \ \, Q^N_t(\varphi, \bx)
	:=\prod_{i=1}^h q^N_{0,t}(\varphi, x_i),  \  \, Q^N_t(\bx, \psi):=\prod_{i=1}^h q^N_{0,t}(x_i, \psi),
\end{align}
where $q^N_{0,t}(\varphi, x_i)$ and $q^N_{0,t}(x_i, \psi)$
are defined in \eqref{eq:qNphi}--\eqref{eq:qNpsi}, and for $I, J\vdash \{1, \ldots, h\}$, denote
\begin{equation}\label{eq:QIJt}
	Q^{I, J}_{t}(\bx, \tilde \bx):= \ind_{\{\bx\sim I, \tilde\bx\sim J\}}Q_{t}(\bx, \tilde \bx).
\end{equation}
We can then write
\begin{equation}\label{eq:MNh2}
\begin{aligned}
M_{N,\tilde N, h}^{\varphi, \psi} = \frac{1}{N^h} & \sum_{r=1}^\infty \sum_{\substack{1\leq n_1<\cdots <n_r\leq n_{r+1}:=N \\ I_1, \ldots, I_r \vdash\{1, \ldots, h\},\, m_i:=|I_i|<h \\
 \by_1, \ldots, \by_r \in (\Z^2)^h}}  Q^N_{n_1}(\varphi, \by_1) \ind_{\{\by_1\sim I_1\}} \bbE[\xi^{I_1}] \\
& \qquad \times \prod_{i=2}^r Q^{I_{i-1}, I_i}_{n_i-n_{i-1}}(\by_{i-1}, \by_i) \bbE[\xi^{I_i}] \times \ind_{\{\by_r\sim I_r\}}  Q^N_{n_{r+1}-n_r}(\by_r, \psi).
\end{aligned}
\end{equation}

%%%%%%%%%%%%%%%%%%%FIGURE%%%%%%%%%%%%%%%%%
%%%%%%%%%%%%%%%%%%% moment diagram%%%%%%%%%%%%%%%%%
\begin{figure}
\hskip -0.2cm
\begin{tikzpicture}[scale=0.4]
\draw[thick] (-6,-5)--(-6,5); \draw[thick] (-4,-5)--(-4,5); \draw[thick] (0,-5)--(0,5); \draw[thick] (2,-5)--(2,5); \draw[thick] (5,-5)--(5,5);
\draw[thick] (7,-5)--(7,5); \draw[thick] (9,-5)--(9,5);  \draw[thick] (7,-5)--(7,5); \draw[thick] (14,-5)--(14,5);
\draw   (-8,3.5) circle [radius=0.2];  \draw  (-8,-1)  circle [radius=0.2];   \draw   (-8,2)  circle [radius=0.2];  \draw   (-8,-3)  circle [radius=0.2];
\draw  (-6, -3.55)  circle [radius=0.2]; \draw  (-6, 3.8)  circle [radius=0.2];  \draw  [fill] (-6,0.5)  circle [radius=0.2];
\draw   (-4, -3)  circle [radius=0.2];  \draw  (-4, 4.0)  circle [radius=0.2]; \draw  [fill] (-4,0.5)  circle [radius=0.2];
\draw   (0, 2.7)  circle [radius=0.2]; \draw  [fill] (0, -0.5)  circle [radius=0.2]; \draw   (0, 4.0)  circle [radius=0.2];
\draw  (2, 2.7)  circle [radius=0.2];  \draw  [fill] (2, -0.5)  circle [radius=0.2]; \draw   (2, 3.9)  circle [radius=0.2];
\draw  [fill] (5, 1)  circle [radius=0.2]; \draw   (5, 3.5)  circle [radius=0.2];
\draw  [fill] (7, 1)  circle [radius=0.2];
\draw  (9,-1)  circle [radius=0.2];   \draw  [fill] (9, 3)  circle [radius=0.2]; \draw  (9, -3)  circle [radius=0.2];
\draw   (14,3) circle [radius=0.2]; \draw   (14,1) circle [radius=0.2]; \draw   (14,-1) circle [radius=0.2]; \draw   (14,-3) circle [radius=0.2];
\draw[thick] (-8,3.5) to [out=10,in=170] (5, 3.5); \draw[thick] (-8,-1) to [out=200,in=260] (-6,0.5);  \draw[thick] (-8,2) to [out=-30,in=100] (-6,0.5);
\draw[thick] (-8,-3) to [out=-30,in=210] (0,-0.5);
\draw[thick] (-4,0.5) to [out=50,in=130] (5,1); \draw[thick] (-4,0.5) to [out=-30,in=210] (0,-0.5);
\draw[thick] (2,-0.5) to [out=50,in=180] (5,1); \draw[thick] (2,-0.5) to [out=0,in=220] (5,1);
\draw [-,thick] (5,1) -- (7,1);  \draw [thick] (5,1) to [out=45,in=135] (7,1); \draw [thick] (5,1) to [out=-45,in=-135] (7,1); \draw [thick] (5,3.5) to [out=-20,in=100] (7,1);
\draw[thick] (7,1) to [out=-60,in=150] (9,-1);  \draw[thick] (7,1) to [out=60,in=200] (9, 3);  \draw[thick] (7,1) to [out=30,in=250] (9, 3);  \draw[thick] (7,1) to [out=-80,in=145] (9, -3);
\draw [thick] (12,4) to [out=0,in=135] (14,3); \draw [thick] (12,1.5) to [out=0,in=145] (14,1);
\draw [thick] (12,-1.5) to [out=0,in=-145] (14,-1); \draw [thick] (12,-4) to [out=0,in=-135] (14,-3);
\draw [-,thick, decorate, decoration={snake,amplitude=.4mm,segment length=2mm}] (0,-0.5) -- (2,-0.5);
\draw [-,thick, decorate, decoration={snake,amplitude=.4mm,segment length=2mm}] (-6,0.5) -- (-4,0.5);
\node at (-8.2,3.0) {\scalebox{0.6}{$(0,z_1)$}};
\node at (-8.2,1.5) {\scalebox{0.6}{$(0,z_2)$}};
\node at (-8.2,-1.5) {\scalebox{0.6}{$(0,z_3)$}};
\node at (-8.2,-3.5) {\scalebox{0.6}{$(0,z_4)$}};
\node at (11,0) {$\cdots\cdots$};
\end{tikzpicture}
\caption{
An illustration of the expansion for the fourth moment in \eqref{eq:MNh2}.
Solid dots are assigned weight $\bbE[\xi^{\hash}]$ with $\hash$ being the number of renewal
sequences visiting the lattice site, while circles are assigned weight $1$ and arise from the Chapman-
Kolmogorov decomposition
of $q_{a,b}(x,y)$ at an intermediate time. Curly lines
between sites $(a,x), (b,y)$ (together with the solid dots at both ends) represent
$U_{\tilde N}(b-a,y-y)$ as in \eqref{U-op} and \eqref{U-diagram}, while solid lines between
sites (either solid dots or circles) $(a,x), (b,y)$ are assigned weight $q_{a, b}(x, y)$.
As an illustration of the expansion in
\eqref{eq:Mbd1}, we see the sequence
of operators $\sfP^{*,I_1}=Q^{*,I_1} U^{I_1}, \sfP^{I_1,I_2}
=Q^{I_1,I_2}U^{I_2}, \sfP^{I_2,I_3}=Q^{I_2,I_3},
\sfP^{I_3,I_4}=Q^{I_3,I_4}, \sfP^{I_4,I_5}=Q^{I_4,I_5}$, with $|I_1|=3, |I_2|=3, |I_3|=2, |I_4|=1, |I_5|=3$.
\label{figure-fourth-GQT}}
\end{figure}
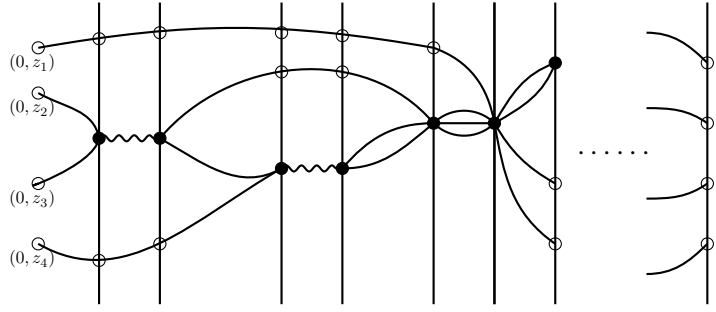

%%%%%%%%%%%%%%%%%%%%%%%%%%%%%%%%%%%%%%%%%%%%%%%%%

First note that we can bound $|M_{N,\tilde N,h}^{\varphi, \psi}|$ from above by replacing
$\bbE[\xi^{I_i}]$, $\varphi$ and $\psi$ with their absolute values. To simplify notation,
we assume from now on $\bbE[\xi^{I_i}]$, $\varphi$ and $\psi$ are all non-negative.

Next, we bound $\psi$ by $\Vert \psi\Vert_\infty \ind_B$, where $B$ is the ball of radius
$\rho\in [1,\infty]$ containing the support of $\psi$.
We then note that, uniformly in $1\leq n_r\leq N\leq n_{r+1}\leq 2N$ and $\by_r$,
we can find $C>0$ such that
\begin{equation}\label{eq:QB}
	Q^N_{N-n_r}(\by_r, \Vert \psi\Vert_\infty \ind_B)
	\leq C \, Q^N_{n_{r+1}-n_r}(\by_r, \Vert \psi\Vert_\infty \ind_B).
\end{equation}
Recalling the definition of $q_{0,t}^N(y, \Vert \psi\Vert_\infty \ind_B)$ from \eqref{eq:qNpsi}, this bound
follows readily from the observation that, given that a random walk starting from $y$
reaches $\sqrt{N}B$ at time $N-n_r$, the probability of being inside $\sqrt{N}B$ at time
$n_{r+1}-n_r \in [0, 2N]$ is uniformly bounded away from $0$.\footnote{For $3N/2\leq n_{r+1}\leq 2N$,
the inequality \eqref{eq:QB} holds for much more general $B$ than balls of radius $\rho\geq 1$, and
Theorem~\ref{th:mom} can be extended accordingly. But we use balls for simplicity.}
Therefore we can sum the r.h.s.\ of \eqref{eq:MNh2} over $N\leq n_{r+1}\leq 2N$
and then divide by $N$ to get
\begin{equation}\label{eq:MNh3}
\begin{aligned}
	|M_{N,\tilde N,h}^{\varphi, \psi}| \leq  \frac{C\Vert \psi\Vert_\infty^h }{N^{h+1}}
	& \sum_{r=1}^\infty \sum_{\substack{1\leq n_1<\cdots <n_r\leq n_{r+1}\leq 2 N \\
	I_1, \ldots, I_r \vdash\{1, \ldots, h\}, \, m_i:=|I_i|<h \\
	 \by_1, \ldots, \by_r \in (\Z^2)^h}}  Q^N_{n_1}(\varphi, \by_1)
	 \ind_{\{ \by_1\sim I_1\}} \bbE[\xi^{I_1}] \\
	& \quad \times \prod_{i=2}^r Q^{I_{i-1}, I_i}_{n_i-n_{i-1}}(\by_{i-1}, \by_i) \bbE[\xi^{I_i}]
	\times \ind_{\{ \by_r\sim I_r\}} Q^N_{n_{r+1}-n_r}(\by_r, \ind_B).
\end{aligned}
\end{equation}

We first single out consecutive appearances of the same $I$ among $I_1, \ldots, I_r$ with $|I|=h-1$
(that is, $I$ consists of all singletons except for a pair $\{k,l\}$).
Given $I\vdash \{1, \ldots, h\}$ with $|I|=h-1$, for $1\leq s_1 < s_2\leq N$
and $\bz_1, \bz_2 \in (\Z^2)^h$,  define
\begin{equation}\label{eq:sfU}
	\sfU^I_{s_2-s_1,\tilde N}(\bz_1, \bz_2)
	:= \ind_{\{\bz_1, \bz_2\sim I \}}\sum_{r=1}^\infty \bbE[\xi^2]^{r} \!\!\!\!\!\!\!\!\!\! 	
	\sum_{\substack{n_0:=s_1<n_1<\dots <n_r:= s_2\\ \by_i \in (\Z^2)^h, \, \by_0
	:=\bz_1, \, \by_r:=\bz_2}} \prod_{i=1}^r Q^{I, I}_{n_i-n_{i-1}}(\by_{i-1}, \by_i)
\end{equation}
(recall that the moments of $\xi$ depend on $\tilde N$,
see \eqref{eq:ximom})
and define $\sfU^I_{0,\tilde N}(\bz_1, \bz_2) := \ind_{\{\bz_1=\bz_2\sim I\}} $.
If $\{k, l\}$ is the unique partition element of $I$ with cardinality two,
then we can write
\begin{equation}\label{U-op}
	\sfU^I_{s_2-s_1,\tilde N}(\bz_1, \bz_2)
	= U_{\tilde N} (s_2-s_1, \bz_{2,k}-\bz_{1, k})
	\prod_{i \in \{1, \ldots, h\}\backslash\{k, l\}} q_{s_2-s_1}(\bz_{2, i}-\bz_{1, i}),
\end{equation}
where $U_{\tilde N}(n,x)$
was defined in \eqref{eq:U} (with $\sigma_N^2$ therein replaced by $\sigma_{\tilde N}^2$).
In \eqref{eq:MNh3}, we can then contract the consecutive appearances of $I_i = I_{i+1} =\cdots I_j=I$ with
$|I|=h-1$ into a single kernel $\sfU^I_\cdot(\cdot, \cdot)$, so that each $I_i$ with
$|I_i|=h-1$ does not appear twice in a row in the summation in \eqref{eq:MNh3}.

With a slight overload of notation in order to avoid extra symbols, for $\lambda > 0$ we set
\begin{equation}\label{eq:QU}
\begin{aligned}
	& Q_{\lambda,N}^{I, J} (\by, \bz)
	:= \sum_{n=1}^{2N} e^{-\lambda n} \, Q^{I, J}_n(\by, \bz),   \qquad  &\by, \bz \in (\Z^2)^h , \\
	& \sfU_{\lambda, N, \tilde N}^{J}(\by, \bz)
	:= \sum_{n=0}^{2N} e^{-\lambda n} \,
	\sfU^J_{n,\tilde N} (\by, \bz),  &\by, \bz \in (\Z^2)^h,
\end{aligned}
\end{equation}
for partitions $I, J \vdash \{1, \ldots, h\}$, and we finally define, with operator notation,
\begin{equation}\label{eq:P1}
\begin{aligned}
	\sfP^{I, J}_{\lambda, N,\tilde{N}}  :=
	\begin{cases}
	Q_{\lambda,N}^{I, J}  &  \text{if } |J| <h-1 \,, \\
	\rule{0pt}{1.4em} Q_{\lambda,N}^{I, J} \, \sfU_{\lambda,N,\tilde N}^{J}
	& \text{if } |J| = h-1 \,.
	\end{cases}
\end{aligned}
\end{equation}
To lighten notation, we will often omit the dependence of these operators on $N, \tilde N$.

The introduction of the parameter $\lambda$, especially the choice $\lambda =\hat\lambda/N$ we will take later, will be crucial in
decoupling the sum over $n_1, n_2-n_1, \ldots, n_{r+1}-n_r=N-n_r$ in \eqref{eq:MNh2}. Together with the replacement of $n_{r+1}=N$
by averaging $n_{r+1}$ over $[N, 2N]$ in \eqref{eq:MNh3}, this allows us to take Laplace transform and bound the r.h.s.\ of \eqref{eq:MNh2}
in terms of the operators defined in \eqref{eq:QU}-\eqref{eq:P1}. Furthermore, by taking $\hat\lambda$ large, we can extract a logarithmic decay in $\hat\lambda$ from $\sfU_{\lambda, N, \tilde N}^{J}$, see \eqref{eq:norm3}. These ideas were used in \cite{GQT21} in a continuum setting.

To proceed with estimates along these lines, we first obtain an upper bound on  \eqref{eq:MNh3} by inserting
the factor $e^{2\lambda N} e^{-\lambda \sum_{i=1}^{r+1} (n_i-n_{i-1})}\geq 1$,
for $\lambda>0$ to be determined later, and enlarging
the range of summation for each $n_i-n_{i-1}$ to $[1, 2N]$.
Denoting by $I=*$ the partition consisting of singletons (namely, $I=\{1\}\sqcup\{2\}\cdots \sqcup\{h\}$),
we can then rewrite the sum in \eqref{eq:MNh3} and obtain the bound
\begin{equation}\label{eq:Mbd1}
	\big|M_{N,\tilde N, h}^{\varphi, \psi}\big| \leq \frac{Ce^{2\lambda N}
	\Vert \psi \Vert_\infty^h}{N^{h+1}}  \sum_{r=1}^\infty \sum_{I_1, \ldots, I_r} \!\!\!\!
	\big\langle \varphi_N^{\otimes h}, \,  \sfP^{*,I_1}_{\lambda} \,
	\sfP^{I_1, I_2}_{\lambda}
	\cdots \sfP^{I_{r-1}, I_r}_{\lambda} \,
	Q^{I_r, *}_{\lambda} \, \ind^{\otimes h}_{B_N} \big\rangle
	\prod_{i=1}^r \bbE[\xi^{I_i}],
\end{equation}
where the sum is over $r$ partitions $I_1, \cdots, I_r \vdash \{1, \cdots, h\}$ such that $|I_i|\leq h-1$ for
all $1\leq i\leq r$ and there is no consecutive $I_{i-1}=I_i$ with $|I_i|=h-1$; we also applied the
definition of $q^N_{0,n}(\varphi, z)$ and $q^N_{0,n}(z, \psi)$ from \eqref{eq:qNphi}--\eqref{eq:qNpsi},
we set $B_N:=\sqrt{N}B$
and, given $f: \Z^2 \to \R$, for $\by \in (\Z^2)^h$
we define $f^{\otimes h}(\by) := \prod_{i=1}^h f(\by_i)$.

Our bounds will be in terms of the norms of operators acting on the function space $\ell^q((\Z^2)^h)$ for some $q>1$. To allow for $\psi = \ind^{\otimes h}_{B_N}$ in \eqref{eq:Mbd1}, it is necessary to introduce spatial weights, which incidentally will also give bounds on the spatial decay if $\psi$ has compact support and we shift its support toward $\infty$. More precisely, for a function
$w: \R^2\to (0,\infty)$ such that $\log w$ is Lipschitz,  we
define its discretized version $w_N: \Z^2 \to \R$ by \eqref{eq:phiNpsiN}
and we introduce the weighted operators
\begin{equation}\label{eq:hatQU}
\begin{aligned}
	\widehat Q_{\lambda,N}^{I, J} (\by, \bz)
	& :=  \frac{w_N^{\otimes h}(\by)}{w^{\otimes h}_N(\bz)} \, Q_{\lambda,N}^{I, J} (\by, \bz) , \\
	\widehat \sfU_{\lambda, N, \tilde N}^{J}(\by, \bz)
	& :=  \frac{w_N^{\otimes h}(\by)}{w^{\otimes h}_N(\bz)} \,
	\sfU_{\lambda, N,\tilde N}^{J}(\by, \bz) ,
\end{aligned}
\end{equation}
with $\widehat \sfP^{I, J}_{\lambda, N, \tilde N}$  defined
from $\widehat Q$ and $\widehat U$ as in \eqref{eq:P1}.
Given a partition $I\vdash\{1, \ldots, h\}$, denote
\begin{equation}\label{ZI-space}
(\Z^2)^h_I:=\{ \bx\in (\Z^2)^h: \bx \sim I\},
\end{equation}
which is just a copy of $(\Z^2)^{|I|}$ embedded in $(\Z^2)^h$.
Due to the delta function constraints in its definition ($\ind_{\{\bx \sim I, \tilde \bx \sim J\}}$
in \eqref{eq:QIJt}), \emph{we will regard $\widehat Q_{\lambda, N}^{I, J}(\bx, \tilde\bx)$ as an operator
mapping from $\ell^q((\Z^2)^h_J)$ to $\ell^q((\Z^2)^h_I)$ for some $q>1$, and similarly for
$\widehat \sfU_{\lambda, N, \tilde N}^{J}$ and
$\widehat P_{\lambda, N, \tilde N}^{I, J}$}. For $p, q>1$
with $\frac{1}{p}+\frac{1}{q}=1$, by H\"older's inequality,
we can then rewrite the bound \eqref{eq:Mbd1} as
\begin{align}
	\big|M_{N,\tilde N,h}^{\varphi, \psi}\big| & \leq \frac{C e^{2\lambda N}
	\Vert \psi\Vert_\infty^h}{N^{h+1}}  \sum_{r=1}^\infty \sum_{I_1, \ldots, I_r}
	\langle \frac{\varphi_N^{\otimes h}}{w_N^{\otimes h}},
	\widehat \sfP^{*, I_1}_{\lambda} \,
	\widehat \sfP^{I_1, I_2}_{\lambda} \cdots
	\widehat \sfP^{I_{r-1}, I_r}_{\lambda} \, \widehat Q^{I_r, *}_{\lambda}
	\, \ind_{B_N}^{\otimes h}w_N^{\otimes h}\rangle\,  \prod_{i=1}^r \bbE[\xi^{I_i}] \notag \\
	& \leq \frac{C e^{2\lambda N} \Vert \psi\Vert_\infty^h}{N^{h+1}} \sum_{r=1}^\infty
	\sum_{I_1, \ldots, I_r}  \Big\Vert
	\frac{\varphi_N^{\otimes h}}{w_N^{\otimes h}} \Big\Vert_{\ell^p}
	\, \Big\Vert \widehat \sfP^{*, I_1}_{\lambda} \Big\Vert_{\ell^q\to \ell^q} \,
	\Big\Vert \widehat \sfP^{I_1, I_2}_{\lambda}
	\Big\Vert_{\ell^q\to \ell^q} \cdots  \label{eq:Mbd2} \\
	& \qquad\qquad\qquad\qquad\qquad \cdots\, \Big\Vert \widehat \sfP^{I_{r-1}, I_r}_{\lambda}
	\Big\Vert_{\ell^q\to \ell^q} \, \Big\Vert \widehat Q^{I_r, *}_{\lambda} \,
	\Big\Vert_{\ell^q\to \ell^q}
	\, \Big\Vert \ind_{B_N}^{\otimes h}w_N^{\otimes h}\Big\Vert_{\ell^q} \prod_{i=1}^r
	\bbE[\xi^{I_i}], \nonumber
\end{align}
where $\Vert \cdot \Vert_{\ell^p(\bbT)}$
is defined in \eqref{eq:ell}, and given an operator $\mathsf{A}: \ell^q(\bbT) \to \ell^q(\bbT')$,
we set
\begin{equation}
	\Vert \mathsf{A} \Vert_{\ell^q\to \ell^q} \,:= \,
	\sup_{g \nequiv 0} \, \frac{\Vert \mathsf{A} \, g \Vert_{\ell^q(\bbT')}}{\Vert g \Vert_{\ell^q(\bbT)}}
	\,=\,  \sup_{ \Vert f\Vert_{\ell^p(\bbT')}\leq 1,
	\, \Vert g\Vert_{\ell^q(\bbT)}\leq 1} \langle f, \mathsf{A} \, g\rangle.
\end{equation}
In our case $\widehat \sfP^{I, J}_{\lambda},
\widehat Q^{I, J}_{\lambda} : \ell^q((\Z^2)^h_{J}) \to \ell^q((\Z^2)^h_{I})$
(note that for $I=*$ we have $(\Z^2)^h_{I} = (\Z^2)^h$).

We will choose $\lambda:= \hat\lambda/N$ with $\hat\lambda$
large but fixed so that $e^{\lambda N}$ remains bounded. We will show the following.

\begin{proposition}\label{prop:opnorm}
Fix $p, q>1$ with $\frac{1}{p}+\frac{1}{q}=1$,
an integer $h\ge 2$ and $\hat\lambda > 0$.
Then there exists $c = c_{p,q,h,\hat\lambda} < \infty$ such that,
uniformly for partitions $I, J \vdash \{1, \ldots, h\}$ with $1\leq |I|, |J| \leq h-1$
and $I\neq J$ when $|I|=|J|=h-1$,  for large $N \le \tilde N \in\N$
and $\lambda= \frac{\hat\lambda}{N}$ we have
\begin{gather}
	\Big\Vert \widehat Q^{I, J}_{\lambda, N} \Big\Vert_{\ell^q\to \ell^q}
	\leq c \,;\label{eq:norm1} \\
	\Big\Vert \widehat Q^{*,I}_{\lambda,N} \Big\Vert_{\ell^q\to \ell^q}
	\leq c\, N^{\frac{1}{q}} \,,
	\qquad \Big\Vert \widehat Q^{I,*}_{\lambda,N} \Big\Vert_{\ell^q\to \ell^q} \leq
	c\, N^{\frac{1}{p}} \,;
	\label{eq:norm2} \\
	\text{furthermore, for } |I|=h-1, \quad \quad \Big\Vert \widehat \sfU^{I}_{\lambda,
	N,\tilde N}
	\Big\Vert_{\ell^q\to \ell^q} \leq \frac{c}{(\log \hat\lambda \frac{\tilde N}{N})
	\, \sigma_{\tilde N}^2} .
	\label{eq:norm3}
\end{gather}
\end{proposition}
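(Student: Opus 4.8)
The plan is to estimate every operator norm by passing to the Laplace transform in time, which replaces the sums $\sum_{n=1}^{2N} e^{-\lambda n}(\cdots)$ by explicit spatial kernels, and then invoking, for the kernels built from random walk transition probabilities, the functional inequality for Green's functions of several random walks on $\Z^2$ (Lemma~\ref{HLSineq}), and, for the kernel built from $U_{\tilde N}$, the second moment asymptotics recalled in Section~\ref{sec:Dickman}. As a preliminary reduction one removes the weight: since $\log w$ is Lipschitz, $w_N^{\otimes h}(\by)/w_N^{\otimes h}(\bz)\le e^{c\sum_{i=1}^h|y_i-z_i|/\sqrt N}$ (compare Remark~\ref{rem:aswe}), and since every kernel entering $\widehat Q^{I,J}_{\lambda,N}$, $\widehat Q^{*,I}_{\lambda,N}$, $\widehat Q^{I,*}_{\lambda,N}$, $\widehat\sfU^{I}_{\lambda,N,\tilde N}$ is, coordinatewise, either a transition probability $q_n(\cdot)$ or $U_{\tilde N}(n,\cdot)$ at a time $n\le 2N$ --- hence with Gaussian spatial tails on scale $\sqrt n$, using Lemma~\ref{lem:UnGauss} for $U_{\tilde N}$ --- the exponential tilt $e^{c|y_i-z_i|/\sqrt N}$ multiplies the mass of each such kernel by a factor $e^{O(n/N)}=O(1)$ and only modifies $\lambda$ by $O(1/N)$, which is harmless for $\hat\lambda$ large. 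Thus it suffices to prove \eqref{eq:norm1}--\eqref{eq:norm3} for $w\equiv 1$.

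For the single-step bounds \eqref{eq:norm1}--\eqref{eq:norm2} one analyses $\sum_{n=1}^{2N}e^{-\lambda n}\prod_{i=1}^h q_n(z_i-y_i)$ under the coincidence constraints $\by\sim I$, $\bz\sim J$. Since $h\ge 2$ and $1\le|I|,|J|\le h-1$, with $I\ne J$ when $|I|=|J|=h-1$, the product of the $h$ transition probabilities --- all at the common time $n$ --- is bounded by $C\,n^{-h}\exp(-c\,\sfq_{I,J}(\by,\bz)/n)$, where $\sfq_{I,J}$ is the positive quadratic form in the displacements $z_i-y_i$ determined by the two partitions (degenerate only along the overall translation direction); carrying out $\sum_n e^{-\lambda n}(\cdots)$ then produces, up to constants, a homogeneous kernel of the shape $\sfq_{I,J}(\by,\bz)^{-(h-1)}$ with a Gaussian cutoff at scale $\sqrt N$ on $(\Z^2)^h_I\times(\Z^2)^h_J$. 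This is exactly the type of kernel to which Lemma~\ref{HLSineq} applies, and it yields $\|\widehat Q^{I,J}_{\lambda,N}\|_{\ell^q\to\ell^q}\le c$, uniformly in $N\le\tilde N$ and in the admissible pairs $(I,J)$. For $\widehat Q^{*,I}_{\lambda,N}$ and $\widehat Q^{I,*}_{\lambda,N}$ exactly one side is the finest partition $*$, so there is one extra free coordinate there carrying a bare random walk step, whose Laplace transform $\sum_{n=1}^{2N}e^{-\lambda n}q_n(\cdot)$ has $\ell^1$-mass $\asymp\sum_{n=1}^{2N}e^{-\hat\lambda n/N}\asymp N$; distributing this mass by H\"older over the pairing produces the announced factors $N^{1/q}$ and $N^{1/p}$ respectively, while the remaining coordinates are handled as before.

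For the return bound \eqref{eq:norm3}, with $|I|=h-1$, the kernel $\sfU^{I}_{n,\tilde N}$ factorizes as in \eqref{U-op}: it equals $U_{\tilde N}(n,\cdot)$ in the single ``paired'' coordinate, a bare transition probability $q_n(\cdot)$ in each of the remaining $h-2$ coordinates, and the ``slaved'' coordinate merely identifies $(\Z^2)^h_I$ with $(\Z^2)^{h-1}$. Since $\sum_x U_{\tilde N}(n,x)=U_{\tilde N}(n)$ with $U_{\tilde N}(n,\cdot)/U_{\tilde N}(n)$ a probability measure and $\sum_x q_n(x)=1$, the operator $\sfU^{I}_{n,\tilde N}$ on $\ell^q$ is $U_{\tilde N}(n)$ times a tensor product of convolutions by probability kernels, so $\|\sfU^{I}_{n,\tilde N}\|_{\ell^q\to\ell^q}\le U_{\tilde N}(n)$; reinserting the weight (via Lemma~\ref{lem:UnGauss} in the paired coordinate) costs only a constant, whence
\[
	\big\|\widehat\sfU^{I}_{\lambda,N,\tilde N}\big\|_{\ell^q\to\ell^q}
	\;\le\; C\sum_{n=0}^{2N}e^{-\lambda n}\,U_{\tilde N}(n)
	\;=\; \frac{C}{\sigma_{\tilde N}^2}\sum_{n=0}^{2N}e^{-\hat\lambda n/N}\,\overline{U}_{\tilde N}(n)\,.
\]
Using \eqref{eq:asU1ub} (reducing the range $n\le 2N\le 2\tilde N$ to $[0,\tilde N]$ by Remark~\ref{R:NtildeN}) and a Riemann-sum approximation, the last sum is $\lesssim\int_0^{2N/\tilde N}e^{-\hat\lambda\frac{\tilde N}{N}s}G_\theta(s)\,\dd s$, and splitting this integral at $s=(\hat\lambda\tfrac{\tilde N}{N})^{-1}$ and using \eqref{eq:Gas}--\eqref{eq:G5}, exactly as in the proof of Lemma~\ref{th:ren-cg}, bounds it by $c/\log(\hat\lambda\tfrac{\tilde N}{N})$. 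This gives \eqref{eq:norm3}.

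The hard part is the spatial estimate behind \eqref{eq:norm1}--\eqref{eq:norm2}: one must verify that the Laplace-transformed kernels genuinely lie in the scope of the Green's-function functional inequality Lemma~\ref{HLSineq} for \emph{every} admissible pair $(I,J)$ --- in particular that the hypotheses $|I|,|J|\le h-1$, together with $I\ne J$ when $|I|=|J|=h-1$, are exactly what force the power $h-1\ge 1$ and hence enough decay of $\sfq_{I,J}^{-(h-1)}$ on the constrained lattice --- and that the constant $c=c_{p,q,h,\hat\lambda}$ can be tracked explicitly enough; it may depend on $h$, but that dependence must remain compatible with the later geometric summation over the number of visited partitions in \eqref{eq:Mbd2}. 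The weight removal and the scalar temporal estimates are routine.
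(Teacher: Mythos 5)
Your treatment of \eqref{eq:norm1} and \eqref{eq:norm3} is essentially the paper's argument: for \eqref{eq:norm1} the pointwise bound $Q_{\lambda,N}(\bx,\by)\le C(1+|\bx-\by|^2)^{-(h-1)}$ with Gaussian decay beyond scale $\sqrt N$ (Lemma~\ref{lem:Green}) plus Lemma~\ref{HLSineq}, with the weight absorbed either pointwise into a slightly worse Gaussian or, as in the paper, by splitting at $|\bx-\by|\le C_0\sqrt N$; for \eqref{eq:norm3} the convolution/mass bound $\|\sfU^I_{n,\tilde N}\|_{\ell^q\to\ell^q}\le U_{\tilde N}(n)$, Lemma~\ref{lem:UnGauss} for the weight, and the renewal asymptotics \eqref{eq:asU1ub}, \eqref{eq:Gas}--\eqref{eq:G5} after splitting the Laplace integral at $(\hat\lambda\tilde N/N)^{-1}$, which is what the paper does via the row-sum bounds \eqref{Chatlambda}--\eqref{Chatlambda2}.

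The gap is in \eqref{eq:norm2}. Lemma~\ref{HLSineq} is stated only for $1\le|I|,|J|\le h-1$, so it says nothing when one side is the trivial partition $*$ (where $|J|=h$); this case requires a separate spatial estimate, namely \eqref{eq:ItoJ*} in the paper, and your proposal replaces it by the heuristic of an ``extra free coordinate'' whose $\ell^1$-mass $\asymp N$ is ``distributed by H\"older over the pairing.'' Implemented in the natural way (Schur test or plain H\"older on the kernel $(1+|\bx-\by|^2)^{-(h-1)}$ restricted to $|\bx-\by|\le C_0\sqrt N$), this fails to give the stated bound in the borderline case $|I|=h-1$: the row sums over $\by\in(\Z^2)^h$ are $\asymp N$, but the column sums over $\bx\in(\Z^2)^h_I$, a lattice of dimension $2(h-1)$, are $\asymp\log N$, so one obtains $N^{1/p}(\log N)^{1/q}$ rather than $cN^{1/p}$. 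The power counting in \eqref{eq:Mbd2}--\eqref{eq:Mbd3} is exact, so this spurious $\log N$ would survive into Theorem~\ref{th:mom} and destroy its application in Lemma~\ref{Theta4th}, where one needs a bound $\lesssim 1/\log\frac1\eps$ uniformly as $N\to\infty$ at fixed $\eps$. The paper removes the logarithm by a weighted H\"older argument, inserting the factor $\bigl(\log(1+\tfrac{C_0^2N}{1+|y_1-y_2|^2})\bigr)^{p/q}$ on the $f$-side and its reciprocal on the $g$-side and exploiting the coincidence $x_1=x_2$ forced by $|I|\le h-1$ (so that the logarithmic column sum is cancelled, while the $\by$-sum of the tilted kernel is still $O(N)$); this device, or some substitute achieving $cN^{1/p}$ with no logarithmic loss, is the missing ingredient in your proposal.
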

Recall by \eqref{eq:ximom} that
$|\bbE[\xi^{I_i}]|=\sigma_{\tilde N}^2$ if $|I_i|=h-1$,
while $|\bbE[\xi^{I_i}]| \le c' \, \sigma_{\tilde N}^3
= O ((\log \tilde N)^{-\frac32})$ if $|I_i|<h-1$.
Then, by the definition of $\widehat\sfP$ analogous to \eqref{eq:P1},
Proposition~\ref{prop:opnorm} implies that in \eqref{eq:Mbd2}, for each $2\leq i\leq r$, we have
for $N$ sufficiently large
$$
	\bbE\big[\xi^{I_i}\big] \Big\Vert \widehat \sfP^{I_{i-1}, I_i}_{\lambda}
	\Big\Vert_{\ell^q\to \ell^q} \leq \ind_{\{|I_i|=h-1\}}
	\frac{c^2}{(\log \hat\lambda \frac{\tilde N}{N})}
	+ \ind_{\{|I_i|<h-1\}} c \, c' \sigma_{\tilde N}^3
	\leq \frac{c''}{\log \hat\lambda \frac{\tilde N}{N}} \,,
$$
where for $|I_i|=h-1$ with $\widehat\sfP_\lambda^{I_{i-1}, I_i}=\widehat Q_{\lambda}^{I_{i-1}, I_{i}} \, \widehat\sfU_{\lambda}^{I_i}$, we used the fact that $\Vert \widehat Q_{\lambda}^{I_{i-1}, I_{i}} \, \widehat\sfU_{\lambda}^{I_i}\Vert_{\ell^q\to \ell^q} \leq \Vert \widehat Q_{\lambda}^{I_{i-1}, I_{i}} \Vert_{\ell^q\to \ell^q} \Vert \widehat\sfU_{\lambda}^{I_i} \Vert_{\ell^q\to \ell^q}$. Similarly,
$$
	\bbE\big[\xi^{I_1}\big]
	\Big\Vert \widehat \sfP^{*, I_1}_{\lambda} \Big\Vert_{\ell^q\to \ell^q}
	\leq c N^{\frac{1}{q}}
	\bigg(  \frac{c \, \ind_{\{|I_1|=h-1\}}}{\log \hat\lambda \frac{\tilde N}{N}}
	+ \ind_{\{|I_1|<h-1\}} c'
	\sigma_{\tilde N}^3\bigg)
	\leq \frac{c'' }{\log \hat\lambda \frac{\tilde N}{N}}  \, N^{\frac{1}{q}} \,.
$$
Substituting these bounds into \eqref{eq:Mbd2}, bounding the number of choices
for each $I_i \vdash\{1, \ldots, h\}$ by a suitable constant $\mathfrak{c}_h$,
choosing $\hat\lambda$ large such that $\mathfrak{c}_h \frac{c''}{\log \hat \lambda} \le \frac{1}{2}$,
and using the fact that $|I_1|, |I_r|\leq h-1$, we then obtain that, for all $N$ sufficiently large,
\begin{equation}\label{eq:Mbd3}
\begin{aligned}
	\big|M_{N,\tilde N, h}^{\varphi, \psi}\big| & \leq
	\frac{C e^{2\hat\lambda} \Vert \psi\Vert_\infty^h}{N^{h}} \, c \,
	\Big\Vert \frac{\varphi_N}{w_N}\Big\Vert_{\ell^p}^h  \,
	\Vert w_N \ind_{B_N}\Vert^h_{\ell^q} \sum_{r=1}^\infty
	\Big(\frac{\mathfrak{c}_h \, c''}{\log \hat\lambda \frac{\tilde N}{N}}\Big)^r \\
	& \leq \frac{(2 \, C \, c \, \mathfrak{c}_h \, c'')}{N^h \, \log \hat\lambda \frac{\tilde N}{N}}
	\Vert \psi\Vert_\infty^h  \,
	\Big\Vert \frac{\varphi_N}{w_N}
	\, \Big\Vert_{\ell^p}^h \,  \Vert w_N \ind_{B_N}\Vert^h_{\ell^q}\\
	& \leq \frac{\sfC}{N^{h}\log (1+\frac{\tilde N}{N})}
	\, \Vert \psi\Vert_\infty^h  \,
	\Big\Vert \frac{\varphi_N}{w_N}
	\, \Big\Vert_{\ell^p}^h \,  \Vert w_N \ind_{B_N}\Vert^h_{\ell^q}\,,
\end{aligned}
\end{equation}
where the last inequality holds for $\hat\lambda \ge 2$.
This concludes the proof of Theorem~\ref{th:mom}.
\end{proof}

\subsection{Functional Inequalities}
It only remains to prove the bounds in Proposition~\ref{prop:opnorm}. The key ingredient is a Hardy-Littlewood-Sobolev type inequality. First we need the following bound on the Green's function of a random walk on $(\Z^2)^h$.

\begin{lemma}\label{lem:Green}
Given $N\in\N$, $\lambda \geq 0$, an integer $h\geq 2$ and  $\bx, \by \in (\Z^2)^h$, denote
$Q_{\lambda,N}(\bx, \by):= \sum_{n=1}^{2N} e^{-\lambda n} \prod_{i=1}^h q_n(y_i-x_i)$.
Then for some $C\in (0,\infty)$
uniformly in $\lambda$, $N$ and $\bx, \by$,
\begin{equation}\label{eq:QlambN}
Q_{\lambda, N}(\bx, \by) \leq
\left\{
\begin{aligned}
\frac{C}{(1+|\by-\bx|^2)^{h-1}} \qquad & \mbox{for all } \bx, \by \in (\Z^2)^h, \\
\frac{C}{N^{h-1}} e^{-\frac{|\by-\bx|^2}{CN}} \qquad & \mbox{for   } |\bx-\by|> \sqrt{N}.
\end{aligned}
\right.
\end{equation}
\end{lemma}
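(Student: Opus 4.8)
The statement is a bound on the (exponentially weighted, time-truncated) Green's function of the $h$-fold product walk $\prod_{i=1}^h q_n(y_i-x_i)$ on $(\Z^2)^h$. The plan is to reduce everything to the local limit theorem \eqref{eq:llt}, which gives $q_n(z) \le C\, g_{n/2}(z)$ for $|z|\le n^{3/4}$ (Lemma~\ref{lem:ker}) and Gaussian tail decay in general. The key point is that the product of $h$ two-dimensional heat kernels at the \emph{same} time $n$ behaves like a $2h$-dimensional heat kernel, which is integrable in time near $0$ precisely because $2h \ge 4$, i.e.\ the effective dimension is supercritical. Concretely, I would write $\prod_{i=1}^h g_{n/2}(y_i-x_i) = (2\pi n)^{-h}\, e^{-|\by-\bx|^2/n}$, so that $\sum_{n\ge 1} e^{-\lambda n}\prod_i q_n(y_i-x_i) \lesssim \sum_{n\ge 1} n^{-h} e^{-|\by-\bx|^2/n}$ up to the parity factor $2^h$ and the correction $e^{O(1/n)+O(|\by-\bx|^4/n^3)}$ from the second form of \eqref{eq:llt}.

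For the first bound, I would split the sum over $n$ according to whether $n \le |\by-\bx|^2$ or $n > |\by-\bx|^2$ (with the convention that $|\by-\bx|\vee 1$ replaces $|\by-\bx|$ to handle the near-diagonal case, which is where the $(1+|\by-\bx|^2)^{-(h-1)}$ rather than $|\by-\bx|^{-2(h-1)}$ form matters). For $n > |\by-\bx|^2$ the exponential is $\Theta(1)$ and $\sum_{n > |\by-\bx|^2} n^{-h} \asymp |\by-\bx|^{-2(h-1)}$ since $h \ge 2$; for $n \le |\by-\bx|^2$ one bounds $n^{-h} e^{-|\by-\bx|^2/n}$ by its maximum (attained near $n \asymp |\by-\bx|^2/h$), which again gives $\asymp |\by-\bx|^{-2(h-1)}$, times the length of the interval — but careful bookkeeping via the substitution $t = |\by-\bx|^2/n$ turns $\sum_n n^{-h} e^{-|\by-\bx|^2/n}$ into $|\by-\bx|^{-2(h-1)}\int_0^\infty t^{h-2} e^{-t}\,\dd t = \Gamma(h-1)\,|\by-\bx|^{-2(h-1)}$, which is clean. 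One must also check that the correction factor $e^{O(|\by-\bx|^4/n^3)}$ from \eqref{eq:llt} is harmless: it is $O(1)$ when $|\by-\bx| \le n^{3/4}$, and outside that range one uses the pure Gaussian tail bound from \eqref{eq:llt} directly, i.e.\ $q_n(z) \le C e^{-c|z|^2/n}$ whenever $|z| > n^{3/4}$ (this is exactly the regime where the first form of the local limit theorem with the $O(n^{-2})$ additive error, or a separate large-deviation bound for simple random walk, applies).

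For the second bound, valid when $|\bx-\by| > \sqrt N$, the truncation $n \le 2N$ forces $|\by-\bx|^2/n \ge |\by-\bx|^2/(2N) > 1/2$, so we are always in the ``$n$ small relative to displacement'' regime; here $\prod_i q_n(y_i-x_i) \le C^h n^{-h} e^{-|\by-\bx|^2/(C n)}$ and since $n \le 2N$ we have $e^{-|\by-\bx|^2/(Cn)} \le e^{-|\by-\bx|^2/(2CN)}$, while $\sum_{n=1}^{2N} n^{-h} e^{-|\by-\bx|^2/(2Cn)}$: extracting one factor $e^{-|\by-\bx|^2/(4CN)}$ and bounding the remaining sum $\sum_{n=1}^{2N} n^{-h} e^{-|\by-\bx|^2/(4Cn)} \le \sum_{n=1}^{2N} n^{-h} \cdot (\text{const})\, (N/|\by-\bx|^2)^{h-1} \le C' N^{-(h-1)}$ (using $|\by-\bx|^2 > N$ and that $t^{h-1}e^{-t}$ is bounded) gives the claimed $C N^{-(h-1)} e^{-|\by-\bx|^2/(C N)}$ after relabelling the constant.

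\textbf{Main obstacle.} The only real subtlety is the uniform control of the local-limit-theorem error term $e^{O(1/n)+O(|\by-\bx|^4/n^3)}$ across \emph{all} regimes of $(n, |\by-\bx|)$ simultaneously — in particular making sure the $|\by-\bx|^4/n^3$ term does not blow up when $|\by-\bx|$ is moderately large compared to $\sqrt n$ but the walk can still reach there. This is handled by a clean dichotomy: use the multiplicative (Gaussian-ratio) form of \eqref{eq:llt} when $|\by-\bx| \le n^{3/4}$ (so the error is $e^{O(1)}$), and the crude Gaussian upper bound $q_n(z) \le C e^{-c|z|^2/n}$ — which follows from the additive form of \eqref{eq:llt} together with standard random walk moderate/large deviation estimates — when $|\by-\bx| > n^{3/4}$. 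In the latter regime $n^{-h}$ is replaced by the even better $e^{-c|\by-\bx|^2/n}$ decay, which easily dominates. Everything else is an elementary one-variable integral estimate via the substitution $t = |\by-\bx|^2/n$.
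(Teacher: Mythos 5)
Your approach is essentially the paper's: reduce to the uniform Gaussian bound $\prod_{i=1}^h q_n(y_i-x_i)\le C_1\,n^{-h}e^{-C_2|\by-\bx|^2/n}$ (the paper invokes the local CLT together with a Gaussian concentration bound; your dichotomy $|z|\le n^{3/4}$ versus $|z|>n^{3/4}$ is a perfectly fine way to control the error terms in \eqref{eq:llt}), and then do a one-variable estimate via the substitution $t\asymp|\by-\bx|^2/n$, which in the paper appears as $\int_0^2 t^{-h}e^{-C_2|\bz_N|^2/t}\,\dd t=(C_2N|\bz_N|^2)^{-(h-1)}\int_{\frac12 C_2|\bz_N|^2}^{\infty}\tau^{h-2}e^{-\tau}\,\dd\tau$ with $\bz_N=(\by-\bx)/\sqrt N$. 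Your first bound is correct. In the second bound, however, the final chain as written does not hold: from the boundedness of $t^{h-1}e^{-t}$ and $n\le 2N$ you indeed get $e^{-|\by-\bx|^2/(4Cn)}\le C'(N/|\by-\bx|^2)^{h-1}$, but then $\sum_{n=1}^{2N}n^{-h}\cdot C'(N/|\by-\bx|^2)^{h-1}\le C''(N/|\by-\bx|^2)^{h-1}$, and under the sole hypothesis $|\bx-\by|>\sqrt N$ this is only $O(1)$, not $O(N^{-(h-1)})$ (take $h=2$ and $|\by-\bx|^2=2N$); the claimed inequality would need $|\by-\bx|\gtrsim N$. The quantity you are bounding does satisfy $\sum_{n=1}^{2N}n^{-h}e^{-c|\by-\bx|^2/n}\le C'N^{-(h-1)}$, and your own first-part computation already proves it: the Gamma-substitution gives $\sum_{n\ge1}n^{-h}e^{-c|\by-\bx|^2/n}\le C|\by-\bx|^{-2(h-1)}\le CN^{-(h-1)}$ since $|\by-\bx|^2>N$. (Alternatively, use boundedness of $t^{h}e^{-t}$ rather than $t^{h-1}e^{-t}$, which bounds each summand by $C|\by-\bx|^{-2h}$ uniformly in $n$, and summing the $2N$ terms gives $CN|\by-\bx|^{-2h}\le CN^{-(h-1)}$; note that the route through $t^{h-1}e^{-t}$ applied with $t=|\by-\bx|^2/(4Cn)$ instead produces an extra $\log N$.) With that one line repaired, your argument is complete and coincides with the paper's proof.
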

\begin{proof}
We may assume $\lambda=0$. Note that $\prod_{i=1}^h q_n(\cdot)$ is the transition kernel of a
random walk on $\Z^{2h}$. By the local central limit theorem \cite[Theorem 2.3.11]{LaLi10} and a
Gaussian concentration bound, we have
$$
	\prod_{i=1}^h q_n(y_i-x_i) \leq \frac{C_1}{n^h} e^{-C_2\frac{|\by-\bx|^2}{n}}
$$
for some $C_1, C_2\in (0,\infty)$ uniformly in $n\in\N$ and $\bx, \by\in (\Z^2)^h$. We then have
$$
\begin{aligned}
	Q_{\lambda}(\bx, \by) & \leq C_1 \sum_{n=1}^{2N} n^{-h} e^{-C_2\frac{|\by-\bx|^2}{n}}
	\leq C_1 \min\Bigg\{2, \ \frac{1}{N^{h-1}} \int_0^2 t^{-h} e^{-C_2
	\frac{|\bz_N|^2}{t}} {\rm d}t\Bigg\},
\end{aligned}
$$
where we used a Riemann sum approximation and we set $\bz_N:= (\by-\bx)/\sqrt{N}$.
When $\bz_N=0$, we just use the constant upper bound $Q_{\lambda, N}(\bx, \by)  \leq 2C_1$.
When $\bz_N\neq 0$, we write
\begin{align*}
Q_{\lambda, N}(\bx, \by) \leq \frac{C_1}{N^{h-1}} \int_0^2 t^{-h} e^{-C_2 \frac{|\bz_N|^2}{t}} {\rm d}t & = \frac{C_1}{(C_2N|\bz_N|^2)^{h-1}} \int_{\frac{1}{2}C_2|\bz_N|^2}^\infty \tau^{h-2} e^{-\tau} {\rm d}\tau,
\end{align*}
where $N|\bz_N|^2= |\by-\bx|^2$, while the integral is bounded uniformly in $\bz_N$ and can be bounded by $C_3 e^{-C_4|\bz_N|^2}$ when $|\bz_N|\geq 1$. The bound \eqref{eq:QlambN} then follows.
\end{proof}
\medskip

The following crucial lemma proves a
Hardy-Littlewood-Sobolev type inequality. This generalizes
an inequality  of Dell'Antonio-Figari-Teta in \cite{DFT94} (see Lemma 3.1
and inequalities (3.1) and (3.5) therein)
which played an important role  in \cite{GQT21}
for moment bounds with $L^2$ test functions and initial conditions.

\begin{lemma}\label{HLSineq}
Fix $p, q>1$ with $\frac{1}{p}+\frac{1}{q}=1$ and an integer $h\ge 2$.
Consider partitions $I, J \vdash \{1, \ldots, h\}$ with $1\leq |I|, |J| \leq h-1$,
and $I\neq J$ if $|I|=|J|=h-1$. Recall $(\Z^2)^h_I$ from \eqref{ZI-space} and the
associated function space $\ell^p\big((\Z^2)^h_I\big)$. Let $f\in\ell^p\big((\Z^2)^h_I\big)$ and
$g \in \ell^q\big((\Z^2)^h_J\big)$. Then there exists $C = C_{p,q,h} < \infty$,
independent of $f$ and $g$, such that
\begin{align}\label{ItoJ}
	\sum_{\bx \in(\Z^2)^h_{I}\,,\,\by\in(\Z^2)^h_{J} }
	\frac{f(\bx) g(\by)}{\big(1+\sum_{i=1}^h  |x_i-y_i |^2\big)^{h-1}}
	\leq C \, \big\|f \big\|_{\ell^p} \big\|g \big\|_{\ell^{q}}.
\end{align}
\end{lemma}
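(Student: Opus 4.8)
\textbf{Proof proposal for Lemma~\ref{HLSineq}.}

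The plan is to reduce the inequality \eqref{ItoJ} to a genuinely lower-dimensional Hardy--Littlewood--Sobolev inequality by identifying which coordinates are ``free'' and which are ``slaved'' under the partition constraints. Write $|I|=a$ and $|J|=b$; by hypothesis $1\le a,b\le h-1$ and $(a,b)\ne(h-1,h-1)$ is excluded only in the borderline case $a=b=h-1$, $I\ne J$. Choose representatives for the blocks of $I$, so that $\bx\sim I$ is parametrised by $\boldsymbol{\xi}\in(\Z^2)^a$ via $x_i=\xi_{c_I(i)}$, where $c_I:\{1,\dots,h\}\to\{1,\dots,a\}$ sends $i$ to the index of its block; similarly $\by\sim J$ is parametrised by $\boldsymbol\eta\in(\Z^2)^b$. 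Under this change of variables the kernel becomes
\[
	\frac{1}{\big(1+\sum_{i=1}^h |\xi_{c_I(i)}-\eta_{c_J(i)}|^2\big)^{h-1}} \,,
\]
and we regard $f$ as a function $\tilde f$ on $(\Z^2)^a$ and $g$ as $\tilde g$ on $(\Z^2)^b$, with $\|f\|_{\ell^p}=\|\tilde f\|_{\ell^p}$ and $\|g\|_{\ell^q}=\|\tilde g\|_{\ell^q}$. So it suffices to bound $\sum_{\boldsymbol\xi,\boldsymbol\eta} \tilde f(\boldsymbol\xi)\,\tilde g(\boldsymbol\eta)\,K(\boldsymbol\xi,\boldsymbol\eta)$ by $C\|\tilde f\|_{\ell^p}\|\tilde g\|_{\ell^q}$, where $K(\boldsymbol\xi,\boldsymbol\eta):=\big(1+\sum_{i=1}^h|\xi_{c_I(i)}-\eta_{c_J(i)}|^2\big)^{-(h-1)}$.

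The key point is a pointwise lower bound on the ``effective distance'' in $K$. Since $c_I$ and $c_J$ are both surjective, every index $\alpha\in\{1,\dots,a\}$ equals $c_I(i)$ for some $i$, and likewise for $\beta$ on the $J$-side; moreover one checks (this is where the exclusion of $a=b=h-1$, $I\ne J$ is used — otherwise two distinct pairings could ``miss'' a joint coordinate) that the combined map $i\mapsto(c_I(i),c_J(i))$ has image whose projections onto each factor are surjective, so that for every $\alpha$ and every $\beta$ there is some $i$ with $c_I(i)=\alpha$ \emph{or} $c_J(i)=\beta$. Grouping the $h$ summands by their $(c_I,c_J)$-value and using that $\sum_i |\xi_{c_I(i)}-\eta_{c_J(i)}|^2 \ge \max_i |\xi_{c_I(i)}-\eta_{c_J(i)}|^2 \ge \tfrac{1}{h}\max\big(\max_\alpha\min_\beta|\xi_\alpha-\eta_\beta|^2,\ \max_\beta\min_\alpha|\xi_\alpha-\eta_\beta|^2\big)$, together with the elementary inequality $1+\sum_i|\xi_{c_I(i)}-\eta_{c_J(i)}|^2\ge \tfrac1h\prod_{\alpha}(1+\min_\beta|\xi_\alpha-\eta_\beta|^2)^{1/a}\cdot(\dots)$ type telescoping — more simply, since $h-1\ge a$ and $h-1\ge b$ we may afford to lose a power and bound
\[
	K(\boldsymbol\xi,\boldsymbol\eta) \,\le\, C\,\prod_{\alpha=1}^{a}\frac{1}{\big(1+\min_{\beta}|\xi_\alpha-\eta_\beta|^2\big)^{(h-1)/a}} \,\wedge\, C\,\prod_{\beta=1}^{b}\frac{1}{\big(1+\min_{\alpha}|\xi_\alpha-\eta_\beta|^2\big)^{(h-1)/b}}\,.
\]
Each factor is (up to constants) an integrable kernel on $\Z^2$ because $(h-1)/a>1$ when $a<h-1$; in the remaining borderline cases $a=h-1$ (resp. $b=h-1$) one factor has exponent exactly $1$, which is the critical, non-integrable case of HLS — this is precisely the scenario handled by the Dell'Antonio--Figari--Teta inequality, and it is where the whole difficulty sits.

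Thus the main obstacle, and the heart of the argument, is the borderline case where $|I|=h-1$ or $|J|=h-1$ (one of them, not both equal and distinct). Here I would follow the strategy of \cite{DFT94} as used in \cite{GQT21}: interpolate. Specifically, split $K$ using $K\le K^{1/p}\cdot K^{1/q}$ after first peeling off the critical coordinate, so that one obtains a product of a kernel which is the critical $2$-dimensional Riesz-type kernel $(1+|z|^2)^{-1}$ raised to a power less than $1$ (hence in weak-$\ell^1$ but we gain a little) and strictly integrable kernels in the other variables; then apply Young's convolution inequality in the supercritical directions and the two-dimensional weak-type/Schur bound in the critical direction, balancing exponents via $\frac1p+\frac1q=1$. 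Concretely, I would prove the auxiliary fact that for $p,q>1$ conjugate and any $\delta>0$ there is $C_{p,q,\delta}$ with $\sum_{z,z'\in\Z^2}\frac{f(z)g(z')}{(1+|z-z'|^2)}\le C\|f\|_{\ell^p}\|g\|_{\ell^q}$ fails, but $\sum\frac{f(z)g(z')}{(1+|z-z'|^2)^{1+\delta}}$ does work, and then show that after the change of variables the genuinely critical contribution only involves a \emph{single} pair of coordinates $(\xi_{\alpha_0},\eta_{\beta_0})$ linked by the unique size-two block, while all other coordinates are linked with exponent $\ge (h-1)/(h-2)>1$; summing out the latter via iterated Young's inequality reduces to the non-borderline HLS on $\Z^2\times\Z^2$ with supercritical exponent, which is classical. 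I expect the bookkeeping of which coordinate is critical — i.e.\ verifying that the constraint $I\ne J$ when $|I|=|J|=h-1$ guarantees at most one critical coordinate and no ``doubly critical'' term — to be the delicate combinatorial step, while the analytic estimates are standard once the kernel has been reduced to a product of one-pair Riesz kernels.
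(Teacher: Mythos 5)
Your treatment of the non\--borderline cases is essentially fine (indeed simpler than you make it: when $|I|,|J|\le h-2$ a direct H\"older application works because $\sum_{\by\in(\Z^2)^h_J}\big(1+\sum_i|x_i-y_i|^2\big)^{-(h-1)}<\infty$, with no need for the change of variables or the $\min_\beta$ product majorization, which is in any case lossy: bounding the kernel by products over pairings $\sigma$ includes non\--surjective $\sigma$, for which the coordinates of $\by$ not hit by $\sigma$ are summed without any decay and cannot be controlled by $\|g\|_{\ell^q}$). The genuine gap is in the only hard case, $|I|=|J|=h-1$ with $I\neq J$, where your proposal defers to ``interpolation \`a la Dell'Antonio--Figari--Teta'' and to a reduction that does not work as described. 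Summing out the ``non-critical'' coordinates does not land you at a supercritical exponent: e.g.\ for $h=3$, $I$ with block $\{1,2\}$, $J$ with block $\{2,3\}$, summing over $y_1$ leaves the kernel $(1+|x_1-y_2|^2+|x_3-y_2|^2)^{-1}$, whose remaining sums still diverge logarithmically, so the ``classical non-borderline HLS on $\Z^2\times\Z^2$'' you invoke is never reached. Relatedly, your combinatorial picture (``the unique size-two block'', ``at most one critical coordinate'') misreads the structure: there are \emph{two} size-two blocks, one in $I$ and one in $J$, and the whole point of the hypothesis $I\neq J$ is that they are different; your sketch uses $I\neq J$ only as bookkeeping, whereas any correct proof must extract a quantitative gain from it, since for $I=J$ the inequality is false.

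The missing mechanism is precisely a transfer of a small power of decay between the two distinct pairs: with $\{k,l\}$ the size-two block of $I$ and $\{m,n\}$ that of $J$ (so $x_k=x_l$, $y_m=y_n$, $\{k,l\}\neq\{m,n\}$), one fixes $0<a<\tfrac1{p\vee q}$ and applies H\"older after inserting the weight $(1+|x_m-x_n|^{2a})/(1+|y_k-y_l|^{2a})$ in one factor and its reciprocal in the other; the triangle inequality $|x_m-y_m|^2+|x_n-y_n|^2\ge\tfrac13\big(|x_m-x_n|^2+|x_n-y_n|^2\big)$ (valid because $y_m=y_n$) converts the $x$-side separation $|x_m-x_n|$ into decay inside the kernel, and after summing the $h-3$ unconstrained $y$-variables and changing variables to $y_k-y_l$, $y_k+y_l-2x_k$ (using $x_k=x_l$) the inserted weight is exactly compensated, yielding $C\|f\|_{\ell^p}$ for that factor, and symmetrically $C\|g\|_{\ell^q}$ for the other. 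Nothing in your interpolation/Young scheme produces this compensation, and without it the critical pair contributes a divergent Schur sum; so as written the proposal does not prove \eqref{ItoJ} in the decisive case.
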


\noindent
In \cite{DFT94}, an analogue of \eqref{ItoJ} was proved in the continuum
for the special case $p=q=2$ (i.e., $L^2$ test functions) and
$|I|=|J|=h-1$ with $I\ne J$.
They presented their inequality in Fourier space, but in the $L^2$ case, it is equivalent  to \eqref{ItoJ}
by the Plancherel theorem. Here we work on the lattice, which requires us to also consider partitions
with $|I|<h-1$ or $|J|<h-1$, which cases are not present in the continuum. We also consider test functions in general $\ell^p$-$\ell^q$ spaces
(our proof steps can also be carried out in the continuum to extend
the inequality of \cite{DFT94} to $L^p$-$L^q$ spaces). Instead of working in Fourier space as
in \cite{DFT94}, we will work directly in real space, which allows the extensions mentioned above.

\begin{remark}
The inequality \eqref{ItoJ} is not expected to hold for $|I|=|J|=h-1$ with $I=J$,
because it is exactly the borderline case of the Hardy-Littlewood-Sobolev inequality when it fails,
see \cite[Theorem 4.3]{LiLo01}.
\end{remark}

\begin{proof}[Proof of Lemma \ref{HLSineq}]
We first consider the case $|I|=|J|=h-1$, and $I\neq J$.
Then $I$ and $J$ each contains a partition element
with cardinality 2, say $\{k, l\}$ and $\{m, n\}$ respectively, and $\{k, l\}\neq \{m, n\}$. In particular,
$x_k=x_l$ and $y_m=y_n$ for $\bx\in (\Z^2)^h_{I}, \by\in (\Z^2)^h_{J}$.

Fix any $0<a<\frac{1}{p\vee q}$. We then apply H\"older to bound the left hand side of \eqref{ItoJ} by
\begin{equation}\label{ItoJf}
\begin{aligned}
	& \Bigg( \sum_{\bx\in (\Z^2)^h_{I}, \by\in (\Z^2)^h_{J}}
	\frac{f(\bx)^p}{\big(1+\sum_{i=1}^h  |x_i-y_i |^2\big)^{h-1}}
\cdot \frac{(1+|x_m-x_n|^{2a})^p}{(1+|y_k-y_l|^{2a})^p}
\Bigg)^{1/p} \\
	& \qquad \times \Bigg( \sum_{\bx\in (\Z^2)^h_{I}, \by\in (\Z^2)^h_{J}}
	\frac{g(\by)^q}{\big(1+\sum_{i=1}^h |x_i-y_i|^2\big)^{h-1}}
	\cdot \frac{(1+|y_k-y_l|^{2a})^q}{(1+|x_m-x_n|^{2a})^q} \Bigg)^{1/q}.
\end{aligned}
\end{equation}

We now bound the first factor in \eqref{ItoJf}. Note that since $y_m=y_n$, by the triangle inequality,
\begin{equation}\label{eq:3or4}
	|x_m-y_m|^2+ |x_n-y_n|^2 \geq \frac{|x_m-x_n|^2 + |x_n-y_{n}|^2}{3}.
\end{equation}
Substituting this inequality into $\sum_{i=1}^h  |x_i-y_i|^2$ then bounds the first factor in \eqref{ItoJf} by
\begin{equation}\label{ItoJf2}
	C \Bigg( \sum_{\bx} f(\bx)^p (1+|x_m-x_n|^{2a})^p \sum_{\by}
	\frac{1}{\big(1+|x_m-x_n|^2
	+ \sum_{i\neq m} |x_i-y_i |^2\big)^{h-1}(1+|y_k-y_l|^{2a})^p} \Bigg)^{1/p}.
\end{equation}
Note that for $r > 1$ we can bound
$\sum_{y \in \Z^2} \frac{1}{(s + |y - x|^2)^r} \le \frac{C }{s^{r-1}}$
uniformly for $s \ge 0$ and $x\in\Z^2$.
We can then successively sum over the variables $y_j$ with $j\neq k, l$:
there are $|J|-2$ such variables $y_j$ and they are all present in the sum
$\sum_{i\neq m} |y_i-x_i |^2$ (recall that we sum over $\by\in (\Z^2)^h_J$,
hence $y_m = y_n$ is a single variable), hence we get
\begin{align*}
	&\sum_{\by\in (\Z^2)^h_J} \frac{1}{\big(1+|x_m-x_n|^2
	+ \sum_{i\neq m} |y_i-x_i |^2\big)^{h-1}(1+|y_k-y_l|^{2a})^p} \\
	\leq\ &  C \sum_{y_k, y_l} \frac{1}{\big(1+|x_m-x_n|^2+ |y_k-x_k|^2 + |y_l-x_l|^2\big)^{h+1-|J|}
	(1+|y_k-y_l|^{2a})^p}.
\end{align*}
Note that $x_k=x_l$. Via the change of variables $\tilde y_1 = y_k-y_l$ and $\tilde y_2=y_k+y_l-2x_k$, and the observation that
$|y_k-x_k|^2 + |y_l-x_l|^2= (\tilde y_1^2 +\tilde y_2^2)/2$, we can bound the above sum by
\begin{align*}
	& C \sum_{\tilde y_1, \tilde y_2} \frac{1}{\big(1+|x_m-x_n|^2+ \tilde y_1^2
	+ \tilde y_2^2\big)^{h+1-|J|} (1+|\tilde y_1|^{2a})^p} \\
	\leq\ & C \sum_{\tilde y_1} \frac{1}{\big(1+|x_m-x_n|^2+ \tilde y_1^2\big)^{h-|J|}
	(1+|\tilde y_1|^{2a})^p} \leq \frac{C}{1+|x_n-x_m|^{2(h-1-|J|)+2ap}},
\end{align*}
where the last inequality is obtained by summing separately over  $|\tilde y_1|\leq |x_n-x_m|$ and
$|\tilde y_1|>|x_n-x_m|$, plus the assumption that $ap<1$. Substituting this bound into \eqref{ItoJf2},
since we assume $|J| = h-1$,
we obtain that the first factor in \eqref{ItoJf} is bounded by $C \|f \|_{\ell^p}$.
The second factor in \eqref{ItoJf} can similarly be bounded by $C\|g\|_{\ell^q}$.
This concludes the proof of \eqref{ItoJ}, and hence also \eqref{eq:norm1},
for the case $|I|=|J|=h-1$ and $I\neq J$.

We can adapt the proof to the case $\min\{|I|, |J|\}<h-1$ as follows. If $|I|, |J|<h-1$, then there is no need to introduce the factor $\frac{1+|x_m-x_n|^{2a}}{1+|y_k-y_l|^{2a}}$ and its reciprocal in \eqref{ItoJf} because we already have $\sum_{\by \in (\Z^2)^h_J} \frac{1}{(1+ \sum_{i=1}^h |x_i-y_i|^2)^{h-1}}<\infty$. If $|I|<h-1$ and $|J|=h-1$, then we can still find $k,l$ in the same partition element of $I$, but not the same partition element of $J$. We should then replace the factor $\frac{1+|x_m-x_n|^{2a}}{1+|y_k-y_l|^{2a}}$ and its reciprocal in \eqref{ItoJf} by $\frac{1}{1+|y_k-y_l|^{2a}}$. The rest of the proof is essentially the same.
\end{proof}

\subsection{Proof of Proposition~\ref{prop:opnorm}}
We now prove \eqref{eq:norm1}--\eqref{eq:norm3}.

\begin{proof}[Proof of \eqref{eq:norm1}]
Note that \eqref{eq:norm1} is equivalent to showing (recall \eqref{eq:hatQU})
\begin{equation}\label{eq:hatQnormbd}
	\sum_{\bx\in (\Z^2)^h_{I}, \by\in (\Z^2)^h_{J}}  f(\bx)
	Q^{I,J}_{\lambda} (\bx, \by) \frac{w_N^{\otimes h}(\bx)} {w_N^{\otimes h}(\by)} \, g(\by)
	\leq c \, \Vert f\Vert_{\ell^p} \, \Vert g\Vert_{\ell^q}
\end{equation}
uniformly for all $f\in \ell^p((\Z^2)^h_{I})$ and $g\in \ell^q((\Z^2)^h_{J})$.
To control the effect of the weight $w_N^{\otimes h}$, we split the summation into the regions
$$
	A_N:=\big\{(\bx,\by) \colon |\bx-\by|\leq C_0\sqrt N\big\}
$$
and $A_N^c$ for some $C_0$ to be chosen later.
Note that $\log w(\frac{y}{\sqrt{N}}) - \log w(\frac{x}{\sqrt{N}}) = O(\frac{|x-y|}{\sqrt{N}})$,
because $\log w$ is assumed to be Lipschitz.
Since $w_N: \Z^2 \to \R$ is obtained from
$w: \R^2 \to \R$  by \eqref{eq:phiNpsiN}, we have for all
$\bx, \by \in (\Z^2)^h$
\begin{equation}\label{eq:wLip}
	\frac{w_N^{\otimes h}(\bx)}{w_N^{\otimes h}(\by)}\leq e^{C |\bx - \by|/\sqrt{N}},
\end{equation}
which is bounded by $e^{CC_0}$ in $A_N$. Therefore,
the contribution of this region to the l.h.s.\ of \eqref{eq:hatQnormbd}
is controlled by the following uniform bound, that we prove below:
\begin{equation}\label{eq:AN}
	\sum_{\bx\in (\Z^2)^h_{I}, \by\in (\Z^2)^h_{J}} f(\bx) \,  Q^{I, J}_{\lambda, N}(\bx, \by) \, g(\by)
	\leq c_1 \, \|f\|_{\ell^p} \, \|g\|_{\ell^q}.
\end{equation}
In the region $A_N^c$, since $Q_{\lambda, N}^{I,J} \le Q_{\lambda, N}$
(recall \eqref{eq:Qdef}-\eqref{eq:QIJt}),
we can apply Lemma~\ref{lem:Green} to bound
\begin{align} \label{eq:babao}
	\frac{w_N^{\otimes h}(\bx)}{w_N^{\otimes h}(\by)}  Q_{\lambda, N}(\bx, \by)
	&\leq \frac{C}{N^{h-1}}  \exp \Big\{ -\tfrac{|\bx-\by|^2}{CN} + \tfrac{C|\bx- \by|}{\sqrt N} \Big\}
	\leq \frac{C}{N^{h-1}}  \exp \Big\{ -\tfrac{|\bx- \by|}{\sqrt N} \Big\},
\end{align}
where the last inequality holds for $|\bx- \by|>C_0\sqrt{N}$
with $C_0 := C(C+1)$. Thus
\begin{equation}\label{eq:ANc}
\begin{aligned}
&\sum_{(\bx,\by)\in A_N^c}
f(\bx) \cdot \frac{w_N^{\otimes h}(\bx)}{w_N^{\otimes h}(\by)}\,  Q_{\lambda, N}^{I, J}(\bx, \by) \cdot g(\by)\\
\leq & \quad \frac{C}{N^{h-1}}\,
\sum_{\bx,\by} f(\bx) \, e^{ -\frac{|\bx-\by|}{\sqrt N} } \, g(\by) \\
\leq & \quad  \frac{C}{N^{h-1}}
\Big( \sum_{\bx\in (\Z^2)^h_{I}, \by\in (\Z^2)^h_{J}}  |f(\bx)|^p \, e^{ -\frac{|\bx-\by|}{\sqrt N} } \Big)^{1/p}
\Big( \sum_{\bx\in (\Z^2)^h_{I}, \by\in (\Z^2)^h_{J}}  |g(\by)|^q \, e^{ -\frac{|\bx-\by|}{\sqrt N} } \Big)^{1/q} \\
\leq & \quad  C N^{\tfrac{|J|}{p} + \tfrac{|I|}{q}-(h-1)} \,\,  \|f\|_{\ell^p} \, \|g\|_{\ell^q},
\end{aligned}
\end{equation}
where the prefactor is bounded if $|I|, |J|\leq h-1$. Combined with \eqref{eq:AN},
this implies \eqref{eq:hatQnormbd}.
\medskip
It only remains to prove \eqref{eq:AN}, which follows from Lemmas~\ref{lem:Green}
and \ref{HLSineq}  above.
\end{proof}
\medskip

\begin{proof}[Proof of \eqref{eq:norm2}] It suffices to show that for $p, q>1$
with $\frac{1}{p}+\frac{1}{q}=1$ and for $|I|\le h-1$
\begin{equation}\label{eq:hatQf}
	\sum_{\bx\in (\Z^2)^h_{I}, \by\in (\Z^2)^{h}} f(\bx) \,  Q^{I,*}_{\lambda, N} (\bx, \by)
	\frac{w_N^{\otimes h}(\bx)}{w_N^{\otimes h}(\by)} \, g(\by)
	\leq c N^{\frac{1}{p}} \Vert f\Vert_{\ell^p} \Vert g\Vert_{\ell^q}
\end{equation}
uniformly in $f\in \ell^p((\Z^2)^h_{I})$
and $g\in \ell^q((\Z^2)^h)$, which proves the second relation in \eqref{eq:norm2};
the first relation follows by interchanging $f$ and $g$.
(We recall that $J=*$ denotes the partition of $\{1,\ldots, h\}$ consisting of
$h$ singletons, i.e.\ $J = \{1\}, \{2\}, \ldots, \{h\}$.)

The proof is similar to that of \eqref{eq:hatQnormbd}. When the sum in \eqref{eq:hatQf}
is restricted to $A_N^c$ with $A_N:=\big\{(\bx,\by) \colon |\bx-\by|\leq C_0\sqrt N\big\}$,
the same bound in \eqref{eq:babao}-\eqref{eq:ANc} holds, which gives an upper bound of
\begin{equation}
	C N^{\frac{h}{p} + \frac{|I|}{q} -(h-1)} \Vert f\Vert_{\ell^p} \Vert g \Vert_{\ell^q}
	= C N^{1-\frac{h-|I|}{q}} \Vert f\Vert_{\ell^p} \Vert g\Vert_{\ell^q} \leq C N^{\frac{1}{p}}
	\Vert f\Vert_{\ell^p} \Vert g\Vert_{\ell^q} \,.
\end{equation}

It only remains to bound the sum in \eqref{eq:hatQf} restricted to $A_N$
and show the following analogue of \eqref{ItoJ}:
\begin{align}\label{eq:ItoJ*}
	\sum_{\bx \in(\Z^2)^h_{I}\,,\,\by\in(\Z^2)^{h} \atop |\bx- \by| \leq C_0\sqrt{N}}
	\frac{f(\bx) g(\by)}{\big(1+\sum_{i=1}^h  |x_i-y_i |^2\big)^{h-1}}
	\leq C N^{\frac{1}{p}} \big\|f \big\|_{\ell^p} \big\|g \big\|_{\ell^q} \,.
\end{align}
W.l.o.g., we may assume that $1$ and $2$ belong to the same partition element of $I$,
so that $x_1=x_2$. By H\"older's inequality, we can bound the l.h.s.\ by
\begin{equation}\label{eq:ItoJf*}
\begin{aligned}
	& \Bigg(\sum_{\bx \in(\Z^2)^h_{I}\,,\,\by\in(\Z^2)^{h} \atop |\bx- \by| \leq C_0\sqrt{N}}
	\frac{f^p(\bx)  \big(\log \big(1+\frac{C_0^2 N}{1+|y_1-y_2|^2}\big)\big)^{\frac{p}{q}}}
	{\big(1+\sum_{i=1}^h  |x_i-y_i |^2\big)^{h-1}}\Bigg)^{\frac{1}{p}} \\
	& \quad \times \Bigg(\sum_{\bx \in(\Z^2)^h_{I}\,,\,\by\in(\Z^2)^{h} \atop |\bx- \by|
	\leq C_0\sqrt{N}} \frac{g(\by)^q}{\big(1+\sum_{i=1}^h  |x_i-y_i |^2\big)^{h-1}
	\log\big(1+ \frac{C_0^2 N}{1+|y_1-y_2|^2}\big)}\Bigg)^{\frac{1}{q}}.
\end{aligned}
\end{equation}
In the second factor, since $x_1=x_2$,
we can bound $|x_2-y_2|^2 + |x_1-y_1|^2 \ge \frac{|y_1-y_2|^2 + |x_1-y_1|^2}{3}$
as in \eqref{eq:3or4} to replace $|x_2-y_2|^2$ by $|y_1-y_2|^2$
inside $\sum_{i=1}^h |x_i-y_i |^2$. By the same argument as that following \eqref{ItoJf2}, we can
sum out the variables $x_i$ for $i \ge 3$. Since there are $|I|-1$ such variables in $(\Z^2)^h_{I}$, for $|I|\leq h-1$ we get
\begin{equation} \label{eq:gNh}
\begin{aligned}
	& \sum_{\bx \in (\Z^2)^h_{I}} \frac{1}{\big(1+|y_1-y_2|^2
	+ \sum_{i\neq 2}  |x_i-y_i |^2\big)^{h-1} \log \big(1+\frac{C_0^2 N}{1+|y_1-y_2|^2}\big)}  \\
	\leq\ &  \frac{1}{\log \big(1+\frac{C_0^2 N}{1+|y_1-y_2|^2}\big)}
	\sum_{x_1\in \Z^2\atop |x_1-y_1|\leq C_0\sqrt{N}}
	\frac{1}{(1+|y_1-y_2|^2 + |x_1-y_1|^2)^{h-|I|}} \leq C \,,
\end{aligned}
\end{equation}
where the last bound holds because
$\sum_{x\in\Z^2:\, |x| \le k} \frac{1}{s+|x|^2} \le C \log(1 + \frac{k}{\sqrt{s}})$
uniformly in $k, s \ge 1$, and furthermore $|y_1-y_2|\leq
|y_1-x_1| + |x_2-y_2| \le 2C_0\sqrt{N}$ by $|\bx -\by|\leq C_0\sqrt{N}$.
This implies that the second factor in \eqref{eq:ItoJf*} can be bounded by $C\Vert g\Vert_{\ell^q}$.

For the first factor in \eqref{eq:ItoJf*}, we can first sum over $\by \in (\Z^2)^h$ to bound
\begin{align*}
	\sum_{\by\in (\Z^2)^h} \frac{\big(\log \big(1+\frac{C_0^2 N}{1+|y_1-y_2|^2}
	\big)\big)^{\frac{p}{q}}}{\big(1+\sum_{i=1}^h |x_i-y_i |^2\big)^{h-1}}
	& \leq C \sum_{y_1, y_2\in \Z^2 \atop |y_1-x_1|, |y_2-x_2|\leq C_0\sqrt{N}}
	\frac{\big(\log \big(1+\frac{C_0^2 N}{1+|y_1-y_2|^2}\big)\big)^{\frac{p}{q}}}
	{1+|y_1-x_1|^2 + |y_2-x_2|^2} \,.
\end{align*}
Recall $x_1=x_2$. Let $z_1:=y_1-y_2$  and $z_2:=y_1+y_2-2x_1$, so that
$|z_1|, |z_2|\leq 2C_0\sqrt{N}$ and $|z_1|^2 +|z_2|^2 = 2(|y_1-x_1|^2 +|y_2-x_2|^2)$.
Summing over $z_2$ then leads to the bound
$$
	\sum_{|z_1|\leq 2C_0\sqrt{N}}
	\Big(\log \big(1+\frac{C_0^2 N}{1+|z_1|^2}\big)\Big)^{1+\frac{p}{q}} \leq C N
$$
by a Riemann sum approximation. Therefore the first factor in  \eqref{eq:ItoJf*} can be bounded by $C N^{\frac{1}{p}} \Vert f\Vert_{\ell^p}$. Together with \eqref{eq:gNh}, this implies \eqref{eq:ItoJ*} and concludes the proof of \eqref{eq:norm2}.
\end{proof}
\medskip

\begin{proof}[Proof of \eqref{eq:norm3}]
Note that \eqref{eq:norm3} is equivalent to showing
\begin{equation}\label{eq:hatUnormbd}
	\sum_{\bx, \by\in (\Z^2)^h_I} f(\bx)\,  \sfU^I_{\frac{\hat \lambda}{N}, N, \tilde N}
	(\bx, \by) \, g(\by) \,
	\frac{w_N^{\otimes h}(\bx)}{w_N^{\otimes h}(\by)}
	\leq \frac{c \log \tilde N}{\log \hat\lambda \frac{\tilde N}{N}}
	\Vert f\Vert_{\ell^p} \Vert g\Vert_{\ell^q}
\end{equation}
uniformly in $f\in \ell^p((\Z^2)^h_{I})$ and $g\in \ell^q((\Z^2)^h_{I})$. Without loss of generality,
we may assume $I\vdash\{1, \ldots, h\}$ consists of partition elements $\{1, 2\}, \{3\}, \ldots, \{h\}$,
so that $x_1=x_2$ and $y_1=y_2$.

Recall from \eqref{eq:QU} and \eqref{U-op} that
\begin{equation}\label{eq:sfUnsig}
	\sfU^I_{\frac{\hat \lambda}{N}, N, \tilde N} (\bx, \by)
	= \ind_{\{\bx =\by\}} + \sum_{n=1}^{2N} e^{-\hat\lambda\frac{n}{N}}
	U_{\tilde N}(n, y_1-x_1) \prod_{i=3}^{h}q_n(y_i-x_i),
\end{equation}
where $U_{\tilde N}(n,x)$ is defined in \eqref{eq:U}, with $\sigma_N^2$
replaced by $\sigma_{\tilde N}^2$.
Let us set $T := \frac{\tilde N}{N} \ge 1$ for short.
By \eqref{eq:Usingle}, \eqref{eq:asU1ub} where
$\overline{U}_{\tilde N}=\sigma_{\tilde N}^2 U_{\tilde N}$, and \eqref{eq:Gas}, we have
\begin{align}
	\sum_{\by \in (\Z^2)^h_I} \sfU^I_{\frac{\hat \lambda}{N}, N,\tilde N} (\bx, \by)
	& \leq 1+\sum_{n=1}^{2N} e^{-\hat\lambda\frac{n}{N}} U_{\tilde N}(n) \notag\\
	 \leq &  1+ C\, \frac{\log \tilde N}{\tilde N} \sum_{n=1}^{2\tilde N/T}
	 e^{-\hat\lambda \tfrac{ nT}{\tilde N}} G_\theta\big( \tfrac{n}{\tilde N}\big)
	 \leq 1+ C \log \tilde N \int_0^{\frac2T} e^{-\hat\lambda T t} G_\theta(t) \dd t \notag\\
	\leq & 1+ C e^{-\frac{\hat\lambda T}{2}}\log \tilde N + C \log \tilde N\int_0^{\frac{1}{2}
	\wedge \frac{2}{T}} \frac{e^{-\hat\lambda T t}}{t (\log \frac{1}{t})^2} \dd t
	\leq C \frac{\log \tilde N}{\log \hat\lambda T} , \label{Chatlambda}
\end{align}
where
the last inequality follows by bounding the integral separately over
$\big(0, \frac{1}{(\hat\lambda T)^{1/2}}\big)$ and
$\big(\frac{1}{(\hat\lambda T)^{1/2}}, \frac{1}{2} \wedge \frac{2}{T}\big)$,
with the dominant contribution
coming from the first interval.

On the other hand, for any $C>0$, by \eqref{eq:sfUnsig} we have
\begin{align}
	& \sum_{\by \in (\Z^2)^h_I} \sfU^I_{\frac{\hat \lambda}{N}, N, \tilde N} (\bx, \by)
	e^{C\frac{|\bx- \by|}{\sqrt N}} \notag \\
	\leq\  & 1 + \sum_{n=1}^{2N} e^{-\hat\lambda\frac{n}{N}}
	\sum_{y_1\in \Z^2} U_{\tilde N}(n, y_1-x_1) e^{C\frac{|y_1-x_1|}{\sqrt N}}
	\prod_{i=3}^{h} \Big(\sum_{y_i\in \Z^2} q_n(y_i-x_i) e^{C\frac{|x_i- y_i|}{\sqrt N}}\Big) \notag \\
	\leq \ & 1 + C \sum_{n=1}^{2N} e^{-\hat\lambda\frac{n}{N}} U_{\tilde N}(n)
	\leq C\frac{\log \tilde N}{\log \hat\lambda T} ,
\label{Chatlambda2}
\end{align}
where we applied \eqref{eq:asU3ub} and \eqref{Chatlambda}.

We can now bound the l.h.s.~ of \eqref{eq:hatUnormbd} as follows, recalling \eqref{eq:wLip}:
$$
\begin{aligned}
	&\!\!\!\!\! \sum_{\bx, \by \in (\Z^2)^h_I} \!\!\!\!\!
	f(\bx) \, \sfU^I_{\frac{\hat \lambda}{N}, N}(\bx, \by) \, g(\by)
	\frac{w_N^{\otimes h}(\bx)}{w_N^{\otimes h}(\by)}
	\leq  \sum_{\bx, \by \in (\Z^2)^h_I} \!\!\!\!
	f(\bx) \, \sfU^I_{\frac{\hat \lambda}{N}, N}(\bx, \by) e^{C\frac{|\bx -\by|}{\sqrt N}} \, g(\by)\\
	\leq\ &
	C \Big( \!\!\sum_{\bx, \by \in (\Z^2)^h_I} |f(\bx)|^p \, \sfU^I_{\frac{\hat \lambda}{N}, N}(\bx, \by)
	e^{C\frac{|\bx -\by|}{\sqrt N}}\Big)^{1/p}
	\Big(\sum_{\bx, \by \in (\Z^2)^h_I} |g(\by)|^q \,
	\sfU^I_{\frac{\hat \lambda}{N}, N}(\bx, \by)e^{C\frac{|\bx -\by|}{\sqrt N}}\Big)^{1/q}. \\
	\leq \ & C \frac{\log \tilde N}{\log \hat\lambda T} \Vert f\Vert_{\ell^p} \Vert g\Vert_{\ell^q} \,,
\end{aligned}
$$
where we applied \eqref{Chatlambda2}.
Recalling that $T = \frac{\tilde N}{N}$, this concludes the proof of \eqref{eq:hatUnormbd}.
\end{proof}

\section{Moment estimates for coarse-grained disorder}\label{Sec:MomCoarse}

In this section, we derive second and fourth moment estimates for the coarse-grained
disorder variables $\Theta^{\rm (cg)}_{N, \eps}(\vec\sfi, \vec\sfa)$ defined in \eqref{eq:Theta}.
These will be used later to bound moments of the coarse-grained model
$\mathscr{Z}_{\epsilon}^{(\cg)}(\varphi,\psi|\Theta)$
introduced in Definition~\ref{def:cg}, with $\Theta = \Theta^{\rm (cg)}_{N, \eps}$.

\subsection{Second moment estimates}

We first study the second moment of $\Theta^{\rm (cg)}_{N, \eps}(\vec\sfi, \vec\sfa)$.

\begin{lemma}\label{lem:theta}
For each time-space block $(\vec\sfi, \vec\sfa)=((\sfi, \sfi'), (\sfa, \sfa'))\in \bbT_\eps$
defined in \eqref{eq:iacond}, the coarse-grained disorder variable
$\Theta^{\rm (cg)}_{N, \eps}(\vec\sfi, \vec\sfa)$ as defined in \eqref{eq:Theta} has mean $0$
and its second moment converges to a finite limit
\begin{equation}\label{eq:sigeps}
	\sigma_\eps^2(\vec\sfi, \vec\sfa):=  \lim_{N\to\infty}
	\E\Big[\big(\Theta^{\rm (cg)}_{N, \eps}(\vec\sfi, \vec\sfa)\big)^2\Big],
\end{equation}
see  \eqref{eq:sigia1}-\eqref{eq:sigia2} below. Furthermore, there exist $c, C>0$
independent of $\eps$, $\vec\sfi$, $\vec\sfa$ such that
\begin{equation}\label{eq:siglim1}
\sigma_\eps^2(\vec\sfi, \vec\sfa) \leq \frac{C e^{-c|\vec\sfa|^2/|\vec\sfi|}\ind_{\{|\vec \sfa|\leq M_\eps\sqrt{|\vec \sfi|}\}}}{(\log \frac{1}{\eps})^{1+\ind_{\{|\vec\sfa|>0\}\cup\{|\vec\sfi|\geq 2\}}}|\vec\sfi|^2}.
\end{equation}
\end{lemma}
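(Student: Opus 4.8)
\textbf{Proof proposal for Lemma~\ref{lem:theta}.}
The plan is to reduce the second moment of $\Theta^{\rm (cg)}_{N, \eps}(\vec\sfi, \vec\sfa)$ to the renewal quantities $\overline{U}_N(\cdot, \cdot)$ and $u(\cdot)$ already analysed in Section~\ref{sec:Dickman}, and then to extract the asymptotics from the Dickman estimates \eqref{eq:asU1}--\eqref{eq:asU2ub} together with the Gaussian tail bound \eqref{eq:asU3ub}. The mean-zero property is immediate from $\bbE[X^{(\diff)}_{d,f}(x,y)] = 0$ and $\bbE[\xi_N]=0$ in the definition \eqref{eq:Theta}. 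For the second moment, I would treat the two cases $|\vec\sfi|=1$ and $|\vec\sfi|>1$ separately. In the case $|\vec\sfi|=1$, writing $\sfi=\sfi'$ and using the $L^2$-orthogonality of the chaos expansion, $\E[(\Theta^{\rm (cg)}_{N,\eps}(\sfi;\sfa,\sfa'))^2] = \frac{4}{(\eps N)^2} \sum_{(d,x)\in\cB_{\eps N}(\sfi,\sfa),\, (f,y)\in\cB_{\eps N}(\sfi,\sfa'),\, d\le f} \overline{U}^{(\diff)}_{N,d,f}(x,y)$, which is bounded above (dropping the diffusive truncation, which only removes terms) by $\frac{4}{(\eps N)^2} \sum \overline{U}_N(f-d,y-x)$; applying \eqref{eq:asU2} and the periodicity factor this converges to an explicit integral $\sigma_\eps^2(\vec\sfi,\vec\sfa)$ against $G_\theta$ and $g$. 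In the case $|\vec\sfi|>1$, the middle kernel $q_{f,d'}(y,x')$ connecting the two mesoscopic time intervals factorises the square, and orthogonality again gives a product of two $\overline{U}^{(\diff)}_N$-sums joined by $\sum_{y,x'} q_{f,d'}(y,x')^2 = u(d'-f)$; summing over the constrained spatial boundary variables $\sfb,\sfb'$ and using the local limit theorem yields the limit in \eqref{eq:sigia1}--\eqref{eq:sigia2}.

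For the upper bound \eqref{eq:siglim1} I would proceed by successively summing out spatial variables from the inside out, exactly as in the proof of Lemma~\ref{th:no3} (see \eqref{eq:interm}--\eqref{eq:anana}). After using $\sum_x q_n(x)^2 = u(n)$ and \eqref{eq:asU1ub} to replace each $\overline{U}_N(f_j-d_j,\cdot)$ by $\frac{C}{N}G_\theta(\frac{f_j-d_j}{N})$, the time integrals over a single mesoscopic interval of width $\eps N$ (or two such intervals at distance $<K_\eps$) contribute, via \eqref{eq:G5} and the bound $\int_a^{a+\eps}G_\theta \le C/\log\frac1\eps$ uniformly in $a\in(0,1)$, a factor of order $\frac{1}{\log\frac1\eps}$ for a $|\vec\sfi|=1$ block that visits no intermediate renewal (i.e.\ $|\vec\sfa|=0$), and an extra factor of $\frac{1}{\log\frac1\eps}$ whenever there is a genuine renewal inside the block, which forces $|\vec\sfi|\ge2$ or $|\vec\sfa|>0$ — this accounts for the exponent $1+\ind_{\{|\vec\sfa|>0\}\cup\{|\vec\sfi|\ge2\}}$. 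The factor $|\vec\sfi|^{-2}$ comes from the two heat-kernel-type normalisations $g_{\frac12(\sfi'-\sfi)\cdot}$ hidden in the prefactor $\frac{2}{\eps N}$ together with the scaling $|\vec\sfi|\le K_\eps$; one gains $\frac{1}{|\vec\sfi|}$ from each of the two $u(\cdot)$ or $\overline{U}_N(\cdot)$ evaluations across a time gap of order $|\vec\sfi|\eps N$. The Gaussian spatial decay $e^{-c|\vec\sfa|^2/|\vec\sfi|}\ind_{\{|\vec\sfa|\le M_\eps\sqrt{|\vec\sfi|}\}}$ is produced by the diffusive truncation built into $X^{(\diff)}$ and into $\Theta^{\rm (cg)}$ (which enforces $|\sfb'-\sfb|\le M_\eps\sqrt{\sfi'-\sfi}$), combined with the Gaussian bound \eqref{eq:asU3ub} on $\sum_x \overline{U}_N(n,x)e^{\lambda|x|}$ and the Gaussian tail of $q_n$: optimising the exponential weight $\lambda$ over the spatial displacement $|\vec\sfa|\sqrt{\eps N}$ across time $|\vec\sfi|\eps N$ gives precisely the ratio $|\vec\sfa|^2/|\vec\sfi|$ in the exponent.

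The main obstacle I anticipate is bookkeeping the constants and the logarithmic factors uniformly in the three regimes ($|\vec\sfi|=1$ with $|\vec\sfa|=0$; $|\vec\sfi|=1$ with $|\vec\sfa|>0$; $|\vec\sfi|\ge2$) so that the single formula \eqref{eq:siglim1} holds with constants $c,C$ independent of $\eps,\vec\sfi,\vec\sfa$. In particular, when $|\vec\sfi|=1$ and $|\vec\sfa|=0$ the chaos may consist of a single disorder variable ($r=1$ in \eqref{eq:Theta}, contributing $\sigma_N^2 \sim \pi/\log\frac1\eps$ after the $\frac{2}{\eps N}$ normalisation is matched against a mesoscopic box of volume $(\eps N)^2/\eps N \cdot \eps N$), so one must check carefully that the ``no renewal inside'' term indeed only carries one power of $1/\log\frac1\eps$ while every term with $r\ge2$ carries at least two. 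The diffusive-truncation constraints interact with the renewal sums in a way that requires applying \eqref{eq:asU3ub} at each renewal step and then reassembling a global Gaussian factor; I would handle this by first proving a one-block version (a clean Gaussian bound on $\sum_x \overline{U}^{(\diff)}_{N,d,f}(x,x+\cdot)$ with a displacement weight) and then chaining it through the at-most-two mesoscopic intervals. Everything else is a routine, if somewhat lengthy, application of the estimates already assembled in Sections~\ref{sec:Dickman} and~\ref{sec:2ndmoment}.
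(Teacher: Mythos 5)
Your argument for the convergence claim \eqref{eq:sigeps} has a genuine gap. You only treat the \emph{untruncated} majorant: dropping the diffusive truncation in $X^{(\diff)}$ and showing that $\frac{4}{(\eps N)^2}\sum \overline{U}_N(f-d,y-x)$ converges gives an upper bound, not the limit of $\E[(\Theta^{\rm (cg)}_{N,\eps}(\vec\sfi,\vec\sfa))^2]$, because the definition \eqref{eq:Theta}, via \eqref{eq:Zpolydiff}, constrains \emph{every intermediate point} $z_j$ to lie in $\bigcup_{|\tilde\sfa-\sfa|\le M_\eps}\cS_{\eps N}(\tilde\sfa)$. At fixed $\eps$ this confinement survives the $N\to\infty$ limit (the allowed region is a bounded macroscopic set), so the limit is \emph{not} an integral of $G_\theta$ against heat kernels but the constrained path functional in \eqref{eq:sigia1}--\eqref{eq:sigia2}. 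Nor can you recover convergence by a term-by-term Riemann-sum argument in the chaos order $r$: each fixed-$r$ term carries $\sigma_N^{2r}=O((\log N)^{-r})$ and vanishes individually, the mass sitting at $r$ of order $\log N$. This is precisely why the paper rewrites $\E[(\Theta^{\rm (cg)})^2]$ in terms of the time-space renewal $(\tau^{(N)},S^{(N)})$ of \eqref{eq:tauS} and invokes the \emph{process-level} convergence of Lemma~\ref{lem:Levyconv} (upgraded from f.d.d.\ convergence exactly for this purpose), so that the indicator of the confinement event, a functional of the whole path $(V_u)_{u\le s}$, passes to the limit and yields \eqref{eq:sigia1}--\eqref{eq:sigia2}. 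Your proposal contains no mechanism for handling this path constraint.

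On the bound \eqref{eq:siglim1}: the paper proves it directly on the continuum limit, using $G_\theta$, the heat kernel and the L\'evy measure $\tfrac1t\ind_{(0,1)}(t)\,g_{t/4}(x)\,\dd t\,\dd x$; your discrete pre-limit route is viable in principle, but your bookkeeping of the exponents is wrong in two places. For $|\vec\sfi|\ge 3$ there is a \emph{single} kernel $q_{f,d'}(y,x')$ crossing the gap, not two: one factor $1/|\vec\sfi|$ comes from $\frac{1}{d'-f}\asymp\frac{1}{|\vec\sfi|\eps N}$ (in the continuum, from the jump density $\frac{1}{a'-b}$), and the second comes from confining the exit/entry positions to mesoscopic boxes of area $O(\eps N)$ while the displacement across the gap spreads on scale $\sqrt{|\vec\sfi|\eps N}$ --- not from ``two $u(\cdot)$ or $\overline U_N(\cdot)$ evaluations across a gap of order $|\vec\sfi|\eps N$'' (the two $\overline U_N$'s act \emph{inside} the blocks, over times at most $\eps N$), and certainly not from heat kernels ``hidden in the prefactor $\frac{2}{\eps N}$''. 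Likewise, the extra power of $\frac{1}{\log\frac1\eps}$ when $|\vec\sfa|>0$ or $|\vec\sfi|\ge2$ is not a matter of ``each term with $r\ge2$ carrying two logs'': that is false, since the aggregate of all $r\ge2$ terms builds $G_\theta$ and already saturates the single-log bound $\int_0^\eps G_\theta(u)\,\dd u\le C/\log\frac1\eps$ when $|\vec\sfa|=0$, $|\vec\sfi|=1$ (the $r=1$ term alone is $O(1/\log N)$ and vanishes). The second log arises because a nonzero mesoscopic displacement, or a time gap of at least $\eps$, forces the time argument of the relevant $G_\theta$-weight to be of order $\eps$, where $G_\theta(u)\asymp \frac{1}{u(\log\frac1u)^2}$ produces $(\log\frac1\eps)^{-2}$; this is the content of the paper's polar-coordinate and Gaussian-tail computation in the three regimes $|\vec\sfi|=1$, $|\vec\sfi|=2$, $|\vec\sfi|\ge3$. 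As written, your sketch does not produce the exponents in \eqref{eq:siglim1}.
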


\begin{proof} {\bf (I) Random walk representation.} We first express $ \E\big[\big(\Theta^{\rm (cg)}_{N, \eps}(\vec\sfi, \vec\sfa)\big)^2\big]$
in terms of the time-space renewal $(\tau^{(N)}_\cdot, S^{(N)}_\cdot)$ defined in \eqref{eq:tauS}. First consider the case $|\vec\sfi|=1$, i.e., $\sfi=\sfi'$. Recall from \eqref{eq:Theta}, \eqref{eq:Zpolydiff} and \eqref{eq:Xdiff} that
\begin{equation}\notag
\begin{aligned}
& \Theta^{\rm (cg)}_{N, \eps}(\vec\sfi, \vec\sfa)  = \frac{2}{\epsilon N}
	\Bigg\{  \!\!\!\!\!\!\!\! \sum_{\substack{ d\in \cT_{\epsilon N}(\sfi) \\ x\in \cS_{\eps N}(\sfa)\cap \cS_{\eps N}(\sfa')}} \!\!\!\!\!\!\!\!\!\! \ind_{(d,x)\in \Z^3_{\rm even}}\xi_N(d, x) + \!\!\!\!\!\!\!\!\!\!\!\!\!\! \sum_{\substack{ d<f\in \cT_{\epsilon N}(\sfi) \\ x\in \cS_{\eps N}(\sfa), \, y\in \cS_{\eps N}(\sfa')}}
	 \!\!\!\!\!\!\!\!\!\! \ind_{(d,x)\in \Z^3_{\rm even}} q_{d, f}(x, y) \xi_N(d, x)\xi_N(f, y) \\
	& \ \ + \sum_{r=1}^{\infty} \!\!\!\!\! \sum_{\substack{d:=n_0< f:=n_{r+1}\in \cT_{\epsilon N}(\sfi) \\ x:=z_0\in \cS_{\eps N}(\sfa), \, y:=z_{r+1}\in \cS_{\eps N}(\sfa')}}  \!\!\!\!\!\!\!\!\!\!\!\!\!\! \ind_{(d,x)\in \Z^3_{\rm even}}\!\!\!\!\!\!\!\!\!\!\!\!
	\sum_{\substack{d < n_1 < \ldots < n_r < f  \\ z_1, \ldots, z_r \,\in\, \bigcup_{|\tilde\sfa-\sfa| \le M_\epsilon} \cS_{\eps N}(\tilde\sfa)}} \!\!\!\!\!\!\!\!\!\!
	\xi_N(d, x)  \prod_{j=1}^{r+1} q_{n_{j-1}, n_j}(z_{j-1},z_j)
	\, \xi_N(n_j,z_j)
	\Bigg\},
\end{aligned}
\end{equation}
where we note that the terms are uncorrelated because $\xi_N(\cdot, \cdot)$ are independent centred random variables with mean $0$ and variance $\sigma_N^2$ (recall \eqref{eq:xi} and \eqref{eq:sigma}). Therefore
\begin{align*}
& \E\Big[\Theta^{\rm (cg)}_{N, \eps}(\vec\sfi, \vec\sfa)^2\Big] =  \frac{4}{(\epsilon N)^2}
	\Bigg\{  \!\!\!\!\!\! \sum_{\substack{ d\in \cT_{\epsilon N}(\sfi) \\ x\in \cS_{\eps N}(\sfa)\cap \cS_{\eps N}(\sfa')}}  \!\!\!\!\!\!\!\!\!\! \ind_{(d,x)\in \Z^3_{\rm even}} \sigma_N^2 + \!\!\!\!\!\! \sum_{\substack{ d<f\in \cT_{\epsilon N}(\sfi) \\ x\in \cS_{\eps N}(\sfa), \, y\in \cS_{\eps N}(\sfa')}} \!\!\!\!\!\!\!\!\!\! \ind_{(d,x)\in \Z^3_{\rm even}} \sigma_N^4 q^2_{d, f}(x, y) 	 \\
	&\qquad  + \sum_{r=1}^{\infty}   \sigma_N^{2(r+1)} \!\!\!\! \!\!\!\! \!\!\!\!
	\sum_{\substack{d:=n_0< f:=n_{r+1}\in \cT_{\epsilon N}(\sfi) \\ x:=z_0\in \cS_{\eps N}(\sfa), \, y:=z_{r+1}\in \cS_{\eps N}(\sfa')}}
	 \!\!\!\!\!\!\!\!\!\! \ind_{(d,x)\in \Z^3_{\rm even}} \!\!\!\!\!\!\!\!\!\! \sum_{\substack{d < n_1 < \ldots < n_r < f  \\ z_1, \ldots, z_r \,\in\, \bigcup_{|\tilde\sfa -\sfa| \le M_\epsilon} \cS_{\eps N}(\tilde\sfa)}}
	\prod_{j=1}^{r+1} q^2_{n_{j-1}, n_j}(z_{j-1},z_j)
	\Bigg\}.
\end{align*}
Note that this sum admits a representation in terms of the space-time random walk $(\tau^{(N)}_k, S^{(N)}_k)_{k\geq 0}$ defined in \eqref{eq:tauS}, namely,
\begin{equation}
\begin{aligned}
& \E\Big[\Theta^{\rm (cg)}_{N, \eps}(\vec\sfi, \vec\sfa)^2\Big] = 2\sigma_N^2 \sum_{k=0}^\infty (\sigma_N^2 R_n)^k \\
& \quad \times \P^{N,\eps}_{\sfi,\sfa}
\Bigg( \frac{S^{(N)}_i}{\sqrt{N}} \in \bigcup_{|\tilde \sfa -\sfa|\leq M_\eps} \cS_\eps(\tilde \sfa)\ \forall\, 1\leq i<k; \ \Bigg(\frac{\tau_k^{(N)}}{N}, \frac{S^{(N)}_k}{\sqrt{N}}\Bigg) \in \cT_\eps(\sfi)\times \cS_\eps(\sfa')\Bigg),
\end{aligned}
\end{equation}
where $\P^{N, \eps}_{\sfi,\sfa}$ denotes probability for $(\tau^{(N)}_k, S^{(N)}_k)_{k\geq 0}$ with $(\tau^{(N)}_0, S^{(N)}_0)$ sampled uniformly from $\cT_{\eps N}(\sfi)\times \cS_{\eps N}(\sfa)\cap \Z^3_{\rm even}$. Changing variable $k=s \log N$, using $\sigma_N^2 R_N=1+ (\vartheta+o(1))/\log N$, and applying Lemma~\ref{lem:Levyconv} on the convergence of $(\tau^{(N)}_k, S^{(N)}_k)_{k\geq 0}$ to a L\'evy process $\bsY_\cdot=(Y_\cdot, V_\cdot)$, we find that the sum above converges to the Riemann integral
\begin{equation}\label{eq:sigia1}
\sigma_\eps^2(\vec\sfi, \vec\sfa)= 2\pi \int_0^\infty e^{\vartheta s} \P^\eps_{\sfi,\sfa}\Big(V_u \in \!\!\! \bigcup_{|\tilde \sfa-\sfa|\leq M_\eps} \cS_\eps(\tilde \sfa)\ \forall\, u\in (0,s); \ (Y_s, V_s) \in \cT_\eps(\sfi)\times \cS_\eps(\sfa') \Big) \, {\rm d}s,
\end{equation}
where $\P^\eps_{\sfi,\sfa}$ denotes the law of the L\'evy process $\bsY_u=(Y_u, V_u)$ with $\bsY_0$ sampled uniformly from $\cT_\eps(\sfi)\times \cS_\eps(\sfa)$.

For $|\vec\sfi|\geq 2$, $\Theta_{N, \eps}(\vec\sfi, \vec\sfa)$ is defined in \eqref{eq:Theta}.
The same argument as for the case $|\vec\sfi|=1$ gives
\begin{align}
& \sigma_\eps^2(\vec\sfi, \vec\sfa) =  \sum_{\substack{\rule{0pt}{0.8em}\sfb: \, |\sfb - \sfa| \le M_\epsilon \\
	\sfb': \, |\sfb' - \sfa'| \le M_\epsilon, \, |\sfb' - \sfb| \le M_\epsilon
	\sqrt{|\sfi'-\sfi|}}} 2\pi \int_0^\infty {\rm d}s \, e^{\vartheta s}    \notag  \\
&\ \times \P^\eps_{\sfi,\sfa}\Big(\ \exists\, t\in (0,s) \ s.t.\ (Y_{t^-}, V_{t^-}) \in \cT_\eps(\sfi)\times \cS_\eps(\sfb), \ (Y_t, V_t) \in \cT_\eps(\sfi')\times \cS_\eps(\sfb');  \label{eq:sigia2} \\
&\quad \forall\, u\in (0,t): \, V_u \in \!\!\! \bigcup_{|\tilde \sfa-\sfa|\leq M_\eps} \!\!\! \cS_\eps(\tilde \sfa); \
\forall\, v\in (t,s): \, V_v \in \!\!\! \bigcup_{|\tilde \sfb-\sfb'|\leq M_\eps} \!\!\! \cS_\eps(\tilde \sfb); \
(Y_s, V_s) \in \cT_\eps(\sfi')\times \cS_\eps(\sfa') \Big). \notag
\end{align}
Here $t$ is the time $(Y_\cdot, V_\cdot)$ jumps from $\cT_\eps(\sfi)\times \cS_\eps(\sfb)$ to $\cT_\eps(\sfi')\times \cS_\eps(\sfb')$.
\medskip

\noindent
{\bf (II) Proof of \eqref{eq:siglim1}.} First consider the case $|\vec\sfi|=1$. By translation invariance, we may assume $\sfi=\sfi'=1$ and $\sfa=0$. First note that
\begin{equation}\label{eq:sigiabd0}
\begin{aligned}
	\sigma_\eps^2(\vec\sfi, \vec\sfa) & \leq 2\pi \int_0^\infty e^{\vartheta s}
	\P^\eps_{0,0}\Big((Y_s, V_s) \in \cT_\eps(0)\times \cS_\eps(\sfa') \Big) \, {\rm d}s \\
	& = \eps^{-2} 2\pi \iint_{\substack{0<s<t<\eps \\ x\in \cS_\eps(0), y\in \cS_\eps(\sfa')}}
	G_\theta(t-s) \, g_{\frac{t-s}{4}}(y-x) {\rm d}s{\rm d}t {\rm d}x {\rm d}y,
\end{aligned}
\end{equation}
where $G_\theta(t-s,y-x) := G_\theta(t-s) \, g_{\frac{t-s}{4}}(y-x)$ is the weighted Green's function defined for the L\'evy process $\bsY_s$, see \eqref{eq:G2} and \eqref{eq:G3}.

When $\sfa'=0$, we can relax the domain of integration in \eqref{eq:sigiabd0},
use standard bounds on the Gaussian kernel $g$, and set $u:=t-s$ to obtain
\begin{equation}
\begin{aligned}
\sigma_\eps^2(\vec\sfi, \vec\sfa)  & \leq  2\pi \!\!\! \iint_{u\in (0, \eps)} \!\!\!
G_\theta(u) g_{\frac{u}{4}}(y) {\rm d}u {\rm d}y
\leq C \int_0^\eps G_\theta(u) {\rm d}u
\leq \frac{C}{\log \frac{1}{\eps}},
\end{aligned}
\end{equation}
where we applied the asymptotics for $\int_0^\eps G_\theta$ in \eqref{eq:G5}.

When $\sfa'\neq 0$, the bound for $\sigma_\eps^2(\vec\sfi, \vec\sfa)$ can be improved with an extra factor of $\frac{e^{-c|\sfa'|^2}}{\log \frac{1}{\eps}}$. Indeed, using polar coordinates (with respect to the $|\cdot|_\infty$ norm) for $x\in \cS_\eps(0)$,  we have
\begin{align}
\sigma_\eps^2(\vec\sfi, \vec\sfa) & \leq \eps^{-2} 2 \pi  \iint_{\substack{0<s<t<\eps \\ x\in \cS_\eps(0), y\in \cS_\eps(\sfa')}} G_\theta(t-s) \, g_{\frac{t-s}{4}}(y-x) {\rm d}s{\rm d}t {\rm d}x {\rm d}y \notag \\
& \leq
\eps^{-2} 2\pi \iint_{ 0<s<t<\eps} {\rm d}s{\rm d}t \, G_\theta(t-s)
\int\limits_{0<r<1} 2 \eps r {\rm d}r \int\limits_{(|\sfa'|-r)\sqrt{\eps} \leq |z| \leq (|\sfa'|+2)\sqrt{\eps}}
\!\!\!\!\!\! g_{\frac{t-s}{4}}(z)  {\rm d}z. \label{eq:sigiabd1}
\end{align}
If $|\sfa'|\geq 2$, then we can use \eqref{eq:Gas} to bound the right hand side of \eqref{eq:sigiabd1} by
\begin{align*}
& \eps^{-1} 2\pi \iint_{ 0<s<t<\eps} {\rm d}s{\rm d}t \, G_\theta(t-s)
\int\limits_{|\sfa'|-1 \leq |w|}
\!\!\!\!\!\! g_{\frac{t-s}{4\eps}}(w)  {\rm d}w \\
\leq \ & \eps^{-1} 2 \pi \iint_{ 0<s<t<\eps}  G_\theta(t-s) e^{- \frac{|\sfa'|^2}{2(t-s)}\eps} \, {\rm d}s{\rm d}t
\leq  4\pi \int_0^1  \frac{1}{u(\log \frac{1}{u} +\log \frac{1}{\eps})^2} e^{- \frac{|\sfa'|^2}{2u}} \, {\rm d}u \\
\leq \ & \frac{4\pi}{(\log \frac{1}{\eps})^2} \int_0^1 \frac{1}{u} e^{- \frac{|\sfa'|^2}{2u}} \, {\rm d}u
= \frac{4\pi}{(\log \frac{1}{\eps})^2} \int_{|\sfa'|^2}^\infty \frac{1}{v} e^{-\frac{v}{2}}{\rm d}v
\leq \frac{4\pi e^{-c|\sfa'|^2}}{(\log \frac{1}{\eps})^2}.
\end{align*}
If $1\leq |\sfa'|<2$, then we can bound the right hand side of \eqref{eq:sigiabd1} by
\begin{align*}
& \eps^{-1} 2\pi \iint_{ 0<s<t<\eps} {\rm d}s{\rm d}t \, G_\theta(t-s) \int_0^1 2r{\rm d}r
\int\limits_{1-r \leq |w|}
\!\!\!\!\!\! g_{\frac{t-s}{4\eps}}(w)  {\rm d}w \\
\leq \ & 2 \pi \int\limits_{ 0<u<\eps}  G_\theta(u) \int_0^1 2r e^{-\frac{(1-r)^2}{u}\eps} \, {\rm d}r  \, {\rm d}u
\leq  \ 4\pi \int\limits_{ 0<u<\eps}  G_\theta(u) \int_0^1 e^{-\frac{r^2}{u}\eps} \, {\rm d}r  \, {\rm d}u  \\
\leq \ & C \int_0^1 \frac{1}{v(\log \frac{1}{v} +\log \frac{1}{\eps})^2} \int_0^1 e^{-\frac{r^2}{v}} \, {\rm d}r  \, {\rm d}v \\
\leq \ & C \int_0^1 \frac{1}{\sqrt{v}(\log \frac{1}{v} +\log \frac{1}{\eps})^2} \int_0^\infty e^{-s^2} \, {\rm d}s  \, {\rm d}v
 \leq  \frac{C}{(\log \frac{1}{\eps})^2} \leq \frac{C e^{-c|\sfa'|^2}}{(\log \frac{1}{\eps})^2}.
\end{align*}
This concludes the proof of the upper bound in \eqref{eq:siglim1}.

We now bound $\sigma_\eps^2(\vec\sfi, \vec\sfa)$ for the case $|\vec\sfi|=2$. By relaxing all the constraints in \eqref{eq:sigia2} except $(Y_s, V_s) \in \cT_\eps(\sfi')\times \cS_\eps(\sfa')$, we note that except for a change of constants, the bound in \eqref{eq:siglim1} for $|\vec\sfi|=1$ also applies in this case. In particular, the bound in \eqref{eq:siglim1} holds for $|\vec\sfi|=2$ and $|\vec\sfa|\neq 0$. For $|\vec\sfi|=2$ and $|\vec\sfa|=0$, let us assume for simplicity that $\sfi=1$, $\sfi'=2$, and $\sfa=0$. Again, relaxing all constraints in \eqref{eq:sigia2} except the constraint on $(Y_s, V_s)$, we have
\begin{equation}
\begin{aligned}
\sigma_\eps^2(\vec\sfi, \vec\sfa)  & \leq  C \eps^{-2} \!\!\!\!\!\!\!\! \iint_{\substack{\rule{0pt}{0.8em} 0<s<\eps<t<2\eps \\ \rule{0pt}{0.8em} x, y\in \cS_\eps(0)}} G_\theta(t-s) g_{\frac{t-s}{4}}(y-x)  {\rm d}x {\rm d}y {\rm d}s {\rm d}t\\
& \leq C \eps^{-1} \iint_{0<s<\eps<t<2\eps} \!\!\!\! G_\theta(t-s) {\rm d}s {\rm d}t \leq C \int_0^\eps u G_\theta(u) {\rm d}u \\
& \leq C \int_0^\eps \frac{1}{(\log \frac{1}{u})^2} {\rm d}u \leq \frac{C}{(\log \frac{1}{\eps})^2}.
\end{aligned}
\end{equation}
The upper bound in \eqref{eq:siglim1} also holds.

We now consider the case $|\vec\sfi|\geq 3$. We first ignore the constraints on $V_r$ for $r\in (0,t)\cup (t,s)$ in \eqref{eq:sigia2}. Using the weighted Green's function $G_\theta$ and the L\'evy measure $\frac{\ind_{(0,1)}}{t} g_{t/4} {\rm d}t {\rm d}x$ for the L\'evy process $\bsY_s=(Y_s, V_s)$ (see \cite[Section 2]{CSZ19a}), we obtain the bound
\begin{align}
\sigma_\eps^2(\vec\sfi, \vec\sfa) \leq\ &   \!\!\!\!\!\!\!\!\!
\sum_{\substack{\rule{0pt}{0.8em}\sfb: \, |\sfb| \le M_\epsilon \\
	\sfb': \, |\sfb' - \sfa'| \le M_\epsilon, \, |\sfb' - \sfb| \le M_\epsilon
	\sqrt{|\vec \sfi|}}}
	\!\!\!\!\!\!\!\!\!
	C \eps^{-2} \iint_{\substack{\rule{0pt}{0.8em} 0<s<t<\eps \\ \rule{0pt}{0.8em} x\in \cS_\eps(0), y\in \cS_\eps(\sfb)}}
	\iint_{\substack{\rule{0pt}{0.8em} (\sfi'-1)\eps<s'<t'<\sfi'\eps \\ \rule{0pt}{0.8em} x'\in \cS_\eps(b'), y'\in \cS_\eps(\sfa')}}
	\!\!\!\!\!\! {\rm d}x {\rm d}y {\rm d}x' {\rm d}y' {\rm d}s{\rm d}t{\rm d}s' {\rm d}t' \notag  \\
	& \qquad \qquad \qquad G_\theta(t-s) g_{\frac{t-s}{4}}(y-x) \cdot \frac{g_{\frac{s'-t}{4}}(x'-y)}{s'-t}\cdot G_\theta(t'-s') \cdot g_{\frac{t'-s'}{4}}(y'-x'). \notag
\end{align}
Since $|\vec\sfi|=\sfi'-\sfi+1\geq 3$, we can bound $\frac{1}{s'-t}\leq \frac{1}{(|\vec\sfi|-2)\eps}\leq \frac{3}{|\vec\sfi|\eps}$ to obtain

\begin{align}
\sigma_\eps^2(\vec\sfi, \vec\sfa)	\leq\ & \frac{C}{|\vec\sfi|\eps^3} \!\!\!\!\!\!\!\!\! \iint_{\substack{\rule{0pt}{0.8em} 0<s<t<\eps \\ \rule{0pt}{0.8em}  (\sfi'-1)\eps<s'<t'<\sfi'\eps}}
	\iint_{\substack{\rule{0pt}{0.8em} x\in  \cS_\eps(0) \\ \rule{0pt}{0.8em} y'\in  \cS_\eps(\sfa')}}
	G_\theta(t-s) G_\theta(t'-s') g_{\frac{t'-s}{4}}(y'-x)
	\,  {\rm d}s{\rm d}t{\rm d}s' {\rm d}t' {\rm d}x {\rm d}y'  \notag \\
	\leq \ & \frac{C}{|\vec\sfi|\eps^3}\  \Big(\int_0^\eps G_\theta(u) {\rm d}u\Big)^2 \!\!\! \iint_{\substack{\rule{0pt}{0.8em} 0<s<\eps \\ \rule{0pt}{0.8em} (\sfi'-1)\eps <t'<\sfi' \eps}} \iint_{\substack{\rule{0pt}{0.8em} x\in  \cS_\eps(0) \\ \rule{0pt}{0.8em} y'\in  \cS_\eps(\sfa')}} g_{\frac{t'-s}{4}}(y'-x) 	\,  {\rm d}s {\rm d}t' {\rm d}x {\rm d}y' \notag \\
	\leq \ & \frac{C}{(\log \frac{1}{\eps})^2}  \frac{e^{-c|\vec\sfa|^2/|\vec\sfi|}}{|\vec\sfi|^2},	\label{eq:sigia6}
\end{align}
where we first relaxed the constraints on $\sfb$ and $\sfb'$, then successively integrated out $y$, $x'$, $s'$, and $t$ and applied \eqref{eq:G5}, while in the last inequality, we applied a uniform bound on the heat kernel $g_{\frac{t'-s}{4}}(y'-x)$. This concludes the proof of the upper bound in \eqref{eq:siglim1}.
\end{proof}

\subsection{Fourth moment estimates}

We next study the fourth moment. %of $\Theta^{\rm (cg)}_{N, \eps}(\vec\sfi, \vec\sfa)$.

\begin{lemma}\label{Theta4th} Let $\Theta^{\rm (cg)}_{N, \eps}(\vec\sfi, \vec\sfa)$ be defined as in \eqref{eq:Theta}, with $(\vec\sfi, \vec\sfa)=((\sfi, \sfi'), (\sfa, \sfa'))\in \bbT_\eps$ defined in
\eqref{eq:iacond}. There exist $c, C\in (0,\infty)$ uniform in $(\vec\sfi, \vec\sfa)$, such that for all $\eps>0$ sufficiently small,
\begin{equation}\label{eq:Theta4.1}
\limsup_{N\to \infty} \, \bbE \big[ \Theta^{(\cg)}_{N, \epsilon}(\vec\sfi, \vec\sfa)^4  \big] < \frac{C e^{-c|\vec\sfa|/\sqrt{|\vec\sfi|}}\ind_{\{|\vec\sfa|\leq M_\eps \sqrt{|\vec\sfi|}\}}}{\log \frac{1}{\eps}}.
\end{equation}
\end{lemma}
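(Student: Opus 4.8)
\textbf{Proof strategy for Lemma~\ref{Theta4th}.}
The plan is to follow the same blueprint as the higher-moment bound for averaged partition functions in Theorem~\ref{th:mom}, since $\Theta^{(\cg)}_{N,\epsilon}(\vec\sfi,\vec\sfa)$ is, up to normalization, an averaged partition function restricted to a mesoscopic time-space window (with a diffusive truncation). Concretely, first I would treat the easier case $|\vec\sfi|=1$, where $\Theta^{(\cg)}_{N,\epsilon}(\sfi;\sfa,\sfa')$ is $\tfrac{2}{\epsilon N}$ times a sum of $X^{(\diff)}_{d,f}(x,y)$ over $(d,x)\in\cB_{\epsilon N}(\sfi,\sfa)$, $(f,y)\in\cB_{\epsilon N}(\sfi,\sfa')$. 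Averaging over the starting point $(d,x)$ corresponds exactly to choosing $\varphi_N(v) = \frac{2}{\epsilon N}\,\mathbf{1}_{v \in \cS_{\epsilon N}(\sfa)}$ (an approximate delta function on the scale $\sqrt{\epsilon N}$, appropriately normalized) and $\psi_N(w) = \mathbf{1}_{w\in\cS_{\epsilon N}(\sfa')}$, with time horizon $N$ replaced by $\epsilon N$ and $\beta = \beta_{N}$ still in the critical window for the \emph{original} $N$; this is precisely the regime $N_{\mathrm{here}} = \epsilon N \le \tilde N = N$ for which Theorem~\ref{th:mom} was formulated. Applying \eqref{eq:mombd} with $h=4$, $p$ close to $1$ (say $p=1+\delta$), $q$ large, and the weight $w(x) = e^{-\mathsf{c}|x|}$, together with the scaling of the $L^p$ and $L^q$ norms of these indicator/approximate-delta functions on the scale $\sqrt{\epsilon N}$ in $\R^2$, should produce the bound $\tfrac{C}{\log(1+1/\epsilon)}\,e^{-c|\vec\sfa|/\sqrt{|\vec\sfi|}}$ after letting $N\to\infty$, because $\log(1+\tilde N/N_{\mathrm{here}}) = \log(1+1/\epsilon) \asymp \log\tfrac1\epsilon$. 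The weight $w(x)=e^{-\mathsf{c}|x|}$ is what converts the spatial separation $|\sfa'-\sfa|$ between the two mesoscopic boxes into the exponential factor $e^{-c|\vec\sfa|/\sqrt{|\vec\sfi|}}$ (after rescaling distances by $\sqrt{\epsilon N}$, a separation of $|\vec\sfa|$ boxes becomes Euclidean distance $\asymp|\vec\sfa|/\sqrt{|\vec\sfi|}$ on the diffusive scale of the window of temporal length $|\vec\sfi|\epsilon N$); I would also use that $X^{(\diff)}$ vanishes unless $|\vec\sfa|\le M_\epsilon\sqrt{|\vec\sfi|}$, which supplies the indicator.

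For $|\vec\sfi|\ge 2$, the coarse-grained disorder variable has the two-block structure of \eqref{eq:Theta}: two truncated partition functions $X^{(\diff)}_{d,f}(x,y)$, $X^{(\diff)}_{d',f'}(x',y')$ linked by a single random walk kernel $q_{f,d'}(y,x')$ across the gap between $\cT_{\epsilon N}(\sfi)$ and $\cT_{\epsilon N}(\sfi')$. To bound its fourth moment I would expand the fourth power and organize the $4$-fold product of the two-block objects exactly as in the proof of Theorem~\ref{th:mom}: each of the four "copies" contributes a pair of truncated partition-function legs joined by a connecting kernel, and the moment becomes a sum over partitions $I_1,\dots,I_r$ of $\{1,2,3,4\}$ with the operator-norm machinery of Proposition~\ref{prop:opnorm}. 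The connecting kernels $q_{f,d'}(y,x')$ (one per copy) play the role of the $\widehat Q^{I,J}_\lambda$ operators with $|\cdot|$ between $h-1$ and distinct, so they contribute $O(1)$ operator norms; the truncated partition-function factors within each time block $\cT_{\epsilon N}(\sfi)$ or $\cT_{\epsilon N}(\sfi')$ each contribute a factor $O(1/\log(1/\epsilon))$ of the $\widehat{\sfU}$ type (this is where the single power of $\log\frac1\epsilon$ in \eqref{eq:Theta4.1} comes from, rather than two powers — one cannot gain a $1/\log$ from \emph{both} blocks because the Lindeberg-type constraint only forces at least one diagonal collision per copy, and the two blocks are separated by $\ge K_\epsilon$ in time). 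Alternatively — and perhaps more cleanly — I would dominate $\Theta^{(\cg)}_{N,\epsilon}(\vec\sfi,\vec\sfa)^4$ by relaxing the truncation and the block structure and bounding it by the fourth moment of an averaged partition function over the full window $\cT_{\epsilon N}(\sfi)\cup\cdots\cup\cT_{\epsilon N}(\sfi')$ of length $|\vec\sfi|\epsilon N$, again with $\varphi_N$ an approximate delta at $\sfa$ and $\psi_N$ an indicator near $\sfa'$, then invoke Theorem~\ref{th:mom} with $N_{\mathrm{here}} = |\vec\sfi|\epsilon N$ and $\tilde N = N$, giving $\tfrac{C}{\log(1+\frac{1}{|\vec\sfi|\epsilon})}\le \tfrac{C}{\log\frac1\epsilon}$ for $|\vec\sfi|\le K_\epsilon$ (since $|\vec\sfi|\le K_\epsilon=(\log\frac1\epsilon)^6$ makes $|\vec\sfi|\epsilon\to 0$, so $\log(1+\frac1{|\vec\sfi|\epsilon})\asymp\log\frac1\epsilon$). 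I would check that relaxing to the full window and dropping the truncation only increases the fourth moment (by $L^2$-orthogonality of chaos terms for the second moment, and by a Lindeberg/hypercontractivity-free direct comparison of the expanded products for the fourth moment, since adding terms to a sum of mean-zero chaos pieces and then raising to an even power with nonnegative-dominated coefficients can only increase the expectation once absolute values are taken as in \eqref{eq:MNh3}).

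The spatial decay factor $e^{-c|\vec\sfa|/\sqrt{|\vec\sfi|}}$ and the indicator $\mathbf{1}_{|\vec\sfa|\le M_\epsilon\sqrt{|\vec\sfi|}}$ require a little extra care in the two-block case: one must track that the starting window is near $\sfa$, the ending window near $\sfa'$, and that $|\sfa'-\sfa|$ enters. Here I would use the weight $w(x)=e^{-\mathsf c|x|}$ in \eqref{eq:mombd} centered so that $\varphi/w$ and $w\mathbf{1}_B$ remain well-controlled while the cross term produces $e^{-\mathsf c\,\mathrm{dist}}$ on the diffusive scale; the truncation indicator in the definition of $X^{(\diff)}$ and of $\bbT_\epsilon$ forces $|\vec\sfa|\le M_\epsilon\sqrt{|\vec\sfi|}$ outright, so $\Theta^{(\cg)}_{N,\epsilon}(\vec\sfi,\vec\sfa)\equiv 0$ otherwise. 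The main obstacle I anticipate is making the reduction to Theorem~\ref{th:mom} fully rigorous in the two-block case $|\vec\sfi|\ge2$: the connecting kernel $q_{f,d'}(y,x')$ straddles two time blocks and, under the diffusive constraint $|\sfb'-\sfb|\le M_\epsilon\sqrt{\sfi'-\sfi}$, one has to verify that replacing it by its contribution to a single long averaged partition function (or, in the operator language, that the corresponding $\widehat Q$ operator has bounded norm uniformly in the window geometry and in $N$) does not lose the $1/\log\frac1\epsilon$ factor — i.e., that the "gain" comes from the truncated partition-function legs and the connecting kernel is merely $O(1)$. This is exactly the type of estimate Proposition~\ref{prop:opnorm} is designed for, so I expect the argument to go through, but the bookkeeping of which of the four copies collides where (to ensure each contributes at least a $\sigma_{\tilde N}^2$ or $\sigma_{\tilde N}^3$, summing to the single $1/\log$) is the delicate point and will need to mirror the partition-sum analysis in \eqref{eq:Mbd1}--\eqref{eq:Mbd3} carefully.
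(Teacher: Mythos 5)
Your overall mechanism is the same as the paper's: compare $\Theta^{(\cg)}_{N,\epsilon}$ with an averaged partition function on the shortened horizon $\epsilon N$ and invoke Theorem~\ref{th:mom} with $\tilde N=N$, $h=4$ and weight $w(x)=e^{-|x|}$, so that $\log(1+\tilde N/(\epsilon N))\asymp\log\frac1\epsilon$ produces the $1/\log\frac1\epsilon$ and the weight produces the exponential spatial decay. However, there is a genuine gap in your reduction step. You claim that averaging over the bulk starting points $(d,x)$ ``corresponds exactly'' to taking $\varphi_N$ an (approximate delta) indicator of $\cS_{\epsilon N}(\sfa)$, and that dropping the truncation makes $\Theta^{(\cg)}$ a sub-sum of the chaos of $\cZ^{\beta_N}_{\epsilon N}(\varphi,\psi)$, so that ``adding terms only increases'' the fourth moment. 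This is not literally true: in the chaos expansion of $\Theta^{(\cg)}$ the first and last disorder variables carry the flat coefficient $\tfrac{2}{\epsilon N}$ (the variables $\xi_N(d,x)$, $\xi_N(f,y)$ sit at the endpoints of $X^{(\diff)}_{d,f}$ with $d,f$ summed over the whole mesoscopic time interval), whereas in an averaged partition function they are smoothed by the boundary kernels $q^N_{0,n_1}(\varphi,\cdot)$ and $q^N_{n_r,\cdot}(\cdot,\psi)$; the two families of chaos coefficients are not comparable term by term. The paper closes exactly this gap in \eqref{eq:Theta4bd1}: it \emph{attaches} kernels $q_{0,d}(z_1,x)$ and $q_{f,\epsilon N}(y,z_2)$ summed over $z_1\in\cS_{\epsilon N}(0)$, $z_2\in\cS_{\epsilon N}(\sfa')$, whose total mass is bounded below by a constant uniformly in $(d,x)$, $(f,y)$, and it may assume $\bbE[\xi_N^3]\ge0$ without loss (the bounds only involve $|\bbE[\xi_N^k]|$), so that every term in the expanded fourth moment is nonnegative and the termwise domination $\bbE[\Theta^{(\cg)4}]\le C\,\bbE[\cZ_{\epsilon N}^{\beta_N}(\ind_{\cS_1(0)},\ind_{\cS_1(\sfa')})^4]$ (centered) becomes legitimate. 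Without this kernel-insertion (or an equivalent argument) your comparison does not go through as stated; note also that after this step the data are order-one indicators on the diffusive scale $\sqrt{\epsilon N}$, so $p=q=2$ suffices and no approximate-delta normalization is needed.

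For $|\vec\sfi|\ge 2$ your two alternatives differ from the paper. The paper treats $|\vec\sfi|=2$ exactly as $|\vec\sfi|=1$ (window $2\epsilon N$), and for $|\vec\sfi|\ge3$ it does not rerun any operator machinery nor dominate by one long partition function: it bounds the single connecting kernel by a Gaussian via Lemma~\ref{lem:ker}, $q_{f,d'}(y,x')\le \frac{C}{\epsilon N|\vec\sfi|}e^{-c|\sfb-\sfb'|^2/|\vec\sfi|}$, which decouples $\Theta^{(\cg)}(\vec\sfi,\vec\sfa)$ into a sum over $(\sfb,\sfb')$ of products of two \emph{independent} single-block variables (see \eqref{eq:Theta4bd3}); a triangle-inequality split of $|\vec\sfa|$ among $|\sfb-\sfa|$, $|\sfb'-\sfa'|$, $|\sfb-\sfb'|$, H\"older, and the single-block bound \eqref{eq:Theta4bd2} then give $C e^{-c|\vec\sfa|^2/|\vec\sfi|}\wedge e^{-c|\vec\sfa|}$ times $(\log\frac1\epsilon)^{-2}$ — in particular your worry that one cannot gain a $1/\log$ from both blocks is unfounded, and the gap $\ge K_\epsilon$ between blocks plays no role in the moment bound. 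Your alternative (b) (dominate by a single partition function on the window $|\vec\sfi|\epsilon N$) would also yield the single $1/\log\frac1\epsilon$ since $|\vec\sfi|\le K_\epsilon$, but it runs into the same bulk-versus-boundary mismatch described above and would again require the kernel-insertion trick, so it is not actually cleaner than the paper's factorization.
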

\begin{proof}
We first prove \eqref{eq:Theta4.1} for $|\vec\sfi|=\sfi'-\sfi+1\leq 2$. Consider a time-space block $(\vec\sfi, \vec\sfa)=((\sfi, \sfi'), (\sfa, \sfa'))$ with $|\vec \sfa| \leq M_\eps \sqrt{|\vec\sfi|}$
and assume without loss of generality that $\sfi=\sfi'=1$ and $\sfa=0$. The case $|\vec\sfi|=2$ is similar (just replace $\eps$ by $2\eps$). We will compare $\Theta^{(\cg)}_{N, \epsilon}(\vec1, \vec\sfa)$ with an averaged partition function so that Theorem~\ref{th:mom} can be applied.

Let us recall the polynomial chaos expansion of $\Theta^{(\cg)}_{N, \epsilon}(\vec\sfi, \vec\sfa)$ from \eqref{eq:Theta}
$$
\Theta^{(\cg)}_{N, \epsilon}(\vec1, \vec\sfa) :=
	\displaystyle \frac{2}{\epsilon N} \,
	\sum_{\substack{(d,x) \in \cB_{\epsilon N}(1, 0)\\
	(f,y) \in \cB_{\epsilon N}(1, \sfa')\\\text{with }d \le f}}
	X_{d,f}^{(\diff)}(x,y),
$$
which is essentially an average of point-to-point partition functions
with average over $(d,x)$ and $(f,y)$ in the bulk instead of through boundary conditions
at time $0$ and $\eps N$ respectively. To compare with an averaged partition function as
in Theorem~\ref{th:mom}, we replace $\Theta^{(\cg)}_{N, \epsilon}(\vec 1, \vec\sfa)$ by
\begin{equation}\label{eq:Theta4bd1}
\Theta := \displaystyle \frac{2}{\epsilon N} \!\!\!\!
	\sum_{z_1, z_2\in \Z^2} \sum_{\substack{(d,x) \in \cB_{\epsilon N}(1, 0)\\
	(f,y) \in \cB_{\epsilon N}(1, \sfa')\\\text{with }d \le f}}  \!\!\!\!\!\!\!\!  \ind_{\cS_{\eps N}(0)}(z_1) q_{0, d}(z_1, x)
	X_{d,f}^{(\diff)}(x,y) q_{f, \eps N}(y, z_2) \ind_{\cS_{\eps N}(\sfa')}(z_2),
\end{equation}
where $\cS_{\eps N}(\sfa)=((\sfa-(1,1))\sqrt{\epsilon N}, \sfa\sqrt{\epsilon N}]$ and we note that uniformly in $(d,x)\in \cB_{\epsilon N}(1, 0)$ and $(f,y) \in \cB_{\epsilon N}(1, \sfa')$, we have
$$
\sum_{z_1, z_2\in \Z^2} \ind_{\cS_{\eps N}(0)}(z_1) q_{0, d}(z_1, x)  q_{f, \eps N}(y, z_2) \ind_{\cS_{\eps N}(\sfa')}(z_2) \geq C >0.
$$
Therefore $\E[\Theta^{(\cg)}_{N, \epsilon}(\vec\sfi, \vec\sfa)^4] \leq C \E[\Theta^4]$ because in the expansion for the fourth moment, all terms are non-negative if we assume $\E[\xi_N^3]\geq 0$, which we may assume from now on since our bounds are in terms of $|\E[\xi_N^k]|$ for $1\leq k\leq 4$ (see the proof of Theorem~\ref{th:mom}). In the definition of $\Theta$, we can further remove the constraint on $y$ and the summation constraints in the definition of $X_{d,f}^{(\diff)}(x,y)$ in \eqref{eq:Zpolydiff}--\eqref{eq:Xdiff}, which gives the centred partition function
$$
\cZ_{\eps N}^{\beta_N}(\varphi, \psi) - \E[\cZ_{\eps N}^{\beta_N}(\varphi, \psi)]
$$
as defined in Theorem~\ref{th:mom}, with $\varphi(x)=\ind_{\cS_1(0)}(x)$ and $\psi(x)=\ind_{\cS_1(\sfa')}(x)$. Therefore for $N$ large, we have
\begin{equation}\label{eq:Theta4bd2}
\begin{aligned}
\E\big[\Theta^{(\cg)}_{N, \epsilon}(\vec\sfi, \vec\sfa)^4\big]  & \leq C \big[(\cZ_{\eps N}^{\beta_N}(\varphi, \psi) - \E[\cZ_{\eps N}^{\beta_N}(\varphi, \psi)])^4\big]  \\
& \leq \frac{C}{\log \frac{1}{\eps}} \Big\Vert \frac{\varphi}{w}\Big\Vert_2^4 \, \Vert \psi\Vert_\infty^4 \Vert w \ind_{\cS_1(\sfa')}\Vert_2^4 \leq \frac{Ce^{-|\sfa'-\sfa|}}{\log \frac{1}{\eps}},
\end{aligned}
\end{equation}
where we applied Theorem~\ref{th:mom}
with $N$ set to $\eps N$, $T=1/\eps$, $p=q=2$, $h=4$, and $w(x) = e^{-|x|}$. This proves \eqref{eq:Theta4.1} for $|\vec\sfi|\leq 2$.

We now consider the case $|\vec\sfi|\geq 3$. Recall the definition of $\Theta^{\rm (cg)}_{N, \eps}(\vec\sfi, \vec\sfa)$ from \eqref{eq:Theta}, we can
rewrite it as
\begin{equation}\label{eq:Theta2t0}
\Theta^{\rm (cg)}_{N, \eps}(\vec \sfi, \vec \sfa) = \sum_{\sfb, \sfb'}
\Theta^{\rm (cg)}_{N, \eps}(\vec \sfi, (\sfa, \sfb), (\sfb', \sfa')),
\end{equation}
where
\begin{equation}\label{eq:Theta2texp0}
\begin{aligned}
& \Theta^{\rm (cg)}_{N, \eps}(\vec\sfi, (\sfa, \sfb), (\sfb', \sfa')) \\
=\ &
\frac{2}{\epsilon N} \!\!\!
	\sum_{\substack{(d,x) \in \cB_{\epsilon N}(\sfi, \sfa)\\
	(f',y') \in \cB_{\epsilon N}(\sfi', \sfa')}}
	\sum_{\substack{(f,y) \in \cB_{\epsilon N}(\sfi, \sfb)\\
	(d',x') \in \cB_{\epsilon N}(\sfi', \sfb')\\   d \le f, \, d' \le f'}}
	 X^{(\diff)}_{d, f}(x, y) \; q_{f, d'}(y, x') \; X^{(\diff)}_{d',f'}(x',y').
\end{aligned}
\end{equation}
For each $(\sfb, \sfb')$, because $\sfi'-\sfi\geq 2$, we can apply Lemma~\ref{lem:ker} (with $m=\eps N$) to bound
$$
	q_{f, d'}(y, x')\leq \frac{C}{ \eps N |\vec \sfi|}
	\, e^{-c|\sfb-\sfb'|^2/|\vec \sfi|}
$$
uniformly in $(f,y) \in \cB_{\eps N}(\sfi, \sfb)$ and $(d', x') \in \cB_{\eps N}(\sfi', \sfb')$. We can make this replacement in the bound for the fourth moment to obtain
\begin{align}\label{eq:Theta4bd3}
\bbE \big[ \Theta^{(\cg)}_{N, \epsilon}(\vec\sfi, \vec\sfa)^4  \big]
\leq  C\, \bbE\Big[ \Big(\sum_{\sfb, \sfb'} \Theta^{(\cg)}_{N, \epsilon}((\sfi, \sfi), (\sfa, \sfb))
\frac{e^{-c|\sfb-\sfb'|^2/|\vec\sfi|}}{|\vec\sfi|}
\Theta^{(\cg)}_{N, \epsilon}((\sfi', \sfi'), (\sfb', \sfa')) \Big)^4 \Big].
\end{align}
By triangle inequality, we can split the sum over $\sfb, \sfb'$ into three parts (with overlaps): (1) $|\sfb-\sfa|\geq |\vec\sfa|/3$; (2) $|\sfb'-\sfa'|\geq |\vec\sfa|/3$; $|\sfb'-\sfb|\geq |\vec\sfa|/3$.
It suffices to bound the fourth moment of each part.

For part (1), we can bound
\begin{equation}\label{eq:Theta4bd4}
\begin{aligned}
& \bbE\Big[ \Big(\sum_{\sfb, \sfb' \atop |\sfb-\sfa|\geq |\vec\sfa|/3} \Theta^{(\cg)}_{N, \epsilon}((\sfi, \sfi), (\sfa, \sfb))
\frac{e^{-c|\sfb-\sfb'|^2/|\vec\sfi|}}{|\vec\sfi|}
\Theta^{(\cg)}_{N, \epsilon}((\sfi', \sfi'), (\sfb', \sfa')) \Big)^4 \Big] \\
\leq\ & \bbE\Big[ \Big(\sum_{\sfb: |\sfb-\sfa|\geq |\vec\sfa|/3} \Theta^{(\cg)}_{N, \epsilon}((\sfi, \sfi), (\sfa, \sfb))\Big)^4\Big] \bbE\Big[ \Big(\sum_{\sfb'}\Theta^{(\cg)}_{N, \epsilon}((\sfi', \sfi'), (\sfb', \sfa')) \Big)^4 \Big],
\end{aligned}
\end{equation}
where the inequality can be justified if we first expand the power and take expectation and then bound
$\frac{e^{-c|\sfb-\sfb'|^2/|\vec\sfi|}}{|\vec\sfi|}<1$; we also used the independence of
$\Theta^{(\cg)}_{N, \epsilon}((\sfi, \sfi), \cdot)$ and $\Theta^{(\cg)}_{N, \epsilon}((\sfi', \sfi'), \cdot)$.
For the first factor in \eqref{eq:Theta4bd4}, we can expand the power and bound
\begin{align*}
	& \bbE\Big[ \Big(\sum_{|\sfb-\sfa|\geq |\vec\sfa|/3}
	\Theta^{(\cg)}_{N, \epsilon}((\sfi, \sfi), (\sfa, \sfb)) \Big)^4\Big]
	= \sum_{\substack{|\sfb_i-\sfa|\geq |\vec\sfa|/3\\ \mbox{\tiny{for} }
	1\leq i\leq 4}} \bbE\Big[ \prod_{i=1}^4 \Theta^{(\cg)}_{N, \epsilon}((\sfi, \sfi), (\sfa, \sfb_i)) \Big] \\
	\leq \ &\sum_{\substack{|\sfb_i-\sfa|\geq |\vec\sfa|/3\\ \mbox{\tiny{for} } 1\leq i\leq 4}}
	\prod_{i=1}^4 \bbE\Big[\Theta^{(\cg)}_{N, \epsilon}((\sfi, \sfi), (\sfa, \sfb_i))^4 \Big]^{\frac14}
	= \Big( \sum_{|\sfb-\sfa|\geq |\vec\sfa|/3}
	\bbE\Big[\Theta^{(\cg)}_{N, \epsilon}((\sfi, \sfi), (\sfa, \sfb))^4 \Big]^{\frac14}  \Big)^4
	\leq  \frac{C e^{-c|\vec\sfa|}}{\log \frac{1}{\eps}},
\end{align*}
where in the last inequality, we applied the fourth moment bound \eqref{eq:Theta4bd2}
for $\Theta^{(\cg)}_{N, \epsilon}(\vec\sfi, \cdot)$ with $|\vec\sfi|=1$.
The second factor in \eqref{eq:Theta4bd4} can be bounded the same way without the factor
$e^{-c|\vec\sfa|}$. This implies that when the sum in \eqref{eq:Theta4bd3} is restricted to
$|\sfb-\sfa|\geq |\vec\sfa|/3$, we get a fourth moment bound of
$C e^{-c|\vec\sfa|}/(\log\frac{1}{\eps})^2$. The same bound holds if the sum
is restricted to $|\sfb'-\sfa'|\geq |\vec\sfa|/3$.

When the sum in \eqref{eq:Theta4bd3} is restricted to $|\sfb-\sfb'|\geq |\vec\sfa|/3$, we can bound
$\frac{e^{-c|\sfb-\sfb'|^2/|\vec\sfi|}}{|\vec\sfi|}\leq e^{-c|\vec\sfa|^2/|\vec\sfi|}$. The rest of
the calculations is the same as before, which leads to a fourth moment bound of
$C e^{-c|\vec\sfa|^2/|\vec\sfi|}/(\log\frac{1}{\eps})^2$. Combined with the previous estimates,
it is clear that \eqref{eq:Theta4.1} holds. This concludes the proof of Lemma~\ref{Theta4th}.
\end{proof}

\section{Moment estimates for the coarse-grained model}\label{Sec:4MomCoarse}

In this section, we will prove an analogue of Theorem~\ref{th:mom} for the coarse-grained model
(defined in \eqref{eq:Zcg-gen}), that we rewrite for convenience:
\begin{equation}\label{eq:Zcg-gen2}
\begin{aligned}
	\mathscr{Z}_{\epsilon}^{(\cg)}(\varphi,\psi|\Theta)
	 & :=  \frac{1}{2} g_{\frac{1}{2}}(\varphi, \psi) \,  + \, \frac{\epsilon}{2}
	 \sum_{r=1}^{(\log\frac{1}{\epsilon})^2} \sum_{{\sfb}, {\sfc} \in \Z^2} 	
	\sum_{\substack{(\vec\sfi_1, \ldots, \vec\sfi_r) \in \bcA_{\epsilon}^{(\notri)} \\
	(\vec\sfa_1, \ldots, \vec\sfa_r) \in \bcA_{\epsilon; \, b,c}^{(\diff)}}}
	\!\!\!\!\!\!\!\!\!\!\! \varphi_\epsilon({\sfb})  g_{\frac{1}{2}\sfi_1}(\sfa_1 - \sfb)
	\Theta(\vec\sfi_1, \vec\sfa_1)  \\
	& \times
	\Bigg\{ \prod_{j=2}^r  g_{\frac{1}{2}(\sfi_j -\sfi_{j-1}')} (\sfa_j -\sfa_{j-1}')
	\, \Theta(\vec\sfi_j, \vec\sfa_j) \Bigg\}
	\, g_{\frac{1}{2}(\lfloor\tfrac{1}{\epsilon}\rfloor- \sfi_r')}({\sfc}-\sfa_r')
	 \psi_\epsilon({\sfc}) \,,
\end{aligned}
\end{equation}
with coarse-grained disorder variables
$\Theta(\vec\sfi, \vec\sfa):=\Theta^{\rm (cg)}_{N, \eps}(\vec\sfi, \vec\sfa)$
(see \eqref{eq:Theta}) indexed by time-space blocks $(\vec\sfi, \vec\sfa)=((\sfi, \sfi'), (\sfa, \sfa'))$
in the set $\bbT_\eps$ (see \eqref{eq:iacond}), while $\varphi_\epsilon, \psi_\epsilon: \Z^2 \to \R$
are defined by \eqref{eq:phieps-psieps} from $\varphi\in C_c(\R^2)$ and $\psi\in C_b(\R^2)$.

We will prove the following analogue of Theorem~\ref{th:mom} for the $4$-th
moment of the coarse-grained model.
\begin{theorem}\label{th:cgmom}
Let $\mathscr{Z}_{N, \epsilon}(\varphi,\psi)
:=\mathscr{Z}_{\epsilon}^{(\cg)}(\varphi,\psi|\Theta^{\rm (cg)}_{N, \eps})$ be the coarse-grained
model defined above. Further assume that $\Vert\psi\Vert_\infty<\infty$ and $\psi$ is supported on a
ball $B$ (possibly $B=\R^d$). Then for any $p, q \in (1,\infty)$ with $\frac{1}{p}+\frac{1}{q}=1$ and
any $w: \R^2 \to (0,\infty)$ such that $\log w$ is Lipschitz continuous, there exists $C\in (0,\infty)$
such that uniformly in $\eps\in (0,1)$,
\begin{equation}\label{eq:mombdcg}
	\limsup_{N\to\infty} \bbE\Big[\big(\mathscr{Z}_{N, \epsilon}(\varphi,\psi)
	- \bbE[\mathscr{Z}_{N, \epsilon}(\varphi,\psi)]\big)^4\Big] \leq C \eps^{\frac 4p}
	\Big\Vert \frac{\varphi_\eps}{w_\eps}\Big\Vert_{\ell^p(\Z^2)}^4 \, \Vert \psi\Vert_\infty^4
	\Vert w \ind_B\Vert_q^4,
\end{equation}
where $w_\eps:\Z^2 \to \R$ is defined from $w$ by \eqref{eq:phieps-psieps}.
\end{theorem}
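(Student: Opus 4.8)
The plan is to mimic the proof of Theorem~\ref{th:mom}, but now working with the coarse-grained model in place of the original partition function, so that the coarse-grained disorder variables $\Theta^{\rm(cg)}_{N,\eps}(\vec\sfi,\vec\sfa)$ play the role of the $\xi_N(n,z)$ and the heat kernels $g_{\frac12(\cdot)}(\cdot)$ play the role of the random walk kernels $q_n(\cdot)$. First I would expand the centred fourth moment $\bbE[(\mathscr{Z}_{N,\eps}(\varphi,\psi)-\bbE[\mathscr{Z}_{N,\eps}(\varphi,\psi)])^4]$ using the multilinear chaos expansion \eqref{eq:Zcg-gen2}: this produces a sum over four mesoscopic time-space renewal configurations of products of heat kernels times mixed moments of the $\Theta$'s. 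Because the $\Theta^{\rm(cg)}_{N,\eps}(\vec\sfi,\vec\sfa)$ are \emph{not independent} --- two of them are correlated when they share a mesoscopic time-space index $(\sfi,\sfa)$ or $(\sfi',\sfa')$ --- I would first bound the mixed moments $\bbE[\prod_{i=1}^4\Theta(\vec\sfi_i,\vec\sfa_i)]$ by products of single fourth moments via H\"older, reducing everything to the bound $\limsup_N\bbE[\Theta^{\rm(cg)}_{N,\eps}(\vec\sfi,\vec\sfa)^4]\le C e^{-c|\vec\sfa|/\sqrt{|\vec\sfi|}}\ind_{\{|\vec\sfa|\le M_\eps\sqrt{|\vec\sfi|}\}}/\log\frac1\eps$ from Lemma~\ref{Theta4th}, and to the second moment bound $\sigma^2_\eps(\vec\sfi,\vec\sfa)\le C e^{-c|\vec\sfa|^2/|\vec\sfi|}/((\log\frac1\eps)|\vec\sfi|^2)$ from Lemma~\ref{lem:theta} for the $\Theta$'s that appear with multiplicity two.

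Next I would reorganize the resulting iterated sum over mesoscopic variables into an operator form, exactly paralleling \eqref{eq:MNh2}--\eqref{eq:Mbd2}: introduce a Laplace/exponential weight $\eps\sfn\rightsquigarrow e^{-\hat\lambda\eps\sfn}$ with $\lambda=\hat\lambda\eps$ (here $1/\eps$ plays the role of $N$) to decouple the sum over the gaps $\sfi_j-\sfi_{j-1}'$, partition the four renewals into the partition data $I_1,\dots,I_r\vdash\{1,\dots,4\}$ recording which renewals coincide at a given mesoscopic time-space site, and contract consecutive appearances of a partition $I$ with $|I|=h-1=3$ into a single coarse-grained renewal kernel, the analogue of $\sfU^I$. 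The key inputs are: (i) a Green's-function bound for the $h$-fold product of heat kernels $\prod g_{\frac12\sfn}(\sfx_i-\sfy_i)$ summed against $e^{-\hat\lambda\eps\sfn}$, which is the continuum analogue of Lemma~\ref{lem:Green} and follows directly from Gaussian estimates; (ii) the Hardy--Littlewood--Sobolev inequality of Lemma~\ref{HLSineq}, which applies verbatim since it is a statement about lattice functions and the kernel $(1+\sum|x_i-y_i|^2)^{-(h-1)}$, and gives the $\ell^q\to\ell^q$ boundedness of the $\widehat Q^{I,J}$ operators with the boundary factors $N^{1/q},N^{1/p}$ replaced by $\eps^{-1/q},\eps^{-1/p}$; and (iii) the contraction estimate for the coarse-grained $\sfU$ operator, which is precisely Lemma~\ref{th:ren-cg}: it yields $\|\widehat\sfU^I_{\hat\lambda\eps}\|_{\ell^q\to\ell^q}\le \sfC/((\log\hat\lambda)\,\sigma^2_{\vec\sfi,\vec\sfa})$-type decay, the analogue of \eqref{eq:norm3}, the factor $1/\log\hat\lambda$ being what makes the geometric series in $r$ converge once $\hat\lambda$ is chosen large. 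The spatial weight $w$ is handled exactly as in the proof of Proposition~\ref{prop:opnorm}: split the sum into the diffusive region $|\vec\sfx-\vec\sfy|\lesssim\sqrt{1/\eps}$, where $w_\eps^{\otimes h}(\vec\sfx)/w_\eps^{\otimes h}(\vec\sfy)$ is bounded by Lipschitzness of $\log w$, and the complementary region, where the Gaussian decay of the heat kernels absorbs the exponential growth of the weight ratio.

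Assembling these pieces, the geometric series in the number $r$ of coarse-grained renewals converges (after fixing $\hat\lambda$ large depending only on $p,q,h$ and the number of partitions $\mathfrak c_h$), the boundary operators contribute the factor $\eps^{-1/q}\cdot\eps^{-1/p}=\eps^{-1}$ which combines with the overall prefactor $(\eps/2)$ from \eqref{eq:Zcg-gen2}, and one is left with the bound $C\,\eps^{4/p}\,\|\varphi_\eps/w_\eps\|_{\ell^p}^4\,\|\psi\|_\infty^4\,\|w\ind_B\|_q^4$ claimed in \eqref{eq:mombdcg}, after the discrete-to-continuum passage $\|w_\eps\ind_{B_\eps}\|_{\ell^q}\asymp \eps^{-1/q}\|w\ind_B\|_q$ (an $L^q$ Riemann-sum identity, as in Remark~\ref{rem:aswe}). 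The main obstacle I anticipate is the bookkeeping around the \emph{dependence} of the $\Theta^{\rm(cg)}_{N,\eps}$: one must carefully argue that in the fourth-moment expansion the correlated terms (where a mesoscopic index is shared by two or more of the four renewals) are genuinely controlled by the single fourth moment bound of Lemma~\ref{Theta4th} combined with the single second moment bound of Lemma~\ref{lem:theta} --- in particular that the ``diagonal'' contributions, where say two renewals pass through the same $\Theta$, do not blow up. This requires an analogue of the $\sfU$-contraction tailored to the case of a $\Theta$ sitting at a shared vertex, and verifying that its contribution carries the same $1/\log\hat\lambda$ smallness; here Lemma~\ref{th:ren-cg} is exactly what is needed, since it was designed as the coarse-grained analogue of Lemma~\ref{lem:UnGauss} precisely for this purpose. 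A secondary technical point is that the truncations in the definition of $\Theta^{\rm(cg)}_{N,\eps}$ (the no-triple constraint $\bcA^{(\notri)}_\eps$, the diffusive constraint $\bcA^{(\diff)}_\eps$, and the cap $r\le(\log\frac1\eps)^2$) must be respected throughout, but since these only \emph{restrict} the sums and the terms are mutually $L^2$-orthogonal, they can only help.
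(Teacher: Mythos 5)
Your proposal follows essentially the same route as the paper's proof: the same fourth-moment expansion into four mesoscopic renewal sequences, the same Laplace transform with $\lambda=\hat\lambda\eps$ and operator reformulation parallel to Theorem~\ref{th:mom}, with Lemma~\ref{HLSineq} giving the $\widehat\cQ$-operator bounds (boundary factors $\eps^{-1/p},\eps^{-1/q}$), Lemma~\ref{th:ren-cg} giving the $1/\log\hat\lambda$ contraction for the two-replica (U-block) kernels built from the second moments of Lemma~\ref{lem:theta}, and Lemma~\ref{Theta4th} via H\"older controlling the vertices where three or four sequences meet (the paper's V blocks, which have length at most $4$ by the no-triple constraint). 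The bookkeeping you flag as the main obstacle — decoupling $\Theta$'s that straddle two blocks via a kernel-replacement bound, and the relaxed spatial matching (within $O(M_\eps)$) at higher-multiplicity vertices, whose $M_\eps$ powers are absorbed by the $(\log\frac1\eps)^{-3/4}$ smallness — is exactly how the paper completes the argument.
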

\begin{proof}
We will adapt the proof of Theorem~\ref{th:mom} to the current setting. The complication is that the
coarse-grained disorder variables $\Theta^{\rm (cg)}_{N, \eps}(\vec \sfi, \vec \sfa)$ are assigned to
time-space blocks $(\vec \sfi, \vec \sfa)$ instead of individual lattice sites. We will therefore divide the
proof into two parts: first, expand the fourth moment and perform a resummation to bring it into a
similar framework as in the proof of Theorem~\ref{th:mom}; second, give the necessary bounds analogous
to those in the proof of Theorem~\ref{th:mom}.
\medskip

\noindent
{\bf Part I. Expansion.} First, as in \eqref{eq:MNh} in the proof of Theorem~\ref{th:mom}, denote
\begin{align}
\cM^{\varphi, \psi}_{N, \eps} :=& \bbE\Big[\big(\mathscr{Z}_{N, \epsilon}(\varphi,\psi) - \bbE[\mathscr{Z}_{N, \epsilon}(\varphi,\psi)]\big)^4\Big] \notag\\
= & \frac{\eps^4}{16} \,\bbE\Big[\Big(
 \sum_{r=1}^{(\log\frac{1}{\epsilon})^2} \sum_{{\sfb}, {\sfc} \in \Z^2} 	
	\sum_{\substack{(\vec\sfi_1, \ldots, \vec\sfi_r) \in \bcA_{\epsilon}^{(\notri)} \\ (\vec\sfa_1, \ldots, \vec\sfa_r) \in \bcA_{\epsilon; \, b,c}^{(\diff)}}}
	\!\!\!\!\!\!\!\!\!\!\! \varphi_\epsilon({\sfb})  g_{\frac{1}{2}\sfi_1}(\sfa_1 - \sfb)   \Theta^{\rm(cg)}_{N,\eps}(\vec\sfi_1, \vec\sfa_1) \label{eq:Mepsh0}\\
& \qquad \qquad \qquad \times \Big\{ \prod_{j=2}^r
	g_{\frac{1}{2}(\sfi_j -\sfi_{j-1}')} (\sfa_j -\sfa_{j-1}') \Theta^{\rm(cg)}_{N,\eps}(\vec\sfi_j, \vec\sfa_j) \Big\}
	g_{\frac{1}{2}(\frac{1}{\epsilon}- \sfi_r')}(\sfc-\sfa_r') \psi_{\epsilon}(\sfc)  \Big)^4\Big]. \nonumber
\end{align}
By assumption, we have $|\psi_\eps|\leq \Vert\psi\Vert_\infty\ind_{B_\eps}$, where $B_\eps=B/\sqrt{\eps}$. By the same reasoning as in the proof of Theorem~\ref{th:mom} (see the discussion leading to \eqref{eq:MNh3}), we can replace $\psi$ by $\Vert\psi\Vert_\infty\ind_{B_\eps}$ and replace $g_{\frac{1}{2}(\frac{1}{\epsilon}- \sfi_r')}(\cdot)$ by $g_{\frac{1}{2}(\tilde n- \sfi_r')}(\cdot)$ (with $\tilde n$ first summed over $[\eps^{-1}, 2\eps^{-1}]$, then extended to $[1, 2\eps^{-1}]$) to obtain the following bound
\begin{equation}\label{eq:Mepsh}
\begin{aligned}
\cM^{\varphi, \psi}_{N, \eps}   & \leq  C \Vert \psi\Vert_\infty^4\, \eps^5 \sum_{\tilde n=1}^{2/\eps} \bbE\Big[\Big(
\sum_{r=1}^{\infty}  \sum_{{\sfb}, {\sfc} \in \Z^2}
\sum_{\substack{(\vec\sfi_1, \ldots, \vec\sfi_r) \in \bcA_\epsilon^{(\notri)}  \\ (\vec\sfa_1, \ldots, \vec\sfa_r)\in \bcA_{\epsilon; \sfb, \sfc}^{(\diff)}}} \!\!\!\!\!\!\!\!\!\!
	\varphi_\eps(\sfb) g_{\frac{1}{2}\sfi_1}(\sfa_1-\sfb) \Theta^{\rm(cg)}_{N,\eps}(\vec\sfi_1, \vec\sfa_1) \\
	& \quad \qquad \quad \times \Bigg\{ \prod_{j=2}^r
	g_{\frac{1}{2}(\sfi_j -\sfi_{j-1}')} (\sfa_j -\sfa_{j-1}') \Theta^{\rm(cg)}_{N,\eps}(\vec\sfi_j, \vec\sfa_j) \Bigg\}
	g_{\frac{1}{2}(\tilde n- \sfi_r')}(\sfc-\sfa_r') \ind_{B_\epsilon}(\sfc)  \Big)^4\Big].
\end{aligned}
\end{equation}
We then expand the product in \eqref{eq:Mepsh} to obtain the sum over $4$ sequences of
time-space blocks, each time-space block contributing a $\Theta^{\rm (cg)}_{N, \eps}$ variable. Because
we will bound the sum by taking the absolute value of each summand, we can relax the
summation constraint on $r$ to obtain an upper bound. Also note that thanks to the assumption
$\varphi\in C_c(\R^2)$ and the diffusive constraint $\bcA_{\epsilon; \sfb, \sfc}^{(\diff)}$
(see \eqref{eq:bcA2}), we have a sum with finitely many terms, which allows us to pass the
limit $N\to\infty$ inside the sum later. For each $\Theta^{\rm (cg)}_{N, \eps}(\vec \sfi, \vec \sfa)$ with
$|\vec\sfi|=\sfi'-\sfi+1\geq 2$, we further expand it as
\begin{equation}\label{eq:Theta2t}
	\Theta^{\rm (cg)}_{N, \eps}(\vec \sfi, \vec \sfa) = \sum_{\substack{\sfb: \, |\sfb - \sfa|
	\le M_\epsilon \\
	\sfb': \, |\sfb' - \sfa'| \le M_\epsilon  \\ |\sfb' - \sfb| \le M_\epsilon \sqrt{|\vec \sfi'|}}}
	\Theta^{\rm (cg)}_{N, \eps}(\vec \sfi, (\sfa, \sfb), (\sfb', \sfa')),
\end{equation}
where $\Theta^{\rm (cg)}_{N, \eps}(\vec \sfi, (\sfa, \sfb), (\sfb', \sfa'))$ is defined as
$\Theta^{\rm (cg)}_{N, \eps}(\vec \sfi, \vec \sfa)$, except the sum in its definition in \eqref{eq:Theta}
is restricted to a fixed choice of $\vec \sfb:=(\sfb, \sfb')$. The expansion of the product
in \eqref{eq:Mepsh} then gives $4$ sequences of coarse-grained disorder variables
$\Theta^{\rm (cg)}_{N, \eps}$, some of which may visit two distinct mesoscopic time intervals with indices
$\sfi, \sfi'$ due to the expansion in \eqref{eq:Theta2t}. If we record the indices of the visited
mesoscopic time intervals and the mesoscopic spatial boxes of entry and exit in each time interval,
then we obtain $4$ sequences of time-space indices $(\sfi^j_1, \sfa^j_1, \sfb^j_1)$, \ldots,
$(\sfi^j_{r_j}, \sfa^j_{r_j}, \sfb^j_{r_j})$, $1\leq j\leq 4$. We will call
each such sequence a {\em mesoscopic time-space renewal sequence},
or just {\em renewal sequence} (see Figure \ref{CG-figB}).

%%%%%%%%%%%%%%%%%%%%%%%%%%FIGURE%%%%%%%%%%%%%%%%%%%%
%%%%%%%%%%%%%%%%%%%%%%%%%% Coarse-grained moment%%%%%%%%%%%%%%%%%%%%
%%%%%%%%%%% U-V blocks %%%%%%%%%%%%%%
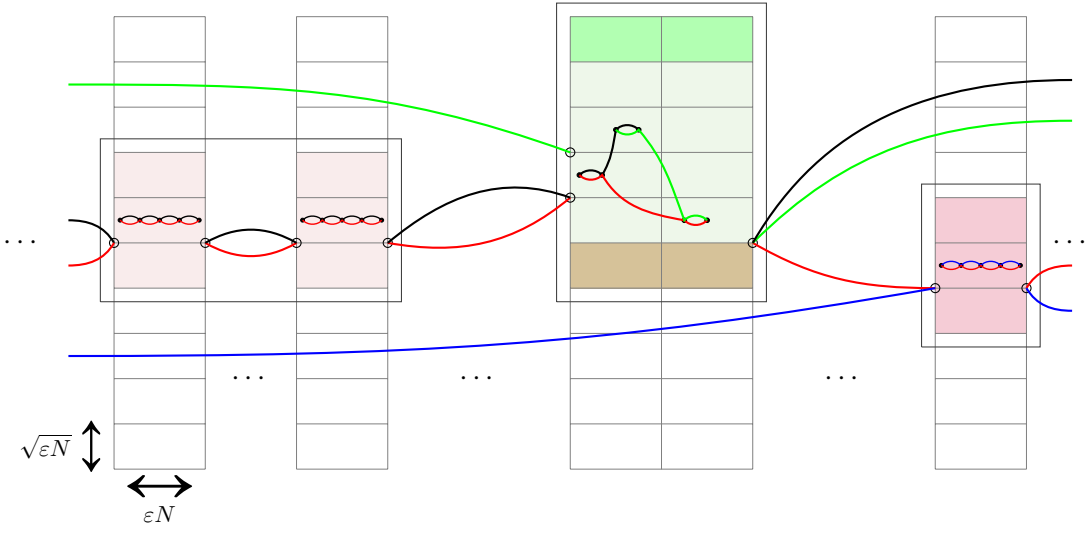
\begin{figure}
\hskip -1.5cm
\begin{tikzpicture}[scale=0.6]
\draw[ystep=1cm, xstep=2,gray,very thin] (0,0) grid (2, 10);
\draw[ystep=1cm, xstep=2,gray,very thin] (4,0) grid (6, 10);
\draw[ystep=1cm, xstep=2,gray,very thin] (10,0) grid (14, 10);
\draw[ystep=1cm, xstep=2,gray,very thin] (18,0) grid (20, 10);
\node at (3,2) {{$\cdots$}}; \node at (8,2) {{$\cdots$}}; \node at (16,2) {{$\cdots$}}; \node at (-2,5) {{$\cdots$}};
\node at (21,5) {{$\cdots$}};
\node at (1,-0.5) {\scalebox{2.5}{$\leftrightarrow$}}; \node at (-0.5, 0.5) {\scalebox{2.0}{$\updownarrow$}};
\node at (1,-1) {\scalebox{0.8}{$\epsilon N$}}; \node at (-1.5, 0.5) {\scalebox{0.8}{$\sqrt{\epsilon N}$}};
\node at (1,5.5) {\scalebox{0.65}{$\overlap$}}; \node at (5,5.5) {\scalebox{0.65}{$\overlap$}};
\node at (19,4.5) {\scalebox{0.65}{$\overlapbr$}};
\draw (0, 5)  circle [radius=0.1]; \draw (2, 5)  circle [radius=0.1];
\draw (4,5) circle [radius=0.1]; \draw (10, 6) circle [radius=0.1];  \draw (10, 7) circle [radius=0.1]; \draw (6, 5) circle [radius=0.1];
\draw (14, 5) circle [radius=0.1]; \draw (18, 4) circle [radius=0.1]; \draw (20, 4) circle [radius=0.1];
\draw[fill] (10.2,6.5)  circle [radius=0.05]; \draw[fill] (10.7,6.5)  circle [radius=0.05]; \draw[fill] (11,7.5)  circle [radius=0.05];
\draw[fill] (11.5,7.5)  circle [radius=0.05]; \draw[fill] (12.5,5.5)  circle [radius=0.05]; \draw[fill] (13,5.5)  circle [radius=0.05];
\draw[black!75] (-0.3,3.7) -- (6.3, 3.7) -- (6.3, 7.3) -- (-0.3, 7.3 ) --(-0.3, 3.7);
\draw[black!75] (9.7,3.7) -- (14.3, 3.7) -- (14.3, 10.3) -- (9.7, 10.3 ) --(9.7, 3.7);
\draw[black!75] (17.7,2.7) -- (20.3, 2.7) -- (20.3, 6.3) -- (17.7, 6.3 ) --(17.7, 2.7);
\draw[thick, green] (10,7) to [out=160, in =0] (-1,8.5);
\draw[thick, red] (2,5) to [out=-30,in=-150] (4,5); \draw[thick] (2,5) to [out=30,in=150] (4,5);
\draw[thick] (-1,5.5) to [out=0,in=120] (0,5); \draw[thick, red]  (-1,4.5) to [out=0,in=-120] (0,5) ;
\draw[thick, red] (6, 5) to [out=-10, in=220] (10, 6); \draw[thick] (6, 5) to [out=40, in=160] (10, 6);
\draw[thick, red] (14, 5) to [out=-30,in=180] (18, 4);
\draw[thick, green] (14, 5) to [out=45,in=180] (21, 7.7);
\draw[thick, blue] (-1, 2.5) to [out=0,in=190] (18,4);
\draw[thick] (14,5) to [out=60, in=180] (21, 8.6);
\draw[thick, red] (20, 4) to [out=60, in=180] (21, 4.5);
\draw[thick, blue] (20, 4) to [out=-60, in=180] (21, 3.5);
\draw[thick] (10.2,6.5)  to  [out=45, in = 135] (10.7,6.5)  to [out=60, in=-100]  (11,7.5) to [out=45, in=135]  (11.5,7.5) ;
\draw[thick, red] (10.2,6.5)  to  [out=-45, in = -135] (10.7,6.5)  to [out=-60, in=170]  (12.5,5.5) to [out=-45, in=-135]  (13,5.5) ;
\draw[thick, green] (11,7.5) to [out=-45, in=-135]  (11.5,7.5) to [out=-50, in=110] (12.5,5.5) to [out=45, in=135]  (13,5.5) ;
 \begin{pgfonlayer}{background}
 \fill[color=gray!50!red!10] (0,4) rectangle (2,7);
 \fill[color=gray!50!red!10] (4,4) rectangle (6,7);
 \fill[color=gray!70!red!50!green!10] (10,5) rectangle (14,9);
 \fill[color=blue!20!red!20] (18,3) rectangle (20,6);
 \fill[color=green!30] (10,9) rectangle (14,10);
  \fill[color=red!60!green!40] (10,4) rectangle (14,5);
 \end{pgfonlayer}
\end{tikzpicture}
\caption{A depiction of the expansion of the fourth moment of the coarse-grained model that
satisfy conditions (a)-(g). There are four time-space renewal sequences, each depicted in a different colour.
Different sequences visit different mesoscopic boxes $\cB_{\epsilon N}(\sfi,\sfa)$, but each visited box must
be visited by at least two sequences to give a non-zero contribution.  The first two time strips are visited
by the two
renewal sequences coloured black and red, which match in the disorder they sample. These two strips
are grouped
together as a  {\it block of type} $\sfU$. The third and fourth time strips are visited by three
renewal sequences, coloured
black, red, and green, which form a {\em block of type} $\sfV$ and its width cannot exceed $4$.
Within this $\sfV$ block,
the spatial boxes of entry by the three renewals are all within distance $2M_\eps$ of each other, 2
of which match exactly.
The last time strip is only visited by two renewal sequences, coloured blue and red, which also forms a
$\sfU$ block. }
\label{CG-figB}
\end{figure}
%%%%%%%%%%%%%%%%%%%%%%%%%%FIGURE%%%%%%%%%%%%%%%%%%%%

We will rearrange the expansion of \eqref{eq:Mepsh} as follows:
\begin{itemize}
\item[(1)] Sum over the set $\bigcup_{j=1}^4 \{\sfi^j_1, \ldots, \sfi^j_{r_j}\}=:\{n_1, \ldots, n_r\}$.
\item[(2)] For each time index $n_i$, $1\leq i\leq r$, sum over the set of indices
$J_i\subset \{1, \ldots, 4\}$, which determine the renewal sequences that visit time interval $n_i$.
\item[(3)] For each $j\in J_i$, i.e., a renewal sequence that visits time interval $n_i$,  sum over the
indices $(\sfa^j_i, \sfb^j_i)$ that determine the spatial boxes of entry and exit in that time interval.
\end{itemize}
Given a choice of these summation indices, the summand contains a product of coarse grained
disorder variables of either the form
$\Theta^{\rm (cg)}_{N, \eps}((n_i, n_i), (\sfa_i, \sfb_i))=: \Theta^{\rm (cg)}_{N, \eps}(n_i; \sfa_i, \sfb_i)$
or the form $\Theta^{\rm (cg)}_{N, \eps}((n_i, n_j), (\sfa_i, \sfb_i), (\sfa_j, \sfb_j))$, connected by
heat kernels $g_{\frac{1}{2}(n_k -n_l)}\, (\sfa_k -\sfb_l)$. For such a product to have non-zero
expectation, we have the following constraints (see Figure~\ref{CG-figB}):
\begin{itemize}
\item[(a)] $|J_i|\geq 2$ for each $1\leq i\leq r$;

\item[(b)] If $|J_i|=2$, say $J_i=\{k, l\}\subset \{1, \ldots, 4\}$, then we must have $\sfa^k_i=\sfa^l_i$
and $\sfb^k_i=\sfb^l_i$;

\item[(c)] If $|J_i|\geq 3$, then for each sequence $k \in J_i$, there must be another sequence $l\in J_i$
such that $|\sfa^k_i-\sfa^l_i|\leq 2M_\eps$, where $M_\eps=\log\log \frac{1}{\eps}$
as in \eqref{eq:KM}.
\end{itemize}
If (c) is violated, then by the spatial constraint in the definition of $\Theta^{\rm (cg)}_{N, \eps}$ in
\eqref{eq:Theta}, there will be a coarse-grained disorder variable
visiting time interval $n_i$, which is independent of all other coarse-grained disorder variables in the
product, and hence leads to zero expectation.

The summation constraints $\bcA_\epsilon^{(\notri)}$ and $\bcA_{\epsilon; \sfb, \sfc}^{(\diff)}$
(see \eqref{eq:bcA} and \eqref{eq:bcA2}) in the definition \eqref{eq:Zcg-gen2} of the coarse-grained
model implies the following additional constraints on the summation indices $r$, $(n_i)_{1\leq i\leq r}$,
$(J_i)_{1\leq i\leq r}$, and $(\sfa^j_i, \sfb^j_i)_{1\leq i\leq r, j\in J_i}$:
\begin{itemize}
\item[(d)] For all $1\leq i\leq r$ and each renewal sequence with index
$j\in J_i$, $|\sfb^j_i-\sfa^j_i|\leq M_\eps$;

\item[(e)] For $1\leq j\leq 4$ and $1\leq i_1 <i_2\leq r$, if $j\in J_{i_1} \cap J_{i_2}$ and $j\notin J_i$ for
all $i_1<i<i_2$ (namely renewal sequence $j$ visits the mesoscopic time intervals with indices $n_{i_1}$
and $n_{i_2}$, but nothing in between), then $|\sfa^j_{i_2}-\sfb^j_{i_1}|\leq M_\eps \sqrt{n_{i_2}-n_{i_1}}$;

\item[(f)] $K_\eps \leq n_1 <n_2 <\cdots <n_r \leq \frac{1}{\eps} - K_\eps$,
where $K_\eps=(\log \frac{1}{\eps})^6$ as in \eqref{eq:KM};

\item[(g)] $(n_1, \ldots, n_r)$ can be partitioned into consecutive stretches $\cD_1, \ldots, \cD_m$
such that each $\cD_i$ consists of consecutive integers, with a gap between $\cD_i$ and
$\cD_{i+1}$. Then each $\cD_\cdot=(n_{i}, n_{i+1}=n_i+1, \ldots, n_{j} = n_i + (j-i))$ has width
$n_{j}-n_{i}+1\leq 4$, since $|J_{n_l}|\geq 2$ for $i\leq l \leq j$ (namely the mesoscopic time
interval with index $n_l$ is visited by at least two renewal sequences), and each sequence can visit at
most two mesoscopic time intervals with indices among $n_{i}, n_{i+1}, \ldots, n_{j}$ by the constraint
$\bcA_\epsilon^{(\notri)}$.
\end{itemize}
Conditions (d)-(e) follow from the definitions of $\bcA_\epsilon^{(\diff)}$ in \eqref{eq:bcA2} and $\Theta^{\rm (cg)}_{N, \eps}$ in \eqref{eq:Theta}, while conditions (f)-(g) follow from the definition of $\bcA_\epsilon^{(\notri)}$ in \eqref{eq:bcA}.

To handle the dependency among the coarse-grained disorder variables in the expansion of \eqref{eq:Mepsh}, we perform a further resummation. First partition $(n_1, \ldots, n_r)$ into consecutive stretches $\cD_1, \ldots, \cD_m$ as in (g), so that $\{n_1, \ldots, n_r\} = \bigcup_{i=1}^m \cD_i$. For each $\cD_i$, let $\tilde J_i :=\bigcup_{j \in \cD_i} J_j$, which records which of the 4 renewal sequences visits the stretch $\cD_i$. Next we group together consecutive $\cD_{i_1}, \cD_{i_1+1}, \ldots, \cD_{i_2}$ with the same $\tilde J_i=\{k, l\}$ for some $k\neq l\in \{1, \ldots, 4\}$, and only keep track of $s:=\min \cD_{i_1}$ and $t:=\max\cD_{i_2}$, thus effectively replacing $\bigcup_{i_1\leq i\leq i_2} \cD_i$ by $[s,t]$. This allows us to identify from $(n_1, \ldots, n_r)$ a sequence of disjoint time intervals (which we call blocks) $\cI_i=[s_i, t_i]\cap\N$, $1\leq i\leq k$, each associated with a label set $\cJ_i\subset \{1, \ldots, 4\}$. Some of these intervals arise from $\bigcup_{i_1\leq i\leq i_2} \cD_i$ as above, which are visited by exactly 2 renewal sequences, the rest coincide with the original $\cD_i$'s. We can then rewrite the expansion of \eqref{eq:Mepsh} as follows:
\begin{itemize}
\item[(1')] Sum over integers $K_\eps<s_1\leq t_1 <s_2\leq t_2<\cdots <s_k\leq t_k < \tilde n\leq \frac{2}{\eps}$, with $s_{i+1}-t_i\geq 2$ for each $i$ (recall the summation index $\tilde n$ from \eqref{eq:Mepsh}). Denote $\cI_i:=[s_i, t_i]\cap \N$.
\item[(2')] For each block $\cI_i$, sum over the set of indices $\cJ_i \subset \{1, \ldots, 4\}$ with $|\cJ_i|\geq 2$. If $|\cJ_i|=2$, we call $\cI_i$ a {\em block of type} U  because it leads to contributions similar to $\sfU_s(\bz^1, \bz^2)$ in \eqref{U-op} (see also \eqref{U-diagram}); otherwise we call it a {\em block of type} V. There are no consecutive blocks $\cI_i$, $\cI_{i+1}$ of both type U with the same label set $\cJ_i=\cJ_{i+1}$, and each block $\cI_i$ of type V must have length $|\cI_i|\leq 4$.
See Figure~\ref{CG-figB} for an illustration.
\item[(3')] For each block $\cI_i$ and each renewal sequence $j\in \cJ_i$ that visits block $\cI_i$, sum over time-space indices $(\sigma^j_i, \sfa^j_i)$ and $(\tau^j_i, \sfb^j_i)$ with $s_i\leq \sigma^j_i \leq \tau^j_i \leq t_i$ and $\sfa^j_i, \sfb^j_i\in \Z^2$, which identifies the mesoscopic time-space blocks of entry and exit by the $j$-th renewal sequence in the time interval $\cI_i=[s_i, t_i]\cap\N$.
\end{itemize}
The constraints imposed in (d)-(g) carry over, so we do not repeat them here.

To rewrite the expansion of  \eqref{eq:Mepsh} in a form that fits the framework developed in the proof
of Theorem~\ref{th:mom}, we will carry out the following steps, that we describe below.
\begin{itemize}
\item[\bf (A)] To decouple different blocks, replace each coarse-grained disorder variable
$\Theta^{\rm (cg)}_{N, \eps}$ (it will arise as a summand in \eqref{eq:Theta2t}) that visits two
consecutive blocks $\cI_i$ and $\cI_{i+1}$ by the product of two coarse-grained disorder variables
of the form
$\Theta^{\rm (cg)}_{N, \eps}(\vec \sfi, \vec \sfa)$ with $|\vec \sfi|=1$, joined by a heat kernel.

\item[\bf (B)] Bound the moments of the effective disorder variable
associated with each U block and V block.

\item[\bf (C)] Modification of the heat kernels $g_{\frac{1}{2} (s_{i+1}-t_i)}(\cdot)$ connecting different blocks. In particular, carry out a Chapman-Kolmogorov type decomposition for the heat kernels as in the proof of Theorem~\ref{th:mom}, so that we sum over the spatial locations for all 4 renewal sequences at the beginning and end of each block $\cI_i$.
\end{itemize}
This rewriting will introduce a constant factor for each block, but it will not affect our final result. We now give the details for {\bf (A)-(C)}.

\smallskip

{\bf (A)} Note the technical complication that given a realisation of the summation indices in (1')--(3'), there could be coarse-grained disorder variables
$\Theta^{\rm (cg)}_{N, \eps}((\tau^j_{i_1}, \sigma^j_{i_2}), (\sfa^j_{i_1}, \sfb^j_{i_1}), (\sfa^j_{i_2}, \sfb^j_{i_2}))$ (see \eqref{eq:Theta2t}) that visit two distinct blocks $\cI_{i_1}=[s_{i_1}, t_{i_1}]\cap \N$ and $\cI_{i_2}=[s_{i_2}, t_{i_2}]\cap \N$ for some $i_1<i_2$, due to the contribution from the $j$-th renewal sequence for some $j\in \cJ_{i_1}\cap \cJ_{i_2}$. In particular, $\tau^j_{i_1} \in \cI_{i_1}$  and $\sigma^j_{i_2} \in \cI_{i_2}$. Recall from \eqref{eq:Theta2t}  and \eqref{eq:Theta} that
\begin{equation}\label{eq:Theta2texp}
\begin{aligned}
& \Theta^{\rm (cg)}_{N, \eps}((\tau^j_{i_1}, \sigma^j_{i_2}), (\sfa^j_{i_1}, \sfb^j_{i_1}), (\sfa^j_{i_2}, \sfb^j_{i_2})) \\
=\ &
\frac{2}{\epsilon N} \!\!\!
	\sum_{\substack{(d,x) \in \cB_{\epsilon N}(\tau^j_{i_1}, \sfa^j_{i_1})\\
	(f',y') \in \cB_{\epsilon N}(\sigma^j_{i_2}, \sfb^j_{i_2})}}
	\sum_{\substack{(f,y) \in \cB_{\epsilon N}(\tau^j_{i_1}, \sfb^j_{i_1})\\
	(d',x') \in \cB_{\epsilon N}(\sigma^j_{i_2}, \sfa^j_{i_2})\\   d \le f, \, d' \le f'}}
	 X^{(\diff)}_{d, f}(x, y) \; q_{f, d'}(y, x') \; X^{(\diff)}_{d',f'}(x',y').
\end{aligned}
\end{equation}
Note that by the definition of a U block, if $\cI_{i_1}$ is a block of type U, then we must have $\tau^j_{i_1}=t_{i_1}$, the last time index in the block $\cI_{i_1}$; while if $\cI_{i_1}$ is a block of type V, then we must have $\tau^j_{i_1}\geq s_{i_1}\geq t_{i_1}-3$ because V blocks of length at most $4$. Similarly, if $\cI_{i_2}$ is of type of U, then we must have $\sigma^j_{i_2}=s_{i_2}$, the first time in the block $\cI_{i_2}$;
while if $\cI_{i_2}$ is type V, then we must have $\sigma^j_{i_2}\leq t_{i_2}\leq s_{i_2} + 3$. Therefore $d'-f\leq (s_{i_2}-t_{i_1}+7)\eps N$.
On the other hand,  $\cI_{i_1}$ and $\cI_{i_2}$ are distinct blocks and hence $s_{i_2}-t_{i_1}\geq 2$ and $d'-f\geq \eps N$. We can therefore apply Lemma~\ref{lem:ker} with $m=\eps N$, $n_1=d'-f$, $n_2=s_{i_2}-t_{i_1}$ and $\rho_1=10$ to bound
\begin{equation}\label{eq:q2g1}
\sup_{\substack{(f,y) \in \cB_{\epsilon N}(\tau^j_{i_1}, \sfb^j_{i_1})\\
	(d',x') \in \cB_{\epsilon N}(\sigma^j_{i_2}, \sfa^j_{i_2})}}   q_{f, d'}(y, x')
\leq \frac{C}{\eps N}\,  g_{10(s_{i_2} -t_{i_1})}\, (\sfa^j_{i_2} -\sfb^j_{i_1}).
\end{equation}
Applying this bound in \eqref{eq:Theta2texp} then allows us to make the replacement  (recall the definition of $\Theta^{\rm (cg)}_{N, \eps}$ from \eqref{eq:Theta})
\begin{equation}\label{eq:Th2Th}
\begin{aligned}
&\Theta^{\rm (cg)}_{N, \eps}((\tau^j_{i_1}, \sigma^j_{i_2}), (\sfa^j_{i_1}, \sfb^j_{i_1}), (\sfa^j_{i_2}, \sfb^j_{i_2}))  \\
\stackrel{\leq}{\rightsquigarrow}  \ & C \Theta^{\rm (cg)}_{N, \eps}(\tau^j_{i_1}; \sfa^j_{i_1}, \sfb^j_{i_1})\  g_{10(s_{i_2} -t_{i_1})}\, (\sfa^j_{i_2} -\sfb^j_{i_1}) \ \Theta^{\rm (cg)}_{N, \eps}(\sigma^j_{i_2}; \sfa^j_{i_2}, \sfb^j_{i_2}).
\end{aligned}
\end{equation}
Of course this is not an upper bound since $\Theta^{\rm (cg)}_{N, \eps}$ could be negative. However, when we compute the moment in \eqref{eq:Mepsh}, we end up with products of the moments of $\Theta^{\rm (cg)}_{N, \eps}$'s and its constituent $\xi_N$'s, which are then be bounded by their absolute values. Applying \eqref{eq:q2g1} at this point gives a true upper bound, which has the same effect as making the replacement \eqref{eq:Th2Th} in the expansion before taking expectation, and then compute the moment as in \eqref{eq:Mepsh}.  To keep the notation simple, we will assume this replacement from now on, so that the expansion of \eqref{eq:Mepsh} now contains only $\Theta^{\rm (cg)}_{N, \eps}(\sfi; \sfa, \sfb)$ that visits a single mesoscopic time interval $\cT_{\eps N}(\sfi)$, which simplifies the expansion from \eqref{eq:Mepsh}. The cost is replacing some heat kernel $g_{\frac{\sfi}{2}}(\cdot)$ (more accurately, the associated random walk transition kernel) by $C g_{10\sfi}(\cdot)$ as in \eqref{eq:q2g1}.

\smallskip

{\bf (B)} We now consider a U block $\cI=[s, t]\cap \N$. Assuming w.l.o.g.\ $\cJ=\{1,2\}$ so that only renewal sequences 1 and 2 in the expansion of \eqref{eq:Mepsh} visit block $\cI$. Let $\sfa, \sfb\in \Z^2$ be spatial indices for the mesoscopic boxes of entry and exit in the time interval $\cI$. Then the coarse-grained disorder variables $\Theta^{\rm (cg)}_{N, \eps}$ visited by renewal sequences 1 and 2 in the time interval $\cI$ must match perfectly in order to have non-zero expectation. Taking expectation in \eqref{eq:Mepsh}, each U block in the expansion therein leads to the following quantity analogous to  $\overline{U}_N(n,x)$ defined in \eqref{eq:barU} and \eqref{U-diagram}:
\begin{equation}\label{eq:defUcg}
\overline{U}^{\rm (cg)}_{N, \eps}(t-s, \sfb-\sfa) :=  \!\!\!\!\!\!\!\!\!\!\!\!\!\! \sum_{\substack{
(\vec\sfi_1, \ldots, \vec\sfi_r) \in  \bcA_\epsilon^{(\notri)} \\
(\vec\sfa_1, \ldots, \vec\sfa_r) \in  \bcA_\epsilon^{(\diff)} \\
\sfi_1=s, \sfi_r'=t, \sfa_1=\sfa, \sfa_r'=\sfb
}} \!\!\!\!\!\!\!\!\!\!\!
        \bbE[\Theta^{\rm(cg)}_{N,\eps}(\vec\sfi_1, \vec\sfa_1)^2] \prod_{j=2}^r
	g^2_{\frac{1}{2}(\sfi_j -\sfi_{j-1}')} (\sfa_j -\sfa_{j-1}') \bbE[\Theta^{\rm(cg)}_{N,\eps}(\vec\sfi_j, \vec\sfa_j)^2],
\end{equation}
where $\bcA_\epsilon^{(\notri)}$ and $\bcA_\epsilon^{(\diff)}$ are defined in \eqref{eq:bcAbis} and \eqref{eq:bcA2bis}. Because the sum above is a sum over finitely many terms, by Lemma~\ref{lem:theta}, the following limit exists
\begin{equation}\label{eq:Ucg}
\overline{U}^{\rm(cg)}_{\infty, \eps}(t-s, \sfb-\sfa) :=\lim_{N\to\infty} \overline{U}^{\rm(cg)}_{N, \eps}(t-s, \sfb-\sfa).
\end{equation}

We next consider a V block $\cI=[s,t]\cap\N$ with size $t-s+1\leq 4$. Let $\cJ\subset \{1, 2,3,4\}$ denote the set of renewal sequences that visit $\cI$. To have non-zero expectation in the expansion of \eqref{eq:Mepsh}, we must have $|\cJ|\geq 3$ ($|\cJ|= 2$ would make it a U block instead). Each renewal sequence can visit at most two mesoscopic time intervals with indices in $\cI$. For each renewal sequence $j\in \cJ$ that visits block $\cI$, let $(\sigma^j, \sfa^j)$ and $(\tau^j, \sfb^j)$ be
the indices of the mesoscopic time-space boxes of entry and exit in $\cI$. In the expectation in \eqref{eq:Mepsh}, such a V block then leads to the following factor
\begin{align*}
V^{\rm (cg)}_{N, \eps}((\sigma^j, \tau^j, \sfa^j, \sfb^j)_{j\in\cJ}) & :=
\bbE\Big[\prod_{j\in \cJ} \Theta^{\rm(cg)}_{N, \eps}((\sigma^j, \tau^j), (\sfa^j, \sfb^j))\Big]  \\
& \leq  \prod_{j\in \cJ} \ind_{\{\exists\, j\neq k\in \cJ, |\sfa^j-\sfa^{k}|\leq 6M_\eps\}} \bbE\Big[\Theta^{\rm(cg)}_{N, \eps}((\sigma^j, \tau^j), (\sfa^j, \sfb^j))^4\Big]^{\frac14},
\end{align*}
where the indicators follow from the local dependence of $\Theta^{\rm (cg)}_{N, \eps}$ from its definition in \eqref{eq:Theta}. Applying \eqref{eq:Theta4.1}
with $|\vec \sfi|\leq 4$ (since V blocks have length at most $4$) then gives
\begin{align}
\limsup_{N\to\infty} V^{\rm (cg)}_{N, \eps}((\sigma^j, \tau^j, \sfa^j, \sfb^j)_{j\in\cJ})
& \leq \frac{C}{(\log \frac{1}{\eps})^{\frac{|\cJ|}{4}}} \prod_{j\in \cJ} \ind_{\{\exists\,  k\in \cJ, k\neq j, |\sfa^j-\sfa^{k}|\leq 6M_\eps; \atop |\sfb^j-\sfa^j|\leq M_\eps\}} e^{-|\sfb^j-\sfa^j|}  \notag \\
& =: V^{\rm (cg), \cJ}_{\infty, \eps}(\sfa, \sfb). \label{eq:Vcg}
\end{align}

\smallskip

{\bf (C)} We next modify the heat kernels connecting different blocks $\cI_i$. First, we will contract each V block $\cI_i=[s_i, t_i]\cap \N$ into a block of size $1$. Note that every heat kernel in the expansion connects two different blocks $\cI_{i_1}=[s_{i_1}, t_{i_1}]\cap \N$ and $\cI_{i_2}=[s_{i_2}, t_{i_2}]\cap\N$, $i_1<i_2$, and are of the form $g_{\frac{1}{2}(\sigma_{i_2}-\tau_{i_1})}(\sfa_{i_2}-\sfb_{i_1})$ for some $\tau_{i_1}\in\cI_{i_1}$ and $\sigma_{i_2}\in \cI_{i_2}$ with $|\sfa_{i_2}-\sfb_{i_1}|\leq M_\eps \sqrt{\sigma_{i_2}-\tau_{i_1}}$. The heat kernel $g_{\frac{1}{2}(\sigma_{i_2}-\tau_{i_1})}(\cdot)$ from time $\tau_{i_1}$ to $\sigma_{i_2}$ may jump over multiple blocks of type V. If we contract the time span $[\tau_{i_1}, \sigma_{i_2}]$ of the heat kernel by shrinking each block of type V that intersects $[\tau_{i_1}, \sigma_{i_2}]$ into a block of size 1, and let $u$ denote the length of the reduced time span for the heat kernel, then $u\geq \frac{1}{4}(\sigma_{i_2}-\tau_{i_1})$ since blocks of type V have length at most 4. Therefore
$$
g_{\frac{1}{2}(\sigma_{i_2}-\tau_{i_1})}(\sfa_{i_2}-\sfb_{i_1}) \leq 8 g_{2u}(\sfa_{i_2}-\sfb_{i_1}).
$$
The heat kernels introduced in \eqref{eq:q2g1} are of the form $g_{10(\sigma_{i_2}-\tau_{i_1})}(\sfa_{i_2}-\sfb_{i_1})$ and can similarly be bounded by $8 g_{80u}(\sfa_{i_2}-\sfb_{i_1})$. For consistency, we will further bound $g_{2u}(\cdot) \leq 20 g_{80u}(\cdot)$. This shows that at the cost of introducing a constant factor $C$ for each block $\cI_i$, we can assume from now on that all blocks $\cI_i$ of type V have length 1, namely $s_i=t_i$, and all heat kernels appearing in the expansion (as an upper bound for the expansion of \eqref{eq:Mepsh}) are of the form $g_{80(s_{i_2}-t_{s_{i_1}})}(\cdot)$.

Lastly, we perform a Chapman-Kolmogorov type decomposition for each heat kernel $g_{80(s_{i_2}-t_{i_1})}(\sfb_{i_1}, \sfa_{i_2}) :=g_{80(s_{i_2}-t_{i_1})}(\sfa_{i_2}-\sfb_{i_1})$ at each $s_j, t_j \in (t_{i_1}, s_{i_2})\cap \N$, similar to what was done in the proof of Theorem~\ref{th:mom}. To simplify notation, let $u_0=t_{i_1}, u_1, \ldots, u_{k-1}, u_k=s_{i_2}$, with $u_1, \ldots, u_{k-1}$ being the times at which we want to perform the decomposition. Let $\sfx_0:=\sfb_{i_1}$, $\sfx_k:=\sfa_{i_2}$. Then we can bound
\begin{align*}
g_{80(u_k-u_0)}(\sfx_0, \sfx_k) & = \idotsint\limits_{x_1, \ldots, x_{k-1}\in \R^2}  g_{80(u_1-u_0)}(\sfx_0, x_1)\cdots g_{80(u_k-u_{k-1})}(x_{k-1}, \sfx_k) {\rm d}x_1\cdots {\rm d}x_{k-1} \\
& \leq \sum_{\sfx_1, \ldots, \sfx_{k-1} \in \Z^2} \prod_{i=1}^k \big(2 g_{80(u_i-u_{i-1})}(\sfx_{i-1}, \sfx_i)\big),
\end{align*}
where we have discretized the spatial integral into a sum over the lattice and introduced a factor 2 for each intermediate time $u_i$, $1\leq i<k$, as a crude upper bound.

The steps {\bf (A)-(C)} we have performed so far basically allow
us to bound the expansion \eqref{eq:Mepsh} in a form that is similar to \eqref{eq:MNh3}, and ready to
lead to the analogue of \eqref{eq:Mbd1}. The U blocks we have introduced correspond to
$\sfU^I$ introduced in \eqref{U-op}, while V blocks correspond to the disorder variable $\xi$, which
even after contracting each V block into a block length 1, still has non-trivial spatial dependence. Due to
the heavy notation, we will not write down the analogue of  \eqref{eq:MNh3} here. Instead, we
explain below how the analogues of \eqref{eq:Mbd1} and \eqref{eq:Mbd2} can be derived.

\medskip
\noindent
{\bf Part II. Bounds.} Based on the considerations above, we can write down an upper bound for \eqref{eq:Mepsh} that allows us to adapt the proof of Theorem~\ref{th:mom}. First, we introduce some notation that parallels those in the proof of Theorem~\ref{th:mom}. Similar to \eqref{eq:Qdef}, for $\sfa=(\sfa^j)_{1\leq j\leq 4},  \sfb=(\sfb^j)_{1\leq j\leq 4} \in (\Z^2)^4$ and $\varphi:\Z^2\to\R$, define
\begin{gather*}
\cQ_t(\sfa, \sfb):= \prod_{j=1}^4 g_{80t}(\sfa^j, \sfb^j) , \\
\cQ_t(\varphi, \sfb) := \prod_{j=1}^4 \Big(\sum_{\sfc^j\in \Z^2}  \varphi(\sfc^j) g_{80t}(\sfb^j-\sfc^j)\Big)
, \quad \cQ_t(\sfa, \psi) := \prod_{j=1}^4 \Big(\sum_{\sfc^j\in \Z^2} g_{80t}(\sfc^j-\sfa^j)\psi(\sfc^j)\Big).
\end{gather*}
Similar to \eqref{U-op}, for $\cJ=\{k, l\}\subset \{1, 2,3,4\}$, define (recall $\overline{U}^{{\rm(cg)}}_{\infty, \eps}$ from \eqref{eq:Ucg})
\begin{equation}\label{eq:sfUcg}
\sfU^{\rm(cg), \cJ}_{\infty, \eps} (t; \sfa, \sfb) := \ind_{\{\sfa^k=\sfa^l, \sfb^k=\sfb^l\}}\overline{U}^{\rm(cg)}_{\infty, \eps} (t, \sfb^k-\sfa^k) \prod_{j\notin \cJ} g_{80t}(\sfa^j, \sfb^j),
\end{equation}
with $g_0(\sfa^j, \sfb^j):=\ind_{\{\sfa^j=\sfb^j\}}$.  Similarly, define (recall $\sfV^{{\rm(cg)}, \cJ}_{\infty, \eps}(\sfa, \sfb)$ from \eqref{eq:Vcg})
\begin{equation}\label{eq:sfVcg}
\sfV^{\rm(cg), \cJ}_{\infty, \eps} (\sfa, \sfb) := V^{{\rm(cg)}, \cJ}_{\infty, \eps} (\sfa, \sfb) \prod_{j\notin \cJ} \ind_{\{\sfa^j=\sfb^j \}}.
\end{equation}
To be consistent with the notation in the proof of Theorem~\ref{th:mom}, we will replace $\cJ\subset \{1, 2,3,4\}$, which determines which mesoscopic renewal sequences collect coarse-grained disorder variables at time $t$, by a partition $I\vdash\{1, 2,3,4\}$, which specifies which sequences interact with each other through the coarse-grained disorder variables at time $t$. In particular, in $\sfU^{\rm(cg), \cJ}_{\infty, \eps}$ corresponding to a U block, the associated partition $I$ consists of $\cJ$ and $\{j\}$ for $j\notin \cJ$, so that $|I|=3$. In $V^{\rm(cg), \cJ}_{\infty, \eps}$ corresponding to a V block, if $|\cJ|=3$, then the associated partition $I$ consists of $\cJ$ and $\{j\}$ for $j\notin\cJ$, so $|I|=2$; if $|\cJ|=4$, then the associated partition $I$ is given by the connected components of $\{1,2,3,4\}$ with an edge between $i$ and $j$ whenever $|\sfa^i-\sfa^j|\leq 6M_\eps$, and there can be no singletons in the partition to ensure that $\sfV^{\rm(cg), \cJ}_{\infty, \eps}\neq 0$ (in particular, $|I|=1$ or $2$).

From now on, we write $\sfU^{{\rm(cg)}, I}_{\infty, \eps} (t; \sfa, \sfb)$ and $\sfV^{{\rm(cg)}, I}_{\infty, \eps} (t; \sfa, \sfb)$, replacing $\cJ$ by the associated partition $I\vdash \{1, 2,3,4\}$. Define
\begin{equation}
\sfW^{{\rm(cg)}, I}_{\infty, \eps}(t, \sfa, \sfb) := \ind_{\{|I|=3\}} \sfU^{{\rm(cg)}, I}_{\infty, \eps}(t, \sfa, \sfb) + \ind_{\{|\I|<3\}} \sfV^{{\rm(cg)}, I}_{\infty, \eps}(\sfa, \sfb).
\end{equation}

We can now write down the following upper bound for \eqref{eq:Mepsh} in the limit $N\to\infty$:
\begin{equation}\label{eq:Mepsh2}
\begin{aligned}
& \cM^{\varphi, \psi}_{\infty, \eps} := \limsup_{N\to\infty} \cM^{\varphi, \psi}_{N, \eps}   \leq  \Vert \psi\Vert_\infty^4\, \eps^5  \sum_{r=1}^\infty C^r
\!\!\!\!\!\!\!\!\!\!\!\! \sum_{\substack{I_1, \ldots, I_r \vdash \{1,2,3,4\}, |I_i|\leq 3\\
                            K_\eps <s_1\leq t_1 <s_2\leq \cdots <s_r\leq t_r <s_{r+1}\leq \frac{2}{\eps} \\
                            \sfa_1, \sfb_1, \ldots, \sfa_r, \sfb_r\in (\Z^2)^4}
          }
          \\
& \qquad \qquad \cQ_{80s_1}(\varphi_{\epsilon}, \sfa_1)  \prod_{i=1}^{r} \sfW^{{\rm(cg)}, I_i}_{\infty, \eps}(t_i-s_i, \sfa_i, \sfb_i) \cQ_{80(s_{i+1}-t_i)}(\sfb_i, \sfa_{i+1}),
\end{aligned}
\end{equation}
where $C$ does not depend on $\eps$, $\sfa_{r+1}:=\ind_{B_\eps}$, the sum in \eqref{eq:Mepsh2}
contains no consecutive $I_i=I_{i+1}$ with $|I_i|=3$, and when $|I_i|\leq 2$, we must have $s_i=t_i$ thanks to the contraction of the V blocks.

For $\lambda>0$ to be chosen later, we can insert the factor
$e^{\frac{2\lambda}{\eps}} e^{-\lambda \sum_{i=1}^{r+1} (s_i-t_{i-1})-\lambda \sum_{i=1}^r (t_i-s_i)}\geq1$
into \eqref{eq:Mepsh2} to obtain a bound similar to \eqref{eq:Mbd1}:
\begin{equation}\label{eq:Mepsh3}
\big|\cM_{\infty,\eps}^{\varphi, \psi}\big| \leq e^{\frac{2\lambda}{\eps}} \Vert \psi \Vert_\infty^4 \eps^5 \sum_{r=1}^\infty C^r \sum_{I_1, \ldots, I_r} \!\!\! \langle \varphi_\eps^{\otimes 4},  \sfP^{*;I_1}_{\lambda, \eps} \sfP^{I_1;I_2}_{\lambda, \eps} \cdots \sfP^{I_{r-1};I_r}_{\lambda, \eps} \cQ^{I_r;*}_{\lambda, \eps} \ind^{\otimes 4}_{B_\eps} \rangle,
\end{equation}
where given two partitions $I, J \vdash \{1, \ldots, 4\}$, with $I=*$ denoting the partition consisting of singletons, $\sfP^{I, J}_{\lambda, \eps}$ are integral operators with kernels given by
\begin{equation}\label{eq:P1cg}
\begin{aligned}
\sfP^{I; J}_{\lambda, \eps} :=
\left\{
\begin{aligned}
\cQ_{\lambda, \eps}^{I; J} \sfV_{\lambda, \eps}^J \quad &  \mbox{if } |J| <3, \\
\cQ_{\lambda, \eps}^{I; J} \sfU_{\lambda, \eps}^J \quad & \mbox{if } |J| = 3,
\end{aligned}
\right.
\end{aligned}
\end{equation}
where for $\sfa, \sfb \in (\Z^2)^4$,
\begin{equation}\label{eq:QUVcg}
\begin{aligned}
&\cQ_{\lambda, \eps}^{I; J} (\sfb, \sfa) := \ind_{\{\sfb\sim I, \sfa\sim J \}} \sum_{n=1}^{2/\eps} e^{-\lambda n} \cQ_{80n}(\sfb, \sfa) ,    \qquad \qquad & & \\
& \sfU_{\lambda, \eps}^{I}(\sfa, \sfb) := \ind_{\{\sfa, \sfb\sim I \}} \sum_{n=0}^{2/\eps} e^{-\lambda n} \sfU^{{\rm(cg)}, I}_{\infty, \eps}(n, \sfa, \sfb),  & & |I|=3, \\
& \sfV_{\lambda, \eps}^{I}(\sfa, \sfb) := \ind_{\{\sfa, \sfb\sim I \}} \sfV^{{\rm(cg)}, I}_{\infty, \eps}(\sfa, \sfb),  & &  |I|<3.
\end{aligned}
\end{equation}
Here, given $\sfa=(\sfa^j)_{1\leq j\leq 4} \in (\Z^2)^4$ and a partition $I\vdash \{1, 2,3,4\}$, with $k\stackrel{I}{\sim} l$ denoting $k, l$ belonging to the same partition element in $I$,
\begin{equation}\label{eq:asimI}
\sfa \sim I \quad \mbox{denotes the constraint }
\left\{
\begin{aligned}
&\forall\, k\stackrel{I}{\sim} l , \ \sfa^k=\sfa^l  \qquad \quad  & \mbox{ if } |I| =3, \\
&\forall\, k\stackrel{I}{\sim} l , \ |\sfa^k-\sfa^l| \leq 20 M_\eps   & \mbox{ if } |I| \leq 2.
\end{aligned}
\right.
\end{equation}
We will denote $(\Z^2)^4_I := \{ \sfb \in (\Z^2)^4: \sfb \sim I\}$. The main difference
from \eqref{eq:Mbd1} and \eqref{eq:QU} is that the spatial constraint there are delta functions
(see \eqref{eq:xsimI}), that is, $\sfa \sim I$ with $M_\eps$ set to $0$. Here we also have the
additional operator $\sfV_{\lambda, \eps}^{J}(\sfa, \sfb)$ because we allow $\sfb\neq \sfa$. The
analogue of $\bbE[\xi^p]$ are the moments of the coarse-grained disorder variables
$\Theta^{\rm (cg)}_{N, \eps}$, which are now captured in $\sfV_{\lambda, \eps}^{J}$
and $\sfU_{\lambda, \eps}^{J}$.

As in \eqref{eq:hatQU}, for a weight function
$w_\eps:\Z^2 \to \R$, see \eqref{eq:phieps-psieps}, %from $w: \R^2 \to \R$,
%$w_\eps(x):=w(\sqrt{\eps}x)$,
we define the weighted operators
\begin{align*}
\widehat \cQ_{\lambda, \eps}^{I; J} (\sfb, \sfa) & :=  \frac{w_\eps^{\otimes 4}(\sfb)}{w^{\otimes 4}_\eps(\sfa)} \, \cQ_{\lambda, \eps}^{I; J} (\sfb, \sfa) ,
\end{align*}
and define $\widehat \sfU_{\lambda, \eps}^{J}(\sfa, \sfb)$, $\widehat \sfV_{\lambda, \eps}^{J}(\sfa, \sfb)$, and $\widehat \sfP^{I, J}_{\lambda, \eps}$ similarly.   For $p, q>1$ with $\frac{1}{p}+\frac{1}{q}=1$, we can then bound \eqref{eq:Mepsh3} via the following analogue of \eqref{eq:Mbd2}:
\begin{align}\label{eq:Mepsh4}
	\big|\cM_{\infty, \eps}^{\varphi, \psi}\big|
	\,\leq\, e^{\frac{2\lambda}{\eps}} \Vert \psi \Vert_\infty^4 \eps^5 \sum_{r=1}^\infty C^r \!\!\!
	\sum_{I_1, \ldots, I_r} &
	\Big\Vert \frac{\varphi_\eps^{\otimes 4}}{w_\eps^{\otimes 4}} \Big\Vert_{\ell^p((\Z^2)^4_{I_1})}
	\, \Big\Vert \widehat \sfP^{*, I_1}_{\lambda, \eps}  \Big\Vert_{\ell^q \to \ell^q} \,
	\Big\Vert \widehat \sfP^{I_1;I_2}_{\lambda, \eps} \Big\Vert_{\ell^q\to \ell^q} \cdots  \\
	& \qquad \cdots \Big\Vert \widehat \sfP^{I_{r-1};I_r}_{\lambda, \eps} \Big\Vert_{\ell^q\to \ell^q} \,
	\Big\Vert \widehat \cQ^{I_r; *}_{\lambda, \eps} \Big\Vert_{\ell^q\to\ell^q}
	\, \Big\Vert \ind_{B_\eps}^{\otimes 4}
	w_\eps^{\otimes 4}\Big\Vert_{\ell^q((\Z^2)^4_{I_r})}, \notag
\end{align}
where we still have $(\Z^2)^4_I := \{ \sfb \in (\Z^2)^4: \sfb \sim I\}$, but the definition of the constraint $\sfb \sim I$ has changed as in \eqref{eq:asimI}. We still regard $\widehat \cQ_{\lambda, \eps}^{I; J} (\cdot, \cdot)$ and $\widehat \sfP_{\lambda, \eps}^{I; J} (\cdot, \cdot)$ as operators from $\ell^q((\Z^2)^4_J) \to \ell^q((\Z^2)^4_I)$, and similarly for $\widehat\sfU_{\lambda, \eps}^{J}$ and $\widehat\sfV_{\lambda, \eps}^{J}$.

We choose $\lambda:= \hat\lambda \eps$ with $\hat\lambda$ large but fixed so that $e^{\lambda \eps}$ remains bounded. We have the following analogue of Proposition~\ref{prop:opnorm},
where we again omitted $(\Z^2)^4_I$ from $\Vert \cdot \Vert_{\ell^p(\cdot)}$.
\begin{proposition}\label{p:opnormcg}
For some $c$ uniformly in $\lambda= \frac{\hat\lambda}{N}>0$, $\eps\in (0,1)$, and $I, J\subset \{1, \ldots, 4\}$ with $1\leq |I|, |J| \leq 3$ and $I\neq J$ when $|I|=|J|=3$, we have
\begin{gather}
	\Big\Vert \widehat \cQ^{I; J}_{\lambda, \eps} \Big\Vert_{\ell^q\to \ell^q} \leq c\ind_{\{|I|=|J|=3\}}
	+ cM_\eps^8 \ind_{\{|I|\wedge |J|\le2\}}; \label{eq:cgnorm1} \\
	\Big\Vert \widehat \cQ^{*, I}_{\lambda, \eps}  \Big\Vert_{\ell^q \to \ell^q}
%	\Big\Vert \widehat \cQ^{I;*}_{\lambda, \eps} \frac{\varphi_\eps^{\otimes 4}}{w_\eps^{\otimes 4}} \Big\Vert_{\ell^p}
	\leq c \eps^{-\frac{1}{q}}  \big(\ind_{\{|I|=3\}} +\ind_{\{|I|\le 2\}}M_\eps^4\big) ;
%	\Big\Vert \frac{\varphi_\eps}{w_\eps} \Big\Vert^4_{\ell^p},
	\label{eq:cgnorm2}  \\
	\Big\Vert \widehat \cQ^{I; *}_{\lambda, \eps}
	%\ind_{B_\eps}^{\otimes 4}	w_\eps^{\otimes 4}
	\Big\Vert_{\ell^q}  \leq c \eps^{-\frac{1}{p}}  \big(\ind_{\{|I|=3\}}
	+\ind_{\{|I|\le 2\}}M_\eps^4\big) ;
	%\big\Vert  \ind_{B_\eps} w_\eps \big\Vert^4_{\ell^q};
	\label{eq:cgnorm2.5} \\
	\Big\Vert \widehat \sfU^{I}_{\hat\lambda \eps, \eps} \Big\Vert_{\ell^q\to \ell^q}
	\leq \frac{c}{\log \hat\lambda} \quad \mbox{for } |I|=3;  \label{eq:cgnorm3} \\
	\Big\Vert \widehat \sfV^{I}_{\lambda, \eps} \Big\Vert_{\ell^q\to \ell^q}
	\leq \frac{c}{(\log \frac{1}{\eps})^{\frac34}} \quad \mbox{for } |I|\leq 2.  \label{eq:cgnorm4}
\end{gather}
\end{proposition}
We now substitute these bounds into \eqref{eq:Mepsh4} and note that when $|I|=|J|=3$, each factor $\Vert \widehat \cQ^{I; J}_{\lambda, \eps}\Vert_{\ell^q\to \ell^q}$ can be controlled by $\Vert \widehat \sfU^{I}_{\hat\lambda \eps, \eps}\Vert_{\ell^q\to \ell^q}$, and when $|I|\wedge |J|\le 2$, the powers of $M_\eps=\log\log \frac{1}{\eps}$
from \eqref{eq:cgnorm1}-\eqref{eq:cgnorm2.5} can be controlled by $\Vert \widehat \sfV^{I}_{\lambda, \eps} \Vert_{\ell^q\to \ell^q}^{1/2} \Vert \widehat \sfV^{J}_{\lambda, \eps} \Vert_{\ell^q\to \ell^q}^{1/2}$ (set $\Vert \widehat \sfV^{I}_{\lambda, \eps} \Vert_{\ell^q\to \ell^q}:=1$ if $|I|=3$). This leads to a convergent geometric series similar to \eqref{eq:Mbd3}, which gives
\begin{equation}
\begin{aligned}
\big|\cM_{\infty, \eps}^{\varphi, \psi}\big| & \leq C \eps^{\frac4p} \Big\Vert \frac{\varphi_\eps}{w_\eps} \Big\Vert^4_{\ell^p} \Vert \psi \Vert_\infty^4  \big\Vert w\ind_{B}\big\Vert^4_{\ell^q}
\end{aligned}
\end{equation}
for some $C$ depending only on $\hat\lambda$. This proves Theorem~\ref{th:cgmom}.
\end{proof}
\medskip

To conclude this section, we sketch the proof of Proposition~\ref{p:opnormcg}.

\medskip
\begin{proof}[Proof of Proposition~\ref{p:opnormcg}] We will sketch how the proof of Proposition~\ref{prop:opnorm} can be adapted to the current setting.

{\em Proof of \eqref{eq:cgnorm1}.}  First note that it is equivalent to
\begin{equation}\label{eq:cQbd1}
\sum_{\bx\in (\Z^2)^4_{I}, \by\in (\Z^2)^4_{J}} \!\!\!\!\!\!\!\! f(\bx) \cQ^{I,J}_{\lambda, \eps} (\bx, \by) \frac{w_\eps^{\otimes 4}(\bx)} {w_\eps^{\otimes 4}(\by)} \, g(\by)
 \leq c \big(\ind_{\{|I|=|J|=3\}} + M_\eps^8\ind_{\{|I|\wedge|J|<3\}}\big)\Vert f\Vert_{\ell^p} \Vert g\Vert_{\ell^q}
\end{equation}
uniformly for all $f\in \ell^p((\Z^2)^4_{I})$ and $g\in \ell^q((\Z^2)^4_{J})$. We split the region of summation into $A_\eps=\{|\bx-\by|\leq C_0/\sqrt{\eps|}\}$ and $A_\eps^c$. Note that the analogue of Lemma~\ref{lem:Green} holds for $\cQ_{\lambda, \eps}$. Therefore following the same argument as in \eqref{eq:ANc}, the region $A_\eps^c$ gives the contribution
\begin{align}
& \sum_{\bx\in (\Z^2)^4_{I}, \by\in (\Z^2)^4_{J} \atop (\bx, \by) \in A_\eps^c}  f(\bx) \cQ^{I,J}_{\lambda, \eps} (\bx, \by) \frac{w_\eps^{\otimes 4}(\bx)} {w_\eps^{\otimes 4}(\by)} \, g(\by) \notag \\
\leq\ & C \eps^3 \Big( \sum_{\bx\in (\Z^2)^4_{I}, \by\in (\Z^2)^4_{J}}  |f(\bx)|^p \, e^{-|\bx-\by|\sqrt{\eps} } \Big)^{1/p}
\Big( \sum_{\bx\in (\Z^2)^4_{I}, \by\in (\Z^2)^4_{J}}  |g(\by)|^q \, e^{-|\bx-\by|\sqrt{\eps} } \Big)^{1/q} \notag \\
\leq\ & C \eps^{3-\tfrac{|J|}{p} -\tfrac{|I|}{q}} \big(\ind_{\{|I|=|J|=3\}} + M_\eps^8 \ind_{\{|I|\wedge|J|<3\}}\big) \|f\|_{\ell^p} \, \|g\|_{\ell^q} \label{eq:Aeps}\\
\leq \ & C \|f\|_{\ell^p} \, \|g\|_{\ell^q}, \notag
\end{align}
where the spatial constraints in $\bx\in (\Z^2)^4_{I}$ and $\by\in (\Z^2)^4_{J}$ (see \eqref{eq:asimI}) led to the factor in the bracket in the third line.

In the region $A_\eps$, the factor  $w_\eps^{\otimes 4}(\bx)/w_\eps^{\otimes 4}(\by)$ is bounded. By the analogue of Lemma~\ref{lem:Green} for $\cQ_{\lambda, \eps}$, it suffices to show
\begin{equation}\label{eq:cQbd2}
\sum_{\bx\in (\Z^2)^4_{I}, \by\in (\Z^2)^4_{J}}  \frac{f(\bx)  g(\by)}{(1+\sum_{i=1}^4 |x_i-y_i|^2)^3}
 \leq c \big(\ind_{\{|I|=|J|=3\}} + M_\eps^8\ind_{\{|I|\wedge|J|<3\}}\big)\Vert f\Vert_{\ell^p} \Vert g\Vert_{\ell^q}
\end{equation}
When $|I|=|J|=3$, the proof is exactly the same as that of \eqref{ItoJ}. When $|I|, |J|<3$, we can apply H\"older to bound the l.h.s.\ of \eqref{eq:cQbd2} by
\begin{align}
& \Big(\sum_{\bx\in (\Z^2)^4_{I}, \by\in (\Z^2)^4_{J}} \frac{f(\bx)^p}{(1+\sum_{i=1}^4 |x_i-y_i|^2)^3}  \Big)^{1/p} \Big(\sum_{\bx\in (\Z^2)^4_{I}, \by\in (\Z^2)^4_{J}} \frac{g(\by)^q}{(1+\sum_{i=1}^4 |x_i-y_i|^2)^3}  \Big)^{1/q} \notag \\
\leq \ & C M_\eps^8\Vert f\Vert_{\ell^p} \Vert g\Vert_{\ell^q}.
\end{align}
When $|I|<3$ and $|J|=3$ (the case $|I|=3$ and $|J|<3$ can be treated identically), we can find $k, l\in \{1,2,3,4\}$ that belong to the same partition element in $I$, but to different partition elements in $J$; in particular,  $\bx\in (\Z^2)^4_{I}$ implies $|x_k-x_l|\leq 20 M_\eps$. Fix any $a\in (0, 1/q)$. We can then apply H\"older to bound the l.h.s.\ of \eqref{eq:cQbd2} by
$$
\left.\begin{aligned}
& \Big(\sum_{\bx\in (\Z^2)^4_{I}, \by\in (\Z^2)^4_{J}} \frac{f(\bx)^p}{(1+\sum_{i=1}^4 |x_i-y_i|^2)^3} \cdot \frac{1}{(1+|y_k-y_l|^{2a})^p} \Big)^{1/p}  \\
& \quad \times \Big(\sum_{\bx\in (\Z^2)^4_{I}, \by\in (\Z^2)^4_{J}} \frac{g(\by)^q}{(1+\sum_{i=1}^4 |x_i-y_i|^2)^3}  \cdot (1+|y_k-y_l|^{2a})^q \Big)^{1/q}
\end{aligned}
\right. \leq \  \ C M_\eps^8 \Vert f\Vert_{\ell^p} \Vert g\Vert_{\ell^q},
$$
where in the first bracket, the sum over $\by$ is uniformly bounded by the same argument as in the bound for \eqref{ItoJf}, while in the second bracket, we can distinguish between two cases: either $|y_k-y_l|\leq 40M_\eps$, in which case we apply this bound and sum over $\bx$ to get a bound of $M_\eps^{\frac{4}{q}+2a}\Vert g\Vert_{\ell^q}$; or $|y_k-y_l|>40M_\eps \geq 2|x_k-x_l|$,
in which case we apply the triangle inequality
$$
|x_k-y_k| +|x_l-y_l| \geq \frac{|x_k-y_k| +|x_l-y_l|}{2} + \frac{|y_k-y_l|}{4}
$$
and follow the same argument as for \eqref{ItoJf2} to get a bound of $M_\eps^{4/q}\Vert g\Vert_{\ell^q}$, where $M_\eps^{4/q}$ comes from summing over the redundant components of $\bx$ after selecting one component of $\bx$ for each partition element of $I$.  This concludes the proof of \eqref{eq:cgnorm1}.
\medskip

\noindent
{\em Proof of \eqref{eq:cgnorm2}-\eqref{eq:cgnorm2.5}.} Similar to \eqref{eq:hatQf},
we need to show
\begin{equation}\label{eq:hatQfcg}
	\sum_{\bx\in (\Z^2)^4_{I}, \by\in (\Z^2)^4} \!\!\!\!\!\!\!\! f(\bx)  \cQ^{I,*}_{\lambda, \eps}
	(\bx, \by)  \frac{w_\eps^{\otimes 4}(\bx)}{w_\eps^{\otimes 4}(\by)} g(\by)
	 \leq c\, \eps^{-\frac{1}{p}} \big(\ind_{\{|I|=3\}} +\ind_{\{|I|<3\}}M_\eps^4\big)
	 \Vert f\Vert_{\ell^p} \Vert g\Vert_{\ell^q}
\end{equation}
uniformly in $f\in \ell^p((\Z^2)^4_{I})$ and
$g\in \ell^q((\Z^2)^4)$. Restricted to $(\bx, \by) \in A_\eps^c$, we note that the bound \eqref{eq:Aeps} can now be replaced by
$$
	C \eps^{3-\tfrac{4}{p}} \Big(\ind_{\{|I|=3\}} \eps^{-\tfrac{3}{q}} + \ind_{\{|I|<3\}}
	M_\eps^{\frac{8-2|I|}{q}}\eps^{-\frac{|I|}{q}}\Big) \|f\|_{\ell^p} \, \|g\|_{\ell^q}
	\leq C \eps^{-\frac{1}{p}}  \|f\|_{\ell^p} \, \|g\|_{\ell^q} .
$$

Restricted to $A_\eps$, it suffices to bound the following analogue of \eqref{eq:ItoJ*}:
\begin{align}\label{eq:hatQfcg2}
	\sum_{\bx \in(\Z^4)^h_{I}\,,\,\by\in(\Z^2)^4 \atop (\bx, \by)\in A_\eps}
	\frac{f(\bx) g(\by)}{\big(1+\sum_{i=1}^4  |x_i-y_i |^2\big)^3}
	\leq C \eps^{-\frac{1}{p}} \big(\ind_{\{|I|=3\}} +\ind_{\{|I|<3\}}M_\eps^4\big)
	\Vert f\Vert_{\ell^p} \Vert g\Vert_{\ell^q} .
\end{align}
When $|I|=3$, this follows by exactly the same proof as that of \eqref{eq:ItoJ*}. When $|I|\leq 2$, the
estimate is simpler and we can apply the H\"older inequality to bound the l.h.s.\ of \eqref{eq:hatQfcg2} by
\begin{equation}\label{eq:Ito*cg}
\begin{aligned}
	\Bigg(\sum_{\bx\in (\Z^2)^4_{I}, \by\in (\Z^2)^4 \atop (\bx, \by)\in A_\eps}
	\frac{f(\bx)^p}{\big(1+\sum_{i=1}^4  |x_i-y_i |^2\big)^3}\Bigg)^{\frac{1}{p}}
	\Bigg(\sum_{\bx\in (\Z^2)^4_{I}, \by\in (\Z^2)^4 \atop (\bx, \by)\in A_\eps}
	\frac{g(\by)^q}{\big(1+\sum_{i=1}^4  |x_i-y_i |^2\big)^3}\Bigg)^{\frac{1}{q}}.
\end{aligned}
\end{equation}
Since $|I|\leq 2$, the second factor can be bounded by $C M_\eps^4 \Vert g\Vert_{\ell^q}$.
For the first factor, summing over $\by$ gives
$$
\sum_{\by\in (\Z^2)^4 \atop |\by-\bx|<C_0/\sqrt{\eps}} \frac{1}{\big(1+\sum_{i=1}^4  |x_i-y_i |^2\big)^3}
\leq  C\eps^{-1}
$$
uniformly in $\bx\in (\Z^2)^4_I$. Collecting all the bounds obtained so far then gives \eqref{eq:hatQfcg}.
\medskip

\noindent
{\em Proof of \eqref{eq:cgnorm3}.}  Assume w.l.o.g.\ that $I\vdash\{1,2,3,4\}$ consists of the partition elements $\{1,2\}, \{3\}, \{4\}$.  Recall from \eqref{eq:QUVcg} and \eqref{eq:sfUcg} that for $\bx, \by\in (\Z^2)^4_I$,
$$
 \sfU^I_{\hat \lambda\eps, \eps} (\bx, \by) = \sum_{n=0}^{2/\eps} e^{-\hat\lambda \eps n} \,\overline{U}^{{\rm(cg)}}_{\infty, \eps}(n, y_1-x_1) \prod_{i=3,4}g_{80n}(x_i, y_i).
$$
We then have the following analogues of \eqref{Chatlambda} and \eqref{Chatlambda2}:
\begin{align*}
\sum_{\by\in (\Z^2)^4_I} \sfU^I_{\hat \lambda\eps, \eps} (\bx, \by) & \leq 4 \sum_{n=0}^{2/\eps} e^{-\hat\lambda \eps n} \sum_{z\in\Z^2} \overline{U}^{{\rm(cg)}}_{\infty, \eps}(n, z) \leq \frac{C}{\log \hat\lambda},
\\
\sum_{\by \in (\Z^2)^4_I} \sfU^I_{\hat \lambda\eps, \eps} (\bx, \by) e^{C|\bx- \by|\sqrt{\eps}}
& \leq C\sum_{n=0}^{2/\eps} e^{-\hat\lambda\eps n} \sum_{z\in \Z^2} \overline{U}^{{\rm(cg)}}_{\infty, \eps}(n, z) e^{C|z|\sqrt{\eps}}
\leq \frac{C}{\log \hat\lambda},
\end{align*}
both of which follows from Lemma~\ref{th:ren-cg} by Fatou's Lemma (recall $\overline{U}^{{\rm(cg)}}_{\infty, \eps}$ from \eqref{eq:Ucg}).
The rest of the proof is exactly the same as that of \eqref{eq:norm3}.
\medskip

\noindent
{\em Proof of \eqref{eq:cgnorm4}.} Given a partition $I\vdash\{1,2,3,4\}$ with $|I|\leq 2$, recall the definition of  $\widehat \sfV^{I}_{\lambda, \eps}$ from \eqref{eq:QUVcg}. We need to show that
\begin{equation}\label{eq:hatVfcg}
\sum_{\bx, \by\in (\Z^2)_I^4} f(\bx)  \sfV^{{\rm(cg)}, I}_{\infty, \eps}(\bx, \by) g(\by) \frac{w_\eps^{\otimes 4}(\bx)}{w_\eps^{\otimes 4}(\by)}
 \leq \frac{C}{(\log \frac{1}{\eps})^{\frac34}} \Vert f\Vert_{\ell^p} \Vert g\Vert_{\ell^q}
\end{equation}
uniformly for all $f\in \ell^p((\Z^2)^4_I)$ and $g\in \ell^q((\Z^2)^4_I)$. As before, we consider the sum of $\bx, \by$ over $A_\eps=\{|\bx-\by|\leq C_0/\sqrt{\eps|}\}$ and $A_\eps^c$ separately and apply H\"older's inequality. The bound \eqref{eq:cgnorm4} will follow if we show that uniformly in $\bx\in (\Z^2)^4_I$,
\begin{align}
\sum_{\by \in (\Z^2)^4_I} \sfV^{{\rm(cg)}, I}_{\infty, \eps}(\bx, \by)  \leq  \frac{C}{(\log \frac{1}{\eps})^{\frac34}}
\quad \mbox{and} \quad \sum_{\by \in (\Z^2)^4_I} \sfV^{{\rm(cg)}, I}_{\infty, \eps}(\bx, \by) e^{C|\by-\bx| \sqrt{\eps}}  \leq \frac{C}{(\log \frac{1}{\eps})^{\frac34}}.
\end{align}
These bounds hold because \eqref{eq:sfVcg} and \eqref{eq:Vcg} imply that
$$
\sfV^{{\rm(cg)}, I}_{\infty, \eps}(\bx, \by) \leq \frac{C}{(\log \frac{1}{\eps})^{\frac34}} \prod_{i=1}^4 \ind_{\{|y_i-x_i|\leq M_\eps\}} e^{-|y_i-x_i|}.
$$
This concludes the proof of Proposition~\ref{p:opnormcg}.
\end{proof}

\section{Proof of the main results: Theorems~\ref{th:main0} and~\ref{th:main1}}
\label{s:Thm1.3}

\subsection{Proof of Theorem~\ref{th:main0}} \label{s:Thm1.1}

We can rephrase Theorem~\ref{th:main0} as follows.
\begin{theorem}\label{th:main}
Assume the same setup as in Theorem~\ref{th:main0}. Let $k\in\N$. For $i=1,\ldots, k$,
assume $0 \le s_i \le t_i < \infty$, $\varphi_i \in C_c(\R^2)$ has compact support,
and $\psi_i \in C_b(\R^2)$ is bounded. Then the following
convergence in distribution holds as $N\to\infty$:
\begin{equation}\label{eq:main-statement}
	\big( \cZ_{N, s_i, t_i}^{\beta_N}(\varphi_i, \psi_i) \big)_{i=1,\ldots, k}
	\ \Longrightarrow \
	\big( \mathscr{Z}_{s,t}^\theta(\varphi_i, \psi_i) \big)_{i=1,\ldots, k}  \,,
\end{equation}
where $\cZ^{\beta_N}_{N;\, s,t}(\varphi, \psi) :=\iint \varphi(x) \,\psi(y)
\, \cZ^{\beta_N}_{N;\, s,t}(\dd x, \dd y)$.
\end{theorem}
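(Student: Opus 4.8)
The plan is to derive Theorem~\ref{th:main} from the coarse-graining approximation (Theorem~\ref{th:cg-main}) and a stability argument for the coarse-grained model via the enhanced Lindeberg principle of Appendix~\ref{app:Lind}. By the Cram\'er--Wold device it is enough to show that, for any coefficients $\lambda_1,\dots,\lambda_k\in\R$, the real random variable $\cZ_N:=\sum_{i=1}^k \lambda_i\,\cZ^{\beta_N}_{N,s_i,t_i}(\varphi_i,\psi_i)$ converges in distribution. Fix a horizon $T\ge\max_i t_i$. The coarse-graining of Section~\ref{sec:outline} and Theorem~\ref{th:cg-main} extend to windows $[s_i,t_i]$ with $s_i>0$ (coarse-graining the time blocks contained in $(s_i/\epsilon,t_i/\epsilon]$), so for each $\epsilon\in(0,1)$ all the summands are simultaneously $L^2$-approximated by coarse-grained models that are multilinear polynomials in a \emph{single} family $\Theta^{(\cg)}_{N,\epsilon}=\{\Theta^{(\cg)}_{N,\epsilon}(\vec\sfi,\vec\sfa)\}_{(\vec\sfi,\vec\sfa)\in\bbT_\epsilon}$ indexed over the horizon $T$, see \eqref{eq:iacond} and \eqref{eq:Zcg-gen}. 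Writing $\mathscr{Z}_{\epsilon,N}$ for the corresponding linear combination of coarse-grained models evaluated at $\Theta^{(\cg)}_{N,\epsilon}$, Theorem~\ref{th:cg-main} gives $\limsup_{N\to\infty}\|\cZ_N-\mathscr{Z}_{\epsilon,N}\|_{L^2}^2\le\mathfrak{r}(\epsilon)$ with $\mathfrak{r}(\epsilon)\to 0$ as $\epsilon\downarrow 0$. Since the bounded--Lipschitz metric $d_{\mathrm{BL}}$ is dominated by the $L^2$-distance, everything reduces to proving, for each fixed $\epsilon$,
\[
\limsup_{N,M\to\infty}\ d_{\mathrm{BL}}\big(\mathscr{Z}_{\epsilon,N},\,\mathscr{Z}_{\epsilon,M}\big)\ \le\ \eta(\epsilon),\qquad \eta(\epsilon)\to 0\ \text{ as }\ \epsilon\downarrow 0 .
\]
Indeed this yields $\limsup_{N,M\to\infty} d_{\mathrm{BL}}(\cZ_N,\cZ_M)\le 2\sqrt{\mathfrak{r}(\epsilon)}+\eta(\epsilon)$ for all $\epsilon$, hence $(\cZ_N)_{N}$ is a Cauchy sequence and converges; uniqueness of the limit is then automatic, and the consistency of the limits in $(s,t,\varphi,\psi)$ defines the process $\mathscr{Z}^\theta$ in \eqref{eq:main-statement}.

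For the key bound I would apply the Lindeberg principle of Appendix~\ref{app:Lind} to compare the \emph{same} multilinear polynomial evaluated at $\Theta^{(\cg)}_{N,\epsilon}$ and at $\Theta^{(\cg)}_{M,\epsilon}$. These two families have the same local dependence structure --- $\Theta^{(\cg)}_{N,\epsilon}((\sfi,\sfi'),(\sfa,\sfa'))$ and $\Theta^{(\cg)}_{N,\epsilon}((\sfj,\sfj'),(\sfb,\sfb'))$ are independent unless an endpoint is shared --- and by Lemma~\ref{lem:theta} they have mean zero and second moments both converging, as $N\to\infty$, to the same limit $\sigma_\epsilon^2(\vec\sfi,\vec\sfa)$. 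Consequently the Lindeberg bound controls $d_{\mathrm{BL}}(\mathscr{Z}_{\epsilon,N},\mathscr{Z}_{\epsilon,M})$ by two contributions: a term measuring the mismatch of the first two moments of the inputs, which vanishes as $N,M\to\infty$ for fixed $\epsilon$ by Lemma~\ref{lem:theta}; and a sum, over $(\vec\sfi,\vec\sfa)\in\bbT_\epsilon$ and over the at most $(\log\tfrac1\epsilon)^2$ admissible chaos lengths, of a third-moment-type quantity of a single $\Theta^{(\cg)}$ (estimated through the fourth moment bound of Lemma~\ref{Theta4th}) times the \emph{influence} of that variable on the coarse-grained model. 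As in the heuristic of Section~\ref{sec:outline}, the influence of $\Theta^{(\cg)}_{N,\epsilon}(\vec\sfi,\vec\sfa)$ is an $L^4$-norm of a product of a ``plane-to-block'' and a ``block-to-plane'' coarse-grained partition function, i.e.\ a fourth moment of the coarse-grained model with the singular boundary data $\psi\equiv 1$ and $\varphi(x)=\epsilon^{-1}\ind_{|x|\le\sqrt\epsilon}$; this is exactly what Theorem~\ref{th:cgmom} bounds. Summing the $(\log\tfrac1\epsilon)^{-1}$-type gain and the Gaussian spatial decay $e^{-c|\vec\sfa|/\sqrt{|\vec\sfi|}}$ of Lemma~\ref{Theta4th} against the influence bound of Theorem~\ref{th:cgmom}, and using the no-triple and diffusive constraints built into $\bcA^{(\notri)}_\epsilon$ and $\bcA^{(\diff)}_\epsilon$ together with the choices $K_\epsilon=(\log\tfrac1\epsilon)^6$, $M_\epsilon=\log\log\tfrac1\epsilon$ of \eqref{eq:KM}, produces $\eta(\epsilon)$ with $\eta(\epsilon)\to 0$.

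Universality is obtained by the very same two-step argument applied between $\cZ_N$ built from the disorder $\omega$ and $\cZ'_N$ built from any other disorder $\omega'$ satisfying \eqref{eq:lambda}: both are $L^2$-approximated by coarse-grained models in $\Theta^{(\cg)}_{N,\epsilon}$ respectively $(\Theta')^{(\cg)}_{N,\epsilon}$, and by Lemma~\ref{lem:theta} both coarse-grained disorder families have the same limiting second moments $\sigma_\epsilon^2$ (which depend only on the normalization in \eqref{eq:lambda}), so the Lindeberg comparison gives $\limsup_{N\to\infty}d_{\mathrm{BL}}(\mathscr{Z}_{\epsilon,N},\mathscr{Z}'_{\epsilon,N})\le\eta(\epsilon)$; letting $\epsilon\downarrow0$ shows that both sequences have the same limit. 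The same remark also shows that the limit is unchanged if one coarse-grains the mollified SHE instead, as noted after Theorem~\ref{th:main0}. Finally, the basic properties recorded in Theorem~\ref{th:main1} follow from this construction: translation invariance in law is inherited from the discrete field, the scaling relation from the scaling of $\cZ^{\beta_N}_{N}$ together with the shift $\theta\mapsto\theta+\log\mathsf a$ in the critical window, and the first two moments from Proposition~\ref{th:m2} (with $L^2$-convergence upgrading moment convergence).

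The main obstacle is the influence estimate, Theorem~\ref{th:cgmom}, and the moment bounds feeding into it: bounding fourth moments at criticality with the \emph{singular} boundary condition $\varphi=\epsilon^{-1}\ind_{|x|\le\sqrt\epsilon}$ and $\psi\equiv 1$, where hypercontractivity fails because the relevant chaos has order $\sim\log N$. This is handled by the functional-inequality machinery of Section~\ref{Sec:functional}: the Hardy--Littlewood--Sobolev--type inequality of Lemma~\ref{HLSineq} and the operator-norm estimates of Propositions~\ref{prop:opnorm} and~\ref{p:opnormcg}, the latter transporting the continuum Green's-function bounds to the coarse-grained setting via Lemma~\ref{th:ren-cg}. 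A secondary difficulty is purely quantitative bookkeeping --- checking that, after summing all Lindeberg error terms over the combinatorially large set $\bbT_\epsilon$ and over all chaos lengths $\le(\log\tfrac1\epsilon)^2$, the total is genuinely $o(1)$ as $\epsilon\downarrow0$ --- which is precisely where the logarithmic gains in Lemmas~\ref{lem:theta} and~\ref{Theta4th} and in Theorem~\ref{th:cgmom}, and the scales $K_\epsilon,M_\epsilon$ of \eqref{eq:KM}, are consumed.
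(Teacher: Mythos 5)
Your proposal is correct and follows essentially the same route as the paper: $L^2$-approximation by the coarse-grained model (Theorem~\ref{th:cg-main}) followed by a Lindeberg comparison of $\Theta^{(\cg)}_{N,\epsilon}$ with $\Theta^{(\cg)}_{M,\epsilon}$ using Lemma~\ref{lem:theta}, Lemma~\ref{Theta4th} and Theorem~\ref{th:cgmom}, with the vector case handled by Cram\'er--Wold instead of the paper's multivariate Lindeberg (Remark~\ref{rem:multiLind}), an immaterial difference. Two small points to adjust in a write-up: the Lindeberg bounds only control $C^3_b$ test functions, so argue with such $f$ (or a corresponding smooth metric) rather than $d_{\mathrm{BL}}$, and the influence estimates must be verified for the interpolated families $W^{k,l}_{s,t,u}$ mixing $\Theta^{(\cg)}_{N,\epsilon}$ with its Gaussian surrogate, which is exactly the content of the paper's Lemma~\ref{th:switch}.
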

We will prove Theorem~\ref{th:main} by showing that
the random vector in the l.h.s.\ of \eqref{eq:main-statement}
converges in distribution as $N\to\infty$ to a unique
random limit. This in turn implies that $(\cZ^{\beta_N}_{N;\, s,t}(\dd x, \dd y))_{0\le s \le t < \infty}$
converges to a unique limit, denoted
$\mathscr{Z}^\theta = (\mathscr{Z}_{s,t}^\theta(\dd x, \dd y))_{0 \le s \le t < \infty}$.

\medskip

The convergence of the one point distribution in Theorem~\ref{th:main} follows from the following result. We will explain how this can be adapted to finite dimensional distributions in Remark \ref{rem:fdd}.

\begin{proposition}\label{p:Thm1.3}
Given $\varphi\in C_c(\R^2)$ and $\psi\in C_b(\R^2)$, let
$\cZ_N(\varphi, \psi) := \cZ^{\beta_N}_{N,0,1}(\varphi, \psi) =
\iint \varphi(x) \psi(y) \cZ^{\beta_N}_N({\rm d}x, {\rm d}y)$ be as in Theorem~\ref{th:main}
with $N\in 2\N$. Then $\cZ_N(\varphi, \psi)$ converges in distribution to a unique limit as $N\to\infty$.
\end{proposition}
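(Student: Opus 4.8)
The plan is to prove Proposition~\ref{p:Thm1.3} by showing that the sequence $(\cZ_N(\varphi,\psi))_{N\in 2\N}$ is Cauchy in distribution. Since the first and second moments converge (Proposition~\ref{th:m2}), and the uniform moment bounds from Remark~\ref{r:2} and Theorem~\ref{th:mom} imply tightness, it suffices to control the distributional distance between $\cZ_N(\varphi,\psi)$ and $\cZ_M(\varphi,\psi)$ for large $N,M\in 2\N$, uniformly as they both go to infinity. The strategy proceeds in three stages. First, fix $\eps>0$ small. By the coarse-graining approximation Theorem~\ref{th:cg-main}, both $\cZ_N(\varphi,\psi)$ and $\cZ_M(\varphi,\psi)$ are approximated in $L^2$ (and hence in distribution) by the coarse-grained models $\mathscr{Z}^{(\cg)}_\eps(\varphi,\psi\mid \Theta^{(\cg)}_{N,\eps})$ and $\mathscr{Z}^{(\cg)}_\eps(\varphi,\psi\mid \Theta^{(\cg)}_{M,\eps})$ respectively, with an $L^2$-error bounded by $\sfC(\Vert\varphi\Vert_{\cG_{K_\eps\eps}}^2 + \Vert\varphi\Vert_{\cG_1}^2/\sqrt{\log\tfrac1\eps})\Vert\psi\Vert_\infty^2$, which vanishes as $\eps\downarrow0$ uniformly in large $N,M$. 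Second, both coarse-grained models are the \emph{same} multilinear polynomial $\mathscr{Z}^{(\cg)}_\eps(\varphi,\psi\mid\cdot)$ evaluated on two different families of coarse-grained disorder variables; the key point is that $\Theta^{(\cg)}_{N,\eps}$ and $\Theta^{(\cg)}_{M,\eps}$ have matching first and second moments in the limit (Lemma~\ref{lem:theta}: $\bbE[\Theta^{(\cg)}_{N,\eps}(\vec\sfi,\vec\sfa)]=0$ and $\lim_{N}\bbE[(\Theta^{(\cg)}_{N,\eps}(\vec\sfi,\vec\sfa))^2]=\sigma_\eps^2(\vec\sfi,\vec\sfa)$ independent of $N$), and uniformly bounded fourth moments (Lemma~\ref{Theta4th}). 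Third, apply the enhanced Lindeberg principle of Appendix~\ref{app:Lind} for multilinear polynomials of dependent random variables: one interpolates between the two families of coarse-grained disorder, and the error is controlled by a sum over coarse-grained disorder variables of (their third/fourth moments) times (the influence, i.e.\ the relevant moment of the derivative of $\mathscr{Z}^{(\cg)}_\eps$ with respect to that variable).

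More concretely, in the third stage I would first replace $\Theta^{(\cg)}_{N,\eps}$ by a family $\widetilde\Theta_{N,\eps}$ of variables with \emph{exactly} matching second moments $\sigma_\eps^2(\vec\sfi,\vec\sfa)$ (rather than only asymptotically matching), at a cost that vanishes as $N\to\infty$ for fixed $\eps$ — this uses again the $L^2$-continuity of $\mathscr{Z}^{(\cg)}_\eps$ in its disorder, together with the convergence in Lemma~\ref{lem:theta}; one can then take $N\to\infty$ with $\eps$ fixed. After this reduction, both $\widetilde\Theta_{N,\eps}$ and $\widetilde\Theta_{M,\eps}$ match in their first two moments exactly, and the Lindeberg bound reads schematically
\begin{equation*}
	\big| \bbE[f(\mathscr{Z}^{(\cg)}_\eps(\widetilde\Theta_{N,\eps}))] - \bbE[f(\mathscr{Z}^{(\cg)}_\eps(\widetilde\Theta_{M,\eps}))] \big|
	\,\le\, C\Vert f'''\Vert_\infty \sum_{(\vec\sfi,\vec\sfa)\in\bbT_\eps}
	\big( \Vert\widetilde\Theta_{N,\eps}(\vec\sfi,\vec\sfa)\Vert_{L^3}^3 + \Vert\widetilde\Theta_{M,\eps}(\vec\sfi,\vec\sfa)\Vert_{L^3}^3 \big)
	\cdot \mathrm{Infl}(\vec\sfi,\vec\sfa),
\end{equation*}
where the influence $\mathrm{Infl}(\vec\sfi,\vec\sfa)$ is a third-moment-type quantity of the partial derivative of $\mathscr{Z}^{(\cg)}_\eps$, which factorizes (as in the heuristic calculation in Section~\ref{sec:outline}) into a plane-to-block and block-to-plane coarse-grained partition function. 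Using H\"older with the fourth-moment bounds: the fourth moment of the coarse-grained model from Theorem~\ref{th:cgmom} (with $\psi\equiv1$ and $\varphi$ an approximate delta on scale $\sqrt\eps$), the fourth-moment bound on $\Theta^{(\cg)}_{N,\eps}$ from Lemma~\ref{Theta4th}, and the second-moment bound $\sigma_\eps^2(\vec\sfi,\vec\sfa)\le C e^{-c|\vec\sfa|^2/|\vec\sfi|}/((\log\tfrac1\eps)^{1+\cdots}|\vec\sfi|^2)$ from Lemma~\ref{lem:theta}, one sums over all time-space blocks $(\vec\sfi,\vec\sfa)\in\bbT_\eps$. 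The factors of $1/\log\tfrac1\eps$ coming from $\sigma_\eps^2$ and from Theorem~\ref{th:cgmom}, combined with the Gaussian spatial decay and the summability of $1/|\vec\sfi|^2$, should produce a bound of the form $C(f,\varphi,\psi)/(\log\tfrac1\eps)^\delta$ for some $\delta>0$, uniformly in large $N,M$.

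Combining the three stages: given $\eta>0$, choose $\eps$ small enough that both the coarse-graining $L^2$-error and the Lindeberg error are below $\eta$; then for $N,M$ large enough the triangle inequality gives $|\bbE[f(\cZ_N)]-\bbE[f(\cZ_M)]|\le 3\eta$ for all $f$ with bounded third derivative. Since such $f$ are distribution-determining on the tight family $(\cZ_N)_N$ (the moments grow, but tightness plus convergence of test functions suffices: one approximates indicators of continuity sets of a subsequential limit), it follows that $(\cZ_N(\varphi,\psi))_{N\in2\N}$ converges in distribution to a unique limit.

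The main obstacle I expect is controlling the influence term $\mathrm{Infl}(\vec\sfi,\vec\sfa)$ and the full sum over $\bbT_\eps$ with a bound that decays in $\eps$ uniformly in $N,M$ — this is precisely where the delicate fourth-moment estimates are needed, and where the allowance for $\varphi$ to be an approximate delta function and $\psi\equiv1$ (rather than $L^2$ boundary conditions) in Theorems~\ref{th:mom} and~\ref{th:cgmom} becomes essential, since the derivative of $\mathscr{Z}^{(\cg)}_\eps$ with respect to a coarse-grained disorder variable produces exactly such boundary conditions. A secondary subtlety is the exact-moment-matching reduction (replacing $\Theta^{(\cg)}_{N,\eps}$ by $\widetilde\Theta_{N,\eps}$): one must verify that this replacement can be done jointly with preserving the local dependence structure among the $\Theta^{(\cg)}_{N,\eps}(\vec\sfi,\vec\sfa)$ (they are dependent when time-space blocks share an endpoint), since the Lindeberg principle of Appendix~\ref{app:Lind} is designed for that dependence structure and the interpolation must respect it. Restricting to $N,M\in2\N$ avoids parity complications, and the extension from one-point to finite-dimensional distributions is routine once the building blocks $\cZ^{\beta_N}_{N,s,t}(\varphi,\psi)$ for general $0\le s\le t$ are handled by the obvious modifications of Theorem~\ref{th:cg-main} and the moment bounds noted in the remarks following their statements.

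\begin{remark}\label{rem:fdd}
For the joint convergence of $(\cZ^{\beta_N}_{N,s_i,t_i}(\varphi_i,\psi_i))_{i=1,\ldots,k}$ one applies the same scheme to an arbitrary linear combination $\sum_i \lambda_i \cZ^{\beta_N}_{N,s_i,t_i}(\varphi_i,\psi_i)$: each summand is coarse-grained on the same mesoscopic scale $\eps$, yielding a multilinear polynomial in a common enlarged family of coarse-grained disorder variables (indexed by time-space blocks inside $[0,\max_i t_i]$), to which the Lindeberg principle applies verbatim; the moment bounds of Sections~\ref{Sec:functional}--\ref{Sec:4MomCoarse} are stated for general time horizons via Remark~\ref{R:NtildeN} and the remarks following Theorems~\ref{th:cg-main} and~\ref{th:mom}.
\end{remark}
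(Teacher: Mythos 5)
Your proposal is correct and follows essentially the same route as the paper's proof: coarse-graining via Theorem~\ref{th:cg-main}, then the dependent-variable Lindeberg principle of Appendix~\ref{app:Lind} fed by the moment estimates of Lemmas~\ref{lem:theta}, \ref{Theta4th} and Theorem~\ref{th:cgmom} (with $\psi\equiv 1$ and $\varphi$ an approximate delta arising from the derivative of the coarse-grained model), and the same linear-combination treatment of finite-dimensional distributions. The only inessential deviation is your exact variance-matching reduction: the paper dispenses with it by comparing each $\Theta^{(\cg)}_{n,\eps}$ to an independent Gaussian surrogate with the same covariance and absorbing the merely asymptotic matching of second moments into the Gaussian-vs-Gaussian term $I_3^{(m,n)}$ of Lemma~\ref{lem:lind2}, which also sidesteps the dependence-preservation subtlety you flag (the remaining technical points you anticipate correspond to the paper's verification of condition \eqref{eq:vanish} via the no-triple constraint and to Lemma~\ref{th:switch} extending the fourth-moment bound to the interpolated disorder).
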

\begin{proof} Since
$$
	\E[|\cZ_N(\varphi, \psi)|] \leq \frac{1}{N} \sum_{x, y\in\Z^2}
	\big|\varphi\big(\tfrac{x}{\sqrt N}\big)\big| \big|\psi\big(\tfrac{y}{\sqrt N}\big)
	\big| q_N(y-x) \to \iint |\varphi(x)| |\psi(y)| g_{\frac12}(y-x) {\rm d}x {\rm d}y \,,
$$
it follows that $\E[|\cZ_N(\varphi, \psi)|]$ is uniformly bounded in $N$ and hence
$(\cZ_N(\varphi, \psi))_{N\in\N}$ is a tight family and admits subsequential weak limits.

To show that the limit is unique, it then suffices to show that for every bounded $f:\R\to\R$,
with uniformly bounded first three derivatives, the limit
\begin{equation*}
	\lim_{N\to\infty} \E[f(\cZ_N(\varphi, \psi))]
\end{equation*}
exists. To this end, we will show that $(\E[f(\cZ_N(\varphi, \psi))])_{N\in\N}$ is a Cauchy sequence.

Theorem~\ref{th:cg-main} allows us to approximate $\cZ_N(\varphi, \psi)$ by the coarse-grained model
$ \mathscr{Z}_{\epsilon}^{(\cg)}(\varphi,\psi|\Theta)$
with coarse-grained disorder variables
$\Theta = \Theta_{N,\epsilon}^{(\cg)}$, with an
$L^2$ error that is arbitrarily small, uniformly in large $N$, if $\eps>0$
is chosen sufficiently small.  Therefore it only remains to show
\begin{equation}\label{eq:fZlind}
	\lim_{\eps\downarrow 0} \lim_{N\to\infty}
	\sup_{m,n\geq N}\big|\E[f( \mathscr{Z}_{\epsilon}^{(\cg)}(\varphi,\psi|\Theta_{m,\epsilon}^{(\cg)}))]
	- \E[f( \mathscr{Z}_{\epsilon}^{(\cg)}(\varphi,\psi|\Theta_{n,\epsilon}^{(\cg)}))]\big| = 0.
\end{equation}
We will prove \eqref{eq:fZlind} by applying the Lindeberg principle for multilinear polynomials
of dependent random variables formulated in Lemmas~\ref{lem:lind1}--\ref{lem:lind3}.

\smallskip

Let us set
$\Phi(\Theta):= \mathscr{Z}_{\epsilon}^{(\cg)}(\varphi,\psi|\Theta)$,
and note from its definition in \eqref{eq:Zcg-gen} that $\Phi(\Theta)$ is a multilinear polynomial
in the variables $\Theta :=(\Theta(\vec\sfi, \vec\sfa))_{(\vec\sfi, \vec\sfa) \in \bbT_\eps}$,
where recall from \eqref{eq:iacond} that
\begin{equation*}
	\bbT_\eps:=\Big\{ (\vec \sfi, \vec\sfa)= ((\sfi, \sfi'), (\sfa, \sfa')):
	\ |\vec\sfi|=\sfi'-\sfi+1 \leq K_\eps, \ |\vec\sfa|=|\sfa-\sfa'| \le M_\epsilon \sqrt{|\vec\sfi|} \Big\}.
\end{equation*}
We write $\Theta_n$ for the coarse-grained disorder variables $\Theta^{\rm (cg)}_{n, \eps}
:=(\Theta^{\rm (cg)}_{n, \eps}(\vec\sfi, \vec\sfa))_{(\vec\sfi, \vec\sfa) \in \bbT_\eps}$,
see \eqref{eq:Theta}.
These satisfy Assumption~\ref{ass:dep} with the following dependency neighborhoods:
\begin{itemize}
\item for each $\sfz_1:=(\vec\sfi_1, \vec\sfa_1)\in \bbT_\eps$,
its dependency neighbourhood is given by
\begin{align*}
	A_{\sfz_1} =\Big\{\sfz_2=(\vec \sfi_2, \vec\sfa_2)\in \bbT_\eps & : \{\sfi_2, \sfi_2'\}
	\cap \{\sfi_1, \sfi_1'\}\neq \emptyset, \\
	& \quad {\rm dist}(\{\sfa_2\}, \{\sfa_1, \sfa_1'\})
	\wedge {\rm dist}(\{\sfa_2'\}, \{\sfa_1, \sfa_1'\}) \leq 2M_\eps \Big\} ;
\end{align*}

\item given $\sfz_1 \in \bbT_\eps$ and $\sfz_2\in A_{\sfz_1}$, the
dependency neighbourhood of $\{\sfz_1, \sfz_2\}$ is given by
$$
A_{\sfz_1 \sfz_2} = A_{\sfz_1}\cup A_{\sfz_2}.
$$
\end{itemize}
Recalling the definition of $\bbT_\eps$,  we see that,
uniformly in $\epsilon > 0$ and $\sfz \in \bbT_\epsilon$,
\begin{equation}\label{eq:Az1}
	|A_{\sfz}|\leq C \, M_\eps^2 \, K_\eps \, (M_\epsilon \sqrt{K_\epsilon})^2
	= C \, M_\epsilon^4 \, K_\epsilon^2 \,.
\end{equation}

In order to apply Lemma~\ref{lem:lind3},
we first verify that condition \eqref{eq:vanish} is satisfied by $\Phi(\Theta)$.

\begin{lemma}
The multilinear polynomial $\Phi(\Theta)
:= \mathscr{Z}_{\epsilon}^{(\cg)}(\varphi,\psi|\Theta)$ satisfies condition \eqref{eq:vanish}.
\end{lemma}

\begin{proof} Condition \eqref{eq:vanish} reads as
$$
	\forall\, \sfz_1 \in \bbT_\eps, \ \forall\,\sfz_2\in A_{\sfz_1}, \ \forall\,\sfz_3\in A_{\sfz_1\sfz_2}
	=A_{\sfz_1} \cup A_{\sfz_2}:
	\qquad 	\partial^2_{\sfz_i \sfz_j} \Phi = 0 \mbox{ for all } 1\leq i, j\leq 3,
$$
where $\partial_{\sfz}$ denotes derivative w.r.t.\ $\Theta(\sfz)$.
Since $\Phi$ is multilinear in $(\Theta(\sfz))_{\sfz \in \bbT_\eps}$, this condition is equivalent to the claim
that no term in the expansion of $\Phi$ (recall its definition from \eqref{eq:Zcg-gen}) contains more than one
of the factors $\Theta(\sfz_i)$, $1\leq i\leq 3$. From the definition of $\Phi$, clearly the product
$\Theta(\sfz_1)\Theta(\sfz_2)$ cannot appear because $\sfz_1=((\sfi_1, \sfi_1'), (\sfa_1, \sfa_1'))$ and
$\sfz_2=((\sfi_2, \sfi_2'), (\sfa_2, \sfa_2'))$ have an overlapping time index. Similarly, if
$\sfz_3\in A_{\sfz_i}$ for either $i=1$ or $2$, then the factor $\Theta(\sfz_i)\Theta(\sfz_3)$ cannot appear.
The last case is if $\sfz_3\in A_{\sfz_2}$, but $\sfz_3\notin A_{\sfz_1}$ (the case
$\sfz_3\in A_{\sfz_1}$, $\sfz_3\notin A_{\sfz_2}$ is the same by symmetry, since $\sfz_2\in A_{\sfz_1}$ if
and only if $\sfz_1 \in A_{\sfz_2}$): let us show that $\Theta(\sfz_1)\Theta(\sfz_3)$
does not appear in $\Phi(\Theta)$.
Both $\vec\sfi_1$ and $\vec\sfi_3$ have an overlapping time index with $\vec\sfi_2$, hence
${\rm dist}(\vec\sfi_1, \vec\sfi_3)\leq \sfi_2'-\sfi_2= |\vec \sfi_2|-1\leq K_\eps-1$, which contradicts
the constraint imposed by $\bcA_\epsilon^{(\notri)}$ in \eqref{eq:bcA}, that for
$\Theta(\sfz_1)\Theta(\sfz_3)$ to appear, we must have ${\rm dist}(\vec\sfi_1, \vec\sfi_3)\geq K_\eps$.
This verifies condition \eqref{eq:vanish}.
\end{proof}

We can then apply Lemmas~\ref{lem:lind1}--\ref{lem:lind3} to bound
\begin{equation}\label{eq:I123}
	\big|\E[f(\Phi(\Theta_m))]- \E[f(\Phi(\Theta_n))]\big|
	\leq I_1^{(m)} + I_2^{(m)} + I_1^{(n)} + I_2^{(n)} + I_3^{(m,n)},
\end{equation}
where $I_1^{(m)}$ and $I_2^{(m)}$ are the terms from applying Lemma~\ref{lem:lind1} to
$h(\cdot) = f(\Phi(\cdot))$ and $X = \Theta_m$, see \eqref{eq:I1} and \eqref{eq:I2},
similarly for $I_1^{(n)}$ and $I_2^{(n)}$, while $I_3^{(m,n)}$ is the term from
applying Lemma~\ref{lem:lind2} to two Gaussian families
$Z = \Theta^{({\rm G)}}_m$ and $\tilde Z = \Theta^{({\rm G)}}_n$
with the same mean and covariance structure as
$\Theta_m$ and $\Theta_n$, respectively, but independent of them,
see \eqref{eq:I3}.\footnote{Since  $(\Theta_n(\sfz))_{\sfz \in \bbT_\eps}$ are uncorrelated,
$(\Theta^{({\rm G)}}_n(\sfz))_{\sfz \in \bbT_\eps}$ are in fact independent Gaussian random variables.}

We are now ready to prove \eqref{eq:fZlind} exploiting \eqref{eq:I123}.
It suffices to prove that
\begin{equation}\label{eq:lasttoprove}
	\lim_{\epsilon\downarrow 0} \, \limsup_{n\to\infty} \, I_1^{(n)} = 0 \,, \qquad
	\lim_{\epsilon\downarrow 0} \, \limsup_{n\to\infty} \, I_2^{(n)} = 0 \,, \qquad
	\lim_{\epsilon\downarrow 0} \, \limsup_{n,m\to\infty} \, I_3^{(m,n)} = 0 \,,
\end{equation}
We will prove these relations separately,
exploiting \eqref{eq:I1bd}, \eqref{eq:I2bd} and \eqref{eq:I3bd} from Lemma~\ref{lem:lind3}.
This will conclude the proof of Proposition~\ref{p:Thm1.3}.

\medskip
\noindent
{\bf Bound on $\limsup\limits_{n\to\infty} I_1^{(n)}$.} By \eqref{eq:I1bd}, we have
\begin{equation}\label{eq:I1bdcg}
\begin{aligned}
|I_1^{(n)}| \, &\leq\,  \frac{1}{2} \|f'''\|_\infty \sup_{\sfz_1 \in \bbT_\eps} \E\big[|\Theta_n(\sfz_1)|^3\big] \sum_{\sfz_1 \in \bbT_\eps,\, \sfz_2 \in A_{\sfz_1}, \, \sfz_3 \in A_{\sfz_1\sfz_2}}  \\
& \quad    \sup_{s,t,u} \E \Big[ \big|\partial_{\sfz_1} \Phi \big(
	W_{s,t,u}^{\sfz_1, \sfz_2} \big) \big|^3 \Big]^{\frac13}
	\sup_{s,t,u} \E \Big[ \big|\partial_{\sfz_2} \Phi \big(
	W_{s,t,u}^{\sfz_1, \sfz_2} \big) \big|^3 \Big]^{\frac13}
	\sup_{s,t,u} \E \Big[ \big|\partial_{\sfz_3} \Phi \big(
	W_{s,t,u}^{\sfz_1, \sfz_2} \big) \big|^3 \Big]^{\frac13},
\end{aligned}
\end{equation}
where for $s,t,u\in [0,1]$,
\begin{equation}\label{eq:Wmix}
	W_{s,t,u}^{\sfz_1, \sfz_2} := s u\sqrt{t}\Theta_n^{A_{\sfz_1}} + u\sqrt{t}
	\Theta_n^{A_{\sfz_1\sfz_2}\backslash A_{\sfz_1}} +\sqrt{t} \Theta_n^{A_{\sfz_1\sfz_2}^c}
	+\sqrt{1-t}\Theta^{({\rm G)}}_n,
\end{equation}
with $\Theta_n^A(\sfz) := \Theta_n(\sfz) \ind_{\{\sfz\in A\}}$.

First note that by the assumption $\varphi\in C_c(\R^2)$ and the definition of the multilinear polynomial $\Phi(\Theta)$ in \eqref{eq:Zcg-gen}, $\Phi(\Theta)$  depends only on $\Theta(\sfz)$ for a finite set of $\sfz \in \bbT_\eps$. In particular, the sum in \eqref{eq:I1bdcg} is finite, and we can pass $\limsup_{n\to\infty}$ inside the sum.

Note that $\Vert f'''\Vert_\infty$ is bounded by assumption, and by \eqref{eq:Theta4.1},
\begin{equation}\label{eq:Thetanz1}
\limsup_{n\to\infty} \sup_{\sfz_1 \in \bbT_\eps} \E\big[|\Theta_n(\sfz_1)|^3\big] \leq \limsup_{n\to\infty} \sup_{\sfz_1 \in \bbT_\eps} \E\big[|\Theta_n(\sfz_1)|^4\big]^{3/4} \leq \frac{C}{(\log \frac{1}{\eps})^{3/4}}.
\end{equation}

The sum in \eqref{eq:I1bdcg} can be bounded by
\begin{equation}\label{eq:Thetanz2}
	\sum_{\sfz_1 \in \bbT_\eps,\, \sfz_2 \in A_{\sfz_1}, \, \sfz_3 \in A_{\sfz_1\sfz_2}}
	\frac{1}{3} \sum_{i=1}^3 \sup_{s,t,u} \E \Big[ \big|\partial_{\sfz_i} \Phi \big(
	W_{s,t,u}^{\sfz_1, \sfz_2} \big) \big|^4 \Big]^{3/4}.
\end{equation}
Given $\sfz=(\vec\sfi, \vec \sfa)=((\sfi, \sfi'), (\sfa, \sfa'))\in \bbT_\eps$, by the definition of
$\Phi(\Theta)=\mathscr{Z}_{\epsilon}^{(\cg)}(\varphi,\psi|\Theta)$ in \eqref{eq:Zcg-gen},
\begin{equation}\label{eq:Thetanz2.5}
	\partial_\sfz \Phi(\Theta) = \partial_{\Theta(\sfz)} \mathscr{Z}_{\epsilon}^{(\cg)}(\varphi,\psi|\Theta)
	= \frac{2}{\eps} \,\bar{\mathscr{Z}}^{(\cg)}_{[0, \sfi]}(\varphi,\ind_{\cS_\eps(\sfa)}|\Theta) 	
	\bar{\mathscr{Z}}^{(\cg)}_{[\sfi', 1/\eps]}(\ind_{\cS_\eps(\sfa')},\psi|\Theta),
\end{equation}
where $\bar{\mathscr{Z}}^{(\cg)}_{[0, \sfi]}(\varphi,\ind_{\{\cS_\eps(\sfa)\}}|\Theta)$ denotes the
centred coarse-grained model with initial condition $\varphi$ at time $0$
and terminal condition $\ind_{\cS_\eps(\sfa)}$  at time $\sfi$: this is just the original
coarse-grained model in \eqref{eq:Zcg-gen} with time horizon $\sfi$ instead of
$\lfloor 1/\epsilon \rfloor$ and with the constant term $\frac{1}{2} \, g_{\frac{1}{2}}(\varphi, \psi)$
omitted.
(We recall that $\cS_\eps(\sfa)$ is a square of side length $\sqrt{\eps}$ defined as in \eqref{eq:cI2}.)
The definition of $\bar{\mathscr{Z}}^{(\cg)}_{[\sfi', 1/\eps]}(\ind_{\cS_\eps(\sfa')},\psi|\Theta)$ is
similar, which is independent of
$ \bar{\mathscr{Z}}^{(\cg)}_{[0, \sfi]}(\varphi,\ind_{\{\cS_\eps(\sfa)\}}|\Theta)$. Each of
$\partial_{\Theta(\sfz)} \mathscr{Z}_{\epsilon}^{(\cg)}$, $\bar{\mathscr{Z}}^{(\cg)}_{[0, \sfi]}$ and
$\bar{\mathscr{Z}}^{(\cg)}_{[\sfi', 1/\eps]}$ contains a factor of $\eps/2$ by the defintion of the
coarse-grained model in \eqref{eq:Zcg-gen}, which is why there is a prefactor of $2/\eps$
in \eqref{eq:Thetanz2.5}. We then have
\begin{align}
	&  \E \Big[ \big|\partial_{\sfz} \Phi \big(W_{s,t,u}^{\sfz_1, \sfz_2} \big) \big|^4 \Big]
	\label{eq:Thetanz3} \\
	\!\!\!\!\!\! = & \frac{16}{\eps^4}\, \E \Big[\! \Big(\!\bar{\mathscr{Z}}^{(\cg)}_{[0, \sfi]}	
	(\ind_{\cS_\eps(0)}, \varphi(\cdot -\sqrt{\eps}\sfa)| W_{s,t,u}^{\sfz_1, \sfz_2})\Big)^4 \Big]
	\E \Big[\! \Big(\!\bar{\mathscr{Z}}^{(\cg)}_{[0, 1/\eps-\sfi']}
	(\ind_{\cS_\eps(0)},\psi(\cdot -\sqrt{\eps}\sfa')| W_{s,t,u}^{\sfz_1, \sfz_2})\Big)^4 \Big], \notag
\end{align}
where we interchanged initial and terminal conditions by symmetry and used translation invariance
and independence.

\smallskip

We can bound the two factors in the r.h.s.\ of \eqref{eq:Thetanz3}
by applying slight variants of Theorem~\ref{th:cgmom}, which was formulated
for the original coarse-grained model.
Let us focus on the first factor in the r.h.s.\ of \eqref{eq:Thetanz3},
for which we need to take into account two differences:
the time range $[0,\sfi]$ instead of $[0,\epsilon^{-1}]$ and the disorder $W_{s,t,u}^{\sfz_1, \sfz_2}$
instead of $\Theta_n$.

The first difference is immaterial, because our moment estimate Theorem~\ref{th:cgmom}
is monotone increasing in the time length $\sfi\in [1, \eps^{-1}]$
(see the argument leading to \eqref{eq:Mepsh}). As a consequence,
we can apply the bound in Theorem~\ref{th:cgmom} with $w(x)=e^{-|x|}$
to $\bar{\mathscr{Z}}^{(\cg)}_{[0, \sfi]}(\varphi,\psi|\Theta_n)$,
which yields
\begin{equation}\label{eq:Thetanz4}
\begin{aligned}
\E \Big[ \Big(\bar{\mathscr{Z}}^{(\cg)}_{[0, \sfi]}(\ind_{\cS_\eps(0)}, \varphi(\cdot -\sqrt{\eps}\sfa)| \Theta_n)\Big)^4 \Big]
& \leq C \eps^{\frac4p} \Big\Vert \frac{\ind_{\cS_1(0)}}{w} \Big\Vert^4_{\ell^p} \Vert \varphi\Vert_\infty^4 \Vert w\ind_{\{B_\varphi-\sqrt{\eps}\sfa\}}\Vert_q^4 \\
& \leq C \Vert \varphi\Vert_\infty^4  e^{-{\rm dist}(\sqrt{\eps} \sfa, B_\varphi)} \eps^{\frac4p},
\end{aligned}
\end{equation}
where $C$ depends on the choice of $p,q$, but is uniform in $\sfi$ and $\sfa$ and in $n$ sufficiently large,
while $B_\varphi$ is a ball that contains the support of $\varphi$.

The second difference is also immaterial, that is we can replace $\Theta_n$ by
$W_{s,t,u}^{\sfz_1, \sfz_2}$ in \eqref{eq:Thetanz4}.
This is recorded in the following result, which we prove later.

\begin{lemma}\label{th:switch}
The bound \eqref{eq:Thetanz4} also holds if $\Theta_n$ is replaced by
$W_{s,t,u}^{\sfz_1, \sfz_2}$, uniformly in $\sfz_1, \sfz_2 \in \bbT_\epsilon$ and $s,t,u\in [0,1]$,
and $n$ large.
\end{lemma}

\noindent
Similarly, we can also bound the second factor in the r.h.s. \eqref{eq:Thetanz3} by
\begin{equation}\label{eq:Thetanz5}
	\E \Big[\! \Big(\!\bar{\mathscr{Z}}^{(\cg)}_{[0, 1/\eps-\sfi']}(\ind_{\cS_\eps(0)},
	\psi(\cdot -\sqrt{\eps}\sfa')| W_{s,t,u}^{\sfz_1, \sfz_2})\!\Big)^4 \Big]
	\leq C \eps^{\frac4p} \Big\Vert \frac{\ind_{\cS_1(0)}}{w} \Big\Vert^4_{\ell^p}
	\Vert \psi\Vert_\infty^4 \Vert w\Vert_q^4
	\leq C \Vert \psi\Vert_\infty^4 \eps^{\frac4p}.
\end{equation}
Substituting these bounds into  \eqref{eq:Thetanz3}  and then \eqref{eq:Thetanz2} gives
(for $n$ sufficiently large)
\begin{equation*}
\begin{aligned}
	\sum_{\sfz_1 \in \bbT_\eps,\, \sfz_2 \in A_{\sfz_1} \atop  \sfz_3 \in A_{\sfz_1\sfz_2}}
	\!\!\!\!\!  \frac{1}{3} \sum_{i=1}^3 \sup_{s,t,u} \E \Big[ \big|\partial_{\sfz_i} \Phi \big(
	W_{s,t,u}^{\sfz_1, \sfz_2} \big) \big|^4 \Big]^{3/4}
	& \leq  C\Vert \varphi\Vert_\infty^3 \Vert \psi\Vert_\infty^3  \eps^{\frac6p-3}
	\!\!\!\!\!\! \sum_{\sfz_1=(\vec\sfi_1, \vec\sfa_1) \in \bbT_\eps,\, \sfz_2 \in
	A_{\sfz_1} \atop \sfz_3 \in A_{\sfz_1\sfz_2}}  \!\!\!\!\!\!\!\!
	e^{-\frac{3}{4}{\rm dist}(\sqrt{\eps} \sfa_1, B_\varphi)}  \\
	& \leq C \Vert \varphi\Vert_\infty^3 \Vert \psi\Vert_\infty^3  \eps^{\frac6p-5}
	M_\eps^{10} K_\eps^6,
\end{aligned}
\end{equation*}
where we used the symmetry in the dependency structure between $\sfz_1, \sfz_2, \sfz_3$
($\sfz_2 \in A_{\sfz_1}$ if and only if $\sfz_1\in A_{\sfz_2}$) to reduce $\sum_{i=1}^3$ to the case $i=1$,
and in the last inequality, we first summed out $\sfz_2, \sfz_3$ and applied the bounds
$|A_{z_1}| , |A_{\sfz_1\sfz_2}| \leq C M_\eps^4 K_\eps^2$ from \eqref{eq:Az1},
then summed out $(\sfi_1', \sfa_1')$ in $\sfz_1=((\sfi_1, \sfi_1'), (\sfa_1, \sfa_1'))$ where the sum over
$\sfi_1'$ gives another factor of $K_\eps$ and the sum over $\sfa_1'$ gives another factor of
$M_\eps^2 \, K_\epsilon$,
and lastly we summed out $(\sfi_1, \sfa_1)$, noting that
$\sfi_1 \in \{1,\ldots, \epsilon^{-1}\}$
while the sum over $\sfa_1$ gives a factor $O((\sqrt{\epsilon})^{-2}) = O(\epsilon^{-1})$
because of the exponential decay on the scale $(\sqrt{\epsilon})^{-1}$
(we recall that $\varphi$ has compact support).

We recall from \eqref{eq:KM}
that $K_\eps=(\log \frac{1}{\eps})^6$ and $M_\eps=\log \log \frac{1}{\eps}$.
Choose $p>1$ sufficiently close to $1$ and substitute the above bound into \eqref{eq:Thetanz2} and
then \eqref{eq:I1bdcg}, together with \eqref{eq:Thetanz1}, we then obtain that
for any $\delta\in (0,1)$, there exists $C_\delta$ such that
$$
	\limsup_{n\to\infty} |I^{(n)}_1| \leq C_\delta \, \eps^{\delta} \,.
$$
This proves the first relation in \eqref{eq:lasttoprove}.

\medskip
\noindent
{\bf Bound on $\limsup\limits_{n\to\infty} I_2^{(n)}$.} By \eqref{eq:I2bd},
\begin{equation}\notag
\begin{aligned}
	|I_2^{(n)}| \, &\leq\,  \frac{1}{2} \|f'''\|_\infty \sup_{\sfz_1 \in \bbT_\eps}
	\E\big[|\Theta_n(\sfz_1)|^3\big] \sum_{\sfz_1 \in \bbT_\eps,\, \sfz_2 \in
	A_{\sfz_1}, \, \sfz_3 \in A_{\sfz_1\sfz_2}}  \\
	& \quad    \sup_{t,u} \E \Big[ \big|\partial_{\sfz_1} \Phi \big(
	W_{t,u}^{\sfz_1, \sfz_2} \big) \big|^3 \Big]^{\frac13}
	\sup_{t,u} \E \Big[ \big|\partial_{\sfz_2} \Phi \big(
	W_{t,u}^{\sfz_1, \sfz_2} \big) \big|^3 \Big]^{\frac13}
	\sup_{t,u} \E \Big[ \big|\partial_{\sfz_3} \Phi \big(
	W_{t,u}^{\sfz_1, \sfz_2} \big) \big|^3 \Big]^{\frac13},
\end{aligned}
\end{equation}
where  $W_{t,u}^{\sfz_1, \sfz_2} := u\sqrt{t}\Theta_n^{A_{\sfz_1\sfz_2}}
+ \sqrt{t} \Theta_n^{A_{\sfz_1\sfz_2}^c} +\sqrt{1-t}\Theta^{({\rm G)}}_n$ for $t,u\in [0,1]$.
The bounds are exactly the same as for $|I^{(1)}_n|$, which gives
$\limsup_{n\to\infty} |I^{(n)}_2| \leq C_\delta \eps^{\delta}$ for any $\delta \in (0,1)$.
This proves the second relation in \eqref{eq:lasttoprove}.

\medskip
\noindent
{\bf Bound on $\limsup\limits_{m,n\to\infty}I_3^{(m,n)}$.} By \eqref{eq:I3bd} and the fact that
$\Theta_n$ is a family of mean zero uncorrelated random variables,
\begin{equation}\label{eq:I3bdcg}
\begin{aligned}
	|I_3^{(m,n)}| \, &\leq\,  \frac{1}{2} \|f''\|_\infty \sum_{\sfz\in \bbT_\eps}
	\big(\bbE\big[\Theta^2_n(\sfz) \big]- \bbE\big[\Theta^2_m(\sfz) \big]\big)
	\sup_{t\in [0,1]}
	\E \big[ \big|\partial_{\sfz} \Phi(W_t)\big|^2\big]
\end{aligned}
\end{equation}
where  $W_t:= \sqrt{t}\Theta^{({\rm G)}}_n + \sqrt{1-t}\Theta^{({\rm G)}}_m$ for $t\in [0,1]$. Note that by
definition, $\Phi(\Theta)$ depends only on a finite set of $\Theta_\sfz$, $\sfz\in \bbT_\eps$. Therefore the
sum in \eqref{eq:I3bdcg} contains finitely many terms. For each $\sfz\in \bbT_\eps$,
$$
\lim_{m,n\to\infty} \big(\bbE\big[\Theta^2_n(\sfz) \big]- \bbE\big[\Theta^2_m(\sfz) \big]\big) = 0
$$
by Lemma~\ref{lem:theta}. On the other hand, uniformly in $t\in [0,1]$,
$\E \big[ \big|\partial_{\sfz} \Phi(W_t)\big|^2\big]$ converges to a finite limit as $m,n\to\infty$ because
$\partial_{\sfz} \Phi(W_t)$ is a multilinear polynomial in $W_t(\sfz)$ for finitely many $\sfz\in \bbT_\eps$,
while its second moment is a multilinear polynomial of $\E[W^2_t(\sfz)]$, $\sfz\in \bbT_\eps$, each of
which converges by Lemma~\ref{lem:theta}. It follows that $\limsup_{m,n\to\infty} |I_3^{(m,n)}|=0$,
which is stronger than the third relation in \eqref{eq:lasttoprove}.

\medskip

\noindent
\textbf{Conclusion.} Assuming Lemma~\ref{th:switch}, we have proved \eqref{eq:lasttoprove}. This implies \eqref{eq:fZlind} and
finally completes the proof of Proposition~\ref{p:Thm1.3}.
\end{proof}

\medskip

\begin{remark}[Extension to finite-dimensional distribution]\label{rem:fdd}
Finally, to prove the finite-dimensional distribution convergence in Theorem~\ref{th:main},
we argue as in the proof of Proposition~\ref{p:Thm1.3}.
First we approximate the components $\cZ_{N, s_i, t_i}^{\beta_N}(\varphi_i, \psi_i)$ of
the random vector in the l.h.s.\ of \eqref{eq:main-statement} by coarse-grained models
$\mathscr{Z}_{\epsilon, s_i, t_i}^{(\cg)}(\varphi_i,\psi_i|\Theta)$,
with the same coarse-grained disorder variables $\Theta = \Theta_{N,\epsilon}^{(\cg)}$,
which we can do with a small $L^2$ error, uniformly in large~$N$, provided
we choose $\epsilon > 0$ small enough, by Theorem~\ref{th:cg-main}.

It remains to apply a Lindeberg principle for a vector of multilinear polynomials,
which is given in Remark~\ref{rem:multiLind}.
The estimates needed are exactly the same as in the Lindeberg principle for a single multilinear
polynomial. This concludes the proof of Theorem~\ref{th:main}.
\end{remark}

\medskip

\begin{proof}[Proof of Lemma~\ref{th:switch}]
We re-examine the proof of Theorem~\ref{th:cgmom}. First note that, similar to the $L^2$
orthogonal decomposition of $\Theta_n(\sfi, \sfa)$ with $|\vec\sfi|\geq 1$ as in \eqref{eq:Theta2t},
we can write
\begin{equation}\label{eq:W2t}
W_{s,t,u}^{\sfz_1, \sfz_2}(\vec \sfi, \vec \sfa) = \sum_{\substack{\sfb: \, |\sfb - \sfa| \le M_\epsilon, \,
	\sfb': \, |\sfb' - \sfa'| \le M_\epsilon  \\ |\sfb' - \sfb| \le M_\epsilon \sqrt{|\vec\sfi|}}}
W_{s,t,u}^{\sfz_1, \sfz_2}(\vec \sfi, (\sfa, \sfb), (\sfb', \sfa')),
\end{equation}
where $W_{s,t,u}^{\sfz_1, \sfz_2}(\vec \sfi, (\sfa, \sfb), (\sfb', \sfa'))$ are defined through the same mixture as in \eqref{eq:Wmix} between $\Theta_n(\vec \sfi, (\sfa, \sfb), (\sfb', \sfa'))$ and $\Theta^{({\rm G)}}_n(\vec \sfi, (\sfa, \sfb), (\sfb', \sfa'))$, with the latter being independent normals with mean $0$ and the same variance as $\Theta_n(\vec \sfi, (\sfa, \sfb), (\sfb', \sfa'))$. We can then carry out the same expansion as for \eqref{eq:Mepsh}, and note that: whenever a product of coarse-grained disorder variables $\Theta_n$ has zero expectation because of the presence of some $\Theta_n(\sfi, \sfa)$ with either $(\sfi, \sfa)$ or $(\sfi', \sfa')$ unmatched by any other $\Theta_n$ in the product, the same is true if the family $\Theta_n$ is replaced by $W_{s,t,u}^{\sfz_1, \sfz_2}$; similarly, whenever two collections of $\Theta_n$ variables are independent of each other, the same is true if $\Theta_n$ is replaced by $W_{s,t,u}^{\sfz_1, \sfz_2}$. This implies that the expansion and re-summation carried out for the r.h.s.\ of \eqref{eq:Mepsh}, as well as the accompanying constraints on summation indices, also apply when $\Theta_n$ is replaced by $W_{s,t,u}^{\sfz_1, \sfz_2}$.

Next we claim that, for $W_{s,t,u}^{\sfz_1, \sfz_2}((\sfi_1, \sfi_2), (\sfa_1, \sfb_1), (\sfa_2, \sfb_2)) $ that visits two distinct blocks $\cI_1, \cI_2$ (see the exposition leading to \eqref{eq:Theta2texp}), although we no longer have a chaos expansion representation as in \eqref{eq:Theta2texp} due to the Gaussian component of $W_{s,t,u}^{\sfz_1, \sfz_2}$, we can still make a  replacement similar to \eqref{eq:Th2Th} in order to bound the r.h.s.\ of \eqref{eq:Mepsh}, with $\Theta_n$ replaced by $W_{s,t,u}^{\sfz_1, \sfz_2}$:
\begin{equation}\label{eq:W2W}
\begin{aligned}
& W_{s,t,u}^{\sfz_1, \sfz_2}((\sfi_1, \sfi_2), (\sfa_1, \sfb_1), (\sfa_2, \sfb_2))  \\
\stackrel{\leq}{\rightsquigarrow} \quad &   C W_{1,1/2,1}^{\sfz_1, \sfz_2}(\sfi_1; \sfa_1, \sfb_1)\  g_{10(\sfi_2 - \sfi_1)}\, (\sfa_2 -\sfb_1) \ W_{s,t,u}^{\sfz_1, \sfz_2}(\sfi_2; \sfa_2, \sfb_2).
\end{aligned}
\end{equation}
To see this, note that $W_{s,t,u}^{\sfz_1, \sfz_2}$ is a mixture of $\Theta_n$ and $\Theta^{({\rm G)}}_n$ with mixture coefficients given in \eqref{eq:Wmix}. When we expand the r.h.s.\ of \eqref{eq:Mepsh} with $W_{s,t,u}^{\sfz_1, \sfz_2}$ in place of $\Theta^{\rm(cg)}_{N, \eps}$, we can further expand $W_{s,t,u}^{\sfz_1, \sfz_2}$ in terms of its mixture. Each term in the expansion then consists of a product of $\Theta_n$ variables and $\Theta^{({\rm G)}}_n$ variables, whose expectation factorises due to the independence between $\Theta_n$ and $\Theta^{({\rm G)}}_n$. For terms that contain the factor $\Theta_n((\sfi_1, \sfi_2), (\sfa_1, \sfb_1), (\sfa_2, \sfb_2))$, the decomposition \eqref{eq:Theta2texp} applies, and the same argument justifying the replacement \eqref{eq:Th2Th} can be applied here. For terms that contain the Gaussian factor $\Theta^{({\rm G)}}_n((\sfi_1, \sfi_2), (\sfa_1, \sfb_1), (\sfa_2, \sfb_2))$, the expectation will be non-zero only if this factor appears exactly twice or four times. The resulting contribution will be either the second or the fourth moment of $\Theta^{({\rm G)}}_n((\sfi_1, \sfi_2), (\sfa_1, \sfb_1), (\sfa_2, \sfb_2))$. Its second moment coincide with that of $\Theta_n((\sfi_1, \sfi_2), (\sfa_1, \sfb_1), (\sfa_2, \sfb_2))$, while its fourth moment can be bounded in terms of its second moment by Gaussianity. Then as in \eqref{eq:Th2Th}, we make the following replacement in the expansion:
\begin{equation}\label{eq:hThetafac}
\Theta^{({\rm G)}}_n((\sfi_1, \sfi_2), (\sfa_1, \sfb_1), (\sfa_2, \sfb_2)) \stackrel{\leq}{\rightsquigarrow} C \Theta^{({\rm G)}}_n (\sfi_1; \sfa_1, \sfb_1)\  g_{10(\sfi_2 - \sfi_1)}\, (\sfa_2 -\sfb_1) \ \Theta^{({\rm G)}}_n(\sfi_2; \sfa_2, \sfb_2).
\end{equation}
Therefore in the mixture of $W_{s,t,u}^{\sfz_1, \sfz_2}((\sfi_1, \sfi_2), (\sfa_1, \sfb_1), (\sfa_2, \sfb_2))$ (recall \eqref{eq:Wmix}), we can replace the $\Theta_n$ and $\Theta^{({\rm G)}}_n$ components each by its factorisation as in \eqref{eq:hThetafac}. The mixture coefficient of the term $C\Theta_n (\sfi_1; \sfa_1, \sfb_1)g_{10(\sfi_2 - \sfi_1)}\, (\sfa_2 -\sfb_1)\Theta_n(\sfi_2; \sfa_2, \sfb_2)$ is equal to $\alpha \sqrt{t}$ with either $\alpha=su$, $u$ or 1, while the mixture coefficient of the term $C\Theta^{(\rm G)}_n (\sfi_1; \sfa_1, \sfb_1)g_{10(\sfi_2 - \sfi_1)}\, (\sfa_2 -\sfb_1)\Theta_n^{(\rm G)}(\sfi_2; \sfa_2, \sfb_2)$ equals $\sqrt{1-t}$.
This mixture can be further replaced by the r.h.s.\ of \eqref{eq:W2W}, which just contains extra terms and larger coefficients, where the choice of $s=u=1$ and $t=1/2$ in the first factor $W_{1,1/2,1}^{\sfz_1, \sfz_2}(\sfi_1; \sfa_1, \sfb_1)$ helps to bound the mixture coefficients.

The remaining parts of the proof of Theorem~\ref{th:cgmom} depends on the coarse-grained disorder variables $\Theta_n=\Theta^{\rm (cg)}_{n,\eps}$ only through their second and fourth moments. Note that uniformly in $s,t,u\in [0,1]$ and $\sfz_1, \sfz_2$, the second moment of $W_{s,t,u}^{\sfz_1, \sfz_2}$ is bounded by that of $\Theta_n$, and modulo a constant multiple, the fourth moment of $W_{s,t,u}^{\sfz_1, \sfz_2}$ can be bounded by that of $\Theta_n$. Therefore the remaining parts of the proof of Theorem~\ref{th:cgmom} carries through without change if $\Theta_n$ is replaced by $W_{s,t,u}^{\sfz_1, \sfz_2}$. In particular, the bound on $U^{\rm(cg)}_{N, \eps}$ in \eqref{eq:Ucg} still holds since it only depends on the second moment of $\Theta_n$, and the bound on $V^{\rm (cg)}_{N,\eps}$ in \eqref{eq:Vcg} still holds because it only depends on the fourth moment of $\Theta_n$.
\end{proof}

\subsection{Proof of Theorem~\ref{th:main1}}
The translation invariance of the law of $\mathscr{Z}^\theta$ is obvious.

To prove the scaling relation \eqref{eq:scaling}, let us write $\beta_N(\theta)$ to emphasize that $\beta_N$ as specified in \eqref{intro:sigma} depends on a parameter $\theta$. Given $\sfa>0$, let $\tilde N=N/\sfa$.
Then using \eqref{intro:sigma} and the fact that $R_N=\frac{1}{\pi}(\log N+ \alpha +o(1))$ given later in \eqref{eq:RN}, we have
\begin{equation}\label{eq:betachange}
\sigma_N^2 =  \frac{1}{R_N} \bigg(1 + \frac{\theta + o(1)}{\log N}\bigg) = \frac{1}{R_{\tilde N}} \bigg(1 + \frac{\theta -\log \sfa + o(1)}{\log \tilde N}\bigg),
\end{equation}
so that $\beta_N(\theta) = \beta_{\tilde N}(\theta - \log \sfa)$, or equivalently, $\beta_N(\theta+\log a) = \beta_{\tilde N}(\theta)$.

By \eqref{eq:main-statement}, for $\varphi, \psi \in C_c(\R^2)$, we have
\begin{equation}\label{eq:sca1}
\begin{aligned}
\iint \varphi\Big(\frac{x}{\sqrt a}\Big) \psi\Big(\frac{y}{\sqrt a}\Big) \cZ^{\beta_{\tilde N}(\theta), \omega}_{\tilde N;\, \sfa s, \sfa t}(\dd x, \dd y) \Asto{\tilde N} &
\iint \varphi\Big(\frac{x}{\sqrt a}\Big) \psi\Big(\frac{y}{\sqrt a}\Big) \mathscr{Z}_{\sfa s, \sfa t}^\theta(\dd x , \dd y)  \\
= & \iint \varphi(x) \psi(y) \mathscr{Z}_{\sfa s, \sfa t}^\theta(\dd (\sqrt{\sfa} x) , \dd (\sqrt{\sfa} y)).
\end{aligned}
\end{equation}
On the other hand, recall \eqref{eq:rescZmeas}, we can rewrite the l.h.s.\ of \eqref{eq:sca1} as
\begin{align*}
 &	\tilde N \iint \varphi\Big(\frac{x}{\sqrt a}\Big)   \psi\Big(\frac{y}{\sqrt a}\Big) Z_{\ev{\tilde N\sfa s}, \ev{\tilde N\sfa t}}^{\beta_{\tilde N}(\theta),\,\omega}(\ev{\sqrt{\tilde N} x},
	\ev{\sqrt{\tilde N} y}) \, \dd x \, \dd y \\
	=\ \  & \frac{1}{\tilde N}\sum_{\tilde x, \tilde y\in \Z^2_{\rm even}} Z_{\ev{\tilde N\sfa s}, \ev{\tilde N\sfa t}}^{\beta_{\tilde N}(\theta),\,\omega}(\tilde x, \tilde y)
	\!\!\!\!\! \int\limits_{|u-\tilde x|_1\leq 1}\!\!\!\!\! \varphi\Big(\frac{u}{\sqrt {\sfa \tilde N}}\Big) \dd u  \!\!\!\!\! \int\limits_{|v-\tilde y|_1\leq 1}\!\!\!\!\! \psi\Big(\frac{v}{\sqrt {\sfa \tilde N}}\Big) \dd v\\
	=\ \  & \frac{\sfa }{N}\sum_{\tilde x, \tilde y\in \Z^2_{\rm even}} Z_{\ev{Ns}, \ev{Nt}}^{\beta_N(\theta+ \log \sfa),\,\omega}(\tilde x, \tilde y)
	\!\!\!\!\! \int\limits_{|u-\tilde x|_1\leq 1}\!\!\!\!\! \varphi\Big(\frac{u}{\sqrt {N}}\Big) \dd u  \!\!\!\!\! \int\limits_{|v-\tilde y|_1\leq 1}\!\!\!\!\! \psi\Big(\frac{v}{\sqrt {N}}\Big) \dd v\\
	\Asto{N} &  \iint \varphi(x) \psi(y) \, \sfa \mathscr{Z}_{s,t}^{\theta+\log \sfa}(\dd x, \dd y),
\end{align*}
where we again applied \eqref{eq:main-statement}. Combined with \eqref{eq:sca1},
this implies \eqref{eq:scaling}.

The first and second moments of $\mathscr{Z}^\theta$ can be identified from the limits in \eqref{eq:m1-lim0} and \eqref{eq:m2-lim0}, since for $\varphi\in C_c(\R^2)$ and $\psi\in C_b(\R^2)$, the averaged partition function $\mathcal{Z}^{\beta_N}_{N, t-s}(\varphi, \psi)$ has a finite fourth moment that is uniformly bounded in $N$, see Theorem~\ref{th:mom}.

%%%%%%%%%%%%%%%%%%%%%%%%%%%%%%%%%%%%%%%%%%%
\appendix

\section{Enhanced Lindeberg principle}
\label{app:Lind}

In this appendix, we prove a Lindeberg principle for multilinear polynomials of dependent random variables
with a local form of dependence. This extends \cite{R13}, which requires the function to have bounded first three derivatives and is not applicable to
multilinear polynomials, and it extends \cite{MOO10}, which considers multilinear polynomials of
independent random variables. We first introduce the necessary setup.
Let $\bbT$ be a finite index set.

\begin{assumption}[Local dependence] \label{ass:dep}
Let $X = (X_i)_{i\in \bbT}$ be random variables satisfying:
\begin{itemize}
\item $\E[X_i] = 0$ and $\E[X_i X_j] = \sigma_{ij}$;

\item for every $k \in \bbT$ there is $A_k \subseteq \bbT$ such that
$X_k$ is independent of $(X_i)_{i\in A_k^c}$;

\item for all $k \in \bbT$, $l \in A_k$ there is $ A_{kl} \subseteq \bbT$ such that
$(X_k, X_l)$ is independent of $(X_i)_{i\in A_{kl}^c}$.
\end{itemize}
The sets $(A_k)_{k\in\T}$ and $(A_{kl})_{k\in\T, l\in A_k}$\!
will be called \emph{dependency neighbourhoods}
of $X = (X_i)_{i\in \bbT}$.
\end{assumption}

Let $Z = (Z_i)_{i\in\bbT} \sim N(0, \sigma_\cdot)$ be a Gaussian vector independent of $X$,
but with the same covariance matrix as $X$.
For $u, t \in [0,1]$, $k\in \bbT$ and $l\in A_k$, we define
\begin{equation} \label{eq:Wtilde}
	W_{t,u}^{k, l} := u\sqrt{t} X^{A_{kl}} +  \sqrt{t} X^{A_{kl}^c} +\sqrt{1-t}Z,
\end{equation}
where for any subset of indices $B \subseteq \bbT$ we write
\begin{equation}
	X^B_i := X_i \, \ind_{\{i \in B\}} \,.
\end{equation}
For $s, u, t\in [0,1]$, $k\in \bbT$ and $l\in A_k$, we define
\begin{equation} \label{eq:W}
\begin{aligned}
	W_{s,t,u}^{k, l} &:=
	s u\sqrt{t}X^{A_k} + u\sqrt{t} X^{A_{kl}\backslash A_k} +\sqrt{t}X^{A_{kl}^c} +\sqrt{1-t}Z.
\end{aligned}
\end{equation}

We have the following Lindeberg type result, which controls the distributional distance between $X$ and $Z$ through smooth test functions.

\begin{lemma}[Lindeberg principle for dependent random variables]\label{lem:lind1}
Let $X$, $Z$, $W_{t,u}^{k, l}$ and $W_{s,t,u}^{k, l}$ be defined as above. Let $h: \R^{|\bbT|} \to \R$ be bounded and thrice differentiable. Then
\begin{equation*}
	\E[h(X)] - \E[h(Z)] = I_1 + I_2
\end{equation*}
where
\begin{align} \label{eq:I1}
	I_1 & := \frac{1}{2} \int_{[0,1]^3}
	\sum_{k \in \bbT,\, l \in A_k, \, m \in A_{kl}}
	\!\!\!\!\!\!\!\! \E \Big[ X_k \, X_l \,
	X_m \big(s \ind_{\{m \in A_k\}}+ \ind_{\{m\in A_{kl}\backslash A_k \}}\big) \, \sqrt{t} \,
	\partial^3_{kl m} h\big(
	W_{s,t,u}^{k, l} \big) \Big] \, \dd s \, \dd t \, \dd u \,, \\
	\label{eq:I2}
	I_2 &:= -\frac{1}{2} \int_{[0,1]^2}
	\sum_{k \in \bbT,\, l \in A_k, \, m \in A_{kl}} \sigma_{kl} \,
	\E \Big[ X_m \,
	\sqrt{t} \,
	\partial^3_{kl m} h\big(W_{t,u}^{k, l} \big) \Big] \dd t  \, \dd u \,,
\end{align}
assuming that the integrals and expectations above are all finite.
\end{lemma}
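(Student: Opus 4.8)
\smallskip
\noindent\textbf{Proof strategy.}
The plan is to run the classical Lindeberg interpolation, but with the Taylor expansions organised so as to exploit the local dependence in Assumption~\ref{ass:dep}. Introduce the interpolating family $X_t := \sqrt t\,X + \sqrt{1-t}\,Z$ for $t\in[0,1]$, so that $X_1 = X$ and $X_0 = Z$, and set $\Psi(t) := \E[h(X_t)]$, so that $\E[h(X)] - \E[h(Z)] = \int_0^1 \Psi'(t)\,\dd t$. Assuming enough integrability to differentiate under the expectation (which holds in our applications, where $h$ is a bounded smooth function composed with a multilinear polynomial and all moments of $X$ are finite), we get
\[
	\Psi'(t) = \frac12 \sum_{k\in\bbT} \E\Big[\Big(\tfrac{X_k}{\sqrt t} - \tfrac{Z_k}{\sqrt{1-t}}\Big)\,\partial_k h(X_t)\Big].
\]
First I would treat the Gaussian term by Gaussian integration by parts in $Z$, conditionally on $X$: since $Z\sim N(0,\sigma_\cdot)$ is independent of $X$,
\[
	\tfrac{1}{2\sqrt{1-t}}\,\E[Z_k\,\partial_k h(X_t)] = \tfrac12 \sum_{l\in\bbT} \sigma_{kl}\,\E[\partial^2_{kl}h(X_t)] = \tfrac12\sum_{l\in A_k}\sigma_{kl}\,\E[\partial^2_{kl}h(X_t)],
\]
the last step using $\sigma_{kl} = \E[X_kX_l] = 0$ whenever $l\notin A_k$.

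The core of the argument is to process $\frac{1}{2\sqrt t}\E[X_k\,\partial_k h(X_t)]$ by three nested first–order Taylor expansions tied to the families \eqref{eq:Wtilde}--\eqref{eq:W}. (i) Expand $\partial_k h$ in the block $A_k$: writing $X_t^{A_k\to s}$ for the vector obtained from $X_t$ by rescaling the $A_k$–coordinates of $X$ by $s$, one has $\partial_k h(X_t) = \partial_k h(X_t^{A_k\to 0}) + \sqrt t\sum_{l\in A_k}X_l\int_0^1\partial^2_{kl}h(X_t^{A_k\to s})\,\dd s$; since $X_k$ is independent of $X_t^{A_k\to 0}$ (using $X_k\perp(X_i)_{i\in A_k^c}$, $X_k\perp Z$, and the elementary fact that $A\perp B$ together with $(A,B)\perp C$ implies $A\perp(B,C)$), the zeroth–order term drops out after multiplying by $X_k$ and taking expectation. (ii) Noting $X_t^{A_k\to s} = W_{s,t,1}^{k,l}$ and that $W_{s,t,u}^{k,l}$ interpolates ($u:1\to0$) down to $W_{s,t,0}^{k,l} = \sqrt t\,X^{A_{kl}^c}+\sqrt{1-t}\,Z$, which is independent of $(X_k,X_l)$, expand $\partial^2_{kl}h$ in the block $A_{kl}$: this yields $\E[X_kX_l\,\partial^2_{kl}h(W_{s,t,1}^{k,l})] = \sigma_{kl}\,\E[\partial^2_{kl}h(W_{s,t,0}^{k,l})]$ plus a triple–derivative remainder carrying the factor $s\,\ind_{m\in A_k}+\ind_{m\in A_{kl}\setminus A_k}$ coming from $\partial_u W_{s,t,u}^{k,l}$. (iii) Finally, to match the Gaussian term, expand $\partial^2_{kl}h(X_t)$ around $W_{s,t,0}^{k,l}$ along $W_{t,v}^{k,l}$ ($v:1\to0$), which gives $\sigma_{kl}\E[\partial^2_{kl}h(W_{s,t,0}^{k,l})] = \sigma_{kl}\E[\partial^2_{kl}h(X_t)] - \sqrt t\,\sigma_{kl}\sum_{m\in A_{kl}}\int_0^1\E[X_m\,\partial^3_{klm}h(W_{t,v}^{k,l})]\,\dd v$. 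Assembling: the terms $\pm\frac12\sum_k\sum_{l\in A_k}\sigma_{kl}\E[\partial^2_{kl}h(X_t)]$ from the $X$– and $Z$–parts cancel, the triple–derivative remainder of step (ii) is precisely the integrand of $I_1$ (with the stated coefficient), and the remainder of step (iii) is that of $I_2$; integrating over $t\in[0,1]$ and relabelling $v\to u$ gives the claimed identity.

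The computations are routine once organised this way, so the real work lies in two places. The main obstacle is the bookkeeping: choosing the order of the three expansions so that each intermediate argument is exactly a member of the families $W_{t,u}^{k,l}$ or $W_{s,t,u}^{k,l}$, tracking the $\sqrt t$, $t^{-1/2}$ and $\sqrt{1-t}$ weights through every step, and, at each "block–removal'' step, verifying that the removed argument is genuinely independent of the $X$–variables being pulled out — this is exactly where Assumption~\ref{ass:dep} and the product–of–independences fact are used (it also implicitly uses $k\in A_k\subseteq A_{kl}$ and $l\in A_{kl}$, so that the three blocks in \eqref{eq:W} partition $\bbT$). A secondary technical point, which I would dispatch first, is the analytic justification: differentiating $\Psi$ under the expectation and applying the fundamental theorem of calculus on $[0,1]$ in spite of the $t^{-1/2}$ and $(1-t)^{-1/2}$ factors in $\Psi'$. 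After the Gaussian integration by parts and the first Taylor step these singular factors are absorbed, leaving an integrand bounded in $t$ under the stated finiteness assumptions, so $\Psi$ is indeed absolutely continuous and the term–by–term integration is legitimate.
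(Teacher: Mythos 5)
Your proposal is correct and follows essentially the same route as the paper: the interpolation $\sqrt t\,X+\sqrt{1-t}\,Z$, Gaussian integration by parts for the $Z$-term, and the three nested Taylor expansions along the families $W^{k,l}_{s,t,u}$ and $W^{k,l}_{t,u}$, with your steps (ii)--(iii) being just a reorganisation of the paper's add-and-subtract of $\sigma_{kl}\,\E[\partial^2_{kl}h(U_t^{A_{kl}^c})]$. Your explicit remarks on the conditional integration by parts, the independence fact $A\perp B,\ (A,B)\perp C\Rightarrow A\perp(B,C)$, and the implicit requirement $A_k\subseteq A_{kl}$ are accurate and, if anything, slightly more careful than the paper's write-up.
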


To control the distributional distance between $X$ and another random vector
$\widetilde X=(\widetilde X_i)_{i\in \bbT}$ with slightly perturbed covariance matrix
$(\tilde \sigma_{ij})_{i, j\in \bbT}$ and dependency neighbourhoods $\tilde A_k, \tilde A_{kl}$, we
will apply Lemma~\ref{lem:lind1} to $X$ and $\widetilde X$ separately,
plus the following result that compares two Gaussian vectors.

\begin{lemma}\label{lem:lind2} Let $Z=(Z_i)_{i\in \bbT}\sim N(0, \sigma_\cdot)$ and $\widetilde Z=(\widetilde Z_i)_{i\in \bbT}\sim N(0, \widetilde \sigma_\cdot)$ be centred Gaussian random vectors with covariance matrices $(\sigma_{ij})_{i,j\in \bbT}$ and $(\tilde \sigma_{ij})_{i,j\in \bbT}$ respectively. Let $h: \R^{|\bbT|} \to \R$ be bounded and twice differentiable. Denote $W_t:=\sqrt{t}\widetilde Z +\sqrt{1-t}Z$. Then we have
\begin{equation}\label{eq:I3}
	\E[h(\widetilde Z )] - \E[h(Z)] =: I_3 =  \frac{1}{2} \sum_{k, l \in \bbT} (\tilde \sigma_{kl}-\sigma_{kl}) \int_0^1 \E\Big[\partial^2_{kl} h(W_t) \Big] \dd t,
\end{equation}
assuming that the integrals and expectations above are all finite.
\end{lemma}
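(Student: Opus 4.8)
The statement to prove is Lemma~\ref{lem:lind2}, the Gaussian interpolation identity: for centered Gaussian vectors $Z \sim N(0,\sigma_\cdot)$ and $\widetilde Z \sim N(0,\widetilde\sigma_\cdot)$ with covariance matrices $(\sigma_{kl})$ and $(\widetilde\sigma_{kl})$, and $W_t := \sqrt{t}\,\widetilde Z + \sqrt{1-t}\,Z$,
\[
	\E[h(\widetilde Z)] - \E[h(Z)] = \frac{1}{2}\sum_{k,l\in\bbT}(\widetilde\sigma_{kl}-\sigma_{kl})\int_0^1 \E\big[\partial^2_{kl}h(W_t)\big]\,\dd t.
\]

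\textbf{Approach.} This is the classical Gaussian interpolation (Slepian/Stein) argument, and the plan is to differentiate the interpolant in $t$ and integrate back. First I would take $Z$ and $\widetilde Z$ to be \emph{independent} (we may always realize them so, since $h$ depends only on the marginal laws of $\widetilde Z$ and of $Z$, and the right-hand side only involves the law of $W_t$, which is itself Gaussian with covariance $t\widetilde\sigma_\cdot + (1-t)\sigma_\cdot$ regardless of any coupling). Then $W_t$ is a smooth ($C^1$) function of $t$ with $W_0 = Z$, $W_1 = \widetilde Z$, so that
\[
	\E[h(\widetilde Z)] - \E[h(Z)] = \int_0^1 \frac{\dd}{\dd t}\,\E[h(W_t)]\,\dd t.
\]
The chain rule gives $\frac{\dd}{\dd t}\E[h(W_t)] = \sum_{k}\E\big[\partial_k h(W_t)\,\dot W_t^{(k)}\big]$ with $\dot W_t^{(k)} = \frac{1}{2\sqrt t}\widetilde Z_k - \frac{1}{2\sqrt{1-t}}Z_k$, and the task is to show this equals $\frac12\sum_{k,l}(\widetilde\sigma_{kl}-\sigma_{kl})\E[\partial^2_{kl}h(W_t)]$.

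\textbf{Key step.} The heart of the matter is Gaussian integration by parts (the Stein identity): if $(G, V)$ is a jointly Gaussian vector with $G$ scalar, mean zero, then $\E[G\,\phi(V)] = \sum_j \E[G V_j]\,\E[\partial_j \phi(V)]$ for smooth $\phi$ of suitable growth. I would apply this with $G = \widetilde Z_k$ (resp.\ $G = Z_k$) and $V = W_t$. Since $\widetilde Z \perp Z$, we have $\E[\widetilde Z_k W_t^{(l)}] = \sqrt t\,\widetilde\sigma_{kl}$ and $\E[Z_k W_t^{(l)}] = \sqrt{1-t}\,\sigma_{kl}$. Hence
\[
	\E\big[\widetilde Z_k\,\partial_k h(W_t)\big] = \sqrt t\sum_l \widetilde\sigma_{kl}\,\E[\partial^2_{kl}h(W_t)], \qquad
	\E\big[Z_k\,\partial_k h(W_t)\big] = \sqrt{1-t}\sum_l \sigma_{kl}\,\E[\partial^2_{kl}h(W_t)],
\]
so that $\E[\partial_k h(W_t)\,\dot W_t^{(k)}] = \frac12\sum_l(\widetilde\sigma_{kl}-\sigma_{kl})\,\E[\partial^2_{kl}h(W_t)]$. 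Summing over $k$ and integrating over $t\in[0,1]$ yields the claim. I would also note the endpoint subtlety: $\dot W_t$ blows up like $t^{-1/2}$ and $(1-t)^{-1/2}$ near $t=0,1$, but after the integration-by-parts step these singular factors are absorbed (they cancel against the $\sqrt t$ and $\sqrt{1-t}$ coming from the covariances), leaving an integrand that is bounded uniformly in $t$ provided $\partial^2 h$ is bounded; this also justifies applying $\frac{\dd}{\dd t}$ under the expectation and using Fubini, which is where the hypothesis that the integrals and expectations are finite is invoked.

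\textbf{Main obstacle.} The only real care needed is the regularity/integrability bookkeeping: $h$ is assumed merely bounded and twice differentiable (not necessarily with bounded derivatives), so one cannot naively assert $\E[G\,\partial_k h(W_t)]$ is finite or differentiate under the integral sign without an argument. I would handle this by first proving the identity for $h$ in a dense class of nice functions (say, $C^\infty$ with all derivatives bounded, or Schwartz) where every step is manifestly justified, and then observing that in the application (Lemma~\ref{lem:lind3} and the proof of Proposition~\ref{p:Thm1.3}) $h = f\circ\Phi$ with $\Phi$ a fixed multilinear polynomial in finitely many variables and $f$ having bounded derivatives, so that $\partial^2 h$ is a polynomial times bounded factors and all Gaussian moments are finite — which is exactly the ``assuming the integrals and expectations are finite'' proviso in the statement. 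Alternatively, one can avoid any density argument by directly checking, via the explicit Gaussian density of $W_t$ and dominated convergence, that the hypotheses guarantee the validity of Gaussian integration by parts and of differentiation under the integral. Either way this is routine; the conceptual content is entirely the one-line interpolation-plus-Stein computation above.
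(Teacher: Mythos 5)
Your proof is correct and follows essentially the same route as the paper: interpolate along $W_t$, differentiate in $t$, and use Gaussian integration by parts (Stein's identity), with $\E[\widetilde Z_k W_t^{(l)}]=\sqrt{t}\,\tilde\sigma_{kl}$ and $\E[Z_k W_t^{(l)}]=\sqrt{1-t}\,\sigma_{kl}$, to turn the first-order terms into the second-derivative terms weighted by $\tilde\sigma_{kl}-\sigma_{kl}$. One small caveat: your parenthetical claim that the law of $W_t$ is Gaussian with covariance $t\widetilde\sigma_\cdot+(1-t)\sigma_\cdot$ ``regardless of any coupling'' is not literally true (it does depend on the joint law of $(Z,\widetilde Z)$); the lemma should be read, as both your computation and the paper's application require, with $Z$ and $\widetilde Z$ independent.
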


We now specialise Lemmas~\ref{lem:lind1} and~\ref{lem:lind2} to our case of interest, where $h$ is a function of a multi-linear polynomial $\Phi(X)$.

\begin{lemma}\label{lem:lind3}
Assume that $h(X) := f(\Phi(X))$ for some bounded $f:\R\to\R$ with bounded first three derivatives, and
\begin{equation} \label{eq:Phi}
	\Phi(X) = \sum_{I \subseteq \bbT} c_I \, \prod_{i\in I} X_i
\end{equation}
for some fixed coefficients $c_I \in \R$. Furthermore, assume that
\begin{equation}\label{eq:vanish}
\forall\, k\in \bbT, \, l\in A_k, \, m\in A_{kl}, \qquad 	\partial^2_{k m} \Phi = \partial^2_{l m} \Phi = \partial^2_{k l} \Phi = 0 \,.
\end{equation}
Then for $I_1$, $I_2$ and $I_3$ as in \eqref{eq:I1}, \eqref{eq:I2} and \eqref{eq:I3}, we have
\begin{align}
|I_1| \, &\leq\,  \frac{1}{2} \|f'''\|_\infty \sup_{k \in \bbT} \E\big[|X_k|^3\big] \sum_{k \in \bbT,\, l \in A_k, \, m \in A_{kl}}  \notag\\
& \quad \qquad    \sup_{s,t,u} \E \Big[ \big|\partial_k \Phi \big(
	W_{s,t,u}^{k, l} \big) \big|^3 \Big]^{\frac13}
	\sup_{s,t,u} \E \Big[ \big|\partial_l \Phi \big(
	W_{s,t,u}^{k, l} \big) \big|^3 \Big]^{\frac13}
	\sup_{s,t,u} \E \Big[ \big|\partial_m \Phi \big(
	W_{s,t,u}^{k, l} \big) \big|^3 \Big]^{\frac13},  \label{eq:I1bd} \\
|I_2| \, &\leq\,  \frac{1}{2} \|f'''\|_\infty \sup_{k \in \bbT} \E\big[|X_k|^3\big] \sum_{k \in \bbT,\, l \in A_k, \, m \in A_{kl}} \notag \\
& \quad \qquad    \sup_{t,u} \E \Big[ \big|\partial_k \Phi \big(
	W_{t,u}^{k, l} \big) \big|^3 \Big]^{\frac13}
	\sup_{t,u} \E \Big[ \big|\partial_l \Phi \big(
	W_{t,u}^{k, l} \big) \big|^3 \Big]^{\frac13}
	\sup_{t,u} \E \Big[ \big|\partial_m \Phi \big(
	W_{t,u}^{k, l} \big) \big|^3 \Big]^{\frac13},  \label{eq:I2bd} \\
	|I_3| \, &\leq\,  \frac{1}{2} \|f''\|_\infty 	\sum_{k\in \bbT,\, l \in A_k}
	(\tilde \sigma_{kl}-\sigma_{kl}) \sup_t \E\Big[(\partial_k \Phi (W_t))^2\Big]^{\frac12}
	\sup_t \E\Big[(\partial_l \Phi (W_t))^2\Big]^{\frac12}.  \label{eq:I3bd}
\end{align}
\end{lemma}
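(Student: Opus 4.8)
The plan is to deduce Lemma~\ref{lem:lind3} from Lemmas~\ref{lem:lind1} and~\ref{lem:lind2} by straightforward but careful manipulation of the expressions \eqref{eq:I1}, \eqref{eq:I2}, \eqref{eq:I3}. The key structural input is that $h = f\circ\Phi$ with $\Phi$ multilinear, so the chain rule expresses derivatives of $h$ in terms of derivatives of $f$ and of $\Phi$. First I would compute, for distinct indices $k,l,m$,
\begin{equation*}
	\partial^3_{klm} h(W) = f'''(\Phi(W))\,\partial_k\Phi(W)\,\partial_l\Phi(W)\,\partial_m\Phi(W)
	\,+\,(\text{terms containing a second derivative }\partial^2_{\cdot\cdot}\Phi),
\end{equation*}
and similarly $\partial^2_{kl} h(W) = f''(\Phi(W))\,\partial_k\Phi(W)\,\partial_l\Phi(W) + f'(\Phi(W))\,\partial^2_{kl}\Phi(W)$. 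Here the hypothesis \eqref{eq:vanish} is exactly what is needed: in all the sums in \eqref{eq:I1}, \eqref{eq:I2}, \eqref{eq:I3} the indices that appear are $k\in\bbT$, $l\in A_k$, $m\in A_{kl}$, and \eqref{eq:vanish} kills every mixed second derivative $\partial^2_{km}\Phi$, $\partial^2_{lm}\Phi$, $\partial^2_{kl}\Phi$, so only the ``fully factored'' term survives in each case. (One should note that when, say, $m=k$ or $m=l$ the relevant expression still reduces correctly because $\Phi$ is multilinear, so $\partial^2_{kk}\Phi=0$ automatically; I would remark on this but not belabor it.)

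Next I would substitute these simplified derivative formulas into \eqref{eq:I1}, \eqref{eq:I2}, \eqref{eq:I3}. For $I_1$: pull $\|f'''\|_\infty$ out, bound the bounded factor $s\ind_{\{m\in A_k\}}+\ind_{\{m\in A_{kl}\setminus A_k\}}$ and $\sqrt t$ by $1$, and apply H\"older's inequality inside the expectation to split
\begin{equation*}
	\E\big[|X_k\,X_l\,X_m|\cdot|\partial_k\Phi(W_{s,t,u}^{k,l})|\,|\partial_l\Phi(W_{s,t,u}^{k,l})|\,|\partial_m\Phi(W_{s,t,u}^{k,l})|\big]
\end{equation*}
into a product of $L^3$ norms. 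The disorder factors contribute $\E[|X_k|^3]^{1/3}\E[|X_l|^3]^{1/3}\E[|X_m|^3]^{1/3}\le \sup_{j}\E[|X_j|^3]$, and each $\partial_\cdot\Phi$ factor contributes $\E[|\partial_\cdot\Phi(W_{s,t,u}^{k,l})|^3]^{1/3}$, which I then bound by its supremum over $s,t,u\in[0,1]$; the integral over $[0,1]^3$ contributes a factor $1$. This yields \eqref{eq:I1bd}. The argument for $I_2$ is identical after replacing $W_{s,t,u}^{k,l}$ by $W_{t,u}^{k,l}$ and noting $|\sigma_{kl}|\le \E[|X_k|^2]^{1/2}\E[|X_l|^2]^{1/2}\le\sup_j\E[|X_j|^3]^{2/3}\cdot(\text{const})$ — actually here I would just absorb $\sigma_{kl}$; since $l\in A_k$ one can alternatively keep it, but the stated bound \eqref{eq:I2bd} uses $\sup_k\E[|X_k|^3]$, so I would bound $|\sigma_{kl}|\le 1$ after rescaling or, more honestly, note that $|\sigma_{kl}|\le\|X_k\|_2\|X_l\|_2\le\sup_j\|X_j\|_3^2$ and fold the extra factor into the constant — I'd check the paper's normalization; in any case this is a routine absorption. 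For $I_3$: the term with $\partial^2_{kl}\Phi$ vanishes by \eqref{eq:vanish} (applied with, e.g., $l\in A_k$), leaving $f''(\Phi(W_t))\partial_k\Phi(W_t)\partial_l\Phi(W_t)$, and the sum over $k,l\in\bbT$ reduces to $k\in\bbT,\ l\in A_k$ because $\tilde\sigma_{kl}=\sigma_{kl}$ whenever $l\notin A_k$ (both vectors share the independence structure off the dependency neighbourhoods, so those covariances agree) — this point deserves an explicit sentence. Then Cauchy--Schwarz inside the expectation gives the product of $L^2$ norms, and bounding over $t$ gives \eqref{eq:I3bd}.

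I do not expect a serious obstacle here; this is essentially bookkeeping. The one point requiring genuine care — and the place I would spend the most attention — is the justification that \eqref{eq:vanish} really does annihilate \emph{all} the unwanted terms produced by the chain rule, including the degenerate cases where two or three of $k,l,m$ coincide, and the verification that in $I_3$ the summation genuinely collapses to $l\in A_k$ (which requires knowing $\tilde\sigma_{kl}=\sigma_{kl}$ for $l\notin A_k$; this holds because $X_k$ and hence $Z_k$ is independent of $(X_i)_{i\in A_k^c}$, so $\sigma_{ki}=0=\tilde\sigma_{ki}$ there, and similarly for the perturbed family whenever its dependency structure is the same). I would state these as short explicit observations rather than leaving them implicit, since they are exactly where the hypotheses are used.
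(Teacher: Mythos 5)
Your chain--rule reduction of $\partial^3_{klm}h$ and $\partial^2_{kl}h$ via \eqref{eq:vanish}, and your treatment of $I_3$ (vanishing of the $f'\partial^2_{kl}\Phi$ term, collapse of the sum to $l\in A_k$ because both covariances vanish off the dependency neighbourhoods, then Cauchy--Schwarz), are fine. The gap is in the H\"older step for $I_1$ (and $I_2$): you propose to split $\E\big[|X_kX_lX_m|\,|\partial_k\Phi|\,|\partial_l\Phi|\,|\partial_m\Phi|\big]$ into six $L^3$ norms, which is not a valid application of H\"older (six factors each in $L^3$ correspond to exponents summing to $2$, not $1$; a blind H\"older on six factors would produce sixth moments, not the third moments appearing in \eqref{eq:I1bd}). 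The correct argument, and the one the paper uses, groups the product into the three pairs $X_j\,\partial_j\Phi(W^{k,l}_{s,t,u})$, $j\in\{k,l,m\}$, applies three-fold H\"older to get $\E\big[|X_j\,\partial_j\Phi(W^{k,l}_{s,t,u})|^3\big]^{1/3}$, and then \emph{factorises} each such third moment as $\E[|X_j|^3]\,\E[|\partial_j\Phi(W^{k,l}_{s,t,u})|^3]$ using that $X_j$ is independent of $\partial_j\Phi(W^{k,l}_{s,t,u})$. This independence is the second, essential consequence of \eqref{eq:vanish} that your proposal never invokes: \eqref{eq:vanish} forces $\partial_k\Phi$ and $\partial_l\Phi$ not to depend on any coordinate in $A_{kl}$ (and, applying \eqref{eq:vanish} with $k:=m$ and $l$ ranging over $A_m$, forces $\partial_m\Phi$ not to depend on coordinates in $A_m$), so that by Assumption~\ref{ass:dep} and the independence of $Z$ from $X$ the two factors in each pair are independent. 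You only use \eqref{eq:vanish} to kill the mixed second derivatives in the chain rule; without the independence step the stated bounds do not follow.

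Relatedly, your handling of $\sigma_{kl}$ in $I_2$ is not correct as written: there is no normalisation making $|\sigma_{kl}|\le 1$, and absorbing $\|X_k\|_2\|X_l\|_2$ ``into the constant'' does not produce \eqref{eq:I2bd} as stated (the bound has no extra constant and exactly one factor $\sup_k\E[|X_k|^3]$). The clean fix, which is what the paper does, is to write $\sigma_{kl}=\E[\widetilde X_k\widetilde X_l]$ for an independent copy $(\widetilde X_k,\widetilde X_l)$ of $(X_k,X_l)$, so that the summand in \eqref{eq:I2} becomes an expectation of $\widetilde X_k\widetilde X_l X_m$ times the three $\partial\Phi$ factors, and then the identical H\"older-plus-independence argument as for $I_1$ yields \eqref{eq:I2bd}.
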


\begin{remark}\label{rem:multiLind}
We can extend Lemma~\ref{lem:lind3} to
the vector setting, i.e.\ for a function $h(X) = f(\Phi^{(1)}(X), \ldots, \Phi^{(k)}(X))$
of a finite number~$k$ of multi-linear polynomials $\Phi^{(i)}(X)$
as in \eqref{eq:Phi}, each satisfying \eqref{eq:vanish}, where $f: \R^k \to \R$
has bounded partial derivatives of order up to three. The bounds
\eqref{eq:I1bd}-\eqref{eq:I2bd} are simply modified replacing $\|f'''\|_\infty$ by
$\max_{1\le i,j,l \le k} \|\partial_{i,j,l} f\|_\infty$ and the three occurrences of $\Phi$
by $\Phi^{(i)}$, $\Phi^{(j)}$, $\Phi^{(l)}$, and then summing over $1 \le i,j,l \le k$;
similar modifications apply to the bound~\eqref{eq:I3bd}.
The adaptation of the proof is straightforward.
\end{remark}

\smallskip

We now give the proofs of Lemmas~\ref{lem:lind1}--\ref{lem:lind3}.
\begin{proof}[Proof of Lemma~\ref{lem:lind1}]
Let $Y_t:=\sqrt{t}X +\sqrt{1-t}Z$, $t\in [0,1]$. Then we can write
\begin{equation*}
\begin{split}
	\E[h(X)] - \E[h(Z)] &= \int_0^1 \frac{\dd}{\dd t} \E[h(Y_t)] \, \dd t \\
	&= \frac{1}{2} \int_0^1 \E \bigg[ \sum_{k \in \bbT}  \frac{X_k}{\sqrt{t}} \,  \partial_k h(Y_t) -
	\sum_{k \in \bbT} \frac{Z_k}{\sqrt{1-t}} \,  \partial_k h(Y_t) \bigg] \dd t  \,.
\end{split}
\end{equation*}

Given $C \subseteq \bbT$, let us denote
\begin{equation*}
	U^C_t := \sqrt{t} X^C + \sqrt{1-t} Z \,, \qquad
	\text{where} \qquad X^C_i := X_i \, \ind_{\{i \in C\}} \,.
\end{equation*}
In particular, $Y_t=U_t^\bbT$. Observe that $\E[X_k \, \partial_k h(U_t^{A_k^c}) ]
= \E[X_k] \E[\partial_k h(U_t^{A_k^c}) ] = 0$ by independence.
We recall Gaussian integration by parts: for smooth functions $\varphi$,
\begin{equation*}
	\E [ Z_k \, \varphi(Z) ] = \sum_{l \in \bbT}
	\sigma_{kl} \, \E [ \partial_l\varphi(Z)] = \sum_{l \in A_k}
	\sigma_{kl} \, \E [ \partial_l\varphi(Z)] \,,
\end{equation*}
since $\sigma_{kl} = 0$ for $l \not\in A_k$. Then
\begin{equation*}
\begin{split}
	\E[h(X)] - \E[h(Z)] &=
	\frac{1}{2} \int_0^1 \E \bigg[ \sum_{k \in \bbT}  \frac{X_k}{\sqrt{t}} \,
	\big( \partial_k h(U^\bbT_t) - \partial_k h(U^{A_k^c}_t) \big) -
	\sum_{k \in \bbT, \, l \in A_k} \sigma_{kl} \, \partial^2_{kl} h(U^\bbT_t) \bigg] \dd t  \,.
\end{split}
\end{equation*}

Let us expand the first term. We can interpolate between $U^\bbT_t$
and $U_t^{A_k^c}$ by
\begin{equation*}
	 U^{A_k^c}_t + s \sqrt{t} X^{A_k} \qquad \text{for } s \in [0,1] \,.
\end{equation*}
Then
\begin{equation*}
\begin{split}
	\partial_k h(U_t^\bbT) - \partial_k h(U^{A_k^c}_t)
	&= \int_0^1 \frac{\dd\ }{\dd s} \partial_k h(U^{A_k^c}_t + s \sqrt{t} X^{A_k}) \, \dd s \\
	&= \sqrt{t} \int_0^1 \sum_{l \in \bbT} X^{A_k}_l \,
	\partial^2_{kl} h\big( U^{A_k^c}_t + s \sqrt{t} X^{A_k} \big) \, \dd s \,.
\end{split}
\end{equation*}
Note that we can restrict the sum to $l \in A_k$ because $X^{A_k}_l = 0$ otherwise.
This leads to
\begin{equation*}
\begin{split}
	\E[h(X)] - \E[h(Z)] &=
	\frac{1}{2} \int_0^1 \int_0^1 \E \bigg[ \sum_{k \in \bbT,\, l \in A_k} X_k \, X_l \,
	\partial^2_{kl} h\big( U^{A_k^c}_t + s \sqrt{t} X^{A_k}\big) \\
	& \qquad\qquad\qquad\qquad -
	\sum_{k \in \bbT, \, l \in A_k} \sigma_{kl} \, \partial^2_{kl} h(U^\bbT_t) \bigg]
	\dd s \, \dd t  \,.
\end{split}
\end{equation*}

Note that $\E[X_k \, X_l \,\partial^2_{kl} h\big( U^{A_{kl}^c}_t \big) ]
= \E[X_k \, X_l] \E[\partial^2_{kl} h\big( U^{A_{kl}^c}_t \big) ]
= \sigma_{kl}
\E[\partial^2_{kl} h\big( U^{A_{kl}^c}_t \big) ]$
by the independence assumption.
By adding and subtracting this term, we get
\begin{equation*}
	\E[h(X)] - \E[h(Z)] = I_1 + I_2
\end{equation*}
where
\begin{equation*}
\begin{split}
	I_1 & := \frac{1}{2} \int_0^1 \int_0^1 \E \bigg[
	\sum_{k \in \bbT,\, l \in A_k}  X_k \, X_l \,
	\Big( \partial^2_{kl} h\big( U^{A_k^c}_t + s \sqrt{t} X^{A_k}\big) -
	\partial^2_{kl} h\big( U^{A_{kl}^c}_t \big) \Big) \bigg] \, \dd s \, \dd t \\
	I_2 &:= \frac{1}{2} \int_0^1 \int_0^1 \E \bigg[
	\sum_{k \in \bbT,\, l \in A_k} \sigma_{kl} \,
	\Big( \partial^2_{kl} h\big( U^{A_{kl}^c}_t \big) -  \partial^2_{kl} h(U^\bbT_t)
	\Big) \bigg] \dd s \, \dd t  \\
	&= \frac{1}{2} \int_0^1 \E \bigg[
	\sum_{k \in \bbT,\, l \in A_k} \sigma_{kl} \,
	\Big( \partial^2_{kl} h\big( U^{A_{kl}^c}_t \big) -  \partial^2_{kl} h(U^\bbT_t)
	\Big) \bigg] \dd t  \,,
\end{split}
\end{equation*}
where we performed the integration on $s$ in $I_2$
(whose integrand does not depend on $s$).

\smallskip

Let us deal with $I_1$. Note that
$$
U^{A_k^c}_t + s \sqrt{t} X^{A_k} = s \sqrt{t} X^{A_k} + \sqrt{t} X^{A_k^c} + \sqrt{1-t}Z \quad \mbox{and} \quad
U^{A_{kl}^c}_t = \sqrt{t} X^{A_{kl}^c} +\sqrt{1-t}Z.
$$
We can therefore interpolate between $U^{A_{kl}^c}_t$ and $U^{A_k^c}_t + s \sqrt{t} X^{A_k}$ by
$$
	W_{s,t,u}^{k, l} := U^{A_{kl}^c}_t + u\sqrt{t}(sX^{A_k} + X^{A_{kl}\backslash A_k}) \,, \qquad
	u \in [0,1] \,.
$$
Note that $(sX^{A_k} + X^{A_{kl}\backslash A_k})_m=0$ for $m \not\in A_{kl}$. We can then write
\begin{equation*}
\begin{split}
&	\partial^2_{kl} h\big( U^{A_k^c}_t + s \sqrt{t} X^{A_k} \big) -
	\partial^2_{kl} h\big( U^{A_{kl}^c}_t \big) =
	\int_0^1 \frac{\dd}{\dd u} \partial^2_{kl} h\big(
	W_{s,t,u}^{k, l} \big) \, \dd u \\
	= \ & \sqrt{t} \int_0^1 \sum_{m \in A_{kl}} X_m(s \ind_{\{m \in A_k\}}+ \ind_{\{m\in A_{kl}\backslash A_k \}}) \,
	\partial^3_{kl m} h\big( W_{s,t,u}^{k, l}\big) \, \dd u \, .
	\end{split}
\end{equation*}
This yields the final form of $I_1$:
\begin{equation*}
\begin{split}
	I_1 & := \frac{1}{2} \int_{[0,1]^3} \E \bigg[
	\sum_{k \in \bbT,\, l \in A_k, \, m \in A_{kl}}  \!\!\!\!\!\!\!\!\!\! X_k \, X_l \,
	X_m (s \ind_{\{m \in A_k\}}+ \ind_{\{m\in A_{kl}\backslash A_k \}}) \, \sqrt{t} \,
	\partial^3_{kl m} h\big(
	W_{s,t,u}^{k, l} \big) \bigg] \, \dd s \, \dd t \, \dd u.
\end{split}
\end{equation*}

Let us now deal with $I_2$. We can interpolate between $U^{A^c_{kl}}_t$ and $U^\bbT_t$ by
\begin{equation*}
	W_{t,u}^{k, l} := U^{A^c_{kl}}_t + u \, \sqrt{t} X^{A_{kl}} \,, \qquad
	u \in [0,1] \,.
\end{equation*}
Then
\begin{equation*}
\begin{split}
	\partial^2_{kl} h\big( U^{A_{kl}^c}_t \big) -  \partial^2_{kl} h(U^\bbT_t) &=
	- \int_0^1 \frac{\dd}{\dd u} \partial^2_{kl} h\big( W_{t,u}^{k, l}  \big)
	\, \dd u
	=  - \sqrt{t} \int_0^1 \sum_{m \in A_{kl}}
	X_m \,
	\partial^3_{kl m} h\big( W_{t,u}^{k, l}  \big) \, \dd u \,.
\end{split}
\end{equation*}
This yields the final form of $I_2$:
\begin{equation*}
\begin{split}
	I_2 &:= -\frac{1}{2} \int_{[0,1]^2} \E \bigg[
	\sum_{k \in \bbT,\, l \in A_k, \, m \in B_{kl}} \sigma_{kl} \, X_m \,
	\sqrt{t} \,
	\partial^3_{kl m} h\big(W_{t,u}^{k, l}  \big) \bigg] \dd t  \, \dd u \,.
\end{split}
\end{equation*}
This concludes the proof of Lemma~\ref{lem:lind1}.
\end{proof}
\medskip

\begin{proof}[Proof of Lemma~\ref{lem:lind2}]
Let $W_t:=\sqrt{t}\widetilde Z +\sqrt{1-t}Z$, $t\in [0,1]$. Using Gaussian integration by parts as in the proof of Lemma~\ref{lem:lind1}, we have
\begin{equation*}
\begin{split}
	\E[h(\widetilde Z)] - \E[h(Z)] &= \int_0^1 \frac{\dd}{\dd t} \E[h(W_t)] \, \dd t \\
	&= \frac{1}{2} \int_0^1 \E \bigg[ \sum_{k \in \bbT}  \frac{\widetilde Z_k}{\sqrt{t}} \,  \partial_k h(W_t) -
	\sum_{k \in \bbT} \frac{Z_k}{\sqrt{1-t}} \,  \partial_k h(W_t) \bigg] \dd t  \\
	& = \frac{1}{2} \sum_{k, l \in \bbT} (\tilde \sigma_{kl}-\sigma_{kl}) \int_0^1 \E\Big[\partial^2_{kl} h(W_t) \Big] \dd t  \,,
\end{split}
\end{equation*}
which proves the lemma.
\end{proof}
\medskip

\begin{proof}[Proof of Lemma~\ref{lem:lind3}]
We can easily compute
\begin{align*}
	\partial_k h(x) & = f'(\Phi(x)) \, \partial_k \Phi(x) \,, \\
	\partial^2_{k l} h(x) & = f''(\Phi(x)) \, \partial_k \Phi(x) \, \partial_l \Phi(x)
	+ f'(\Phi(x)) \, \partial^2_{kl} \Phi(x) \,, \\
	\partial^3_{k l m} h(x) & = f'''(\Phi(x)) \, \partial_k \Phi(x) \, \partial_l \Phi(x)
	\, \partial_m \Phi(x)  \\
	& \quad + f''(\Phi(x)) \big\{ \partial^2_{k m} \Phi(x) \, \partial_{l} \Phi(x)
	+ \partial^2_{l m} \Phi(x) \, \partial_{k} \Phi(x)
	+ \partial^2_{k l} \Phi(x) \, \partial_{m} \Phi(x) \big\} \\
	& \quad +   f'(\Phi(x)) \, \partial^3_{k l m} \Phi(x) \,.
\end{align*}
which by assumption \eqref{eq:vanish}, gives
\begin{equation}\label{eq:ourthird}
	\partial^3_{k l m} h(x) = f'''(\Phi(x)) \, \partial_k \Phi(x) \, \partial_l \Phi(x)
	\, \partial_m \Phi(x).
\end{equation}
We can then substitute this into \eqref{eq:I1} to bound
\begin{equation*}
\begin{split}
	& \Big| \E \Big[ X_k \, X_l \,
	X_m (1 - s \ind_{\{m \in A_k\}}) \, \sqrt{t} \,
	\partial^3_{kl m} h\big(
	W_{s,t,u}^{A_k, B_{kl}} \big) \Big] \Big| \\
	& \le \|f'''\|_\infty \,
	\bigg( \E \Big[ \big|X_k \,\partial_k \Phi \big(
	W_{s,t,u}^{k, l} \big) \big|^3 \Big]
	\, \E \Big[ \big|X_l \,\partial_l \Phi \big(
	W_{s,t,u}^{k, l} \big) \big|^3 \Big]
	\, \E \Big[ \big|X_m \,\partial_m \Phi \big(
	W_{s,t,u}^{k, l} \big) \big|^3 \Big] \bigg)^{1/3} \\
	& \le \|f'''\|_\infty \, \sup_{k \in \bbT} \E[|X_k|^3] \,
	\bigg( \E \Big[ \big|\partial_k \Phi \big(
	W_{s,t,u}^{k, l} \big) \big|^3 \Big]
	\, \E \Big[ \big|\partial_l \Phi \big(
	W_{s,t,u}^{k, l} \big) \big|^3 \Big]
	\, \E \Big[ \big|\partial_m \Phi \big(
	W_{s,t,u}^{k, l} \big) \big|^3 \Big] \bigg)^{1/3} \,,
\end{split}
\end{equation*}
where we used the fact that $\partial_k\Phi(\ldots)$ is independent of $X_k$ since assumption \eqref{eq:vanish} implies that
$\partial_k \Phi(\ldots)$ does not depend on $(X_m)_{m\in A_{kl}}$. This immediately implies \eqref{eq:I1bd}.

The proof of \eqref{eq:I2bd} is the same if in \eqref{eq:I2}, we write $\sigma_{kl}= \E[\widetilde X_k \widetilde X_l]$ for $(\widetilde X_k, \widetilde X_l)$ with the same distribution as $(X_k, X_l)$ but independent of $X$. The proof of \eqref{eq:I3bd} is even simpler.
\end{proof}

\section*{Acknowledgements}
We especially thank Te LI for showing us how Hardy-Littlewood-Sobolev
type inequalities can be proved without using Fourier transform.
F.C. is supported by INdAM/GNAMPA.
R.S.~is supported by NUS grant R-146-000-288-114. N.Z.~is supported
by EPRSC through grant EP/R024456/1.

\end{document}